\providecommand{\U}[1]{\protect \rule{.1in}{.1in}}
\newtheorem{theorem}{Theorem}[section]
\newtheorem{corollary}[theorem]{Corollary}
\newtheorem{definition}[theorem]{Definition}
\newtheorem{example}[theorem]{Example}
\newtheorem{lemma}[theorem]{Lemma}
\newtheorem{notation}[theorem]{Notation}
\newtheorem{problem}[theorem]{Problem}
\newtheorem{proposition}[theorem]{Proposition}
\newtheorem{remark}[theorem]{Remark}
\newenvironment{proof}[1][Proof]{\noindent \textbf{#1.} }{\  \rule{0.5em}{0.5em}}
\begin{document}

\title{$G$--Brownian Motion and Dynamic Risk Measure under Volatility Uncertainty}
\author{Shige PENG\thanks{The author thanks the partial support from
The National Basic Research Program of China (973 Program) grant No.
2007CB814900 (Financial Risk) .}\\Institute of Mathematics\\Shandong
University\\250100, Jinan, China\\peng@sdu.edu.cn}
\date{Version: November 18, 2007} \maketitle
\tableofcontents

\chapter{Introduction}

It is well-known in classical probability theory that a expectation
$\mathbb{E}$ satisfies the following relation for random variables
$X$ and $Y$:
$$\mathbb{E}[aX+Y+c]=a\mathbb{E}[X]+\mathbb{E}[Y]+c,\ \ \ \ \forall a,c \in \mathbb{R}.$$
A sublinear expectation $\mathbb{\hat{E}}$ satisfies the following
weaker condition:
$$\mathbb{\hat{E}}[X+Y]\leq \mathbb{\hat{E}}[X]+\mathbb{\hat{E}}[Y],\ \ \ \mathbb{\hat{E}}[|a|X+c]=|a|\mathbb{\hat{E}}[X]+c.$$
This $\mathbb{\hat{E}}$ keeps the monotonicity property: If $X\geq
Y$ then $\mathbb{\hat{E}}[X]\geq \mathbb{\hat{E}}[Y]$.

The notion of sublinear expectations is proved to be a basic tool in
volatility uncertainty which is crucial in superhedging,
superpricing (See \cite{Avellaneda} and \cite{Lyons})  and measures
of risk in finance which caused a great attention in finance since
the pioneer work of Artzner, Delbaen, Eber and Heath \cite{ADEH1}
and \cite{ADEH2}. This is also the start point of a new theory
stochastic calculus which gives us a new insight to characterize and
calculate varies kinds of financial risk.

In this note we will introduce a crucial notion of $G$-normal
distributions corresponding to the well-known normal distributions
in classical probability theory. This $G$-normal distribution will
bring us to a new framework of stochastic calculus of It\^{o}'s type
through the corresponding $G$-Brownian motion. We will also present
analytical calculations and some new statistical methods with
application to risk analysis in finance under volatility
uncertainty.

Our basic point of view is: sublinear expectation theory is very
like its special situation of linear expectation in the classical
probability theory. Under a sublinear expectation (or even more
general nonlinear expectation) space we still can introduce the
notion of distributions, of random variables, as well as  the
notions of joint distributions, marginal distributions, etc. We
still denoted $X\sim Y$ if $X$ and $Y$ are identically distributed.
We still have the notion of independence. A particularly interesting
phenomenon in sublinear situations is that a random variable $Y$ is
independent to $X$ does not automatically implies that $X$ is
independent to $Y$.

We will prove two important theorems in the framework of sublinear
expectation theory: The law of large number and the central limit
theorem. A very interesting result of our new central limit theorem
under a sublinear expectation is that a sequence of zero-mean
independent and identically distributed random will converge in law
to a `$G$-normal distribution'
$\mathcal{N}(0,[{\underline\sigma}^2,{\overline\sigma}^2])$. Briefly
speaking a random variable $X$ in a sublinear expectation space is
said to be
$\mathcal{N}(0,[{\underline\sigma}^2,{\overline\sigma}^2])$-distributed
if for any $Y$ independent and identically distributed w.r.t. $X$
and for any real function $a$ and $b$ we have $aX+bY\sim
\sqrt{a^2+b^2}X$. Here ${\overline\sigma}^2=\mathbb{\hat{E}}[X^2]$
and ${\underline\sigma}^2=-\mathbb{\hat{E}}[-X^2]$. In a special
case ${\underline\sigma}^2={\overline\sigma}^2$, this $G$-normal
distribution becomes a classical normal distribution
$\mathcal{N}(0,{\overline\sigma}^2)$.

We will define a sublinear expectation on the space of continuous
paths from $\mathbb{R}_{+}$ to $\mathbb{R}^{d}$ which will be an
analogue of Wiener's law, by which a $G$-Brownian motion is
formulated. Briefly speaking a $G$-Brownian motion $(B_t)_{t\geq 0}$
is a continuous process with independent and stationary increments
under a given sublinear expectation $\mathbb{\hat{E}}[\cdot]$.

$G$--Brownian motion has a very rich and interesting new structure
which non trivially generalizes the classical one. We can establish
the related
stochastic calculus, especially $G$--It\^{o}'s integrals (see \cite[1942]%
{Ito}) and the related quadratic variation process $\left \langle
B\right \rangle $. A very interesting new phenomenon of our
$G$-Brownian motion is that its quadratic process $\left \langle
B\right \rangle $ also has independent and stationary increments. \
The corresponding $G$--It\^{o}'s formula is obtained. We then
introduce the notion of $G$--martingales and the related Jensen
inequality for a new type of
\textquotedblleft$G$--convex\textquotedblright \ functions. We have
also established the existence and uniqueness of solution to
stochastic differential equation under our stochastic calculus by
the same Picard iterations as in the classical situation. Books on
stochastic calculus e.g., \cite{Chu-Will}, \cite{HWY},\  \cite{IW},
\cite{Ito-McKean}, \cite{KSh}, \cite{Oksendal}, \cite{Protter},
\cite{Revuz-Yor}, \cite{Yong-Zhou} are recommended for understanding
the present results and some further possible developments of this
new stochastic calculus.

A sublinear expectation can be regarded as a coherent risk measure.
This, together with the related conditional expectations
$\mathbb{E}[\cdot|\mathcal{H}_{t}]_{t\geq0}$ makes a dynamic risk
measure: $G$--risk measure.

The other motivation of our $G$--expectation is the notion of
(nonlinear) $g$--expectations introduced in \cite{Peng1997},
\cite{Peng1997b}. Here $g$ is the generating function of a backward
stochastic differential equation (BSDE) on a given probability space
$(\Omega,\mathcal{F},\mathbf{P})$. The natural definition of the
conditional $g$--expectations with respect to the past induces rich
properties of nonlinear $g$--martingale theory (see among others,
\cite{BCHMP1}, \cite{Chen98}, \cite{CE}, \cite{CKJ}, \cite{CHMP},
\cite{CHMP3}, \cite{CP}, \cite{CP1}, \cite{Jiang}, \cite{JC},
\cite{Peng1999}, \cite{Peng2005a}, \cite{Peng2005b}, \cite{PX2003}).
Recently $g$--expectations are also studied as dynamic risk
measures: $g$--risk measure (cf. \cite{Roazza2003}, \cite{El-Bar},
\cite{DPR}). Fully nonlinear super-hedging is also a possible
application (cf. \cite{Avellaneda}, \cite{Lyons} and \cite{Touzi}
where new BSDE\ approach was introduced).

The notion of $g$--expectation is defined on a given probability
space. In \cite{Peng2005} (see also \cite{Peng2004}). As compared
with the framework of $g$--expectations, the theory of
$G$--expectation is more intrinsic, a meaning similar to
\textquotedblleft intrinsic geometry\textquotedblright \ in the
sense that it cannot be based on a given (linear) probability space.
\ Since the classical Brownian expectation as well as many other
linear and nonlinear expectations are dominated by our
$G$--Expectation, our theory also provides a flexible theoretical
framework.

The whole results of this paper are based on the very basic
knowledge of Banach space and the parabolic partial differential
equation (\ref{eq-G-heat}). When this $G$-heat equation
(\ref{eq-G-heat}) is linear, our $G$-Brownian motion becomes the
classical Brownian motion. This paper still provides an analytic
shortcut to reach the sophistic It\^{o}'s calculus.

A very basic knowledge of Banach space and probability theory are
necessary. Stochastic analysis, i.e., the theory of stochastic
processes will be very helpful but not necessary.\ We also use some
basic knowledge on the smooth solutions of parabolic partial
differential equations. Good background in statistics will be very
helpful. This note was written for several series of lectures: In
the 2nd Workshop ``Stochastic Equations and Related Topic'' Jena,
July 23--29, 2006; In graduate Courses of Yantai Summer School in
Finance, Yantai University,  July 06--21, 2007; as well as in
Graduate Courses of Wuhan Summer School, July 24--26, 2007. Also in
mini-course of Institute of Applied Mathematics, AMSS, April 16-18
2007 and a mini-course in Fudan University, May 2007; In graduate
courses of CSFI, Osaka University, May 15--June 13, 2007. The
hospitalities and encouragements of the above institutions and the
enthusiasm of the audiences are the main engine to realize this
lecture notes.

I thank for many comments and suggestions given during those
courses, especially to Li Juan and Hu Mingshang. References are
given at the end. Historical remarks are still under preparation.
This lecture note are mainly based on my recent research papers:

\vskip 1cm

\noindent Peng, S. (2006) $G$--Expectation, $G$--Brownian Motion and
Related Stochastic Calculus of It\^{o}'s type, (pdf-file available
in arXiv:math.PR/0601035v2, to appear in \emph{Proceedings of the
2005 Abel Symposium}, Springer.
\medskip

\noindent Peng, S. (2006) Multi-dimensional $G$--Brownian motion and
related stochastic calculus under $G$--expectation, Preprint,
(pdf-file available in arXiv:math.PR/0601699 v2).
\medskip

\noindent Peng, S. (2007) Law of large numbers and central limit
theorem under nonlinear expectations, in arXiv:math.PR/0702358v1 13
Feb 2007
\medskip

which were stimulated by:
\medskip

\noindent Peng, S. (2004) Filtration Consistent Nonlinear
Expectations and Evaluations of Contingent Claims, \emph{Acta
Mathematicae Applicatae Sinica,} English Series \textbf{20}(2),
1--24.
\medskip

\noindent Peng, S. (2005) Nonlinear expectations and nonlinear
Markov chains, \emph{Chin. Ann. Math.} \textbf{26B}(2) ,159--184.

\medskip

\chapter{Risk Measures  and Sublinear Expectations  }

\section{How to measure risk of financial positions}

Let $\Omega$ be a set the set of scenarios. We are given a linear
subspace $\mathcal{X}$ of real valued and bounded functions on
$\Omega$ $\mathcal{X}$ is the collection of are all possible risk
positions in a financial market. We assume that all constants are in
$\mathcal{X}$ and that $X\in\mathcal{X}$ implies
$|X|\in\mathcal{X}$. For each $X\in \mathcal{X}$ we denote
\[
X^{\ast}=\sup_{\omega \in \Omega}X(\omega)\text{ \  \ and \  \
}\left \Vert X\right \Vert =\left \vert X\right \vert ^{\ast}.
\]
$\left \Vert \cdot \right \Vert $ is a Banach norm on $\mathcal{X}$.
In this section $\mathcal{X}$ is assumed to be a Banach space, i.e.,
if a sequence $\{X_{i}\}_{i=1}^{\infty}$ of $\mathcal{X}$ converges
uniformly to some function $X$ on $\Omega$, then $X\in \mathcal{X}$.

\begin{remark}
If $S\in \mathcal{X}$, then for each constant $c$, $S\vee c$,
$S\wedge c$ are all in $\mathcal{X}$. One typical example in finance
is that $S$ is the tomorrow's price of a stock. In this case any
European call or put options of
forms%
\[
(S-k)^{+},\  \ (k-S)^{+}%
\]
are in $\mathcal{X}$.
\end{remark}

\begin{problem}
Prove that if $S\in \mathcal{X}$ then $\varphi(S)\in \mathcal{X}$
for each continuous function $\varphi$ on $\mathbb{R}$.
\end{problem}

\begin{example} Let $(\Omega,\mathcal{F})$ be a measurable space and
let $\mathbb{L}^{\infty}(\Omega,\mathcal{F})$ be the space of all
bounded
$\mathcal{F}$-measurable random variables. $\mathbb{L}^{\infty}%
(\Omega,\mathcal{F})$ is considered as a space of risk positions in
a financial market. $\mathbb{L}^{\infty}(\Omega,\mathcal{F})$ is a
Banach space under the norm $\left \Vert X\right \Vert
_{\infty}=\sup_{\omega \in \Omega }\left \vert X(\omega)\right \vert
$.
\end{example}

\begin{example}
If $\Omega$ is a metric space, then we can consider $\mathcal{X}=C_{b}%
(\Omega \mathcal{)}$, the set of all bounded and continuous
functions on $\Omega$.
\end{example}

\subsection{{Coherent measures of risk}}

 A risk supervisor is responsible for taking a rule to tell traders,
stocks companies, banks or other institutions under his supervision,
which kind of risk positions is unacceptable and thus a minimum
amount of risk capitals should be deposited to make the positions to
be acceptable. The collection of acceptable positions is defined by:
\[
\mathcal{%
A%
}=\{X\in \mathcal{X},\;X\text{ is acceptable}\}.
\]
This set has the following economically meaningful properties:

\begin{definition}
(\textbf{Coherent acceptable set}{)} \newline \textbf{\textsl{(i)} }%
{\textbf{Monotonicity:}
\[
X\in \mathcal{A},\;Y\geq X\; \; \Rightarrow \; \;Y\in \mathcal{A}%
\]
\textbf{\textsl{(ii)} }$0\in \mathcal{A}$ but $-1\not \in \mathcal{A}$. }\newline%
\textbf{\textsl{(iii)} }{\textbf{Positively homogeneity}
\[
X\in \mathcal{%
A%
}\; \; \Rightarrow \; \; \lambda X\in \mathcal{%
A%
},\; \; \forall \lambda \geq0.
\]
\textbf{\textsl{(iv)}} \textbf{Convexity:}
\[
X,Y\in \mathcal{A}\Rightarrow \alpha X+(1-\alpha)Y\in \mathcal{A},\;
\alpha \in \lbrack0,1]
\]
}
\end{definition}

\begin{remark}
{(iii) and (iv) \textbf{imply }\newline\textbf{\textsl{(v)}} \textbf{Sublinearity:} \textbf{ }%
\[
X,Y\in \mathcal{A}\Rightarrow \mu X+\nu Y\in \mathcal{A},\; \forall
\mu,\nu \geq0.
\]
}
\end{remark}

\begin{remark}
If we remove the condition of the positive homogeneity, then
$\mathcal{A}$ is called a convex acceptable set. In this course we
mainly study the coherent case. Once the rule of the acceptable set
is fixed, the minimum requirement of risk deposit is then
automatically determined.
\end{remark}

\begin{definition}
\textbf{(Risk measure related to a given acceptable set
$\mathcal{A}$) }{The functional }$\rho(\cdot)$ defined by {
\[\rho(X)=\rho_{\mathcal{A }}(X):=\inf \{m\in \mathbb{R}:\;m+X\in
\mathcal{A }\},\  \  \ X\in \mathcal{X}
\]
is called the risk measure related to }%
$\mathcal{A}$.
\end{definition}

It is easy to see that

{%
\[
\rho(X+\rho(X))=0.
\]
}

\begin{proposition}
{ \textbf{ }$\rho(\cdot)$ is a coherent risk measure, namely\newline%
\newline \textbf{(a) Monotonicity:} $X\geq Y$ implies $\rho(X)\leq \rho(Y)$;\,
\, \newline \textbf{(b) Constant preservation:}$\mathbf{\
}\rho(1)=-\rho(-1)=-1; $ \newline} {\textbf{(c) Sublinearity:} For
each $X,Y\in \mathcal{X}$, \textbf{ }$\rho(X+Y)\leq \rho(X)+\rho(Y);
$ }\newline \textbf{(d) Positive homogeneity:
}$\rho(\lambda X)=\lambda \rho(X),\  \  \forall \lambda \geq0.\mathbf{\ }%
\newline \ $
\end{proposition}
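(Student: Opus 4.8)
The plan is to derive each of the four properties (a)–(d) directly from the definition $\rho(X)=\inf\{m\in\mathbb{R}:m+X\in\mathcal{A}\}$ together with the four axioms of a coherent acceptable set. The key observation organizing the whole argument is that the infimum defining $\rho(X)$ is an infimum over a set of acceptance levels, so each property of $\rho$ should correspond to a monotonicity or stability property of these sets under the operations in question. I would set $\mathcal{M}(X):=\{m\in\mathbb{R}:m+X\in\mathcal{A}\}$ so that $\rho(X)=\inf\mathcal{M}(X)$, and then translate each claim into a statement about the sets $\mathcal{M}(X)$.

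First I would prove monotonicity (a). If $X\geq Y$ and $m+Y\in\mathcal{A}$, then $m+X\geq m+Y$, so by axiom (i) (monotonicity of $\mathcal{A}$) we get $m+X\in\mathcal{A}$. Hence $\mathcal{M}(Y)\subseteq\mathcal{M}(X)$, and taking infima reverses the inclusion to give $\rho(X)\leq\rho(Y)$. Next, for positive homogeneity (d), I would show that for $\lambda>0$ one has $m+\lambda X\in\mathcal{A}\iff \frac{m}{\lambda}+X\in\mathcal{A}$ using axiom (iii); this yields $\mathcal{M}(\lambda X)=\lambda\,\mathcal{M}(X)$ and therefore $\rho(\lambda X)=\lambda\rho(X)$. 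The case $\lambda=0$ must be handled separately, as it asserts $\rho(0)=0$; I would obtain $\rho(0)=0$ from $0\in\mathcal{A}$ (so $0\in\mathcal{M}(0)$, giving $\rho(0)\leq 0$) combined with the fact that $-1\notin\mathcal{A}$, which after a scaling argument prevents negative acceptance levels for the zero position.

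For sublinearity (c), I would take $m\in\mathcal{M}(X)$ and $n\in\mathcal{M}(Y)$, so $m+X\in\mathcal{A}$ and $n+Y\in\mathcal{A}$, and apply the sublinearity property (v) of the acceptable set (with $\mu=\nu=1$) to conclude $(m+X)+(n+Y)=(m+n)+(X+Y)\in\mathcal{A}$, i.e. $m+n\in\mathcal{M}(X+Y)$. Taking the infimum over all such $m$ and $n$ gives $\rho(X+Y)\leq\rho(X)+\rho(Y)$. Finally, for constant preservation (b), I would use $0\in\mathcal{A}$ and $-1\notin\mathcal{A}$ together with monotonicity of $\mathcal{A}$ to pin down $\rho(1)$ and $\rho(-1)$: acceptability of $m+1$ forces $m\geq -1$ and $m=-1$ is acceptable, giving $\rho(1)=-1$, while a symmetric computation gives $\rho(-1)=1$, so $\rho(1)=-\rho(-1)=-1$.

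The main obstacle I expect is not any single property in isolation—each is a short deduction—but rather ensuring that the infima are genuinely attained or at least well-behaved, so that the set-level inclusions and sums translate cleanly into the claimed inequalities and equalities for $\rho$. In particular, establishing that $\rho$ takes finite values and that $-1\notin\mathcal{A}$ really does forbid arbitrarily negative acceptance levels (needed for the sharp constant-preservation and for $\rho(0)=0$) requires combining boundedness of the positions in $\mathcal{X}$ with axioms (i)–(iii); this interplay between the normalization axiom (ii) and the homogeneity/monotonicity axioms is the delicate point, whereas sublinearity and homogeneity follow almost formally once the sets $\mathcal{M}(X)$ are understood.
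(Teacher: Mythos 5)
Your proposal is correct and takes essentially the same route as the paper: the paper's proof of sublinearity is exactly your observation that sums of acceptable positions lie in $\mathcal{A}$ (i.e.\ $\mathcal{M}(X)+\mathcal{M}(Y)\subseteq\mathcal{M}(X+Y)$), and its proof of $\rho(\mu X)=\mu\rho(X)$ is your identity $\mathcal{M}(\lambda X)=\lambda\mathcal{M}(X)$ obtained by the substitution $n=m/\mu$. Your $\mathcal{M}(X)$ bookkeeping simply makes explicit the steps the paper dismisses as obvious --- monotonicity, constant preservation, the $\lambda=0$ case, and the scaling argument showing no negative constant is acceptable --- all of which you handle correctly.
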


\begin{proof}
{ (a), (b) are obvious. To prove (c) we first have $\rho(\mu X)=\mu
\rho(X)$. In fact the case }${ \mu=0}$  is trivial; when $ \mu>0$,
\begin{align*}
\rho(\mu X)  &  =\inf \{m\in \mathbb{R}:\;m+\mu X\in \mathcal{A
}\} \  \ \ \ \ \ \ (n=\frac{m}{\mu})\\
&  =\mu \inf \{n\in \mathbb{R}:\;n+X\in \mathcal{A}\}=\mu \rho(X).
\end{align*}
Now, for each positive $\mu$ and $\nu$,
\begin{align*} \rho(\mu X+\nu Y)=  &  \inf
\{m\in \mathbb{R}:\;m+(\mu X+\nu Y)\in \mathcal{A
}\} \\
=  &  \inf \{m+n:m,n\in \mathbb{R},\;(m+\mu X)+(n+\nu Y)\in \mathcal{%
A
}\} \\
\  \leq &  \inf \{m\in \mathbb{R}:\;m+\mu X\in \mathcal{%
A
}\}+\inf \{n\in \mathbb{R}:\;n+\nu Y\in \mathcal{%
A}\} \\
=  &  \rho(\mu X)+\rho(\nu Y).
\end{align*}

\end{proof}

Now let a ${ \rho(\cdot)}$ be a functional satisfying (a)--(d).
Then we can inversely define %
\[
\mathcal{A}_{\rho}{ =\{X\in X:\rho(X)\leq0\}.}%
\]
{ It is easy to prove that }$A_{\rho}${ satisfies (i)--(iv).}

\subsection{{Sublinear expectation of risk loss}}

{ From now we will denote $X$ to be a loss position, namely $-X$ is
the corresponding financial position. Related to a coherent risk
measure $\rho$, we evaluate the risk loss $X$ by:
\[
\mathbb{\hat{E}}[X]:=\rho(-X),\  \  \  \ X\in \mathcal{X}.
\]
This functional satisfies the following properties: }

\noindent{\textbf{(a) Monotonicity: }%
\[
X\geq Y\  \  \implies \  \  \mathbb{\hat{E}}[X]\geq
\mathbb{\hat{E}}[Y].
\]
}

\noindent\textbf{(b) Constant preserving}
\[
\mathbb{\hat{E}}[c]=c,\  \  \  \forall c\in \mathbb{R}.
\]

\noindent\textbf{(c) Sub-additivity: } For each $X,Y\in
\mathcal{X}$,
\[
\mathbb{\hat{E}}[X+Y]\leq \mathbb{\hat{E}}[X]+\mathbb{\hat{E}}[Y].
\]

\noindent\textbf{(d) Positive homogeneity:}
\[
\mathbb{\hat{E}}[\lambda X]=\lambda \mathbb{\hat{E}}[X],\  \  \
\forall
\lambda \geq0\text{.}%
\]

A real valued functional $\mathbb{\hat{E}}[\cdot]$ defined on
$\mathcal{X}$ satisfying (a)--(d) will be called a sublinear
expectation and will be systematically studied in this lecture.

\begin{remark}
 (c)+(d) is called sublinearity. This sublinearity implies: \newline(e)
\textbf{Convexity:
\[
\mathbb{\hat{E}}[\alpha X+(1-\alpha)Y]\leq \alpha \mathbb{\hat{E}}%
[X]+(1-\alpha)\mathbb{\hat{E}}[Y],\  \  \forall \alpha \in
\lbrack0,1];
\]
}
\end{remark}

\begin{remark}
\textbf{(b) + (d)} implies \newline(f) \textbf{Cash
translatability:}
\[
\mathbb{\hat{E}}[X+c]=\mathbb{\hat{E}}[X]+c.
\]
Indeed, we have
\begin{align*}
\mathbb{\hat{E}}[X]+c  &  =\mathbb{\hat{E}}[X]-\mathbb{\hat{E}}[-c]\\
&  \leq \mathbb{\hat{E}}[X+c]\leq \mathbb{\hat{E}}[X]+\mathbb{\hat{E}%
}[c]=\mathbb{\hat{E}}[X]+c.
\end{align*}

\end{remark}

\begin{remark}
{ \textbf{(c) + (d)} $\iff$ } (d) + the following
{\textbf{Convexity:}
\[
\mathbb{\hat{E}}[\alpha X+(1-\alpha)Y]\leq \alpha \mathbb{\hat{E}}%
[X]+(1-\alpha)\mathbb{\hat{E}}[Y],\  \  \forall \alpha \in
\lbrack0,1].
\]
}
\end{remark}

\begin{remark}
{ \label{Rem-1}We recall the notion of the above expectations
satisfying (c)--(d) was systematically introduced by Artzner,
Delbaen, Eber and Heath \cite{ADEH1}, \cite{ADEH2}, in the case
where $\Omega$ is a finite set, and by Delbaen \cite{Delbaen} in
general situation with the notation of risk measure:
$\rho(X)=\mathbb{\hat{E}}[-X]$. See also in Huber \cite{Huber} for
even early study of this notion $\mathbb{\hat{E}}$ (called upper
expectation $\mathbf{E}^{\ast}$ in Ch.10 of \cite{Huber}) in a
finite set $\Omega$. See Rosazza Gianin \cite{Roazza2003} or Peng
\cite{Peng2003}, El Karoui \& Barrieu \cite{El-Bar},
\cite{El-Bar2005} for dynamic risk measures using $g$--expectations.
Super-hedging and super pricing (see \cite{EQ} and \cite{EPQ}) are
also closely related to this formulation. }
\end{remark}

\subsection{Examples of sublinear expectations}

Let $\mathbb{\hat{E}}^{1}$ and $\mathbb{\hat{E}}^{2}$ be two
nonlinear expectations defined on $(\Omega,\mathcal{X})$.
$\mathbb{\hat{E}}^{1}$ is said to be \ dominated by
$\mathbb{\hat{E}}^{2}$ if
\[
\mathbb{\hat{E}}^{1}[X]-\mathbb{\hat{E}}^{1}[Y]\leq
\mathbb{\hat{E}}^{2}[X-Y],\ \ \  \ \forall X, Y\in \mathcal{X}.
\]
The strongest sublinear expectation on $\mathcal{X}$ is
\[
\mathbb{\hat{E}}^{\infty}[X]:=X^{\ast}=\sup_{\omega \in
\Omega}X(\omega).
\]
Namely, all other sublinear expectations are dominated by $\mathbb{\hat{E}%
}^{\infty}[\cdot]$. From (c) a sublinear expectation is dominated by
itself.

\begin{notation}
 Let $\mathcal{P}_{f}$ be the collection of all finitely additive
probability measures on $(\Omega,\mathcal{F})$.
\end{notation}

\begin{example}
{ (A \textbf{linear expectation) }We consider $\mathbb{L}_{0}^{\infty}%
(\Omega,\mathcal{F})$ the collection of risk positions with finite
values. It
is a subspace of $\mathcal{X}$ consisting of risk positions $X$ of the form%
\begin{equation}
X(\omega)=\sum_{i=1}^{N}x_{i}\mathbf{1}_{A_{i}}(\omega),\ x_{i}\in
\mathbb{R},\ A_{i}\in \mathcal{F},i=1,\cdots,N.\  \  \label{xiAi}%
\end{equation}
It is easy to check that, under the norm $\left \Vert \cdot \right
\Vert _{\infty}$, $\mathbb{L}_{0}^{\infty}(\Omega,\mathcal{F})$ is
dense on $\mathbb{L}^{\infty}(\Omega,\mathcal{F})$. For a fixed
$Q\in \mathcal{P}_{f}$ and $X\in
\mathbb{L}_{0}^{\infty}(\Omega,\mathcal{F})$ we define
\[
E_{Q}[X]=E_{Q}[\sum_{i=1}^{N}x_{i}\mathbf{1}_{A_{i}}(\omega)]:=\sum_{i=1}%
^{N}x_{i}Q(A_{i})=\int_{\Omega}X(\omega)Q(d\omega)
\]
$E_{Q}:\mathbb{L}_{0}^{\infty}(\Omega,\mathcal{F})\rightarrow
\mathbb{R}$ is a linear functional. It is easy to check that $E_{Q}$
satisfies (a)-(b). It is also continuous under $\left \Vert X\right
\Vert _{\infty}$.
\[
|E_{Q}[X]|\leq \sup_{\omega \in \Omega}|X(\omega)|=\left \Vert
X\right \Vert
_{\infty}\text{. }%
\]
Since $\mathbb{L}_{0}^{\infty}$ is dense in $\mathbb{L}^{\infty}$ we
then can extend $E_{Q}$ from $\mathbb{L}_{0}^{\infty}$ to a linear
continuous functional on $\mathbb{L}^{\infty}(\Omega,\mathcal{F})$.
}
\end{example}

\begin{proposition}
The functional $E_{Q}[\cdot]:${$\mathcal{X}\mapsto \mathbb{R}$
satisfies (a)
and (b). Inversely each linear functional }$\eta(\cdot):${$\mathcal{X}%
\mapsto \mathbb{R}$} satisfying (a) and (b) induces a finitely
additive probability measure via $Q_{\eta}(A)=\eta(\mathbf{1}_{A})$,
$A\in \mathcal{F}$.
The corresponding expectation is $\eta$ itself:%
\[
\eta(X)=\int_{\Omega}X(\omega)Q_{\eta}(d\omega).
\]

\end{proposition}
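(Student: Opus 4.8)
The plan is to handle the two assertions separately, since the forward direction is essentially the routine verification already flagged in the preceding Example, while the real content lies in the converse. For the forward direction I would first record constant preservation: because $Q$ is a probability measure, $E_Q[c]=c\,Q(\Omega)=c$. For monotonicity I would invoke linearity together with the fact that the integral of a nonnegative simple function is nonnegative (every $Q(A_i)\ge 0$); given $X\ge Y$, the nonnegative bounded function $X-Y$ is a uniform limit of nonnegative simple functions, so $E_Q[X-Y]\ge 0$ follows from the continuity of $E_Q$ under $\|\cdot\|_\infty$ noted in the Example, and linearity then yields $E_Q[X]\ge E_Q[Y]$.

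For the converse I would set $Q_\eta(A):=\eta(\mathbf{1}_A)$ and check the three defining properties of a finitely additive probability measure. Total mass one is immediate from (b): $Q_\eta(\Omega)=\eta(\mathbf{1}_\Omega)=\eta(1)=1$, and linearity gives $Q_\eta(\emptyset)=\eta(0)=0$. Nonnegativity follows from the pointwise bound $0\le\mathbf{1}_A\le 1$ together with monotonicity (a) and (b), which force $0=\eta(0)\le\eta(\mathbf{1}_A)\le\eta(1)=1$. Finite additivity is the cleanest step: for disjoint $A,B\in\mathcal{F}$ the pointwise identity $\mathbf{1}_{A\cup B}=\mathbf{1}_A+\mathbf{1}_B$ and linearity of $\eta$ give $Q_\eta(A\cup B)=Q_\eta(A)+Q_\eta(B)$.

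To obtain the integral representation I would first dispatch simple functions: for $X=\sum_{i=1}^N x_i\mathbf{1}_{A_i}$ linearity yields $\eta(X)=\sum_i x_i\,\eta(\mathbf{1}_{A_i})=\sum_i x_i\,Q_\eta(A_i)=\int_\Omega X\,dQ_\eta$, which is exactly the claimed formula on the dense subspace $\mathbb{L}_0^\infty$. The step I expect to be the crux is passing from simple functions to a general $X\in\mathcal{X}$, for which I need continuity of $\eta$ in the sup norm — this is not assumed but must be extracted from (a) and (b). The key observation is that for any $X,Y$ the pointwise bound $X\le Y+\|X-Y\|_\infty$, combined with monotonicity (a) and the cash translatability $\eta(Y+c)=\eta(Y)+c$ (itself a consequence of linearity and (b)), gives $\eta(X)\le\eta(Y)+\|X-Y\|_\infty$; by symmetry $|\eta(X)-\eta(Y)|\le\|X-Y\|_\infty$, so $\eta$ is $1$-Lipschitz. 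Since $\mathbb{L}_0^\infty$ is dense in $\mathcal{X}$ and the finitely additive integral is likewise $1$-Lipschitz in the sup norm (because $|\int_\Omega Z\,dQ_\eta|\le\|Z\|_\infty$), choosing simple $X_n\to X$ uniformly gives $\eta(X)=\lim_n\eta(X_n)=\lim_n\int_\Omega X_n\,dQ_\eta=\int_\Omega X\,dQ_\eta$, completing the proof.
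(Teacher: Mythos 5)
Your proof is correct, and it supplies exactly what the paper leaves implicit: the paper states this proposition without proof, relegating the forward direction to the remark ``it is easy to check that $E_{Q}$ satisfies (a)--(b)'' in the preceding Example, where the bound $|E_{Q}[X]|\leq \left\Vert X\right\Vert _{\infty}$ and the density of $\mathbb{L}_{0}^{\infty}$ in $\mathbb{L}^{\infty}$ are the only recorded ingredients. Your key step --- extracting the $1$-Lipschitz estimate $|\eta(X)-\eta(Y)|\leq \left\Vert X-Y\right\Vert _{\infty}$ from (a), (b) and linearity via $X\leq Y+\left\Vert X-Y\right\Vert _{\infty}$ --- is precisely the continuity that the paper verifies only for $E_{Q}$ but tacitly needs for the abstract $\eta$, so your argument is the intended one, carried out in full.
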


\begin{notation}
Let $Q\in \mathcal{P}_{f}$ be given and let $X:\Omega \mapsto
\mathbb{R}$ be a $\mathcal{F}$-measurable function such that
$|X(\omega)|<\infty$ for each $\omega$. The distribution of $X$ \ on
$(\Omega,\mathcal{F},Q)$ is defined as a linear functional
\[
F_{X}[\varphi]=E_{Q}[\varphi(X)]:\varphi \in \mathbb{L}^{\infty}(\mathbb{R}%
,\mathcal{B}(\mathbb{R}))\mapsto \mathbb{R}.
\]
$F_{X}[\cdot]$ is a linear functional satisfying (a) and (b). Thus
it induces
a finitely additive probability measure on $(\mathbb{R},\mathcal{B}%
(\mathbb{R}))$ via $F_{X}(B):=F_{X}(\mathbf{1}_{B})\,$, $B\in \mathcal{B}%
(\mathbb{R})$. We have%
\[
F_{X}[\varphi]=\int_{\mathbb{R}}\varphi(x)F_{X}(dx).
\]

\end{notation}

\begin{remark}
Usually, people call the probability measure $F_{X}(\cdot)$ to be
the distribution of $X$ under $Q$. But we will see that in sublinear
or more general situation the functional version is necessary.
\end{remark}

\begin{definition}
Let $\mathbb{E}$ be a linear finitely additive probability defined
on a measurable space $(\Omega,\mathcal{F})$. We say that a random
variable $Y$ is independent to $X$ if, for each $\varphi \in
\mathbb{L}^{\infty}(\mathbb{R}^{2})$
\[
\mathbb{E}[\varphi(X,Y)]=\mathbb{E}[\mathbb{E}[\varphi(x,Y)]_{x=X}].
\]

\end{definition}

\subsection{Representation of a sublinear expectation}

Let a risk supervisor take a linear expectation $E[\cdot]$ to be his
risk measure. This means that he takes the corresponding finitely
additive probability induced by $E[\cdot]$ as his probability
measure. But in many cases, he cannot decide precisely which
probability he should take. He has a set of finitely additive
probabilities $\mathcal{Q}$. The size of $\mathcal{Q}$ characterizes
his model-uncertainty. A robust risk measure under such model uncertainty is:%
\[
\mathbb{\hat{E}}^{\mathcal{Q}}[X]=\sup_{Q\in \mathcal{Q}}E_{Q}[X]:\mathcal{X}%
\mapsto \mathbb{R}.
\]
It is easy to prove that $\mathbb{\hat{E}}^{\mathcal{Q}}[\cdot]$ is
a sublinear expectation.

\begin{theorem}
{ A sublinear expectation $\mathbb{\hat{E}}[\cdot]$ has the
following representation: there exists a subset $\mathcal{Q}\subset
\mathcal{P}_{f}$, such that
\[
\mathbb{\hat{E}}[X]=\max_{Q\in \mathcal{Q}}E_{Q}[X],\  \  \  \forall
X\in \mathcal{X}.
\]
}
\end{theorem}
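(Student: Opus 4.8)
The plan is to realize $\mathbb{\hat{E}}$ as the upper envelope of its dual set of finitely additive probabilities and then to show this envelope is attained. I would define
\[
\mathcal{Q}=\{Q\in\mathcal{P}_{f}:E_{Q}[X]\leq\mathbb{\hat{E}}[X]\ \text{for all}\ X\in\mathcal{X}\}.
\]
For any $Q\in\mathcal{Q}$ and any $X$ one has $E_{Q}[X]\leq\mathbb{\hat{E}}[X]$ by the very definition of $\mathcal{Q}$, hence $\sup_{Q\in\mathcal{Q}}E_{Q}[X]\leq\mathbb{\hat{E}}[X]$. The entire content of the theorem is the reverse inequality together with the claim that the supremum is a maximum, and I would establish both at once by showing that for each fixed $X_{0}\in\mathcal{X}$ there exists $Q\in\mathcal{Q}$ with $E_{Q}[X_{0}]=\mathbb{\hat{E}}[X_{0}]$.

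To produce such a $Q$ I would invoke the Hahn--Banach theorem with $\mathbb{\hat{E}}$ in the role of the dominating sublinear functional: properties (c) and (d) say precisely that $\mathbb{\hat{E}}$ is subadditive and positively homogeneous. On the one-dimensional subspace $\{tX_{0}:t\in\mathbb{R}\}$ define the linear functional $\ell(tX_{0}):=t\,\mathbb{\hat{E}}[X_{0}]$. One checks $\ell\leq\mathbb{\hat{E}}$ on this subspace: for $t\geq0$ this is just positive homogeneity (d), while for $t<0$ it reduces to $\mathbb{\hat{E}}[X_{0}]+\mathbb{\hat{E}}[-X_{0}]\geq0$, which follows from subadditivity (c) applied to $X_{0}+(-X_{0})$ together with $\mathbb{\hat{E}}[0]=0$. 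Hahn--Banach then extends $\ell$ to a linear functional $\eta$ on all of $\mathcal{X}$ with $\eta(X)\leq\mathbb{\hat{E}}[X]$ for every $X$ and $\eta(X_{0})=\mathbb{\hat{E}}[X_{0}]$.

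It remains to identify $\eta$ with some $E_{Q}$, $Q\in\mathcal{Q}$. By the Proposition above, a linear functional on $\mathcal{X}$ satisfying (a) monotonicity and (b) $\eta(1)=1$ is of the form $E_{Q_{\eta}}$ for the finitely additive probability $Q_{\eta}(A)=\eta(\mathbf{1}_{A})$. Both properties follow from the domination $\eta\leq\mathbb{\hat{E}}$: constant preservation comes from $\eta(c)\leq\mathbb{\hat{E}}[c]=c$ applied to $\pm c$, and monotonicity from $\eta(Y-X)\leq\mathbb{\hat{E}}[Y-X]\leq\mathbb{\hat{E}}[0]=0$ whenever $Y\leq X$, using monotonicity of $\mathbb{\hat{E}}$. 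Since $\eta\leq\mathbb{\hat{E}}$ globally, $Q_{\eta}\in\mathcal{Q}$, and $E_{Q_{\eta}}[X_{0}]=\eta(X_{0})=\mathbb{\hat{E}}[X_{0}]$, so the supremum over $\mathcal{Q}$ is attained at $X_{0}$. As $X_{0}$ was arbitrary, $\mathbb{\hat{E}}[X]=\max_{Q\in\mathcal{Q}}E_{Q}[X]$ for all $X$.

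I expect the main obstacle to be the Hahn--Banach step, and specifically the verification that $\ell$ is dominated by $\mathbb{\hat{E}}$ on the one-dimensional subspace in the negative-scalar case; this is where subadditivity is genuinely used and where the asymmetry of a sublinear (rather than linear) expectation enters the argument. A secondary point needing care is that $\mathcal{X}$ is here only required to be a linear space of bounded functions, so the algebraic form of Hahn--Banach suffices and no completeness is invoked; I would nonetheless note that each resulting $\eta$ is automatically $\|\cdot\|$-continuous, since $|\eta(X)|\leq\|X\|$ follows from $\eta(X)\leq\mathbb{\hat{E}}[X]\leq X^{\ast}\leq\|X\|$ applied to $\pm X$.
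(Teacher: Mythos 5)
Your proof is correct, but it takes a genuinely different route from the paper's. The paper argues geometrically: after normalizing so that $\mathbb{\hat{E}}[X_{0}]=1$, it separates $X_{0}$ from the open convex set $U_{1}=\{X:\mathbb{\hat{E}}[X]<1\}$ by a continuous linear functional, and must then recover the domination $\eta\leq\mathbb{\hat{E}}$ from the scaling property ($\eta(X)<c$ whenever $\mathbb{\hat{E}}[X]<c$), prove monotonicity of $\eta$ via a $\lambda\rightarrow\infty$ argument with $-\lambda Y\in U_{1}$, and pin down $\eta(1)=1$ through the auxiliary elements $2X_{0}-c$, $c>1$. You instead use the analytic (extension) form of Hahn--Banach with $\mathbb{\hat{E}}$ as the dominating sublinear functional, so the only computation is the domination of $\ell$ on the line $\mathbb{R}X_{0}$, where the negative-scalar case reduces to $\mathbb{\hat{E}}[X_{0}]+\mathbb{\hat{E}}[-X_{0}]\geq0$, exactly as you identify. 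Your route buys several things: the agreement $\eta(X_{0})=\mathbb{\hat{E}}[X_{0}]$ and the global domination come in a single stroke rather than being reassembled afterwards; monotonicity and constant preservation each fall out of domination in one line, replacing the paper's two separate limiting arguments; and the argument is purely algebraic, so the standing Banach-space hypothesis is never used, whereas the separation theorem needs $U_{1}$ open, i.e., the norm topology (openness itself resting on $|\mathbb{\hat{E}}[X]-\mathbb{\hat{E}}[Y]|\leq\left\Vert X-Y\right\Vert$, which the paper leaves implicit). Your closing remark that $\eta$ is automatically norm-continuous recovers for free what the paper obtains from the separation theorem. The one caveat, shared equally by the paper: identifying a monotone, constant-preserving linear functional with some $E_{Q}$, $Q\in\mathcal{P}_{f}$, invokes the earlier Proposition, which presumes $\mathbf{1}_{A}\in\mathcal{X}$ (or a density argument as in the $\mathbb{L}^{\infty}$ example); since the paper's own proof relies on the same identification, this is not a gap in your argument relative to theirs.
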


{ \textbf{Proof. }It suffices to prove that, for each $X_{0}\in
\mathcal{X}$,
there exists a $Q_{X_{0}}\in \mathcal{P}_f$ such that }$E_{Q_{X_{0}}}%
[${$X]\leq \mathbb{\hat{E}}[X]$, for all $X$, and such that $Q_{X_{0}}%
(X_{0})=\mathbb{\hat{E}}[X_{0}]$. We only consider the case $\mathbb{\hat{E}%
}[X_{0}]=1$ (otherwise we may consider $\bar{X}_{0}=X_{0}-\mathbb{\hat{E}%
}[X_{0}]+1$). }

{ Let $U_{1}:=\{X:\mathbb{\hat{E}}[X]<1\}$. Since $U_{1}$ is an open
and convex set, by the well-known separation theorem for convex
sets, there exists a continuous linear functional $\eta$ on
$\mathcal{X}$ such that $\eta (X)<\eta(X_{0})$, for all $X\in
U_{1}$. Since $0\in U_{1}$ we have particularly $\eta(X_{0})>0$, and
we then can normalize $\eta$ so that
$\eta(X_{0})=1=\mathbb{\hat{E}}[X_{0}]$. Thus }$\eta$ satisfies:%
\[
\eta(X)<1,\  \  \  \forall X\in \mathcal{X},\  \  \text{s.t.\ }\mathbb{\hat{E}%
}[X]<1.
\]
By the positive homogeneity of $\mathbb{\hat{E}}$, for each $c>0$,%
\[
\eta(X)<c,\  \  \  \forall X\in \mathcal{X},\  \  \text{s.t.\ }\mathbb{\hat{E}%
}[X]<c.
\]
This implies that%
\begin{equation}
\mathbb{\hat{E}}[X]\geq \eta(X),\  \text{for all }X\text{ such that
}\mathbb{\hat{E}}[X]>0. \label{eq-Domine}%
\end{equation}
{ We now prove that }$\eta$ is monotone in the sense of (a). {For a
given $Y\geq0$ and $\lambda>0$, since $-\lambda Y\in U_{1}$, we have
\[
\eta(-\lambda Y)<\eta(X_{0}),\  \  \text{thus
}\eta(Y)>-\lambda^{-1}\eta (X_{0})=-\lambda^{-1},\text{\  \  \
}\forall \lambda>0.
\]
From which it follows that $\eta(Y)\geq0$ and thus (a) holds true.
}From (\ref{eq-Domine}) we have $\eta(1)\leq \mathbb{\hat{E}}[1]=1$.
{We now prove that }$\eta(1)=1$. { Indeed, for each $c>1$, we have
\[
\mathbb{\hat{E}}[2X_{0}-c]=2\mathbb{\hat{E}}[X_{0}]-c=2-c<1,
\]
hence }$2X_{0}-c\in U_{1}$ and{
\[
\eta(2X_{0}-c)=2-c\eta(1)<1\  \  \text{or }\eta(1)>\frac{1}{c},\  \
\forall
c>1\text{;}%
\]
hence $\eta(1)=1$. This, together with (\ref{eq-Domine}),  yields
\[
\mathbb{\hat{E}}[X]\geq \eta(X),\  \text{for all }X\in \mathcal{X}.
\]
We thus proved the desired result. }


\chapter{LLN and Central Limit Theorem}

\section{Preliminary}

 The law of normalsize numbers (LLN) and central limit theorem
(CLT) are long and widely been known as two fundamental results in
the theory of probability and statistics. A striking consequence of
CLT is that accumulated independent and identically distributed
random variables tends to a normal distributed random variable
whatever is the original distribution. It is a very useful tool in
finance since many typical financial positions are accumulations of
a large number of small and independent risk positions. But CLT only
holds in cases of model certainty. In this section we are interested
in CLT with variance-uncertainty. We will prove that the accumulated
risk positions can converge `in law' to what we call $G$-normal
distribution, which is a distribution under sublinear expectation.
In a special case where the variance-uncertainty becomes zero, the
$G$-normal distribution becomes the classical normal distribution.
Technically we introduce a new method to prove a CLT under a
sublinear expectation space.

In the following two chapters we will consider the following type of
spaces of sublinear expectations: Let $\Omega$ be a given set and
let $\mathcal{H}$ be a linear space of real functions defined on
$\Omega$ such that if $X_{1},\cdots,X_{n}\in \mathcal{H}$ then
$\varphi(X_{1},\cdots,X_{n})\in \mathcal{H}$ for each $\varphi \in
C_{l.Lip}(\mathbb{R}^{n})$ where $C_{l.Lip}(\mathbb{R}^{n})$ denotes
the linear space of functions $\varphi$ satisfying
\begin{align*}
|\varphi(x)-\varphi(y)|  &  \leq C(1+|x|^{m}+|y|^{m})|x-y|,\  \
\forall
x,y\in \mathbb{R}^{n}\text{, \ }\\
\  &  \text{for some }C>0\text{, }m\in \mathbb{N}\text{ depending on
}\varphi.
\end{align*}
$\mathcal{H}$ is considered as a space of \textquotedblleft random
variables\textquotedblright.

\begin{remark}
In particular, if $X,Y\in \mathcal{H}$, then $|X|$, $X^{m}\in
\mathcal{H}$ are in $\mathcal{H}$. More generally
$\varphi(X)\psi(Y)$ is still in $\mathcal{H}$ if $\varphi, \psi\in
C_{l.Lip}(\mathbb{R})$.
\end{remark}

Here we use $C_{l.Lip}(\mathbb{R}^{n})$ in our framework only for
some convenience of techniques. In fact our essential requirement is
that $\mathcal{H}$ contains all constants and, moreover, $X\in
\mathcal{H}$ implies
$\left \vert X\right \vert \in \mathcal{H}$. In general $C_{l.Lip}(\mathbb{R}%
^{n})$ can be replaced by the following spaces of functions defined
on $\mathbb{R}^{n}$.

\begin{itemize}
\item { $\mathbb{L}^{\infty}(\mathbb{R}^{n})$: the space bounded
Borel-measurable functions; }

\item { $C_{b}(\mathbb{R}^{n})$: the space of bounded and continuous functions;
}

\item {$C_{b}^{k}(\mathbb{R}^{n})$: the space of bounded and $k$-time
continuously differentiable functions with bounded derivatives of
all orders less than or equal to  $k$;}

\item { $C_{unif}(\mathbb{R}^{n})$: the space of bounded and uniformly
continuous functions; }

\item { $C_{b.Lip}(\mathbb{R}^{n})$: the space of bounded and Lipschitz
continuous functions; }

\item { $L^{0}(\mathbb{R}^{n})$: the space of Borel measurable functions. }
\end{itemize}

\begin{definition}
{\label{Def-1} { \textbf{Sublinear expectation}$\mathbb{\hat{E}}$ on
$\mathcal{H}$ is a functional $\mathbb{\hat{E} }: \mathcal{H}\mapsto
\mathbb{R}$ satisfying the
following properties: for all $X,Y\in \mathcal{H}$, we have\newline%
\  \  \newline \textbf{(a) Monotonicity:} \ \ \ \ \ \ \ \ \ \ \ \ \ If $X\geq Y$ then $\mathbb{\hat{E}%
}[X]\geq \mathbb{\hat{E}}[Y].$\newline \textbf{(b) Constant
preserving: \  \ }\ \   $\mathbb{\hat{E}}[c]=c$.\newline
\textbf{(c)} \textbf{Sub-additivity:
\  \  \  \ }}}\ \ \ \ \ \ \ \ $\mathbb{\hat{E}}[X]-\mathbb{\hat{E}}[Y]\leq \mathbb{\hat{E}}%
[X-Y].$\newline{{\textbf{(d) Positive homogeneity: } \
$\mathbb{\hat{E}}[\lambda X]=\lambda \mathbb{\hat{E}}[X]$,$\  \
\forall \lambda \geq0$.\newline}}\newline(In many situation
{{\textbf{(c) }}}is also called property of self--domination). The
triple $(\Omega,\mathcal{H},\mathbb{\hat{E}}\mathbb{)}$ is called a
\textbf{sublinear expectation space} (compare with a probability
space $(\Omega,\mathcal{F},\mathbb{P})$).
\end{definition}

\begin{example}
{ In a game we select at random a ball from a box containing $W$
white, $B$ black and $Y$ yellow balls. The owner of the box, who is
the banker of the game, does not tell us the exact numbers of $W,B$
and $Y$. He or she only informs us $W+B+Y=100$ and $W=B\in
\lbrack20,25]$. Let $\xi$ be a random variable
\[
\xi=\left \{
\begin{array}
[c]{rcc}%
1 &  & \text{if we get a white ball;}\\
0 &  & \text{if we get a yellow ball;}\\
-1 &  & \text{if we get a black ball.}%
\end{array}
\right.
\]
Problem: How to measure a loss $X=\varphi(\xi)$, for a given
function
$\varphi$ on $\mathbb{R}$. We know that the distribution of $\xi$ is%
\[
\left \{
\begin{array}
[c]{ccc}%
-1 & 0 & 1\\
\frac{p}{2} & 1-p & \frac{p}{2}%
\end{array}
\right \}  \  \  \text{with uncertainty: $p\in$}[\underline{\sigma}^{2}%
,\overline{\sigma}^{2}]=[0.4,0.5].
\]
Thus the robust expectation of $X=\varphi(\xi)$ is:%
\begin{align*}
\hat{\mathbb{E}}[\varphi(\xi)]  &  :=\sup_{P\in
\mathcal{P}}E_{P}[\varphi
(\xi)]\\
&  =\sup_{p\in \lbrack
\underline{\sigma^{2}},\overline{\sigma}^{2}]}[\frac
{p}{2}[\varphi(1)+\varphi(-1)]+(1-p)\varphi(0)].
\end{align*}
$\xi$ has distribution uncertainty. }
\end{example}

\begin{example}
{ A more general situation is that the banker of a game can choose
among a set of distribution }${\{F(\theta,A)}\}_{A\in
\mathcal{B}(\mathbb{R}),\theta \in \Theta}${ of a random variable
}$\xi${ . In this situation the robust expectation of a risk
position $\varphi(\xi)$ for some $\varphi\in C_{l.Lip}(\mathbb{R})$
is:}
\[
\hat{\mathbb{E}}[\varphi(\xi)]:=\sup_{\theta \in \Theta}\int_{\mathbb{R}}%
\varphi(x)F(\theta,dx).
\]

\end{example}

\section{{Distributions and independence}}

We now consider the notion of the distributions of random variables
under
sublinear expectations. Let $X=(X_{1},\cdots,X_{n})$ be a given $n$%
-dimensional random vector on a sublinear expectation space
$(\Omega_{1},\mathcal{H}_{1},\mathbb{\hat{E}})$. We define a
functional on $C_{l.Lip}(\mathbb{R}^{n})$ by
\begin{equation}
\mathbb{\hat{F}}_{X}[\varphi]:=\mathbb{\hat{E}}[\varphi(X)]:\varphi
\in
C_{l.Lip}(\mathbb{R}^{n})\mapsto(-\infty,\infty). \label{X-Distr}%
\end{equation}
The triple $(\mathbb{R}^{n},C_{l.Lip}(\mathbb{R}^{n}),\mathbb{\hat{F}}%
_{X}[\cdot])$ forms a sublinear expectation space.
$\mathbb{\hat{F}}_{X}$ is called the distribution of $X$.

\begin{definition}
Let $X_{1}$ and $X_{2}$ be two $n$--dimensional random vectors
defined
respectively in {sublinear expectation spaces }$(\Omega_{1},\mathcal{H}%
_{1},\mathbb{\hat{E}}_{1})${ and
}$(\Omega_{2},\mathcal{H}_{2},\mathbb{\hat {E}}_{2})$. They are
called identically distributed, denoted by $X_{1}\sim X_{2}$, if
\[
\mathbb{\hat{E}}_{1}[\varphi(X_{1})]=\mathbb{\hat{E}}_{2}[\varphi
(X_{2})],\  \  \  \forall \varphi \in C_{l.Lip}(\mathbb{R}^{n}).
\]
It is clear that $X_{1}\sim X_{2}$ if and only if their
distributions coincide.
\end{definition}

\begin{remark}
If the distribution $\mathbb{\hat{F}}_{X}$ of $X\in \mathcal{H}$ is
not a linear expectation, then $X$ is said to have distributional
uncertainty. The distribution of $X$ has the following four typical
parameters:
\[
\overline{\mu}:=\hat{\mathbb{E}}[X],\  \  \underline{\mu}:=-\mathbb{\hat{E}%
}[-X],\  \  \  \  \  \  \  \  \overline{\sigma}^{2}:=\hat{\mathbb{E}}[X^{2}%
],\  \  \underline{\sigma}^{2}:=-\hat{\mathbb{E}}[-X^{2}].\  \
\]
The subsets $[\underline{\mu},\overline{\mu}]$ and $[\underline{\sigma}%
^{2},\overline{\sigma}^{2}]$ characterize the mean-uncertainty and
the variance-uncertainty of $X$. The problem of mean uncertainty
have been studied in [Chen-Epstein] using the notion of
$g$-expectations. In this lecture we are mainly concentrated on the
situation of variance-uncertainty and thus set
$\overline{\mu}=\underline{\mu}$.
\end{remark}

The following simple property is very useful in our sublinear
analysis.

\begin{proposition}
{ \label{Prop-X+Y}Let $X,Y\in \mathcal{H}$ be such that $\mathbb{\hat{E}%
}[Y]=-\mathbb{\hat{E}}[-Y]$, i.e. }$Y$ has no mean uncertainty.{ Then we have%
\[
\mathbb{\hat{E}}[X+Y]=\mathbb{\hat{E}}[X]+\mathbb{\hat{E}}[Y].
\]
In particular, if $\mathbb{\hat{E}}[Y]=\mathbb{\hat{E}}[-Y]=0$, then
$\mathbb{\hat{E}}[X+Y]=\mathbb{\hat{E}}[X]$. }
\end{proposition}

\begin{proof}
{ It is simply because we have $\mathbb{\hat{E}}[X+Y]\leq \mathbb{\hat{E}%
}[X]+\mathbb{\hat{E}}[Y]$ and
\[
\mathbb{\hat{E}}[X+Y]\geq
\mathbb{\hat{E}}[X]-\mathbb{\hat{E}}[-Y]=\mathbb{\hat
{E}}[X]+\mathbb{\hat{E}}[Y]\text{.}%
\]
}
\end{proof}

The following notion of independence plays a key role:

\begin{definition}
In a sublinear expectation space
$(\Omega,\mathcal{H},\mathbb{\hat{E}})$ a random vector
$Y=(Y_1,\cdots,Y_n)$, $Y_i\in \mathcal{H}$ is said to be independent
to another random vector $X=(X_1,\cdots,X_m)$, $X_i\in \mathcal{H}$
under $\mathbb{\hat{E}}[\cdot]$ if for each test function $\varphi
\in C_{l.Lip}(\mathbb{R}^{m}\times \mathbb{R}^{n})$ we have
\[
\mathbb{\hat{E}}[\varphi(X,Y)]=\mathbb{\hat{E}}[\mathbb{\hat{E}}%
[\varphi(x,Y)]_{x=X}].
\]
A random variable $Y\in \mathcal{H}$ is said to be weakly
independent to $X\in \mathcal{H}^{\otimes m}${ if the above test
functions
$\varphi$ are only taken from the following class:%
\[
\varphi(x,y)=\psi_{0}(x)+\psi_{1}(x)y+\psi_{2}(x)y^{2},\  \
\psi_{i}\in C_{l.Lip}(\mathbb{R}^{m}\times \mathbb{R}),\  \ i=0,1,2.
\]
}
\end{definition}

\begin{remark}
{In the case of linear expectation, this notion of independence is
just the classical one. It is important to note that under sublinear
expectations the condition \textquotedblleft$Y$ is independent to
$X$\textquotedblright \ does not implies automatically that
\textquotedblleft$X$ is independent to $Y$\textquotedblright. }
\end{remark}

\begin{example}
We consider a case where $X,Y\in \mathcal{H}$ are identically
distributed and
$\mathbb{\hat{E}}[X]=\mathbb{\hat{E}}[-X]=0$ but $\overline{\sigma}%
^{2}=\mathbb{\hat{E}}[X^{2}]>\underline{\sigma}^{2}=-\mathbb{\hat{E}}[-X^{2}]$.
We also assume that
$\mathbb{\hat{E}}[|X|]=\mathbb{\hat{E}}[X^{+}+X^{-}]>0$,
thus $\mathbb{\hat{E}}[X^{+}]=\frac{1}{2}\mathbb{\hat{E}}[|X|+X]=$$\frac{1}%
{2}\mathbb{\hat{E}}[|X|]>0$. In the case where $Y$ is independent to
$X$, we
have%
\[
\mathbb{\hat{E}}[XY^{2}]=\mathbb{\hat{E}}[X^{+}\overline{\sigma}^{2}%
-X^{-}\underline{\sigma}^{2}]=(\overline{\sigma}^{2}-\underline{\sigma}%
^{2})\mathbb{\hat{E}}[X^{+}]>0.
\]
But if $X$ is independent to $Y$ we have%
\[
\mathbb{\hat{E}}[XY^{2}]=0.
\]

\end{example}

The independence property of two random vectors $X,Y$ involves only
the joint distribution of $(X,Y)$. The following result tell us how
to construct random vectors with given sublinear distributions and
with joint independence.

\begin{proposition}
Let $X_{i}$ be $n_{i}$-dimensional random vectors respectively in
sublinear expectation spaces $(\Omega_{i},\mathcal{H}_{i},$
$\mathbb{\hat{E}}_{i})$, $i=1,\cdots,N$. We can construct random
vectors $Y_{1},\cdots,Y_{N}$ in a new sublinear expectation space
$(\Omega,\mathcal{H},\mathbb{\hat{E}})$ such that
$Y_{i}\sim X_{i}$ and such that $Y_{i+1}$ is independent to $(Y_{1}%
,\cdots,Y_{i})$, for each $i$.\label{Prop-2-10}
\end{proposition}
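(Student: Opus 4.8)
The natural approach is to build the product space directly and define a sublinear expectation on it by iterated "partial" expectations, taken in the right order so that independence is forced by construction. Concretely, I would take $\Omega := \Omega_1 \times \cdots \times \Omega_N$ and let $Y_i(\omega_1,\dots,\omega_N) := X_i(\omega_i)$ be the coordinate liftings of the given random vectors. The space $\mathcal{H}$ should be the collection of all $\varphi(Y_1,\dots,Y_N) = \varphi(X_1,\dots,X_N)$ with $\varphi \in C_{l.Lip}(\mathbb{R}^{n_1}\times\cdots\times\mathbb{R}^{n_N})$; this is a linear space stable under $C_{l.Lip}$ composition. The crux is the definition of $\mathbb{\hat{E}}$, and the idea is to integrate out the last coordinate first, then the next-to-last, and so on.

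**Construction of $\mathbb{\hat{E}}$.** For $\varphi \in C_{l.Lip}$ define a descending chain of functions by
\begin{align*}
\varphi_N(x_1,\dots,x_N) &:= \varphi(x_1,\dots,x_N),\\
\varphi_{i-1}(x_1,\dots,x_{i-1}) &:= \mathbb{\hat{E}}_i\big[\varphi_i(x_1,\dots,x_{i-1},X_i)\big],\qquad i=N,N-1,\dots,1,
\end{align*}
where at each stage $\mathbb{\hat{E}}_i$ acts only on the $X_i$-slot with the earlier arguments frozen as parameters, and then set $\mathbb{\hat{E}}[\varphi(Y_1,\dots,Y_N)] := \varphi_0$. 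The first thing to verify is that this recursion is well defined, i.e.\ that each $\varphi_i$ lands back in $C_{l.Lip}$ of the remaining variables so that $\mathbb{\hat{E}}_i$ may legitimately be applied at the next step. This is where the local-Lipschitz structure does real work: I would check that applying a sublinear expectation to a $C_{l.Lip}$ function in one block of variables produces a function that is again $C_{l.Lip}$ in the frozen variables, using sub-additivity to bound $|\varphi_{i-1}(x)-\varphi_{i-1}(y)|$ by $\mathbb{\hat{E}}_i[|\varphi_i(x,X_i)-\varphi_i(y,X_i)|]$ and then the polynomial-Lipschitz estimate defining $C_{l.Lip}$, together with a moment bound $\mathbb{\hat{E}}_i[|X_i|^m]<\infty$ (finite since $|x|^m\in C_{l.Lip}$).

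**Verifying the axioms and the two conclusions.** Once $\mathbb{\hat{E}}$ is well defined, properties (a)--(d) of Definition~\ref{Def-1} follow by downward induction on the chain: each $\mathbb{\hat{E}}_i$ is monotone, constant-preserving, sub-additive and positively homogeneous, and these properties are preserved under composition, so $\varphi \mapsto \varphi_0$ inherits all four. To get $Y_i \sim X_i$, apply the construction to a test function depending on the single block $x_i$ only: all the other stages act on constants and pass through, leaving $\mathbb{\hat{E}}[\psi(Y_i)] = \mathbb{\hat{E}}_i[\psi(X_i)]$. The independence of $Y_{i+1}$ from $(Y_1,\dots,Y_i)$ is the payoff of the ordering: for a test function $\varphi(y_1,\dots,y_i,y_{i+1})$, the inner stages $N,\dots,i+2$ act on constants in the relevant slots, the stage $i+1$ computes exactly $\mathbb{\hat{E}}_{i+1}[\varphi(x_1,\dots,x_i,X_{i+1})]$ with $x_1,\dots,x_i$ frozen, and the remaining stages reproduce the definition of $\mathbb{\hat{E}}[\,\cdot\,]$ applied to this new function of $(Y_1,\dots,Y_i)$; reading this off gives precisely $\mathbb{\hat{E}}[\varphi(Y_1,\dots,Y_i,Y_{i+1})] = \mathbb{\hat{E}}\big[\mathbb{\hat{E}}_{i+1}[\varphi(x,X_{i+1})]_{x=(Y_1,\dots,Y_i)}\big]$, which is the required independence identity.

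**Main obstacle.** The routine-looking but genuinely load-bearing step is the closure claim that $\varphi_i \in C_{l.Lip}$ at every stage, since the definition of $\mathbb{\hat{E}}_i$ is only available on $C_{l.Lip}$ functions; without it the recursion is not even meaningful. The polynomial growth exponent $m$ can increase from stage to stage, so I would track how $m$ and the constant $C$ propagate and confirm they stay finite through all $N$ steps. Everything else is a formal verification that composition preserves sublinearity and that freezing variables commutes with the construction in the way described above.
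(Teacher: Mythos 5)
Your proposal is correct and follows essentially the same route as the paper's own proof: the paper treats $N=2$ on the canonical space $\Omega=\mathbb{R}^{n_{1}}\times\mathbb{R}^{n_{2}}$ with $\mathcal{H}=\{\varphi(\omega):\varphi\in C_{l.Lip}\}$, defines $\mathbb{\hat{E}}[\varphi(Y_{1},Y_{2})]:=\mathbb{\hat{E}}_{1}[\bar{\varphi}(X_{1})]$ with $\bar{\varphi}(x_{1}):=\mathbb{\hat{E}}_{2}[\varphi(x_{1},X_{2})]$ --- precisely your descending recursion with the inner expectation over the later index --- takes $Y_{1},Y_{2}$ to be the coordinate maps, and iterates for $N>2$. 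The only differences are cosmetic: the paper's canonical Euclidean $\Omega$ makes the representing $\varphi$ unique, so the (easy, but real) well-definedness check implicit in your lifted construction on $\Omega_{1}\times\cdots\times\Omega_{N}$ disappears, and the $C_{l.Lip}$-closure of each $\varphi_{i}$ that you rightly identify as the load-bearing step is exactly what the paper dismisses as ``easy to check.''
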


\begin{proof}
We first consider the case $N=2$. We set: $\Omega=\mathbb{R}^{n_{1}}%
\times \mathbb{R}^{n_{2}}$, i.e., $\omega=(x_{1},x_{2})\in \Omega,\  \ x_{1}%
\in \mathbb{R}^{n_{1}}$ $x_{2}\in \mathbb{R}^{n_{2}}$; a space of
random
variables $\mathcal{H}=\{X(\omega)=\varphi(\omega),\varphi \in C_{l.Lip}%
(\Omega)\}$ and a functional $\mathbb{\hat{E}}$ on
$(\Omega,\mathcal{H})$
defined by%
\begin{align*}
\mathbb{\hat{E}}[\varphi(Y)]  &  =\mathbb{\hat{E}}_{1}[\bar{\varphi}%
(X_{1})],\  \ \hbox{ where } \ \  \bar{\varphi}(x_{1}):=\mathbb{\hat{E}}_{2}[\varphi(x_{1}%
,X_{2})],\  \ x_{1}\in \mathbb{R}^{n_{1}},\  \  \\
\forall Y(\omega)  &  =\varphi(\omega),\  \  \  \varphi \in C_{l.Lip}%
(\mathbb{R}^{n_{1}}\times \mathbb{R}^{n_{2}}).
\end{align*}
It is easy to check that $\mathbb{\hat{E}}$ forms a sublinear
expectation on $(\Omega,\mathcal{H})$. Now let us consider two
random vectors in $\mathcal{H}$:
\[
Y_{1}(\omega)=x_{1},\ Y_{2}(\omega)=x_{2},\  \  \omega=(x_{1},x_{2}%
)\in \mathbb{R}^{n_{1}}\times \mathbb{R}^{n_{2}}.
\]
It is easy to check that $Y_{1}$, $Y_{2}$ meet our requirement:
$Y_{2}$ is independent to $Y_{1}$ under $\mathbb{\hat{E}}$, and
$Y_{1}\sim X_{1}$, $Y_{2}\sim X_{2}$. The case of $N>2$ can be
proved by repeating the above procedure.
\end{proof}

\begin{example}
{We consider a situation where two random variables $X$ and $Y$ in
$\mathcal{H}$ are identically distributed and their common
distribution is
\[
\mathbb{\hat{F}}_{X}[\varphi]=\mathbb{\hat{F}}_{Y}[\varphi]=\sup_{\theta
\in \Theta}\int_{\mathbb{R}}\varphi(y)F(\theta,dy),\  \  \varphi \in
C_{l.Lip}(\mathbb{R}).
\]
where, for each $\theta \in \Theta$, $\{F(\theta,A)\}_{A\in \mathcal{B}%
(\mathbb{R})}$ is a probability measure on $(\mathbb{R},\mathcal{B}%
(\mathbb{R}))$. In this case \textquotedblleft$Y$ is independent to
$X$\textquotedblright \ means that the joint distribution of $X$ and
$Y$ is:
\begin{align*}
\mathbb{\hat{F}}_{X,Y}[\psi]  &  =\sup_{\theta_{1}\in \Theta}\int_{\mathbb{R}%
}\left[  \sup_{\theta_{2}\in
\Theta}\int_{\mathbb{R}}\psi(x,y)F(\theta
_{2},dy)\right]  F(\theta_{1},dx),\  \  \\
\psi &  \in C_{l.Lip}(\mathbb{R}^{2}).
\end{align*}
}
\end{example}

\begin{remark}
{The situation \textquotedblleft$Y$ is independent to $X$\textquotedblright%
 often appears when $Y$ occurs after $X$, thus a very
robust expectation should take the information of $X$ into account.
}
\end{remark}

\begin{definition}
A sequence of random variables $\left \{  \eta_{i}\right \}
_{i=1}^{\infty}$ in $\mathcal{H}$ is said to converge in
distribution under $\mathbb{\hat{E}}$ if for each bounded and
$\varphi \in C_{b.Lip}(\mathbb{R})$, $\left \{
\mathbb{\hat{E}}[\varphi(\eta_{i})]\right \}  _{i=1}^{\infty}$
converges.
\end{definition}

\section{$G$-normal distributions}

\subsection{$G$-normal distribution}

A fundamentally important distribution in sublinear expectation
theory is
$\mathcal{N}(0;[\underline{\sigma}^{2},\overline{\sigma}^{2}])$-distributed
random variable $X$ under $\hat{\mathbb{E}}[\cdot]$:

\begin{definition}
(\textbf{$G$-normal distribution}) {\label{Def-Gnormal}}In a
sublinear expectation space $(\Omega,\mathbb{\hat{E}},\mathcal{H})$,
a random variable
$X\in \mathcal{H}$ with%
\[
\overline{\sigma
}^{2}=\mathbb{\hat{E}}[X^{2}],\  \  \underline{\sigma}^{2}=-\mathbb{\hat{E}%
}[-X^{2}]
\]
is said to be $\mathcal{N}(0;[\underline{\sigma}^{2},\overline{\sigma}^{2}%
])$-distributed, denoted by $X\sim \  \mathcal{N}(0;[\underline{\sigma}%
^{2},\overline{\sigma}^{2}])$, if for each $Y\in\mathcal{H}$ which
is independent
to $X$ such that $Y\sim X$ we have%
\begin{equation}
aX+bY\sim \sqrt{a^{2}+b^{2}}X,\  \  \  \forall a,b\geq0. \label{G-normal}%
\end{equation}
\end{definition}

Proposition \ref{Prop-2-10} tells us how to construct the above $Y$
based on the distribution of $X$.

\begin{remark} By the above definition, we have
$\sqrt{2}\mathbb{\hat{E}}[X]=\mathbb{\hat{E}}[X+Y]=2\mathbb{\hat{E}}[X]$
and  $\sqrt{2}\mathbb{\hat{E}}[-X]=\mathbb{\hat{E}}[-X-Y]=2\mathbb{\hat{E}%
}[-X]$ it follows that
\[
\mathbb{\hat{E}}[X]=\mathbb{\hat{E}}[-X]=0,
\]
Namely an $\mathcal{N}(0;[\underline{\sigma}^{2},\overline{\sigma}^{2}%
])$-distributed random variable $X$ has no mean uncertainty.
\end{remark}

\begin{remark}
If $X$ is independent to $Y$ and $X\sim Y$, such that
(\ref{G-normal}) satisfies, then $-X$ is independent to $-Y$,
$-X\sim-Y$. We also have
$a(-X)+b(-Y)\sim \sqrt{a^{2}+b^{2}}(-X)$, $a,b\geq0$.
Thus
\[
X\sim \mathcal{N}(0;[\underline{\sigma}^{2},\overline{\sigma}^{2}])
\ \ \ \hbox{\textit { iff }  } \ \ \  -X\sim
\mathcal{N}(0;[\underline{\sigma}^{2},\overline{\sigma}^{2}]).
\]
\end{remark}

The following proposition and corollary show that
$\mathcal{N}(0;[\underline {\sigma}^{2},\overline{\sigma}^{2}])$ is
a uniquely defined sublinear distribution on
$(\mathbb{R},C_{l.Lip}(\mathbb{R}))$. We will show that an
{$\mathcal{N}(0;[\underline{\sigma}^{2},\overline{\sigma}^{2}])$
distribution is characterized, or generated, by the following
parabolic PDE }defined on $[0,\infty)\times \mathbb{R}$:
\begin{equation}
\partial_{t}u-G(\partial_{xx}^{2}u)=0, \label{eq-G-heat}%
\end{equation}
with Cauchy condition$\  \ u|_{t=0}=\varphi$, where $G$, called the
\textbf{generating function} of (\ref{eq-G-heat}), is the following
sublinear real function parameterized by $\underline{\sigma}$ and
$\overline{\sigma}$:
\[
G(\alpha):=\frac{1}{2}\mathbb{\hat{E}}[X^2\alpha]=\frac{1}{2}(\overline{\sigma}^{2}\alpha^{+}-\underline{\sigma}%
^{2}\alpha^{-}),\  \  \alpha \in \mathbb{R}.
\]
Here we denote $\alpha^{+}:=\max \{0,\alpha \}$ and
$\alpha^{-}:=(-\alpha)^{+}$. (\ref{eq-G-heat}) is called generating
heat equation, or $G$-heat
equation of the sublinear distribution $\mathcal{N}(0;[\underline{\sigma}%
^{2},\overline{\sigma}^{2}])$. We also call this sublinear
distribution \textbf{$G$-normal distribution}.

\begin{remark}
We will use the notion of viscosity solutions to the generating heat
equation (\ref{eq-G-heat}). This notion was introduced by Crandall
and Lions. For the existence and uniqueness of solutions and related
very rich references we refer to Crandall, Ishii and Lions
\cite{CIL}. We note that, in the situation where
$\underline{\sigma}^{2}>0$, the viscosity solution (\ref{eq-G-heat})
becomes a classical $C^{1+\frac{\alpha}{2},2+\alpha}$-solution (see
\cite{Krylov}, \cite{Krylov1} and the recent works of
\cite{Caff1997} and \cite{WangL}). Readers can understand
(\ref{eq-G-heat}) in the classical meaning.
\end{remark}

\begin{definition}
\text{ A} real-valued continuous function $u\in C([0,T]\times
\mathbb{R})$ is called a viscosity subsolution (respectively,
supersolution) of (\ref{eq-G-heat}) if, for each function $\psi \in
C_{b}^{3}((0,\infty)\times \mathbb{R})$ and for each minimum
(respectively, maximum) point $(t,x)\in(0,\infty)\times \mathbb{R}$
of $\psi -u$, we have
\[
\partial_{t}\psi-G(\partial_{xx}^{2}\psi)\leq0\  \ (\text{respectively,  }\geq0).
\]
$u$ is called a viscosity solution of (\ref{eq-G-heat}) if it is
both super and subsolution.
\end{definition}

\begin{proposition}
Let $X$ be an
$\mathcal{N}(0;[\underline{\sigma}^{2},\overline{\sigma}^{2}])$
distributed random variable. For each $\varphi \in
C_{l.Lip}(\mathbb{R})$ we
define a function%
\[
u(t,x):=\mathbb{\hat{E}}[\varphi(x+\sqrt{t}X)],\ (t,x)\in \lbrack
0,\infty)\times \mathbb{R}.\
\]
Then we have%
\begin{equation}
u(t+s,x)=\mathbb{\hat{E}}[u(t,x+\sqrt{s}X)],\  \ s\geq0. \label{DPP}%
\end{equation}
We also have the estimates: For each $T>0$ there exist constants
$C,k>0$ such that, for all $t,s\in \lbrack0,T]$ and $x,y\in
\mathbb{R}$,
\begin{equation}
|u(t,x)-u(t,y)|\leq C(1+|x|^{k}+|y|^{k})|x-y| \label{|x-y|}%
\end{equation}
and%
\begin{equation}
|u(t,x)-u(t+s,x)|\leq C(1+|x|^{k})|s|^{1/2}. \label{|s|}%
\end{equation}
Moreover, $u$ is the unique viscosity solution, continuous in the
sense of (\ref{|x-y|}) and (\ref{|s|}), of the generating PDE
(\ref{eq-G-heat}).
\end{proposition}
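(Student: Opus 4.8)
The statement bundles together four distinct claims about $u(t,x):=\mathbb{\hat{E}}[\varphi(x+\sqrt{t}X)]$: the semigroup/tower property \eqref{DPP}, the two regularity estimates \eqref{|x-y|} and \eqref{|s|}, and finally that $u$ solves the $G$-heat equation \eqref{eq-G-heat} and is its unique viscosity solution. My plan is to establish these in that order, since the PDE identification will rest on the semigroup property together with the regularity estimates.

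For \eqref{DPP}, the plan is to introduce $Y\in\mathcal{H}$ independent to $X$ with $Y\sim X$ (constructible by Proposition \ref{Prop-2-10}). The key observation is that the $G$-normal defining relation $aX+bY\sim\sqrt{a^2+b^2}X$ lets me write $x+\sqrt{t+s}X\sim x+\sqrt{s}X+\sqrt{t}Y$ in distribution. Then I would compute $u(t+s,x)=\mathbb{\hat{E}}[\varphi(x+\sqrt{s}X+\sqrt{t}Y)]$ and apply the independence of $Y$ to $X$: freezing $X=\bar x$, the inner sublinear expectation over $Y$ produces exactly $\mathbb{\hat{E}}[\varphi(\bar x+\sqrt{s}X+\sqrt{t}Y)]_{\bar x}=u(t,\bar x+\sqrt{s}X)$ evaluated at the frozen value, and the outer expectation over $X$ yields $\mathbb{\hat{E}}[u(t,x+\sqrt{s}X)]$. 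Care is needed that $u(t,\cdot)\in C_{l.Lip}$, which is why the estimate \eqref{|x-y|} is wanted first; I would establish \eqref{|x-y|} directly from the sub-additivity of $\mathbb{\hat{E}}$, writing $|u(t,x)-u(t,y)|\leq\mathbb{\hat{E}}[|\varphi(x+\sqrt{t}X)-\varphi(y+\sqrt{t}X)|]$ and invoking the local-Lipschitz bound defining $C_{l.Lip}(\mathbb{R})$ together with $\mathbb{\hat{E}}[|X|^{m}]<\infty$. The temporal estimate \eqref{|s|} I would derive from \eqref{DPP}: write $u(t+s,x)-u(t,x)=\mathbb{\hat{E}}[u(t,x+\sqrt{s}X)]-u(t,x)$, use $\mathbb{\hat{E}}[X]=\mathbb{\hat{E}}[-X]=0$ (the no-mean-uncertainty remark) to kill the first-order term via Proposition \ref{Prop-X+Y}, and control the remainder by the Lipschitz-in-space modulus, producing the $|s|^{1/2}$ scaling from the factor $\sqrt{s}$.

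The heart of the argument is showing $u$ is a viscosity solution of \eqref{eq-G-heat}. Fix a smooth test function $\psi\in C_b^3$ touching $u$ from above at $(t,x)$ with $t>0$. Using \eqref{DPP} with a small time increment $\delta>0$, I would write $0=u(t,x)-u(t,x)$ and expand $u(t-\delta,x+\sqrt{\delta}X)$ around $(t,x)$ by Taylor's formula applied to $\psi$ (which dominates $u$ near $(t,x)$ and agrees at the point). The crucial cancellation is that the first-order spatial term $\psi_x(t,x)\sqrt{\delta}\,\mathbb{\hat{E}}[X]$ vanishes, leaving $-\delta\,\partial_t\psi(t,x)+\tfrac{1}{2}\delta\,\mathbb{\hat{E}}[\partial_{xx}^2\psi(t,x)X^2]+o(\delta)$; dividing by $\delta$ and sending $\delta\downarrow0$ gives $\partial_t\psi-G(\partial_{xx}^2\psi)\geq0$, using $G(\alpha)=\tfrac12\mathbb{\hat{E}}[\alpha X^2]$ directly. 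The supersolution inequality is symmetric. I expect the main obstacle here to be the \emph{uniform control of the Taylor remainder under the sublinear expectation}: since $X$ is unbounded, I must bound $\mathbb{\hat{E}}[\,|X|^3\,]$-type terms and argue that the nonlinearity of $\mathbb{\hat{E}}$ does not spoil the $o(\delta)$ estimate, which is exactly where the regularity estimates \eqref{|x-y|}–\eqref{|s|} and the moment finiteness of $X$ are indispensable. Uniqueness I would obtain by quoting the standard comparison principle for viscosity solutions of \eqref{eq-G-heat} (Crandall–Ishii–Lions \cite{CIL}), noting $G$ is convex and Lipschitz so the equation is proper and degenerate parabolic, and the growth conditions \eqref{|x-y|}–\eqref{|s|} place $u$ in the uniqueness class.
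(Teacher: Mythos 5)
Your route is essentially the paper's own: you prove (\ref{|x-y|}) directly from sub-additivity and the $C_{l.Lip}$ bound on $\varphi$, obtain (\ref{DPP}) by writing $x+\sqrt{t+s}X\sim x+\sqrt{s}X+\sqrt{t}Y$ for an independent identically distributed $Y$ (Proposition \ref{Prop-2-10}) and applying the definition of independence, deduce (\ref{|s|}) from these two, and identify the PDE by Taylor expansion of a smooth test function, in which the first-order term dies because $\mathbb{\hat{E}}[X]=\mathbb{\hat{E}}[-X]=0$ (Proposition \ref{Prop-X+Y}) and the second-order term produces $\delta G(\partial_{xx}^{2}\psi)$ via $G(\alpha)=\frac{1}{2}\mathbb{\hat{E}}[\alpha X^{2}]$; the remainder is controlled exactly as you anticipate, by $\psi\in C_{b}^{1,3}$ together with $\mathbb{\hat{E}}[|X|^{3}]<\infty$, giving an error of order $\delta^{3/2}$.

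One sign slip to fix in the viscosity step: with $\psi$ touching $u$ from \emph{above} at $(t,x)$, the chain
\[
0\leq\mathbb{\hat{E}}[\psi(t-\delta,x+\sqrt{\delta}X)-\psi(t,x)]\leq-\partial_{t}\psi(t,x)\delta+\delta G(\partial_{xx}^{2}\psi(t,x))+\bar{C}\delta^{3/2}
\]
yields, after dividing by $\delta$ and letting $\delta\downarrow0$, the inequality $\partial_{t}\psi-G(\partial_{xx}^{2}\psi)\leq0$ at $(t,x)$ --- not $\geq0$ as you wrote; the inequality $\geq0$ belongs to test functions touching from \emph{below}, where sub-additivity is used in the reverse direction ($\mathbb{\hat{E}}[A+B]\geq\mathbb{\hat{E}}[A]-\mathbb{\hat{E}}[-B]$). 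Since you invoke the symmetric case for the other half, the argument survives once the pairing is corrected; note that the paper's own definition attaches ``minimum point of $\psi-u$'', i.e.\ $\psi$ above $u$, to the inequality $\leq0$. Two smaller remarks: for (\ref{|s|}) you do not need any mean-zero cancellation --- the paper simply applies (\ref{|x-y|}) with $y=x+\sqrt{s}X$, and the factor $\sqrt{s}|X|$ already delivers the $|s|^{1/2}$ rate after taking $\mathbb{\hat{E}}$; and in the independence step of (\ref{DPP}) the composite $(z,y)\mapsto\varphi(x+\sqrt{s}z+\sqrt{t}y)$ already lies in $C_{l.Lip}(\mathbb{R}^{2})$, so the tower identity needs no prior knowledge that $u(t,\cdot)\in C_{l.Lip}$, though your ordering is harmless. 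Your appeal to comparison for (\ref{eq-G-heat}) in the growth class fixed by (\ref{|x-y|}) and (\ref{|s|}), via \cite{CIL}, is precisely the uniqueness ingredient the paper relies on as well.
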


\begin{proof}
Since%
\begin{align*}
u(t,x)-u(t,y)  &  =\mathbb{\hat{E}}[\varphi(x+\sqrt{t}X)]-\mathbb{\hat{E}%
}[\varphi(y+\sqrt{t}X)]\\
&  \leq \mathbb{\hat{E}}[\varphi(x+\sqrt{t}X)-\varphi(y+\sqrt{t}X)]\\
&  \leq \mathbb{\hat{E}}[C_{1}(1+|X|^{k}+|x|^{k}+|y|^{k})|x-y|]\\
&  \leq C(1+|x|^{k}+|y|^{k})|x-y|.
\end{align*}
We then have (\ref{|x-y|}). Let $Y$ be independent to $X$ such that
$X\sim Y$.
Since $X$ is {$\mathcal{N}(0;[\underline{\sigma}^{2},\overline{\sigma}^{2}%
])$-distributed, then}%
\begin{align*}
u(t+s,x)  &  =\mathbb{\hat{E}}[\varphi(x+\sqrt{t+s}X)]\\
&  =\mathbb{\hat{E}}[\varphi(x+\sqrt{s}X+\sqrt{t}Y)]\\
&  =\mathbb{\hat{E}}[\mathbb{\hat{E}}[\varphi(x+\sqrt{s}z+\sqrt{t}%
Y)]_{z=X}]\\
&  =\mathbb{\hat{E}}[u(t,x+\sqrt{s}X)].
\end{align*}
We thus obtain (\ref{DPP}). From this and (\ref{|x-y|}) it follows
that
\begin{align*}
u(t+s,x)-u(t,x)  &  =\mathbb{\hat{E}}[u(t,x+\sqrt{s}X)-u(t,x)]\\
&  \leq \mathbb{\hat{E}}[C_{1}(1+|x|^{k}+|X|^{k})|s|^{1/2}|X|].
\end{align*}
Thus we obtain (\ref{|s|}). Now, for a fixed
$(t,x)\in(0,\infty)\times \mathbb{R}$, let $\psi \in
C_{b}^{1,3}([0,\infty)\times \mathbb{R})$ be such that $\psi \geq u$
and $\psi(t,x)=u(t,x)$. By (\ref{DPP}) it follows that, for $\delta
\in(0,t)$
\begin{align*}
0  &  \leq \mathbb{\hat{E}}[\psi(t-\delta,x+\sqrt{\delta}X)-\psi(t,x)]\\
&  \leq-\partial_{t}\psi(t,x)\delta+\mathbb{\hat{E}}[\partial_{x}%
\psi(t,x)\sqrt{\delta}X+\frac{1}{2}\partial_{xx}^{2}\psi(t,x)\delta
X^{2}]+\bar{C}\delta^{3/2}\\
&  =-\partial_{t}\psi(t,x)\delta+\mathbb{\hat{E}}[\frac{1}{2}\partial_{xx}%
^{2}\psi(t,x)\delta X^{2}]+\bar{C}\delta^{3/2}\\
&  =-\partial_{t}\psi(t,x)\delta+\delta
G(\partial_{xx}^{2}\psi(t,x))+\bar {C}\delta^{3/2}.
\end{align*}
From which it is easy to check that
\[
\lbrack \partial_{t}\psi-G(\partial_{xx}^{2}\psi)](t,x)\leq0.
\]
It follows that $u$ is a viscosity supersolution of
(\ref{eq-G-heat}). Similarly we can prove that $u$ is a viscosity
subsolution of (\ref{eq-G-heat}).
\end{proof}

\begin{corollary}
If both $X$ and $\bar{X}$ are {$\mathcal{N}(0;[\underline{\sigma}%
^{2},\overline{\sigma}^{2}])$ distributed. Then }$X\sim \bar{X}$. In
particular, $X\sim-X$.
\end{corollary}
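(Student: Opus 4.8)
The plan is to reduce the identity in distribution to the uniqueness of the viscosity solution of the $G$-heat equation, which the preceding proposition already supplies. The crucial observation is that the generating function $G(\alpha)=\frac{1}{2}(\overline{\sigma}^{2}\alpha^{+}-\underline{\sigma}^{2}\alpha^{-})$ depends only on the pair $(\underline{\sigma}^{2},\overline{\sigma}^{2})$, and by hypothesis $X$ and $\bar{X}$ share exactly this pair. Hence both random variables generate one and the same PDE (\ref{eq-G-heat}) with the same initial data.

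First I would fix an arbitrary test function $\varphi\in C_{l.Lip}(\mathbb{R})$ and introduce the two associated value functions
\[
u(t,x):=\mathbb{\hat{E}}[\varphi(x+\sqrt{t}X)],\qquad \bar{u}(t,x):=\mathbb{\hat{E}}[\varphi(x+\sqrt{t}\bar{X})].
\]
By the proposition, each of $u$ and $\bar{u}$ is a viscosity solution of (\ref{eq-G-heat}), continuous in the sense of the estimates (\ref{|x-y|}) and (\ref{|s|}), and both carry the same Cauchy data $u|_{t=0}=\bar{u}|_{t=0}=\varphi$ (since $\mathbb{\hat{E}}[\varphi(x)]=\varphi(x)$ by constant preservation). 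I would then invoke the uniqueness clause of the proposition: a viscosity solution of (\ref{eq-G-heat}) with given initial data, continuous in the stated sense, is unique. Therefore $u\equiv\bar{u}$ on $[0,\infty)\times\mathbb{R}$, and evaluating at $(t,x)=(1,0)$ yields $\mathbb{\hat{E}}[\varphi(X)]=u(1,0)=\bar{u}(1,0)=\mathbb{\hat{E}}[\varphi(\bar{X})]$. Since $\varphi$ was arbitrary, this is precisely $X\sim\bar{X}$.

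For the final assertion $X\sim-X$ I would apply the first part with $\bar{X}=-X$. It remains only to check that $-X$ is again $\mathcal{N}(0;[\underline{\sigma}^{2},\overline{\sigma}^{2}])$-distributed, which is recorded in the earlier remark; concretely $\mathbb{\hat{E}}[(-X)^{2}]=\mathbb{\hat{E}}[X^{2}]=\overline{\sigma}^{2}$ and $-\mathbb{\hat{E}}[-(-X)^{2}]=-\mathbb{\hat{E}}[-X^{2}]=\underline{\sigma}^{2}$, so $-X$ carries the same parameters and the conclusion follows.

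I do not expect a genuine obstacle here, since all the analytic work is front-loaded into the proposition. The only point requiring care is confirming that both value functions satisfy the same continuity estimates, so that the uniqueness statement genuinely applies to the pair $(u,\bar{u})$; but this too is guaranteed by the proposition, because $X$ and $\bar{X}$ enter only through the common growth constants appearing in (\ref{|x-y|}) and (\ref{|s|}).
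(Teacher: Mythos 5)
Your proposal is correct and follows essentially the same route as the paper: both arguments define $u(t,x)=\mathbb{\hat{E}}[\varphi(x+\sqrt{t}X)]$ and $\bar{u}(t,x)=\mathbb{\hat{E}}[\varphi(x+\sqrt{t}\bar{X})]$, appeal to the preceding proposition to see that both are viscosity solutions of the $G$-heat equation (\ref{eq-G-heat}) with common Cauchy data $\varphi$, and conclude $u\equiv\bar{u}$ from uniqueness, whence $\mathbb{\hat{E}}[\varphi(X)]=\mathbb{\hat{E}}[\varphi(\bar{X})]$ for all test functions. Your explicit verification that $-X$ shares the same parameters $(\underline{\sigma}^{2},\overline{\sigma}^{2})$, justified by the earlier remark, is exactly how the paper's ``in particular'' clause is meant to be read, so nothing is missing.
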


\begin{proof}
For each $\varphi \in C_{l.Lip}(\mathbb{R})$ we set
\[
u(t,x):=\mathbb{\hat{E}}[\varphi(x+\sqrt{t}X)],\
\bar{u}(t,x):=\mathbb{\hat {E}}[\varphi(x+\sqrt{t}\bar{X})],\
(t,x)\in \lbrack0,\infty)\times \mathbb{R}.
\]
By the above Proposition, both $u$ and $\bar{u}$ are viscosity
solutions of the G-heat equation (\ref{eq-G-heat}) with Cauchy
condition $u|_{t=0}=\bar {u}|_{t=0}=\varphi$. It follows from the
uniqueness of the viscosity solution
that $u\equiv \bar{u}$. In particular%
\[
\mathbb{\hat{E}}[\varphi(X)]=\mathbb{\hat{E}}[\varphi(\bar{X})]\text{.}%
\]
Thus $X\sim \bar{X}$.
\end{proof}

\begin{corollary}
In the case where $\underline{\sigma}^{2}=\overline{\sigma}^{2}>0$,
$\mathcal{N}(0;[\underline{\sigma}^{2},\overline{\sigma}^{2}])$ is
just the classical normal distribution
$\mathcal{N}(0;\overline{\sigma}^{2})$.
\end{corollary}

\begin{proof}
In fact the solution of the generating PDE (\ref{eq-G-heat}) becomes
a classical heat equation
\[
\partial_{t}u=\frac{\overline{\sigma}^{2}}{2}\partial_{xx}^{2}u,\  \ u|_{t=0}%
=\varphi
\]
where the solution is
\[
u(t,x)=\frac{1}{\sqrt{2\pi
\overline{\sigma}^{2}t}}\int_{-\infty}^{\infty
}\varphi(y)\exp(-\frac{(x-y)^{2}}{2\overline{\sigma}^{2}t})dy
\]
thus, for each $\varphi$,
\[
\mathbb{\hat{E}}[\varphi(X)]=u(1,0)=\frac{1}{\sqrt{2\pi \overline{\sigma}^{2}}%
}\int_{-\infty}^{\infty}\varphi(y)\exp(-\frac{y^{2}}{2\overline{\sigma}^{2}%
})dy.
\]

\end{proof}

In two typical situations the calculation of
$\hat{\mathbb{E}}[\varphi(X)]$ is very easy:

\begin{itemize}
\item (i) For each \textbf{convex} $\varphi$, we have
\[
\hat{\mathbb{E}}[\varphi(X)]=\frac{1}{\sqrt{2\pi \overline{\sigma}^{2}}}%
\int_{-\infty}^{\infty}\varphi(y)\exp(-\frac{y^{2}}{2\overline{\sigma}^{2}%
})dy
\]
Indeed, for each fixed $t\geq0$, it is easy to check that the
function
{$u(t,x):=\hat{\mathbb{E}}[\varphi(x+\sqrt{t}X)]$ is convex:}%
\begin{align*}
u(t,\alpha x+(1-\alpha)y)  &  =\mathbb{\hat{E}}[{\varphi(\alpha
x+(1-\alpha
)y+\sqrt{t}X)]}\\
&  \leq{}{{\alpha}\mathbb{\hat{E}}[\varphi(x+\sqrt{t}X)]+(1-{\alpha
)}\mathbb{\hat{E}}[\varphi(x+\sqrt{t}X)]}\\
&  =\alpha u(t,x)+(1-\alpha)u(t,y)
\end{align*}
It follows that $(\partial_{xx}^{2}u)^{-}\equiv0$ and thus the
G-heat equation
(\ref{eq-G-heat}) becomes {%
\[
\partial_{t}u=\frac{\overline{\sigma}^{2}}{2}\partial_{xx}^{2}u,\  \  \ u|_{t=0}%
=\varphi.\  \
\]
}

\item (ii) But for each \textbf{concave} $\varphi$, we have,%
\[
\hat{\mathbb{E}}[\varphi(X)]=\frac{1}{\sqrt{2\pi \underline{\sigma}^{2}}}%
\int_{-\infty}^{\infty}\varphi(y)\exp(-\frac{y^{2}}{2\underline{\sigma}^{2}%
})dy
\]

In particular%
\[
\hat{\mathbb{E}}[X]=\mathbb{\hat{E}}[-X]=0,\  \ \hat{\mathbb{E}}[X
^{2}]=\overline{\sigma}^{2},\  \
-\hat{\mathbb{E}}[-X^{2}]=\underline{\sigma
}^{2}%
\]
and
\[
\mathbb{\hat{E}}[X^{4}]=6\overline{\sigma}^{4},\ -\mathbb{\hat{E}}%
[-X^{4}]=6\underline{\sigma}^{4}\  \
\]

\end{itemize}

\subsection{Construction of $G$-normal distributed random
variables}

To construct an $\mathcal{N}(0;[\underline{\sigma}^{2},\overline{\sigma}%
^{2}])$--distributed random variable $\xi$, let $u=u^{\varphi}$ be
the unique viscosity solution of the $G$-heat equation
(\ref{eq-G-heat}) with $u^{\varphi}|_{t=0}=\varphi$.
Then we take $\widetilde{\Omega}=\mathbb{R}^{2}$, $\widetilde{\mathcal{H}%
}=C_{l.Lip}(\mathbb{R}^{2})$, $\omega=(x,y)\in \mathbb{R}^{2}$. The
corresponding sublinear expectation $\widetilde{\mathbb{E}}[\cdot]$
is defined
by, for each $X(\omega)=\psi(\omega)$ such that $\psi \in C_{l.Lip}%
(\mathbb{R}^{2})$,
\[
\widetilde{\mathbb{E}}[X]=u^{\bar{\psi}(\cdot)}(1,0),\text{ where we set }%
\bar{\psi}(x):=u^{\psi(x,\cdot)}(1,0).
\]
We now consider two random variables $\xi(\omega)=x$,
$\eta(\omega)=y$. It is clear that the
\[
\widetilde{\mathbb{E}}[\varphi(\xi)]=\widetilde{\mathbb{E}}[\varphi
(\eta)]=u^{\varphi}(1,0),\  \  \  \  \forall \varphi \in
C_{l.Lip}(\mathbb{R}).
\]
By the definition $\eta$ is independent to $\xi$ and $\xi \sim \eta$
under $\widetilde{\mathbb{E}}$. To prove that the distribution of
$\xi$ and $\eta$ satisfies condition (\ref{G-normal}), it suffices
to observe that, for each
$\varphi \in C_{l.Lip}(\mathbb{R})$ and for each fixed $\lambda>0$, $\bar{x}%
\in \mathbb{R}$, since the function $v$ defined by
$v(t,x):=u^{\varphi}(\lambda t,\bar{x}+\sqrt{\lambda}x)$ solves
exactly the same $G$-heat equation
(\ref{eq-G-heat}) but with Cauchy condition $v|_{t=0}=\varphi(\bar{x}%
+\sqrt{\lambda}\times \cdot)$, we then have
\[
\widetilde{\mathbb{E}}[\varphi(\bar{x}+\sqrt{\lambda}\xi)]=v(1,\bar
{x})=u^{\varphi(\sqrt{\lambda}\times \cdot)}(1,\bar{x})=u^{\varphi}%
(\lambda,\bar{x}),\  \bar{x}\in \mathbb{R}.\
\]
Thus, for each $t>0$ and $s>0$,
\begin{align*}
\widetilde{\mathbb{E}}[\varphi(\sqrt{t}\xi+\sqrt{s}\eta)] &
=\widetilde
{\mathbb{E}}[\widetilde{\mathbb{E}}[\varphi(\sqrt{t}x+\sqrt{s}\eta)]_{x=\xi
}]\\
&  =u^{u^{\varphi}(s,\cdot)}(t,0)=u^{\varphi}(t+s,0)\\
&  =\widetilde{\mathbb{E}}[\varphi(\sqrt{t+s}\xi)].
\end{align*}
Namely $\sqrt{t}\xi+\sqrt{s}\eta \sim \sqrt{t+s}\xi$. Thus $\xi$ and
$\eta$ are both
{$\mathcal{N}(0;[\underline{\sigma}^{2},\overline{\sigma}^{2}])$
distributed.}

We need to check that the functional
$\widetilde{\mathbb{E}}[\cdot]:C_{l.Lip}(\mathbb{R})\mapsto
\mathbb{R}$ forms a sublinear expectation, i.e., (a)-(d) of
Definition \ref{Def-1} are satisfied. Indeed, (a) is simply the
consequence of comparison theorem, or the maximum principle of
viscosity solution (see Appendix). It is also easy to check that,
when $\varphi \equiv c$, then the unique solution of
(\ref{eq-G-heat}) is also $u\equiv c$; hence (b) holds true. (d)
also holds since $u^{\lambda \varphi}=\lambda u^{\varphi}$, $\lambda
\geq0$. The sub-additivity (c) will be proved in Appendix.

\section{{Central Limit Theorem}}

{Our main result is: }

\begin{theorem}
{(Central Limit Theorem) Let a sequence $\left \{  X_{i}\right \}
_{i=1}^{\infty}$ in $\mathcal{H}$ be identically distributed with
each others. We also assume that, each $X_{n+1}$ is independent (or
weakly independent) to $(X_{1},\cdots,X_{n})$ for $n=1,2,\cdots$. We
assume furthermore that
\[
\mathbb{\hat{E}}[X_{1}]=\mathbb{\hat{E}}[-X_{1}]=0\text{,\  \ }\mathbb{\hat{E}%
}[X_{1}^{2}]=\overline{\sigma}^{2},\ -\mathbb{\hat{E}}[-X_{1}^{2}%
]=\underline{\sigma}^{2},
\]
for some fixed numbers $0<\underline{\sigma}\leq
\overline{\sigma}<\infty$. Then the sequence $\left \{
S_{n}/\sqrt{n}\right \}  _{n=1}^{\infty}$, of the sum
}$S_{n}=X_{1}+\cdots+X_{n}$, {converges in law to
$\mathcal{N}(0;[\underline {\sigma}^{2},\overline{\sigma}^{2}])$:
\begin{equation}
\lim_{n\rightarrow \infty}\mathbb{\hat{E}}[\varphi(\frac{S_{n}}{\sqrt{n}%
})]=\widetilde{\mathbb{E}}[\varphi(\xi)],\  \forall \varphi \in lip_{b}%
(\mathbb{R}),\  \  \  \  \label{CLT}%
\end{equation}
where $\xi \  \sim \  \mathcal{N}(0;[\underline{\sigma}^{2},\overline{\sigma}%
^{2}])$. }
\end{theorem}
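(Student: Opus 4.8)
The plan is to show that the limit in (\ref{CLT}) equals the value $u(1,0)$ of the solution of the $G$-heat equation (\ref{eq-G-heat}), and to obtain the convergence by a one-step interpolation of Lindeberg type along a uniform time grid. Fix $\varphi\in lip_{b}(\mathbb{R})$ and let $u=u^{\varphi}$ be the solution of (\ref{eq-G-heat}) with $u|_{t=0}=\varphi$; by the construction of the preceding subsection $u(1,0)=\widetilde{\mathbb{E}}[\varphi(\xi)]$, so (\ref{CLT}) is equivalent to $\lim_{n}\mathbb{\hat{E}}[\varphi(S_{n}/\sqrt{n})]=u(1,0)$. Set $\delta=1/n$, $t_{k}=1-k\delta$ and $\tilde{S}_{k}=S_{k}/\sqrt{n}=\sqrt{\delta}(X_{1}+\cdots+X_{k})$, so that $\mathbb{\hat{E}}[\varphi(S_{n}/\sqrt{n})]=\mathbb{\hat{E}}[u(0,\tilde{S}_{n})]$ while $u(1,0)=u(t_{0},\tilde{S}_{0})$. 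I would then write the telescoping identity
\[
u(1,0)-\mathbb{\hat{E}}[\varphi(\tilde{S}_{n})]=\sum_{k=0}^{n-1}\Big(\mathbb{\hat{E}}[u(t_{k},\tilde{S}_{k})]-\mathbb{\hat{E}}[u(t_{k+1},\tilde{S}_{k+1})]\Big)
\]
and estimate each summand separately.

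The core is a one-step estimate. Since each $X_{k+1}$ is independent to $(X_{1},\cdots,X_{k})$, hence to $\tilde{S}_{k}$, and $\tilde{S}_{k+1}=\tilde{S}_{k}+\sqrt{\delta}X_{k+1}$, the definition of independence gives $\mathbb{\hat{E}}[u(t_{k+1},\tilde{S}_{k+1})]=\mathbb{\hat{E}}[g_{k}(\tilde{S}_{k})]$, where $g_{k}(x):=\mathbb{\hat{E}}[u(t_{k+1},x+\sqrt{\delta}X_{k+1})]$. A second-order Taylor expansion of $u(t_{k+1},\cdot)$ at $x$ splits the increment into a constant, a first-order term $\partial_{x}u(t_{k+1},x)\sqrt{\delta}X_{k+1}$, a second-order term, and a remainder. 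The first-order term has no mean uncertainty because $\mathbb{\hat{E}}[X_{1}]=\mathbb{\hat{E}}[-X_{1}]=0$, so by Proposition \ref{Prop-X+Y} it may be discarded; the second-order term produces $\frac{1}{2}\mathbb{\hat{E}}[\partial_{xx}^{2}u(t_{k+1},x)\delta X_{k+1}^{2}]=\delta G(\partial_{xx}^{2}u(t_{k+1},x))$ by the very definition of $G$, using that $X_{k+1}$ has variance interval $[\underline{\sigma}^{2},\overline{\sigma}^{2}]$. Expanding $u$ to first order in time and invoking the equation $\partial_{t}u=G(\partial_{xx}^{2}u)$ shows that these two contributions cancel, leaving a pointwise one-step error
\[
\varepsilon_{k}(x):=u(t_{k},x)-g_{k}(x),\qquad |\varepsilon_{k}(x)|\leq C(1+|x|^{N})\delta^{1+\beta}
\]
for some $\beta>0$, where the remainder is controlled by the H\"{o}lder moduli of $\partial_{xx}^{2}u$ and $\partial_{t}u$ and by the finiteness of $\mathbb{\hat{E}}[|X_{1}|^{2+\alpha}]$.

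To conclude, write $u(t_{k},\cdot)=g_{k}+\varepsilon_{k}$; then sub-additivity (property (c) of Definition \ref{Def-1}) yields $|\mathbb{\hat{E}}[u(t_{k},\tilde{S}_{k})]-\mathbb{\hat{E}}[g_{k}(\tilde{S}_{k})]|\leq \mathbb{\hat{E}}[|\varepsilon_{k}(\tilde{S}_{k})|]\leq C\delta^{1+\beta}\mathbb{\hat{E}}[1+|\tilde{S}_{k}|^{N}]$. A separate moment bound $\mathbb{\hat{E}}[|S_{k}|^{2m}]\leq Ck^{m}$, proved by induction on $k$ from independence together with $\mathbb{\hat{E}}[X_{1}]=\mathbb{\hat{E}}[-X_{1}]=0$ and $\mathbb{\hat{E}}[X_{1}^{2}]=\overline{\sigma}^{2}$, gives $\mathbb{\hat{E}}[|\tilde{S}_{k}|^{N}]\leq C$ uniformly in $k\leq n$ and in $n$. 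Summing the $n$ terms of the telescoping identity then produces
\[
|u(1,0)-\mathbb{\hat{E}}[\varphi(S_{n}/\sqrt{n})]|\leq C\,n\,\delta^{1+\beta}=C\,n^{-\beta}\longrightarrow 0,
\]
which is the claim.

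The main obstacle is regularity. The one-step estimate requires $u\in C^{1+\alpha/2,\,2+\alpha}$ with H\"{o}lder seminorms bounded uniformly on $[0,1]\times\mathbb{R}$; this is exactly where the nondegeneracy $\underline{\sigma}^{2}>0$ enters, since it makes (\ref{eq-G-heat}) uniformly parabolic (cf. the Remark following the $G$-heat equation), and this a priori estimate is the hard analytic heart of the argument. For merely bounded Lipschitz $\varphi$ one should first establish the convergence for smooth data and then pass to general $\varphi\in lip_{b}(\mathbb{R})$ by mollification, using $|\mathbb{\hat{E}}[\varphi_{1}(\cdot)]-\mathbb{\hat{E}}[\varphi_{2}(\cdot)]|\leq \mathbb{\hat{E}}[|\varphi_{1}-\varphi_{2}|(\cdot)]$ to control the error uniformly. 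A secondary delicate point is that the per-step bound must remain $o(1/n)$ even for grid times $t_{k+1}$ close to $0$; the smoothing effect of the nondegenerate equation, together with the fact that any single exceptional term contributes only $O(1/n)$ to the sum, resolves this.
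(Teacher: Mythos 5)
Your overall architecture---the Lindeberg-type telescoping along the uniform grid, discarding the first-order term by mean-certainty (Proposition \ref{Prop-X+Y}), cancelling the second-order term against the equation $\partial_{t}u=G(\partial_{xx}^{2}u)$, and a per-step remainder of order $\delta^{1+\alpha/2}$---is exactly the paper's argument. The genuine gap is the regularity claim on which your per-step bound rests: for $\varphi\in lip_{b}(\mathbb{R})$ the solution $u$ of (\ref{eq-G-heat}) is \emph{not} in $C^{1+\alpha/2,2+\alpha}$ uniformly up to $t=0$. Wang's interior estimate gives $\left\Vert u\right\Vert _{C^{1+\alpha/2,2+\alpha}([\kappa,1]\times\mathbb{R})}<\infty$ only for $\kappa>0$, with constants blowing up as $\kappa\downarrow0$ (generically $|\partial_{xx}^{2}u(t,\cdot)|\sim t^{-1/2}$ for Lipschitz data). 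In your parametrization the grid times $t_{k+1}$ march all the way down to $0$, so a growing number of steps---not ``any single exceptional term''---lies in the region where your constant $C$ is unavailable; even the last step alone costs $O(n^{-1/2})$, not $O(1/n)$. The assertion that ``the smoothing effect resolves this'' is not a proof: to salvage the argument in this form you would need quantitative weighted bounds such as $[\partial_{xx}^{2}u(t,\cdot)]_{C^{\alpha}}\leq Ct^{-(1+\alpha)/2}$ followed by a weighted summation over the grid, none of which you establish.

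The paper sidesteps the singularity with a time shift: it solves the terminal-value problem $\partial_{t}V+G(\partial_{xx}^{2}V)=0$ on $[0,1+h]\times\mathbb{R}$ with $V|_{t=1+h}=\varphi$ for a fixed small $h>0$, and telescopes only over $[0,1]$, which stays at distance $h$ from the terminal time where the data is merely Lipschitz; there the interior $C^{1+\alpha/2,2+\alpha}$ bound is legitimate, the sum is $O(n^{-\alpha/2})$, the substitution of $V(1,\cdot)$ for $\varphi=V(1+h,\cdot)$ costs only $C\sqrt{h}$ via the $\frac{1}{2}$-H\"{o}lder time-regularity of $V$, and $h\downarrow0$ is taken last. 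Your mollification fallback (prove the result for smooth data, then approximate in sup-norm using $|\mathbb{\hat{E}}[\varphi_{1}(\cdot)]-\mathbb{\hat{E}}[\varphi_{2}(\cdot)]|\leq\left\Vert \varphi_{1}-\varphi_{2}\right\Vert _{\infty}$) is an equally valid regularization and would complete the proof; but note it is the \emph{essential} step for the theorem as stated, not an optional refinement, since $lip_{b}$ is the entire hypothesis class. With bounded data the polynomial weight $(1+|x|^{N})$ and your moment lemma $\mathbb{\hat{E}}[|S_{k}|^{2m}]\leq Ck^{m}$ also become unnecessary, since all H\"{o}lder bounds are then global in $x$. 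One further point: your reduction $\mathbb{\hat{E}}[u(t_{k+1},\tilde{S}_{k+1})]=\mathbb{\hat{E}}[g_{k}(\tilde{S}_{k})]$ applies independence to a test function that is not quadratic in the increment, so it does not cover the ``weakly independent'' case allowed in the statement; the paper's bookkeeping (exact quadratic main terms $J_{\delta}^{i}$, with the remainder $I_{\delta}^{i}$ bounded in absolute value without any conditioning) is arranged precisely so that only quadratic test functions are ever conditioned.
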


\begin{proof}
{For a small but fixed $h>0$, let $V$ be the unique viscosity solution of%
\begin{equation}
\partial_{t}V+G(\partial_{xx}^{2}V)=0,\ (t,x)\in \lbrack0,1+h]\times
\mathbb{R}\text{,}\  \ V|_{t=1+h}=\varphi. \label{eq-V}%
\end{equation}
We have, according to the definition of $G$-normal distribution%
\[
V(t,x)=\widetilde{\mathbb{E}}[\varphi(x+\sqrt{1+h-t}\xi)].
\]
Particularly,
\begin{equation}
V(h,0)=\widetilde{\mathbb{E}}[\varphi(\xi)],\  \
V(1+h,x)=\varphi(x).
\label{equ-0}%
\end{equation}
Since (\ref{eq-V}) is a uniformly parabolic PDE and $G$ is a convex
function, thus, by the interior regularity of $V$ (see Wang
\cite{WangL}, Theorem 4.13) we have
\[
\left \Vert V\right \Vert _{C^{1+\alpha/2,2+\alpha}([0,1]\times \mathbb{R}%
)}<\infty,\  \text{for some }\alpha \in(0,1).
\]
We set $\delta=\frac{1}{n}$ and $S_{0}=0$. Then \
\begin{align*}
&
V(1,\sqrt{\delta}S_{n})-V(0,0)=\sum_{i=0}^{n-1}\{V((i+1)\delta,\sqrt
{\delta}S_{i+1})-V(i\delta,\sqrt{\delta}S_{i})\} \\
&  =\sum_{i=0}^{n-1}\left \{  [V((i+1)\delta,\sqrt{\delta}S_{i+1}%
)-V(i\delta,\sqrt{\delta}S_{i+1})]+[V(i\delta,\sqrt{\delta}S_{i+1}%
)-V(i\delta,\sqrt{\delta}S_{i})]\right \} \\
&  =\sum_{i=0}^{n-1}\left \{  I_{\delta}^{i}+J_{\delta}^{i}\right \}
\end{align*}
with, by Taylor's expansion,%
\[
J_{\delta}^{i}=\partial_{t}V(i\delta,\sqrt{\delta}S_{i})\delta+\frac{1}%
{2}\partial_{xx}^{2}V(i\delta,\sqrt{\delta}S_{i})X_{i+1}^{2}\delta
+\partial_{x}V(i\delta,\sqrt{\delta}S_{i})X_{i+1}\sqrt{\delta}%
\]%
\begin{align*}
&
I_{\delta}^{i}=\int_{0}^{1}[\partial_{t}V((i+\beta)\delta,\sqrt{\delta
}S_{i+1})-\partial_{t}V(i\delta,\sqrt{\delta}S_{i+1})]d\beta \delta \\
&
+[\partial_{t}V(i\delta,\sqrt{\delta}S_{i+1})-\partial_{t}V(i\delta
,\sqrt{\delta}S_{i})]\delta \\
&  +\int_{0}^{1}\int_{0}^{1}[\partial_{xx}^{2}V(i\delta,\sqrt{\delta}%
S_{i}+\gamma \beta X_{i+1}\sqrt{\delta})-\partial_{xx}^{2}V(i\delta
,\sqrt{\delta}S_{i})]\gamma d\beta d\gamma X_{i+1}^{2}\delta.
\end{align*}
Thus
\begin{equation}
\mathbb{\hat{E}}[\sum_{i=0}^{n-1}J_{\delta}^{i}]-\mathbb{\hat{E}}[-\sum
_{i=0}^{n-1}I_{\delta}^{i}]\leq \mathbb{\hat{E}}[V(1,\sqrt{\delta}%
S_{n})]-V(0,0)\leq \mathbb{\hat{E}}[\sum_{i=0}^{n-1}J_{\delta}^{i}%
]+\mathbb{\hat{E}}[\sum_{i=0}^{n-1}I_{\delta}^{i}] \label{major1}%
\end{equation}
We now prove that
$\mathbb{\hat{E}}[\sum_{i=0}^{n-1}J_{\delta}^{i}]=0$. Indeed, the
3rd term of $J_{\delta}^{i}$ has mean-certainty:
\[
\mathbb{\hat{E}}[\partial_{x}V(i\delta,\sqrt{\delta}S_{i})X_{i+1}\sqrt{\delta
}]=\mathbb{\hat{E}}[-\partial_{x}V(i\delta,\sqrt{\delta}S_{i})X_{i+1}%
\sqrt{\delta}]=0.
\]
For the second term, we have
\[
\mathbb{\hat{E}}[\frac{1}{2}\partial_{xx}^{2}V(i\delta,\sqrt{\delta}%
S_{i})X_{i+1}^{2}\delta]=\mathbb{\hat{E}}[G(\partial_{xx}^{2}V(i\delta
,\sqrt{\delta}S_{i}))\delta].
\]
We then combine the above two equalities with
$\partial_{t}V+G(\partial_{xx}^{2}V)=0$ as well as the independence
of $X_{i+1}$ to $(X_{1},\cdots,X_{i})$, it follows that
\[
\mathbb{\hat{E}}[\sum_{i=0}^{n-1}J_{\delta}^{i}]=\mathbb{\hat{E}}[\sum
_{i=0}^{n-2}J_{\delta}^{i}]=\cdots=0.
\]
Thus (\ref{major1}) can be rewritten as%
\[
-\mathbb{\hat{E}}[-\sum_{i=0}^{n-1}I_{\delta}^{i}]\leq \mathbb{\hat{E}%
}[V(1,\sqrt{\delta}S_{n})]-V(0,0)\leq \mathbb{\hat{E}}[\sum_{i=0}%
^{n-1}I_{\delta}^{i}].
\]
But since both $\partial_{t}V$ and $\partial_{xx}^{2}V$ are
uniformly $\alpha $-h\"{o}lder continuous in $x$ and\
$\frac{\alpha}{2}$-h\"{o}lder continuous in $t$ on $[0,1]\times
\mathbb{R}$, we then have $|I_{\delta}^{i}|\leq
C\delta^{1+\alpha/2}[1+|X_{i+1}|+|X_{i+1}|^{2+\alpha}]$. It follows
that
\[
\mathbb{\hat{E}}[|I_{\delta}^{i}|]\leq C\delta^{1+\alpha/2}(1+\mathbb{\hat{E}%
}[|X_{1}|^{\alpha}]+\mathbb{\hat{E}}[|X_{1}|^{2+\alpha}]).
\]
Thus%
\begin{align*}
-C(\frac{1}{n})^{\alpha/2}(1+\mathbb{\hat{E}}[|X_{1}|^{\alpha}+|X_{1}%
|^{2+\alpha}])  &  \leq \mathbb{\hat{E}}[V(1,\sqrt{\delta}S_{n})]-V(0,0)\\
&  \leq C(\frac{1}{n})^{\alpha/2}(1+\mathbb{\hat{E}}[|X_{1}|^{\alpha}%
+|X_{1}|^{2+\alpha}]).
\end{align*}
As $n\rightarrow \infty$ we have
\begin{equation}
\lim_{n\rightarrow
\infty}\mathbb{\hat{E}}[V(1,\sqrt{\delta}S_{n})]=V(0,0).
\label{equ-h}%
\end{equation}
On the other hand, for each $t,t^{\prime}\in \lbrack0,1+h]$ and
$x\in \mathbb{R}$,
\begin{align*}
|V(t,x)-V(t^{\prime},x)|  &
=|\widetilde{\mathbb{E}}[|\varphi(x+\sqrt
{1+h-t}\xi)-\varphi(\sqrt{1+h-t^{\prime}}\xi)|]\\
&  \leq
k_{\varphi}|\sqrt{1+h-t}-\sqrt{1+h-t^{\prime}}|\times\widetilde
{\mathbb{E}}[|\xi|]\\
&  \leq C\sqrt{|t-t^{\prime}|},
\end{align*}
where $k_{\varphi}$ denotes the Lipschitz constant of $\varphi$.
Thus $|V(0,0)-V(h,0)|\leq C\sqrt{h}$ and, by (\ref{equ-0}),
\begin{align*}
&
|\mathbb{\hat{E}}[V(1,\sqrt{\delta}S_{n})]-\mathbb{\hat{E}}[\varphi
(\sqrt{\delta}S_{n})]|\\
&
=|\mathbb{\hat{E}}[V(1,\sqrt{\delta}S_{n})]-\mathbb{\hat{E}}[V(1+h,\sqrt
{\delta}S_{n})]|\leq C\sqrt{h}.
\end{align*}
It follows form (\ref{equ-h}) and (\ref{equ-0}) that
\[
\limsup_{n\rightarrow \infty}|\mathbb{\hat{E}}[\varphi(\frac{S_{n}}{\sqrt{n}%
})]-\widetilde{\mathbb{E}}[\varphi(\xi)]|\leq2C\sqrt{h}.
\]
Since $h$ can be arbitrarily small we thus have%
\[
\lim_{n\rightarrow \infty}\mathbb{\hat{E}}[\varphi(\frac{S_{n}}{\sqrt{n}%
})]=\widetilde{\mathbb{E}}[\varphi(\xi)].
\]
}
\end{proof}

\begin{corollary}
{The convergence (\ref{CLT}) holds for the case where $\varphi$ is a
bounded and uniformly continuous function. }
\end{corollary}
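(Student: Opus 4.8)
The plan is to deduce the corollary from the preceding Theorem by a uniform approximation argument, the key structural fact being that a sublinear expectation is non-expansive for the supremum norm. From properties (a)--(c) of Definition \ref{Def-1} one gets, for any admissible test functions $\varphi,\psi$ and any random variable $Z$,
\[
\mathbb{\hat{E}}[\varphi(Z)]-\mathbb{\hat{E}}[\psi(Z)]\leq\mathbb{\hat{E}}[\varphi(Z)-\psi(Z)]\leq\sup_{x}|\varphi(x)-\psi(x)|,
\]
since $\varphi(Z)-\psi(Z)\leq\|\varphi-\psi\|_{\infty}$ is dominated by a constant, and symmetrically, so that $|\mathbb{\hat{E}}[\varphi(Z)]-\mathbb{\hat{E}}[\psi(Z)]|\leq\|\varphi-\psi\|_{\infty}$, with the identical bound for $\widetilde{\mathbb{E}}$. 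The decisive point is that this estimate does not involve $Z$ at all, hence is \emph{uniform in} $n$ when $Z=S_{n}/\sqrt{n}$. The same non-expansiveness also shows that $\mathbb{\hat{E}}[\varphi(S_{n}/\sqrt{n})]$ and $\widetilde{\mathbb{E}}[\varphi(\xi)]$ are unambiguously defined for bounded uniformly continuous $\varphi$, as the sup-norm-continuous extensions of $\mathbb{\hat{E}}$ and $\widetilde{\mathbb{E}}$ from $C_{l.Lip}$.

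First I would approximate the given bounded uniformly continuous $\varphi$ by bounded Lipschitz functions in sup norm. A convenient device is the inf-convolution $\varphi_{m}(x):=\inf_{y\in\mathbb{R}}\{\varphi(y)+m|x-y|\}$: each $\varphi_{m}$ is $m$-Lipschitz, satisfies $\inf\varphi\leq\varphi_{m}\leq\varphi$, hence lies in $lip_{b}(\mathbb{R})\subset C_{l.Lip}(\mathbb{R})$, and $\varphi_{m}\uparrow\varphi$ uniformly precisely because $\varphi$ is uniformly continuous (the gap $\varphi-\varphi_{m}$ is controlled by the modulus of continuity of $\varphi$). In particular $\varphi_{m}(S_{n}/\sqrt{n})$ and $\varphi_{m}(\xi)$ belong to the respective spaces of random variables, so the Theorem applies to each $\varphi_{m}$.

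Then, for each fixed $m$, I would split
\[
|\mathbb{\hat{E}}[\varphi(\tfrac{S_{n}}{\sqrt{n}})]-\widetilde{\mathbb{E}}[\varphi(\xi)]|\leq |\mathbb{\hat{E}}[\varphi(\tfrac{S_{n}}{\sqrt{n}})]-\mathbb{\hat{E}}[\varphi_{m}(\tfrac{S_{n}}{\sqrt{n}})]|+|\mathbb{\hat{E}}[\varphi_{m}(\tfrac{S_{n}}{\sqrt{n}})]-\widetilde{\mathbb{E}}[\varphi_{m}(\xi)]|+|\widetilde{\mathbb{E}}[\varphi_{m}(\xi)]-\widetilde{\mathbb{E}}[\varphi(\xi)]|.
\]
By the non-expansiveness estimate the first and third terms are each at most $\|\varphi-\varphi_{m}\|_{\infty}$, uniformly in $n$, while the middle term tends to $0$ as $n\to\infty$ by the Theorem applied to $\varphi_{m}\in lip_{b}(\mathbb{R})$. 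Hence $\limsup_{n\to\infty}|\mathbb{\hat{E}}[\varphi(S_{n}/\sqrt{n})]-\widetilde{\mathbb{E}}[\varphi(\xi)]|\leq 2\|\varphi-\varphi_{m}\|_{\infty}$, and letting $m\to\infty$ yields (\ref{CLT}).

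The only point demanding care, and what I regard as the main (mild) obstacle, is making the approximation error uniform in $n$: a merely pointwise approximation of $\varphi$ would be useless, since the error bound must survive the limit $n\to\infty$ before $m\to\infty$. This is exactly what the sup-norm contraction property supplies, and it is why the inf-convolution is chosen so as to converge \emph{uniformly}, a convergence guaranteed precisely by the uniform continuity of $\varphi$. Everything else is a routine three-$\varepsilon$ argument.
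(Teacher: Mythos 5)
Your proposal is correct and follows essentially the same route as the paper: the paper likewise approximates $\varphi$ uniformly by a sequence $\{\varphi_{k}\}\subset lip_{b}(\mathbb{R})$ and uses the identical three-term decomposition, leaving the sup-norm contraction of $\mathbb{\hat{E}}$ and $\widetilde{\mathbb{E}}$ and the existence of the approximating sequence implicit. You merely fill in those details (the non-expansiveness estimate and the inf-convolution construction), which is a faithful elaboration rather than a different argument.
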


\begin{proof}
{We can find a sequence $\left \{  \varphi_{k}\right \}
_{k=1}^{\infty}$ in $lip_{b}(\mathbb{R})$ such that
$\varphi_{k}\rightarrow \varphi$ uniformly on $\mathbb{R}$. By
\begin{align*}
|\mathbb{\hat{E}}[\varphi(\frac{S_{n}}{\sqrt{n}})]-\widetilde{\mathbb{E}%
}[\varphi(\xi)]|  &  \leq|\mathbb{\hat{E}}[\varphi(\frac{S_{n}}{\sqrt{n}%
})]-\mathbb{\hat{E}}[\varphi_{k}(\frac{S_{n}}{\sqrt{n}})]|\\
&
+|\widetilde{\mathbb{E}}[\varphi(\xi)]-\widetilde{\mathbb{E}}[\varphi
_{k}(\xi)]|+|\mathbb{\hat{E}}[\varphi_{k}(\frac{S_{n}}{\sqrt{n}}%
)]-\widetilde{\mathbb{E}}[\varphi_{k}(\xi)]|.
\end{align*}
We can easily check that (\ref{CLT}) holds. }
\end{proof}


\chapter{$G$-Brownian Motion: 1-Dimensional Case}

\section{$1$-dimensional $G$-Brownian motion}

In this lecture I will introduce the notion of $G$-Brownian motion
related to the $G$-normal distribution in a space of a sublinear
expectation. We first give the definition of the $G$-Brownian
motion.

\begin{definition}
\label{Def-3} A process $\{B_{t}(\omega)\}_{t\geq 0}$ in a sublinear
expectation space $(\Omega,\mathcal{H},\mathbb{\hat{E}})$ is called
a $G$\textbf{--Brownian motion} if for each $n\in \mathbb{N}$ and
$0\leq t_{1},\cdots ,t_{n}<\infty$,$\ B_{t_{1}},\cdots,B_{t_{n}}\in
\mathcal{H}$ and the following properties are satisfied:
\newline\textsl{(i)} $B_{0}(\omega)=0$;\newline\textsl{(ii)} For
each $t,s\geq0$, the increment $B_{t+s}-B_{t}$ is $\mathcal{N}
(0;[\underline{\sigma}^{2}s,\overline{\sigma}^{2}s])$-distributed
and is independent to $(B_{t_{1}},B_{t_{2}},\cdots,B_{t_{n}})$, for
each $n\in \mathbb{N}$ and $0\leq t_{1}\leq\cdots\leq t_{n}\leq t$.
\end{definition}

\begin{remark}
The letter $G$ indicates that the process $B$ is characterized by
it's `generating function' $G$ defined by
\[
G(\alpha):=\mathbb{\hat{E}}[\alpha B_1^2],\ \ \ \ \alpha\in
\mathbb{R}.
\]
 We can prove that, for each $\lambda>0$, $(\lambda^{-\frac{1}{2}}B_{\lambda
t})_{t\geq0}$ is also a $G$-Brownian motion. For each $t_{0}>0$.
$(B_{t+t_{0}}-B_{t_{0}})_{t\geq0}$ is also a $G$-Brownian motion.
This is the scaling property of $G$-Brownian motion, which is the
same as that of the usual Brownian motion.

 In
this course we assume without loss of generality that
$\overline{\sigma}=1$ and $\underline{\sigma}=\sigma\leq 1$.
\end{remark}

\begin{theorem}
Let $(\tilde{B}_{t})_{t\geq0}$ be a process defined in a sublinear
expectation space $(\Omega ,\widetilde{\mathcal{H}}$,
$\widetilde{\mathbb{E}}${$)$ such that
\newline\textsl{(i)}
$\tilde{B}_{0}=0$;\newline\textsl{(ii)} For each $t,s\geq0$, the difference $\tilde{B}%
${$_{t+s}-\tilde{B}_{t}$ and }$\tilde{B}${$_{s}$ are identically
distributed and independent to
$(\tilde{B}_{t_{1}},\tilde{B}_{t_{2}},\cdots,\tilde {B}_{t_{n}})$,
for each $n\in \mathbb{N}$ and $0\leq t_{1}\leq\cdots\leq t_{n}\leq
t$.
\newline\textsl{(iii)} }$B_{0}=0$, $\widetilde{\mathbb{E}}${$[\tilde{B}_{t}]=$%
}$\widetilde{\mathbb{E}}${$[-\tilde{B}_{t}]=0$ and $\lim_{t\downarrow0}$%
}$\widetilde{\mathbb{E}}${$[|\tilde{B}_{t}|^{3}]t^{-1}=0$. \newline
Then }$\tilde{B}${ is a
$G_{\underline{\sigma},\overline{\sigma}}$-Brownian motion with
$\overline{\sigma}^{2}=$}$\widetilde{\mathbb{E}}[\tilde{B}_{1}^{2}]$
and
$\underline{\sigma}^{2}=-$}$\widetilde{\mathbb{E}}[-\tilde{B}_{1}^{2}]$.
\end{theorem}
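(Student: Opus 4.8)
The plan is to reduce the statement to the single claim that, for every fixed $t\geq0$, the marginal $\tilde{B}_{t}$ is exactly $\mathcal{N}(0;[\underline{\sigma}^{2}t,\overline{\sigma}^{2}t])$-distributed. Hypothesis (ii) already supplies stationarity and independence of increments in the sense of Definition \ref{Def-3}, so once the law of $\tilde{B}_{s}$ is identified as $G$-normal with the correct variance interval, the increment $\tilde{B}_{t+s}-\tilde{B}_{t}\sim\tilde{B}_{s}$ inherits the right law and every requirement of a $G$-Brownian motion is met. To identify the law of $\tilde{B}_{t}$ I would imitate the characterization of the $G$-normal distribution proved above: fix $\varphi\in C_{l.Lip}(\mathbb{R})$, set $u(t,x):=\widetilde{\mathbb{E}}[\varphi(x+\tilde{B}_{t})]$, and show that $u$ is the unique viscosity solution of the $G$-heat equation (\ref{eq-G-heat}) with $u|_{t=0}=\varphi$ and $G(\alpha)=\tfrac{1}{2}(\overline{\sigma}^{2}\alpha^{+}-\underline{\sigma}^{2}\alpha^{-})$. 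The stationary-independent-increment hypothesis yields directly the semigroup (dynamic-programming) identity $u(t+s,x)=\widetilde{\mathbb{E}}[u(t,x+\tilde{B}_{s})]$, by writing $\tilde{B}_{t+s}=\tilde{B}_{s}+(\tilde{B}_{t+s}-\tilde{B}_{s})$, conditioning on the independent increment, and using $\tilde{B}_{t+s}-\tilde{B}_{s}\sim\tilde{B}_{t}$.

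A preliminary step is to pin down the linear scaling of the variances, namely $\widetilde{\mathbb{E}}[\tilde{B}_{t}^{2}]=\overline{\sigma}^{2}t$ and $-\widetilde{\mathbb{E}}[-\tilde{B}_{t}^{2}]=\underline{\sigma}^{2}t$. Expanding $\tilde{B}_{t+s}^{2}=(\tilde{B}_{t+s}-\tilde{B}_{s})^{2}+2\tilde{B}_{s}(\tilde{B}_{t+s}-\tilde{B}_{s})+\tilde{B}_{s}^{2}$ and conditioning on $\tilde{B}_{s}$, the cross term has no mean uncertainty (the increment is mean-zero and independent of $\tilde{B}_{s}$), so Proposition \ref{Prop-X+Y} gives the additivity $b(t+s)=b(t)+b(s)$ for $b(t):=\widetilde{\mathbb{E}}[\tilde{B}_{t}^{2}]$, and likewise for $\underline{b}(t):=-\widetilde{\mathbb{E}}[-\tilde{B}_{t}^{2}]$. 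Combined with the continuity furnished by hypothesis (iii) through $\widetilde{\mathbb{E}}[|\tilde{B}_{t}|]\leq b(t)^{1/2}$, additivity forces linear scaling with $b(1)=\overline{\sigma}^{2}$ and $\underline{b}(1)=\underline{\sigma}^{2}$ as given. This scaling then supplies the regularity estimates (\ref{|x-y|}) and (\ref{|s|}) for $u$: the space-Lipschitz bound follows from the Lipschitz bound on $\varphi$ and polynomial moment estimates, while the $\tfrac{1}{2}$-Hölder bound in time follows from the semigroup identity together with $\widetilde{\mathbb{E}}[|\tilde{B}_{s}|]\leq\overline{\sigma}\sqrt{s}$.

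The heart of the argument is the viscosity-solution verification. Fixing $(t,x)$ and a smooth test function $\psi$ touching $u$ from one side, I would use the semigroup identity in the form $u(t,x)=\widetilde{\mathbb{E}}[u(t-\delta,x+\tilde{B}_{\delta})]$ for small $\delta>0$, replace $u$ by $\psi$, and Taylor-expand $\psi$ in both variables. The first-order term $\partial_{x}\psi(t,x)\tilde{B}_{\delta}$ vanishes under $\widetilde{\mathbb{E}}$ because $\tilde{B}_{\delta}$ is mean-certain, and the second-order term produces exactly $\widetilde{\mathbb{E}}[\tfrac{1}{2}\partial_{xx}^{2}\psi(t,x)\tilde{B}_{\delta}^{2}]=G(\partial_{xx}^{2}\psi(t,x))\delta$ by positive homogeneity and the variance scaling just established. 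Dividing by $\delta$ and letting $\delta\downarrow0$ yields the one-sided inequality for $\partial_{t}\psi-G(\partial_{xx}^{2}\psi)$ at $(t,x)$ required to make $u$ a viscosity sub- (resp. super-) solution of (\ref{eq-G-heat}). Once $u$ is identified as the unique viscosity solution, the characterization Proposition for the $G$-normal distribution gives $\widetilde{\mathbb{E}}[\varphi(\tilde{B}_{t})]=u(t,0)=\widetilde{\mathbb{E}}[\varphi(\sqrt{t}\,\xi)]$ with $\xi\sim\mathcal{N}(0;[\underline{\sigma}^{2},\overline{\sigma}^{2}])$, i.e. $\tilde{B}_{t}\sim\mathcal{N}(0;[\underline{\sigma}^{2}t,\overline{\sigma}^{2}t])$, which completes the proof.

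I expect the main obstacle to be the control of the Taylor remainder in the viscosity step. The expansion generates higher-order terms of order $\tilde{B}_{\delta}^{3}$ (and $\delta^{2}$ in time), and for the limit $\delta\downarrow0$ to isolate the $G$-operator these must be $o(\delta)$ after applying $\widetilde{\mathbb{E}}$. This is exactly where hypothesis (iii) is used: the condition $\lim_{t\downarrow0}\widetilde{\mathbb{E}}[|\tilde{B}_{t}|^{3}]t^{-1}=0$ guarantees $\widetilde{\mathbb{E}}[|\tilde{B}_{\delta}|^{3}]=o(\delta)$, so that the cubic remainder is negligible relative to the $\delta$-order drift and diffusion terms. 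Arranging the sub- and super-additivity of $\widetilde{\mathbb{E}}$ so that both viscosity inequalities survive the passage to the limit is the delicate bookkeeping, but it is entirely parallel to the remainder control in the central limit theorem proof above, where the third-moment term played precisely the same role.
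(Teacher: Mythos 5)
Your proposal is correct and follows essentially the same route as the paper's own proof: establish linear scaling $\widetilde{\mathbb{E}}[\tilde{B}_{t}^{2}]=\overline{\sigma}^{2}t$, $-\widetilde{\mathbb{E}}[-\tilde{B}_{t}^{2}]=\underline{\sigma}^{2}t$ via the additivity-plus-continuity argument, derive the dynamic programming identity $u(t,x)=\widetilde{\mathbb{E}}[u(t-\delta,x+\tilde{B}_{\delta})]$ from independent stationary increments, verify the viscosity sub- and supersolution property by Taylor expansion with the cubic remainder killed by hypothesis (iii), and conclude by uniqueness for the $G$-heat equation. The only cosmetic slip is in the continuity step: what makes $b(t)\to 0$ as $t\downarrow 0$ is the bound $b(t)\leq\widetilde{\mathbb{E}}[|\tilde{B}_{t}|^{3}]^{2/3}$ (as in the paper), not the inequality $\widetilde{\mathbb{E}}[|\tilde{B}_{t}|]\leq b(t)^{1/2}$ you cite, which runs in the wrong direction for this purpose.
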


{ {\medskip \noindent \textbf{Proof. }}We need only to prove that $\tilde{B}%
_{t}$ is  $\mathcal{N}(0;[\underline{\sigma}^{2}t,\overline{\sigma}^{2}%
t])$-distributed. We first prove that
\[
\widetilde{\mathbb{E}}[\tilde{B}_{t}^{2}]=\overline{\sigma}^{2}t\text{
\ and
}-\widetilde{\mathbb{E}}[-\tilde{B}_{t}^{2}]=\underline{\sigma}^{2}t.
\]
We set $b(t):=\widetilde{\mathbb{E}}${$[\tilde{B}_{t}^{2}]$. Then
}$b(0)=0$
and $\lim_{t\downarrow0}b(t)\leq \widetilde{\mathbb{E}}${$[|\tilde{B}_{t}%
|^{3}]^{\frac{2}{3}}\rightarrow0$.} {Since for each $t,s\geq0$,%
\begin{align*}
b(t+s) &  =\widetilde{\mathbb{E}}[(\tilde{B}_{t+s})^{2}]=\widetilde
{\mathbb{E}}[(\tilde{B}_{t+s}-\tilde{B}_{s}+\tilde{B}_{s})^{2}]=\widetilde
{\mathbb{E}}[(\tilde{B}_{t+s}-\tilde{B}_{s})^{2}+\tilde{B}_{s}^{2}]\\
&  =b(t)+b(s).
\end{align*}
Thus }$b(t)$ is linear and uniformly continuous in $t$; hence there
exists a
constant $\overline{\sigma}\geq0$ such that $\widetilde{\mathbb{E}}$%
{$[\tilde{B}_{t}^{2}]=\overline{\sigma}^{2}t$. Similarly, there
exists a
constant $\underline{\sigma}\in \lbrack0,\overline{\sigma}]$ such that $-$%
}$\widetilde{\mathbb{E}}${$[-\tilde{B}_{t}^{2}]=\underline{\sigma}^{2}t$.
We
have }$\overline{\sigma}^{2}=${ }$\widetilde{\mathbb{E}}$$[\tilde{B}_{1}%
^{2}]\geq-\widetilde{\mathbb{E}}${$[-\tilde{B}_{1}^{2}]=$}$\underline{\sigma
}^{2}$.{ } }

{ {We now prove that }$\tilde{B}_{t}$ is
$\mathcal{N}(0;[\underline{\sigma
}^{2}t,\overline{\sigma}^{2}t])$-distributed. We just need to prove
that, for each fixed $\varphi \in C_{l.Lip}(\mathbb{R)}$, the
function
\[
u(t,x):=\widetilde{\mathbb{E}}[\varphi(x+\tilde{B}_{t})],\  \
(t,x)\in
\lbrack0,\infty)\times \mathbb{R}%
\]
is the viscosity solution of the following $G_{\underline{\sigma}%
,\overline{\sigma}}$-heat equation}%
\begin{equation}
\partial_{t}u-G(\partial_{xx}^{2}u)=0,\  \text{for }t>0;\  \ u|_{t=0}%
=\varphi.\label{G-heat-BM}%
\end{equation}
{with }${G(a)=G_{\underline{\sigma},\overline{\sigma}}(a)=\frac{1}%
{2}(\overline{\sigma}^{2}a^{+}-\underline{\sigma}^{2}a^{-})}$. Thus
for each
$\mathcal{N}(0;[\underline{\sigma}^{2},\overline{\sigma}^{2}])$-distributed
$\xi$, we have { }%
\[
\widetilde{\mathbb{E}}[\varphi(\tilde{B}_{t})]=\widetilde{\mathbb{E}}%
[\varphi(\sqrt{t}\xi)],\  \  \forall \varphi \in
C_{l.Lip}(\mathbb{R}).
\]
Thus $\tilde{B}_{t}\sim \sqrt{t}\xi \sim \mathcal{N}(0;[\underline{\sigma}%
^{2}t,\overline{\sigma}^{2}t])$.

{ {We first prove that }$u$ is locally Lipschitz in $x$ and locally
$\frac {1}{2}$-H\"{o}lder in $t$. In fact, {for each fixed }$t$,{
}$u(t,\cdot)\in ${$C_{l.Lip}(\mathbb{R)}$ }since
\begin{align*}
|\widetilde{\mathbb{E}}[\varphi(x+\tilde{B}_{t})]-\widetilde{\mathbb{E}%
}[\varphi(y+\tilde{B}_{t})]|  & \leq\widetilde{\mathbb{E}}[|\varphi
(x+\tilde{B}_{t})-\varphi(y+\tilde{B}_{t})|]\\
&  \leq|\widetilde{\mathbb{E}}[C(1+|x|^{m}+|y|^{m}+|\tilde{B}_{t}|^m)|x-y|]\\
&  \leq C_1(1+|x|^{m}+|y|^{m})|x-y|.
\end{align*}
{For each $\delta \in \lbrack0,t]$, since
$\tilde{B}_{t}-\tilde{B}_{\delta}$ is independent to
$\tilde{B}_{\delta}$, w}e also have
\begin{align*}
u(t,x)  &
=\widetilde{\mathbb{E}}[\mathbb{\varphi(}x+\tilde{B}_{\delta
}+(\tilde{B}_{t}-\tilde{B}_{\delta})]\\
&  =\widetilde{\mathbb{E}}[\widetilde{\mathbb{E}}[\varphi(y+(\tilde{B}%
_{t}-\tilde{B}_{\delta}))]_{y=x+\tilde{B}_{\delta}}],
\end{align*}
hence%
\begin{equation}
u(t,x)=\widetilde{\mathbb{E}}[u(t-\delta,x+\tilde{B}_{\delta})]. \label{Dyna}%
\end{equation}
{Thus}%
\begin{align*}
|u(t,x)-u(t-\delta,x)|  &
=|\widetilde{\mathbb{E}}[u(t-\delta,x+\tilde
{B}_{\delta})-u(t-\delta,x)]|\\
&  \leq
\widetilde{\mathbb{E}}[|u(t-\delta,x+\tilde{B}_{\delta})-u(t-\delta
,x)|]\\
&  \leq
\widetilde{\mathbb{E}}[C(1+|x|^{m}+|\tilde{B}_{\delta}|^{m})|\tilde
{B}_{\delta}|]\\
&  \leq C_{1}(1+|x|^{m})\overline{\sigma}\delta^{\frac{1}{2}}.
\end{align*}
}

{ To prove that $u$ is a viscosity solution of {(\ref{G-heat-BM})},
{we fix a
time-space $(t,x)\in(0,\infty)\times \mathbb{R}$} and{ let $v\in C_{b}%
^{2,3}([0,\infty)\times \mathbb{R})$ be such that $v\geq u$ and $v(t,x)=u(t,x)$%
. From (\ref{Dyna}) we have
\[
v(t,x)=\widetilde{\mathbb{E}}[u(t-\delta,x+\tilde{B}_{\delta})]\leq
\widetilde{\mathbb{E}}[v(t-\delta,x+\tilde{B}_{\delta})]
\]
Therefore by Taylor's expansion,
\begin{align*}
0 &  \leq \widetilde{\mathbb{E}}[v(t-\delta,x+\tilde{B}_{\delta})-v(t,x)]\\
&
=\widetilde{\mathbb{E}}[v(t-\delta,x+\tilde{B}_{\delta})-v(t,x+\tilde
{B}_{\delta})+(v(t,x+\tilde{B}_{\delta})-v(t,x))]\\
&  =\widetilde{\mathbb{E}}[-\partial_{t}v(t,x)\delta+\partial_{x}%
v(t,x)\tilde{B}_{\delta}+\frac{1}{2}\partial_{xx}^{2}v(t,x)\tilde{B}_{\delta
}^{2}+I_{\delta}]\\
&
\leq-\partial_{t}v(t,x)\delta+\widetilde{\mathbb{E}}[\frac{1}{2}\partial_{xx}^{2}v(t,x)\tilde
{B}_{\delta}^{2}]-\widetilde{\mathbb{E}}[I_{\delta}]\\
&
=-\partial_{t}v(t,x)\delta+G(\partial_{xx}^{2}(t,x))\delta-\widetilde
{\mathbb{E}}[I_{\delta}].
\end{align*}
where}%
\begin{align*}
I_{\delta} &  =\int_{0}^{1}-[\partial_{t}v(t-\beta
\delta,x+\tilde{B}_{\delta
})-\partial_{t}v(t,x)]\delta d\beta \\
&  +\int_{0}^{1}\int_{0}^{1}[\partial_{xx}^{2}v(t,x+\alpha \beta
\tilde {B}_{\delta})-\partial_{xx}^{2}v(t,x)]\alpha d\beta d\alpha
\tilde {B}_{\delta}^2
\end{align*}
With the assumption \textsl{(iii)} we can check that $\lim_{\delta
\downarrow 0}\widetilde{\mathbb{E}}[|I_{\delta}|]\delta^{-1}=0$;
from which we get
$\partial_{t}v(t,x)-G(\partial_{xx}^{2}(t,x))\leq0$; hence $u$ is a
viscosity supersolution of {(\ref{G-heat-BM}). We can analogously
prove that }$u$ is a
viscosity subsolution. But by the definition of $G$-normal distribution%
\[
\tilde{B}_{t}\  \sim \  \sqrt{t}\xi \  \sim \ {\mathcal{N}(0;[\underline{\sigma}%
^{2}t,\overline{\sigma}^{2}t]).}%
\]
Thus $(\tilde{B}_{t})_{t\geq0}$ is a $G$-Brownian motion. }

\subsection{{Existence of{ $G$-Brownian motion}}}

{ {In the rest of this course, we denote by
$\Omega=C_{0}(\mathbb{R}^{+})$ the space of all $\mathbb{R}$--valued
continuous paths $(\omega_{t})_{t\in \mathbb{R}^{+}}$ with
$\omega_{0}=0$, equipped with the distance
\[
\rho(\omega^{1},\omega^{2}):=\sum_{i=1}^{\infty}2^{-i}[(\max_{t\in
\lbrack 0,i]}|\omega_{t}^{1}-\omega_{t}^{2}|)\wedge1].
\]
For each fixed $T\geq0$, we consider the following space of random variables:}%
\begin{align*}
(\mathcal{H}_T^0)=L_{ip}^{0}(\mathcal{F}_{T})  &  :=\{X(\omega)=\varphi(\omega_{t_{1}}%
,\cdots,\omega_{t_{m}}),\  \forall m\geq1,\\
\  &  \  \  \  \  \; \  \  \  \  \  \  \  \ t_{1},\cdots,t_{m}\in
\lbrack0,T],\forall \varphi \in C_{l.Lip}(\mathbb{R}^{m})\}.
\end{align*}
 It is clear that $\mathcal{H}_t^0\subseteq L_{ip}^{0}%
(\mathcal{F}_{T})$, for $t\leq T$. We also denote%
\[
(\mathcal{H}^0)=L_{ip}^{0}(\mathcal{F}):=%
{\displaystyle \bigcup \limits_{n=1}^{\infty}}
L_{ip}^{0}(\mathcal{F}_{n}).
\]
 }

\begin{remark}
{ { Obviously $C_{l.Lip}(\mathbb{R}^{m})$ and then $L_{ip}%
^{0}(\mathcal{F}_{T})$, $L_{ip}^{0}(\mathcal{F})$ are vector
lattices. Moreover, since $\varphi,\psi \in
C_{l.Lip}(\mathbb{R}^{m})$ implies
$\varphi \cdot \psi \in C_{l.Lip}(\mathbb{R}^{m})$ thus $X$, $Y\in L_{ip}%
^{0}(\mathcal{F}_{T})$ implies $X\cdot Y\in
L_{ip}^{0}(\mathcal{F}_{T})$. } }
\end{remark}

We will consider the canonical space and set
$B_{t}(\omega)=\omega_{t}$, $t\in \lbrack0,\infty)$, for $\omega \in
\Omega$.

For each fixed $T\in \lbrack0,\infty)$, we set
\[
L_{ip}(\mathcal{F}_{T}):=\{
\varphi(B_{t_{1}},\cdots,B_{t_{n}}):0\leq t_{1},\cdots,t_{n}\leq T,\
\varphi \in C_{l.Lip}(\mathbb{R}^{n}),\ n\in
\mathbb{N}\}.\text{ }%
\]
In particular, for each $t\in \lbrack0,\infty)$, $B_{t}\in L_{ip}%
(\mathcal{F}_{t})$. {We are given a sublinear function $G(a)=G_{
\sigma,1}(a)=\frac{1}{2}(a^{+}-\sigma^{2}a^{-})$,
$a\in \mathbb{R}$. Let }$\xi$ be a $G$-normal distributed, or {$\mathcal{N}%
(0;[\sigma^{2},1])$-distributed,} random variable in a sublinear
expectation space
$(\widetilde{\Omega},\widetilde{\mathcal{H}},\widetilde{\mathbb{E}})${.}

We now introduce a sublinear expectation $\mathbb{\hat{E}}$ defined
on $\mathcal{H}_{T}^{0}=L_{ip}^{0}(\mathcal{F}_{T})$, as well as on
$\mathcal{H}^{0}=L_{ip}^{0}(\mathcal{F})$, via the following
procedure: For each  $X\in \mathcal{H}_T^0$ with
\[
X=\varphi(B_{t_{1}}-B_{t_{0}},B_{t_{2}}-B_{t_{1}},\cdots,B_{t_{m}}-B_{t_{m-1}%
})
\]
{ for some {$\varphi \in C_{l.Lip}(\mathbb{R}^{m})$ and $0=t_{0}<
t_{1}<\cdots<t_{m}<\infty$, we set%
\[
\mathbb{\hat{E}}[\varphi(B_{t_{1}}-B_{t_{0}},B_{t_{2}}-B_{t_{1}}%
,\cdots,B_{t_{m}}-B_{t_{m-1}})]
\]
\[
=\widetilde{\mathbb{E}}[\varphi(\sqrt{t_{1}-t_{0}}\xi_{1},\cdots,\sqrt
{t_{m}-t_{m-1}}\xi_{m})],
\]
where $(\xi_{1},\cdots,\xi_{n})$ is an $m$-dimensional $G$-normal
distributed random vector in a sublinear expectation space
$(\widetilde{\Omega
},\widetilde{\mathcal{H}},\widetilde{\mathbb{E}})$ such that $\xi
_{i}\sim\mathcal{N}(0;[\sigma^{2},1])$ and such that $\xi_{i+1}$ is
independent to $(\xi_{1},\cdots,\xi_{i})$ for each $i=1,\cdots,m-1$.

{{The related conditional expectation of $X=\varphi(B_{t_{1}},B_{t_{2}%
}-B_{t_{1}},\cdots,B_{t_{m}}-B_{t_{m-1}})$ under
$\mathcal{H}_{t_{j}}$ is
defined by%
\begin{align}
\mathbb{\hat{E}}[X|\mathcal{H}_{t_{j}}]  &
=\mathbb{\hat{E}}[\varphi
(B_{t_{1}},B_{t_{2}}-B_{t_{1}},\cdots,B_{t_{m}}-B_{t_{m-1}})|\mathcal{H}%
_{t_{j}}]\label{Condition}\\
&  =\psi(B_{t_{1}},\cdots,B_{t_{j}}-B_{t_{j-1}})\nonumber
\end{align}
where}}%
\[
\psi(x_{1},\cdots,x_{j})=\widetilde{\mathbb{E}}[\varphi(x_{1},\cdots
,x_{j},\sqrt{t_{j+1}-t_{j}}\xi_{j+1},\cdots,\sqrt{t_{m}-t_{m-1}}\xi_{m})]
\]
{{It is easy to check that $\mathbb{\hat{E}}[\cdot]$ consistently
defines a sublinear expectation on as well as on
$L_{ip}^{0}(\mathcal{F})$ satisfying (a)--(d) of Definition
\ref{Def-1}.} }

\begin{definition}
{ { The expectation $\mathbb{\hat{E}}[\cdot]:L_{ip}^{0}(\mathcal{F}%
)\mapsto \mathbb{R}$ defined through the above procedure is called $G$%
\textbf{--expectation}. The corresponding canonical process
$(B_{t})_{t\geq0}$ in the sublinear expectation space
$(\Omega,\mathcal{H},\mathbb{\hat{E}})$ is called  a $G$--Brownian
motion. } }
\end{definition}

\begin{proposition}
 \label{Prop-1-7-1}{We list the properties of $\mathbb{\hat{E}}%
[\cdot|\mathcal{H}_{t}]$ that hold for each $X,Y\in \mathcal{H}^0=L_{ip}%
^{0}(\mathcal{F})$:}\newline{\textbf{(a') }{If $X\geq Y$, then
$\mathbb{\hat
{E}}[X|\mathcal{H}_{t}]\geq \mathbb{\hat{E}}[Y|\mathcal{H}_{t}]$.\newline%
}\textbf{(b') }$\mathbb{\hat{E}}[\eta|\mathcal{H}_{t}]=\eta$, for each }%
$t\in \lbrack0,\infty)$ and {{$\eta \in \mathcal{H}_t^0$%
.\newline \textbf{(c') }$\mathbb{\hat{E}}[X|\mathcal{H}_{t}]-\mathbb{\hat{E}%
}[Y|\mathcal{H}_{t}]\leq \mathbb{\hat{E}}[X-Y|\mathcal{H}_{t}].$\newline%
{\textbf{(d')} $\mathbb{\hat{E}}[\eta X|\mathcal{H}_{t}]=\eta^{+}%
\mathbb{\hat{E}}[X|\mathcal{H}_{t}]+\eta^{-}\mathbb{\hat{E}}[-X|\mathcal{H}%
_{t}]$, for each $\eta \in{\mathcal{H}_t^0}.$}\textbf{ }}}\newline
We also have%
\[
{{\mathbb{\hat{E}}[\mathbb{\hat{E}}[X|\mathcal{H}_{t}]|\mathcal{H}%
_{s}]=\mathbb{\hat{E}}[X|\mathcal{H}_{t\wedge s}],\ }}\text{ in
particular }
\  \ \mathbb{\hat{E}}[\mathbb{\hat{E}}[X|\mathcal{H}_{t}]]=\mathbb{\hat{E}%
}[X].%
\]
For each $X\in{L_{ip}^{0}}(\mathcal{F}_{T}^{t})$, $\mathbb{\hat{E}%
}[X|\mathcal{H}_{t}]=\mathbb{\hat{E}}[X]$, where $L_{ip}%
^{0}(\mathcal{F}_{T}^{t})=\mathcal{H}_{T}^{t}$ is the linear space
of random
variables of the form %
\begin{align*}
&  {\varphi(B_{t_{2}}-B_{t_{1}},B_{t_{3}}-B_{t_{2}},\cdots,B_{t_{m+1}%
}-B_{t_{m}}),}\\
\  &  \  \  m=1,2,\cdots,\  \varphi \in C_{l.Lip}(\mathbb{R}^{m}),\ t_{1}%
,\cdots,t_{m},t_{m+1},\in \lbrack t,\infty).%
\end{align*}
\end{proposition}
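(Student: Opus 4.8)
The plan is to reduce every assertion to the corresponding property of the auxiliary sublinear expectation $\widetilde{\mathbb{E}}$, applied coordinatewise. The foundation is a consistency lemma: any two elements $X,Y\in L_{ip}^{0}(\mathcal{F})$ can be written over a common partition $0=t_{0}<t_{1}<\cdots<t_{m}$ as $X=\varphi(B_{t_{1}}-B_{t_{0}},\cdots,B_{t_{m}}-B_{t_{m-1}})$ and $Y=\phi(\cdots)$ with $\varphi,\phi\in C_{l.Lip}(\mathbb{R}^{m})$, and the number assigned by the defining formula must not depend on the chosen representation. First I would verify that inserting a redundant time point $t_{k}<s<t_{k+1}$ leaves both $\mathbb{\hat{E}}[X]$ and the conditioned quantity unchanged; this rests on the $G$-normal convolution identity $\sqrt{t_{k+1}-s}\,\xi'+\sqrt{s-t_{k}}\,\xi''\sim\sqrt{t_{k+1}-t_{k}}\,\xi$ guaranteed by $\xi\sim\mathcal{N}(0;[\sigma^{2},1])$, together with the independence of the $\xi_{i}$ furnished by Proposition \ref{Prop-2-10}. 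Since any two partitions admit a common refinement, consistency follows.

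With this common normal form in hand, I would prove (a')--(d') by freezing the ``past'' coordinates. Writing $\mathbb{\hat{E}}[X|\mathcal{H}_{t_{j}}]=\psi_{X}(B_{t_{1}},\cdots,B_{t_{j}}-B_{t_{j-1}})$ with $\psi_{X}(x)=\widetilde{\mathbb{E}}[\varphi(x,\sqrt{t_{j+1}-t_{j}}\,\xi_{j+1},\cdots)]$, each claim becomes a pointwise statement about $\psi$. For (a'), $X\geq Y$ forces $\varphi\geq\phi$ on $\mathbb{R}^{m}$ (the increments range over all of $\mathbb{R}$ on the canonical path space), so monotonicity of $\widetilde{\mathbb{E}}$ gives $\psi_{X}\geq\psi_{Y}$; for (c'), sub-additivity of $\widetilde{\mathbb{E}}$ gives $\psi_{X}-\psi_{Y}\leq\psi_{X-Y}$; for (b') and the final identity $\mathbb{\hat{E}}[X|\mathcal{H}_{t}]=\mathbb{\hat{E}}[X]$ when $X\in L_{ip}^{0}(\mathcal{F}_{T}^{t})$, one notes that $\eta\in\mathcal{H}_{t}^{0}$ involves no future increments and so survives the integration untouched (constant preservation), whereas an $X$ depending only on increments after $t$ is fully integrated out and yields the constant $\mathbb{\hat{E}}[X]$. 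Property (d') follows from the scalar identity $\widetilde{\mathbb{E}}[aZ]=a^{+}\widetilde{\mathbb{E}}[Z]+a^{-}\widetilde{\mathbb{E}}[-Z]$, valid for any real $a$ by positive homogeneity, applied with $a=\eta(x_{1},\cdots,x_{j})$ held fixed inside the conditional integration.

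For the tower property I would split on the order of $s$ and $t$. When $s\geq t$, the quantity $\mathbb{\hat{E}}[X|\mathcal{H}_{t}]$ already lies in $\mathcal{H}_{t}^{0}\subseteq\mathcal{H}_{s}^{0}$, so (b') collapses the outer conditioning and returns $\mathbb{\hat{E}}[X|\mathcal{H}_{t}]=\mathbb{\hat{E}}[X|\mathcal{H}_{t\wedge s}]$. When $s<t$, integrating out the increments after $t$ and then those between $s$ and $t$ must equal integrating out all increments after $s$ at once; this is precisely the defining independence relation $\widetilde{\mathbb{E}}[\psi(X,Y)]=\widetilde{\mathbb{E}}[\widetilde{\mathbb{E}}[\psi(x,Y)]_{x=X}]$ for the nested family $(\xi_{i})$, so the two-stage integration telescopes onto the one-stage integration. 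The special case $\mathbb{\hat{E}}[\mathbb{\hat{E}}[X|\mathcal{H}_{t}]]=\mathbb{\hat{E}}[X]$ is then $s=0$.

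I expect the main obstacle to be the consistency lemma, i.e.\ establishing that the formula is independent of the representing partition. Everything downstream is a routine transcription of the axioms of $\widetilde{\mathbb{E}}$, but well-definedness genuinely requires the $G$-normal scaling identity and the stationarity and independence of increments built into the construction; moreover it silently underlies all of (a')--(d') and the tower property, each of which presupposes that $X$ and $Y$ (or $X$ and its own conditional expectation) can be brought onto a single partition, so it must be checked with some care before the rest goes through.
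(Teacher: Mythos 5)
Your proposal is correct and follows exactly the route the paper intends but leaves unwritten (the text merely asserts that $\mathbb{\hat{E}}[\cdot]$ ``consistently defines'' a sublinear expectation): reduce (a')--(d') and the tower property to the axioms of $\widetilde{\mathbb{E}}$ through the defining formula with frozen past coordinates, with well-definedness under partition refinement resting on the $G$-normal scaling identity, i.e.\ the semigroup relation $u^{u^{\varphi}(s,\cdot)}(t,\cdot)=u^{\varphi}(t+s,\cdot)$ already established in the construction of $G$-normal random variables. You are also right to single out the consistency lemma as the one step requiring genuine care, since the paper compresses it into the single word ``consistently.''
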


\begin{remark}
(b') and (c') imply:%
\[
{{\mathbb{\hat{E}}[X+\eta|\mathcal{H}_{t}]=\mathbb{\hat{E}}[X|\mathcal{H}%
_{t}]+\eta.}}%
\]
Moreover, if $Y\in L_{ip}^{0}(\mathcal{F})$ satisfies ${{\mathbb{\hat{E}%
}[Y|\mathcal{H}_{t}]=-\mathbb{\hat{E}}[-Y|\mathcal{H}_{t}]}}$ then%
\[
{{\mathbb{\hat{E}}[X+Y|\mathcal{H}_{t}]=\mathbb{\hat{E}}[X{|\mathcal{H}_{t}%
]}+{\mathbb{\hat{E}}[Y}|\mathcal{H}_{t}].}}%
\]

\end{remark}

\begin{example}
{ { \label{Exam-1}For each $s<t$, we have
$\mathbb{\hat{E}}[B_{t}-B_{s}|\mathcal{H}_{s}]=0$ and, for
$n=1,2,\cdots,$
\[
\mathbb{\hat{E}}[|B_{t}-B_{s}|^{n}|\mathcal{H}_{s}]=\mathbb{\hat{E}}%
[|B_{t-s}|^{n}]=\frac{1}{\sqrt{2\pi(t-s)}}\int_{-\infty}^{\infty}|x|^{n}%
\exp(-\frac{x^{2}}{2(t-s)})dx.
\]
But we have%
\[
\mathbb{\hat{E}}[-|B_{t}-B_{s}|^{n}|\mathcal{H}_{s}]=\mathbb{\hat{E}%
}[-|B_{t-s}|^{n}]=-\sigma^{n}\mathbb{\hat{E}}[|B_{t-s}|^{n}].
\]
Exactly as in classical cases, we have
\begin{align*}
\mathbb{\hat{E}}[(B_{t}-B_{s})^{2}|\mathcal{H}_{s}]  &
=t-s,\  \  \  \mathbb{\hat{E}}[(B_{t}-B_{s})^{4}|\mathcal{H}_{s}]=3(t-s)^{2},\\
\mathbb{\hat{E}}[(B_{t}-B_{s})^{6}|\mathcal{H}_{s}]  &  =15(t-s)^{3}%
,\  \  \mathbb{\hat{E}}[(B_{t}-B_{s})^{8}|\mathcal{H}_{s}]=105(t-s)^{4},\\
\mathbb{\hat{E}}[|B_{t}-B_{s}||\mathcal{H}_{s}]  &  =\frac{\sqrt{2(t-s)}%
}{\sqrt{\pi}},\  \  \mathbb{\hat{E}}[|B_{t}-B_{s}|^{3}|\mathcal{H}_{s}%
]=\frac{2\sqrt{2}(t-s)^{3/2}}{\sqrt{\pi}},\\
\mathbb{\hat{E}}[|B_{t}-B_{s}|^{5}|\mathcal{H}_{s}]  &
=8\frac{\sqrt {2}(t-s)^{5/2}}{\sqrt{\pi}}.
\end{align*}
}}
\end{example}

\begin{definition}
{ A process }$(M_{t})_{t\geq0}$ is called a $G$-martingale
(respectively, $G$-supermartingale; $G$-submartingale) if for each
$t\in \lbrack0,\infty)$,
$M_{t}\in L_{ip}^{0}(\Omega)$ and for each $s\in \lbrack0,t]$, we have%
\[
\mathbb{E}[M_{t}|\mathcal{H}_{s}]=M_{s},\  \  \ (\text{respectively,
\ \ }\leq M_{s};\  \  \  \geq M_{s}).
\]

\end{definition}

\begin{example}
$(B_{t})_{t\geq0}$ and $(-B_{t})_{t\geq0}$ are $G$-Martingale. $(B_{t}%
^{2})_{t\geq0}$ is a $G$-submartingale since%
\begin{align*}
\mathbb{\hat{E}}[B_{t}^{2}|\mathcal{H}_{s}]  &  =\mathbb{\hat{E}}[(B_{t}%
-B_{s})^{2}+B_{s}^{2}+2B_{s}(B_{t}-B_{s})|\mathcal{H}_{s}]\\
&  =\mathbb{\hat{E}}[(B_{t}-B_{s})^{2}]+B_{s}^{2}=t-s+B_{s}^{2}\geq B_{s}%
^{2}\text{.}%
\end{align*}

\end{example}

\newpage

\subsection{{Complete spaces of sublinear expectation}}

We briefly recall the notion of nonlinear expectations introduced in
\cite{Peng2005}. Following Daniell (see Daniell 1918 \cite{Daniell})
in his famous Daniell's integration, we begin with a vector lattice.

Let $\Omega$ be a given set and let $\mathcal{H}$ be a vector
lattice of real functions defined on $\Omega$ containing $1$,
namely, $\mathcal{H}$ is a linear space such that $1\in \mathcal{H}$
and that $X\in \mathcal{H}$ implies $|X|\in \mathcal{H}$.
$\mathcal{H}$ is a space of random variables. We assume the
functions on $\mathcal{H}$ are all bounded. Notice that
\[
a\wedge b=\min \{a,b\}=\frac{1}{2}(a+b-|a-b|),\  \ a\vee
b=-[(-a)\wedge(-b)].
\]
Thus $X$, $Y\in \mathcal{H}$ implies that $X\wedge Y$, $X\vee Y$, $X^{+}%
=X\vee0$ and $X^{-}=(-X)^{+}$ are all in $\mathcal{H}$.  }

{In this course we are mainly concerned with space $\mathcal{H}^0=L_{ip}%
^{0}(\mathcal{F})$. It satisfies%
\[
X_{1},\cdots,X_{n}\in \mathcal{H}\  \Longrightarrow \  \  \varphi(X_{1}%
,\cdots,X_{n})\in \mathcal{H},\  \  \forall \varphi \in
C_{l.Lip}(\mathbb{R}^{n}).
\]

\begin{remark}
{  { \label{Rem-2}For each fixed }$p\geq1$, we observe that $\mathcal{H}%
_{0}^{p}=\{X\in \mathcal{H}$, $\mathbb{\hat{E}}[|X|^{p}]=0\}$ is a
linear subspace of $\mathcal{H}$. To take $\mathcal{H}_{0}^{p}$ as
our null space, we introduce the quotient space
$\mathcal{H}/\mathcal{H}_{0}^{p}$. Observe that, for every
$\mathbf{\{}X\} \in \mathcal{H}/\mathcal{H}_{0}^{p}$ with a
representation $X\in \mathcal{H}$, we can define an expectation
$\mathbb{\hat {E}}\mathbf{[\{}X\}]:=\mathbb{\hat{E}}[X]$ which still
satisfies (a)--(d) of Definition \ref{Def-1}.  We set  $\left \Vert
X\right \Vert _{p}:=\mathbb{\hat{E}}[|X|^{p}]^{\frac{1}{p}}$.
$\left \Vert \cdot \right \Vert _{p}$ forms a Banach norm in $\mathcal{H}%
/\mathcal{H}_{0}^{p}$. }
\end{remark}

\begin{lemma}
{ {F{or $r>0$ and $1<p,q<\infty$ with $\frac{1}{p}+\frac{1}{q}=1$,
we have
\begin{align}
|a+b|^{r}  &  \leq \max \{1,2^{r-1}\}(|a|^{r}+|b|^{r}),\  \  \forall
a,b\in \mathbb{R};\label{ee4.3}\\
|ab|  &  \leq \frac{|a|^{p}}{p}+\frac{|b|^{q}}{q}. \label{ee4.4}%
\end{align}
} } }
\end{lemma}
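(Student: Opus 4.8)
The plan is to treat the two inequalities separately, each by reducing to a convexity (or concavity) statement about an elementary real function of one variable; neither requires any of the machinery developed earlier in the paper.

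For the first inequality I would begin by observing that, since $r>0$, the map $t\mapsto t^{r}$ is increasing on $[0,\infty)$, so the ordinary triangle inequality $|a+b|\le|a|+|b|$ already gives $|a+b|^{r}\le(|a|+|b|)^{r}$. It then suffices to bound $(|a|+|b|)^{r}$ by the right-hand side, and for this I would split into two cases according to the sign of $r-1$. When $0<r\le1$ the function $t\mapsto t^{r}$ is concave on $[0,\infty)$ with value $0$ at the origin, hence subadditive, which yields $(|a|+|b|)^{r}\le|a|^{r}+|b|^{r}$; here $2^{r-1}\le1$, so the asserted constant $\max\{1,2^{r-1}\}$ equals $1$ and the estimate is exactly what we want. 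When $r>1$ the function $t\mapsto t^{r}$ is convex, and evaluating convexity at the midpoint $\tfrac{|a|+|b|}{2}$ gives $\left(\tfrac{|a|+|b|}{2}\right)^{r}\le\tfrac{|a|^{r}+|b|^{r}}{2}$, that is $(|a|+|b|)^{r}\le2^{r-1}(|a|^{r}+|b|^{r})$; since $r>1$ here $\max\{1,2^{r-1}\}=2^{r-1}$, matching the claim.

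For the second inequality, which is Young's inequality, I would first dispose of the trivial cases $a=0$ or $b=0$, where the left-hand side is $0$ and the bound is immediate. For $a,b\neq0$ I would exploit the convexity of the exponential together with the conjugacy relation $\tfrac1p+\tfrac1q=1$: writing $|ab|=\exp\!\left(\tfrac1p\log|a|^{p}+\tfrac1q\log|b|^{q}\right)$ and applying convexity of $\exp$ with the weights $\tfrac1p,\tfrac1q$, which sum to $1$, gives $|ab|\le\tfrac1p\exp(\log|a|^{p})+\tfrac1q\exp(\log|b|^{q})=\tfrac{|a|^{p}}{p}+\tfrac{|b|^{q}}{q}$. (Equivalently one could invoke concavity of $\log$ directly.)

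Neither part presents a genuine obstacle, as these are classical elementary estimates. The only point demanding a little care is the case distinction in the first inequality: one must correctly align the ranges $r\le1$ and $r>1$ with the two branches of $\max\{1,2^{r-1}\}$, and remember first to reduce to nonnegative arguments via the triangle inequality before invoking (sub)additivity of the power function.
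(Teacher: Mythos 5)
Your proof is correct, and both inequalities are handled by the standard arguments: the triangle inequality followed by subadditivity of $t\mapsto t^{r}$ (concavity with value $0$ at the origin) for $0<r\leq1$ and midpoint convexity for $r>1$, then Young's inequality via convexity of $\exp$ with the conjugate weights $\tfrac1p,\tfrac1q$. The paper itself states this lemma without proof, treating it as a classical elementary fact, so your write-up simply supplies the canonical argument that the author left implicit.
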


\begin{proposition}
{ { For each $X,Y\in L_{ip}^{0}(\mathcal{F})$, we have{
\begin{align}
\mathbb{\hat{E}}[|X+Y|^{r}]  &  \leq C_{r}(\mathbb{\hat{E}}[|X|^{r}%
]+\mathbb{\hat{E}[}|Y|^{r}]),\label{ee4.5}\\
\mathbb{\hat{E}}[|XY|]  &  \leq \mathbb{\hat{E}}[|X|^{p}]^{1/p}\cdot
\mathbb{\hat{E}}[|Y|^{q}]^{1/q},\label{ee4.6}\\
\mathbb{\hat{E}}[|X+Y|^{p}]^{1/p}  &  \leq \mathbb{\hat{E}}[|X|^{p}%
]^{1/p}+\mathbb{\hat{E}}[|Y|^{p}]^{1/p}. \label{ee4.7}%
\end{align}
In particular, for $1\leq p<p^{\prime}$, we have $\mathbb{\hat{E}}%
[|X|^{p}]^{1/p}\leq
\mathbb{\hat{E}}[|X|^{p^{\prime}}]^{1/p^{\prime}}.$ } } }
\end{proposition}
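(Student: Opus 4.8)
The plan is to transplant the classical proofs of the Hölder and Minkowski inequalities, replacing linearity throughout by the two sublinearity properties (c) sub-additivity and (d) positive homogeneity, together with (a) monotonicity and (b) constant preservation. The elementary pointwise inequalities (\ref{ee4.3}) and (\ref{ee4.4}) of the preceding Lemma will supply, in each case, a pointwise bound to which monotonicity is then applied.

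First I would dispatch (\ref{ee4.5}): the pointwise estimate $|X+Y|^{r}\leq \max\{1,2^{r-1}\}(|X|^{r}+|Y|^{r})$ from (\ref{ee4.3}), followed by (a), then (d) and (c), yields the inequality with $C_{r}=\max\{1,2^{r-1}\}$. Next, for the Hölder inequality (\ref{ee4.6}) I would first treat the main case $\mathbb{\hat{E}}[|X|^{p}]>0$ and $\mathbb{\hat{E}}[|Y|^{q}]>0$ by normalization: divide $|X|$ by $\mathbb{\hat{E}}[|X|^{p}]^{1/p}$ and $|Y|$ by $\mathbb{\hat{E}}[|Y|^{q}]^{1/q}$, apply the pointwise Young inequality (\ref{ee4.4}), and take $\mathbb{\hat{E}}$ of both sides using (a), (c), (d); the right-hand side collapses to $\frac{1}{p}+\frac{1}{q}=1$, giving (\ref{ee4.6}). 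To cover the degenerate cases uniformly I would instead invoke the scaled form $|XY|\leq \frac{\varepsilon^{p}}{p}|X|^{p}+\frac{1}{q\varepsilon^{q}}|Y|^{q}$, valid for every $\varepsilon>0$, take $\mathbb{\hat{E}}$ to obtain $\mathbb{\hat{E}}[|XY|]\leq \frac{\varepsilon^{p}}{p}\mathbb{\hat{E}}[|X|^{p}]+\frac{1}{q\varepsilon^{q}}\mathbb{\hat{E}}[|Y|^{q}]$, and optimize over $\varepsilon$; the minimum is exactly $\mathbb{\hat{E}}[|X|^{p}]^{1/p}\mathbb{\hat{E}}[|Y|^{q}]^{1/q}$, and this simultaneously forces $\mathbb{\hat{E}}[|XY|]=0$ whenever one of the two norms vanishes.

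Then (\ref{ee4.7}) follows by the usual Minkowski argument. Setting $q=p/(p-1)$, I would start from the pointwise bound $|X+Y|^{p}\leq |X|\,|X+Y|^{p-1}+|Y|\,|X+Y|^{p-1}$, apply (a) and (c), and then apply (\ref{ee4.6}) to each term, using $(p-1)q=p$, to reach $\mathbb{\hat{E}}[|X+Y|^{p}]\leq (\mathbb{\hat{E}}[|X|^{p}]^{1/p}+\mathbb{\hat{E}}[|Y|^{p}]^{1/p})\,\mathbb{\hat{E}}[|X+Y|^{p}]^{1/q}$. Dividing by $\mathbb{\hat{E}}[|X+Y|^{p}]^{1/q}$, with the case $\mathbb{\hat{E}}[|X+Y|^{p}]=0$ being trivial, yields (\ref{ee4.7}) since $1-\frac{1}{q}=\frac{1}{p}$. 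Finally, the monotonicity of $p\mapsto \mathbb{\hat{E}}[|X|^{p}]^{1/p}$ for $1\leq p<p^{\prime}$ comes from applying (\ref{ee4.6}) to the pair $(|X|^{p},1)$ with exponents $p^{\prime}/p$ and its conjugate, together with $\mathbb{\hat{E}}[1]=1$ from (b).

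I expect no genuine obstacle: each step reduces to a pointwise inequality plus an application of monotonicity and sublinearity, and linearity is never invoked. The only points requiring care are the degenerate cases in which a norm vanishes, which the $\varepsilon$-scaled Young inequality handles cleanly, and the implicit finiteness of $\mathbb{\hat{E}}[|X+Y|^{p}]$ needed before dividing in the Minkowski step — this holds because elements of $L_{ip}^{0}(\mathcal{F})$ have all moments finite, by (\ref{ee4.5}) and the finiteness of the moments of the $G$-normal distribution.
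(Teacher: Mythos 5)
Your proof follows the paper's own argument essentially step for step: (\ref{ee4.5}) from the pointwise bound (\ref{ee4.3}); H\"older via normalizing $X$ and $Y$ by their norms and applying the Young inequality (\ref{ee4.4}); Minkowski via $\mathbb{\hat{E}}[|X+Y|^{p}]\leq \mathbb{\hat{E}}[|X|\cdot|X+Y|^{p-1}]+\mathbb{\hat{E}}[|Y|\cdot|X+Y|^{p-1}]$ with $(p-1)q=p$; and the norm monotonicity from H\"older applied to $(|X|^{p},1)$. Your $\varepsilon$-scaled Young inequality covering the degenerate cases $\mathbb{\hat{E}}[|X|^{p}]=0$ or $\mathbb{\hat{E}}[|Y|^{q}]=0$, and the explicit check before dividing by $\mathbb{\hat{E}}[|X+Y|^{p}]^{1/q}$, are small tightenings that the paper leaves implicit but do not change the route.
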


\begin{proof}
   (\ref{ee4.5}) follows from (\ref{ee4.3}). We set
\[
\xi=\frac{X}{\mathbb{\hat{E}}[|X|^{p}]^{1/p}},\  \
\eta=\frac{Y}{\mathbb{\hat {E}}[|Y|^{q}]^{1/q}}.
\]
By (\ref{ee4.4}) we have%
\begin{align*}
\mathbb{\hat{E}}[|\xi \eta|]  &  \leq \mathbb{\hat{E}}[\frac{|\xi|^{p}}{p}%
+\frac{|\eta|^{q}}{q}]\leq
\mathbb{\hat{E}}[\frac{|\xi|^{p}}{p}]+\mathbb{\hat
{E}}[\frac{|\eta|^{q}}{q}]\\
&  =\frac{1}{p}+\frac{1}{q}=1.
\end{align*}
Thus (\ref{ee4.6}) follows. We now prove (\ref{ee4.7}):
\begin{align*}
\mathbb{\hat{E}}[|X+Y|^{p}]  &  =\mathbb{\hat{E}}[|X+Y|\cdot|X+Y|^{p-1}]\\
&  \leq
\mathbb{\hat{E}}[|X|\cdot|X+Y|^{p-1}]+\mathbb{\hat{E}}[|Y|\cdot
|X+Y|^{p-1}]\\
&  \leq \mathbb{\hat{E}}[|X|^{p}]^{1/p}\cdot \mathbb{\hat{E}[}|X+Y|^{(p-1)q}%
]^{1/q}\\
&  +\mathbb{\hat{E}}[|Y|^{p}]^{1/p}\cdot \mathbb{\hat{E}[}|X+Y|^{(p-1)q}]^{1/q}.%
\end{align*}
We observe that $(p-1)q=p$. Thus we have (\ref{ee4.7}).

{ { { For each $p,q>0$ with $\frac{1}{p}+\frac
{1}{q}=1$ we have %
\[
\left \Vert XY\right \Vert =\mathbb{\hat{E}}[|XY|]\leq \left \Vert
X\right \Vert _{p}\left \Vert X\right \Vert _{q}.
\]
With this we have $\left \Vert X\right \Vert _{p}\leq \left \Vert
X\right \Vert _{p^{\prime}}$ if $p\leq p^{\prime}$. } } }
\end{proof}

\begin{remark}
{ {{It is easy to check that $\mathcal{H}/\mathcal{H}_{0}^{p}$ is a
normed
space under $\left \Vert \cdot \right \Vert _{p}$. We then extend $\mathcal{H}%
/\mathcal{H}_{0}^{p}$ to its completion $\mathcal{\hat{H}}_{p}$
under\ this norm. $(\mathcal{\hat{H}}_{p},\left \Vert \cdot \right
\Vert _{p})$ is a Banach space. The sublinear expectation
$\mathbb{\hat{E}}[\cdot]$ can be also continuously extended from
$\mathcal{H}/\mathcal{H}_{0}$ to $\mathcal{\hat{H}}_{p}$, which
satisfies (a)--(d). } } }
\end{remark}

{ { { For any $X\in \mathcal{H}$, the mappings
\[
X^{+}(\omega):\mathcal{H\longmapsto H}\  \  \  \text{and \  \ }X^{-}%
(\omega):\mathcal{H\longmapsto H}%
\]
satisfy
\[
|X^{+}-Y^{+}|\leq|X-Y|\text{ \  \ and \ }\ |X^{-}-Y^{-}|=|(-X)^{+}%
-(-Y)^{+}|\leq|X-Y|.
\]
Thus they are both contraction mappings under $\left \Vert \cdot
\right \Vert
_{p}$ and can be continuously extended to the Banach space $(\mathcal{\hat{H}%
}_{p},\left \Vert \cdot \right \Vert _{p})$. } } }

{ { { We define the partial order \textquotedblleft$\geq$\textquotedblright%
\ in this Banach space. } } }

\begin{definition}
{ { { An element $X$ in $(\mathcal{\hat{H}},\left \Vert \cdot \right
\Vert )$ is said to be nonnegative, or $X\geq0$, $0\leq X$, if
$X=X^{+}$. We also denote by $X\geq Y$, or $Y\leq X$, if $X-Y\geq0$.
} } }
\end{definition}

{ { { It is easy to check that $X\geq Y$ and $Y\geq X$ implies $X=Y$
in $(\mathcal{\hat{H}}_{p},\left \Vert \cdot \right \Vert _{p})$. }
} }

{ { { The sublinear expectation $\mathbb{\hat{E}}[\cdot]$ can be
continuously extended to $(\mathcal{\hat{H}}_{p},\left \Vert \cdot
\right \Vert _{p})$ on which \textbf{(a)--(d)} still hold. } } }

\subsection{{{$G$-Brownian motion in a complete sublinear expectation space}}}

{ {{We can check that, for each }$p>0$ and for each $X\in L_{ip}%
^{0}(\mathcal{F})$ with the form $X(\omega)=\varphi(B_{t_{1}},\cdots,B_{t_{m}%
})$, for some $\varphi \in C_{l.Lip}(\mathbb{R}^{m})$, { }%
\[
\mathbb{\hat{E}}[|X|]=0\  \  \  \Longleftrightarrow \  \  \  \mathbb{\hat{E}}%
[|X|^{p}]=0\  \  \Longleftrightarrow \  \  \
\varphi(x)\equiv0\text{, }\forall x\in \mathbb{R}^{m}.
\]
{ For each }$p\geq1$, $\left \Vert X\right \Vert _{p}:=\mathbb{\hat{E}}%
[|X|^{p}]^{\frac{1}{p}}$, $X\in L_{ip}^{0}(\mathcal{F}_{T})$
(respectively, $L_{ip}^{0}(\mathcal{F})$) forms a norm and that
$L_{ip}^{0}(\mathcal{F}_{T})$ (respectively,
$L_{ip}^{0}(\mathcal{F})$), {can be continuously extended to a
Banach space, denoted by
\[
\mathcal{H}_{T}=L_{G}^{p}(\mathcal{F}_{T})\ \ \ \text{ (respectively, } \mathcal{H}=L_{G}^{p}(\mathcal{F}%
)).
\]
 For each $0\leq t\leq T<\infty$ we have $L_{G}^{p}(\mathcal{F}%
_{t})\subseteq L_{G}^{p}(\mathcal{F}_{T})\subset
L_{G}^{p}(\mathcal{F})$. It
is easy to check that, in $L_{G}^{p}(\mathcal{F}_{T})$ (respectively, $L_{G}%
^{p}(\mathcal{F})$), $\mathbb{\hat{E}}[\cdot]$ still satisfies
(a)--(d) in Definition \ref{Def-1}. } } }

{ { { We now consider the conditional expectation introduced in
(\ref{Condition}). For each fixed $t=t_{j}\leq T$, the conditional
expectation
$\mathbb{\hat{E}}[\cdot|\mathcal{H}_{t}]:L_{ip}^{0}(\mathcal{F}_{T})\mapsto
\mathcal{H}_t^0$ is a continuous mapping under $\left \Vert
\cdot \right \Vert $ since $\mathbb{\hat{E}}[\mathbb{\hat{E}}[X|\mathcal{H}%
_{t}]]=\mathbb{\hat{E}}[X]$, $X\in L_{ip}^{0}(\mathcal{F}_{T})$ and%
\begin{align*}
\mathbb{\hat{E}}[\mathbb{\hat{E}}[X|\mathcal{H}_{t}]-\mathbb{\hat{E}%
}[Y|\mathcal{H}_{t}]]  &  \leq \mathbb{\hat{E}}[X-Y],\\
\left \Vert \mathbb{\hat{E}}[X|\mathcal{H}_{t}]-\mathbb{\hat{E}}[Y|\mathcal{H}%
_{t}]\right \Vert  &  \leq \left \Vert X-Y\right \Vert .
\end{align*}
It follows that $\mathbb{\hat{E}}[\cdot|\mathcal{H}_{t}]$ can be
also extended
as a continuous mapping $L_{G}^{p}(\mathcal{F}_{T})\mapsto L_{G}%
^{p}(\mathcal{F}_{t})$. If the above $T$ is not fixed, then we can
obtain
$\mathbb{\hat{E}}[\cdot|\mathcal{H}_{t}]:L_{G}^{1}(\mathcal{F})\mapsto
L_{G}^{1}(\mathcal{F}_{t})$.} } }

\begin{proposition}
{ { { \label{Prop-1-7} The properties of Proposition
\ref{Prop-1-7-1} of  $\mathbb{\hat{E}}[\cdot|\mathcal{H}_{t}]$ still
hold for $X$, $Y\in$
{{$L_{G}^{1}$}}$(\mathcal{F})$:}\newline{\textbf{(a') }{If $X\geq
Y$, then
$\mathbb{\hat{E}}[X|\mathcal{H}_{t}]\geq \mathbb{\hat{E}}[Y|\mathcal{H}_{t}%
]$.\newline}\textbf{(b')
}$\mathbb{\hat{E}}[\eta|\mathcal{H}_{t}]=\eta$, for
each }}$t\in \lbrack0,\infty)$ and $\eta \in{{{{L_{G}^{1}}}}}(\mathcal{F}_{t}%
)$.\newline \textbf{(c') }$\mathbb{\hat{E}}[X|\mathcal{H}_{t}]-\mathbb{\hat{E}%
}[Y|\mathcal{H}_{t}]\leq \mathbb{\hat{E}}[X-Y|\mathcal{H}_{t}].$\newline%
{\textbf{(d')} $\mathbb{\hat{E}}[\eta X|\mathcal{H}_{t}]=\eta^{+}%
\mathbb{\hat{E}}[X|\mathcal{H}_{t}]+\eta^{-}\mathbb{\hat{E}}[-X|\mathcal{H}%
_{t}]$, for each bounded $\eta
\in{L}_{G}^{1}{(\mathcal{F}_{t})}.$}
\newline{\textbf{(e')}} $\mathbb{\hat{E}}[\mathbb{\hat{E}}[X|\mathcal{H}_{t}]|\mathcal{H}%
_{s}]=\mathbb{\hat{E}}[X|\mathcal{H}_{t\wedge s}],\ $ in
particular, ${{\mathbb{\hat{E}}[\mathbb{\hat{E}}[X|\mathcal{H}_{t}]]=\mathbb{\hat{E}%
}[X].\newline}}$
\newline\textbf{(f')} For each $X\in{L_G^1}(\mathcal{F}_{T}^{t})$ we have $\mathbb{\hat{E}%
}[X|\mathcal{H}_{t}]=\mathbb{\hat{E}}[X]$.}
\end{proposition}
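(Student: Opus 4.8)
The plan is to obtain every assertion by a density-and-continuity argument that transports the corresponding property of Proposition \ref{Prop-1-7-1}, already known on $L_{ip}^{0}(\mathcal{F})$, to its completion $L_{G}^{1}(\mathcal{F})$. By construction $L_{ip}^{0}(\mathcal{F})$ is $\left\Vert \cdot\right\Vert _{1}$-dense in $L_{G}^{1}(\mathcal{F})$ and $L_{ip}^{0}(\mathcal{F}_{t})$ is dense in $L_{G}^{1}(\mathcal{F}_{t})$, and the conditional expectation has already been extended to a contraction $\mathbb{\hat{E}}[\cdot|\mathcal{H}_{t}]:L_{G}^{1}(\mathcal{F})\to L_{G}^{1}(\mathcal{F}_{t})$, so that $X_{n}\to X$ forces $\mathbb{\hat{E}}[X_{n}|\mathcal{H}_{t}]\to\mathbb{\hat{E}}[X|\mathcal{H}_{t}]$. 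The remaining operations entering the statements --- addition, the maps $X\mapsto X^{+}$ and $X\mapsto X^{-}$, and multiplication by a fixed bounded element --- are all Lipschitz for $\left\Vert \cdot\right\Vert _{1}$ (the first three are contractions, as already recorded; for the last, $\mathbb{\hat{E}}[|\eta|\,|X-X_{n}|]\leq M\left\Vert X-X_{n}\right\Vert _{1}$ whenever $|\eta|\leq M$), hence continuous. I would first record the one structural fact that legitimizes passing inequalities to the limit: the cone $\{W:W=W^{+}\}$ is closed, being the set on which the two continuous maps $\mathrm{id}$ and $(\cdot)^{+}$ agree; consequently $A_{n}\leq B_{n}$ together with $A_{n}\to A$, $B_{n}\to B$ implies $A\leq B$.

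With this in hand the properties (b'), (c'), (e'), (f') fall quickly. For each I would pick approximating sequences $X_{n},Y_{n}\in L_{ip}^{0}(\mathcal{F})$ --- and $\eta_{n}\in L_{ip}^{0}(\mathcal{F}_{t})$ for (b'), $X_{n}\in\mathcal{H}_{T}^{t}$ for (f') --- apply the identity or inequality of Proposition \ref{Prop-1-7-1} for each $n$, and pass to the limit using continuity of the operators above and closedness of the cone. For (e') the left-hand side is a composition of two contractions, hence continuous in $X$, so the tower identity survives the limit; for (f') the approximants are taken inside $L_{ip}^{0}(\mathcal{F}_{T}^{t})$ and both sides are continuous.

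For monotonicity (a') the subtlety is that norm-approximation need not respect the order, so I would build order-preserving approximants using the lattice structure of $L_{ip}^{0}(\mathcal{F})$. Given $X\geq Y$ in $L_{G}^{1}$, i.e. $X-Y=(X-Y)^{+}$, choose $\tilde{X}_{n},Y_{n}\in L_{ip}^{0}(\mathcal{F})$ with $\tilde{X}_{n}\to X$, $Y_{n}\to Y$, and set $X_{n}:=Y_{n}+(\tilde{X}_{n}-Y_{n})^{+}$, which again lies in $L_{ip}^{0}(\mathcal{F})$ and satisfies $X_{n}\geq Y_{n}$ pointwise. Since $(\tilde{X}_{n}-Y_{n})^{+}\to(X-Y)^{+}=X-Y$ by continuity of $(\cdot)^{+}$, we get $X_{n}\to X$. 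Applying (a') of Proposition \ref{Prop-1-7-1} gives $\mathbb{\hat{E}}[X_{n}|\mathcal{H}_{t}]\geq\mathbb{\hat{E}}[Y_{n}|\mathcal{H}_{t}]$ for every $n$, and closedness of the cone yields $\mathbb{\hat{E}}[X|\mathcal{H}_{t}]\geq\mathbb{\hat{E}}[Y|\mathcal{H}_{t}]$.

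The one step needing real care, and the main obstacle, is (d'), because products appear on both sides. Here I would approximate $\eta$ by $\eta_{m}\in L_{ip}^{0}(\mathcal{F}_{t})$ kept uniformly bounded --- for instance by composing with a smooth truncation so that $|\eta_{m}|\leq M$ for a common $M$ --- while simultaneously approximating $X$ by $X_{n}\in L_{ip}^{0}(\mathcal{F})$. The exact identity $\mathbb{\hat{E}}[\eta_{m}X_{n}|\mathcal{H}_{t}]=\eta_{m}^{+}\mathbb{\hat{E}}[X_{n}|\mathcal{H}_{t}]+\eta_{m}^{-}\mathbb{\hat{E}}[-X_{n}|\mathcal{H}_{t}]$ holds for each pair by Proposition \ref{Prop-1-7-1}. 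The delicate part is to pass to the limit in all four products in a compatible order: the uniform bound gives $\left\Vert \eta_{m}X_{n}-\eta_{m}X\right\Vert _{1}\leq M\left\Vert X_{n}-X\right\Vert _{1}$ and, together with $\eta_{m}\to\eta$, controls $\left\Vert \eta_{m}X-\eta X\right\Vert _{1}$, while on the right the bounded multipliers $\eta_{m}^{\pm}$ act continuously on the convergent conditional expectations $\mathbb{\hat{E}}[\pm X_{n}|\mathcal{H}_{t}]$. Managing this double limit so that the two approximations stay coordinated is the one genuine piece of bookkeeping; once it is done the identity $\mathbb{\hat{E}}[\eta X|\mathcal{H}_{t}]=\eta^{+}\mathbb{\hat{E}}[X|\mathcal{H}_{t}]+\eta^{-}\mathbb{\hat{E}}[-X|\mathcal{H}_{t}]$ is secured and the proposition is complete.
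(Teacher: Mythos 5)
Your proposal is correct and is essentially the paper's own route: Proposition \ref{Prop-1-7} is stated there without a separate proof, resting on the immediately preceding observation that $\mathbb{\hat{E}}[\cdot|\mathcal{H}_{t}]$ is a $\left\Vert \cdot\right\Vert$-contraction and hence extends from the dense lattice $L_{ip}^{0}$ --- exactly your density-and-continuity scheme, with your closed-cone argument for passing inequalities to the limit and your order-preserving approximants $X_{n}=Y_{n}+(\tilde{X}_{n}-Y_{n})^{+}$ supplying details the paper leaves implicit. The double limit you flag in (d') closes routinely: either take the $X_{n}$ to be bounded truncations (truncation is a $\left\Vert \cdot\right\Vert _{1}$-contraction lying in $L_{ip}^{0}$), so that $\mathbb{\hat{E}}[|\eta_{m}-\eta|\,|X_{n}|]\leq K_{n}\left\Vert \eta_{m}-\eta\right\Vert _{1}$ at fixed $n$, or use $\left\Vert \eta_{m}-\eta\right\Vert _{2}^{2}\leq2M\left\Vert \eta_{m}-\eta\right\Vert _{1}$ together with H\"{o}lder's inequality (\ref{ee4.6}).
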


\begin{definition}
{ { { An $X\in L_{G}^{1}(\mathcal{F})$ is said to be independent of
$\mathcal{F}_{t}$ under the $G$--expectation $\mathbb{\hat{E}}$ for
some given $t\in \lbrack0,\infty)$, if for each real function $\Phi$
suitably defined on $\mathbb{R}$ such that $\Phi(X)\in
L_{G}^{1}(\mathcal{F})$ we have
\[
\mathbb{\hat{E}}[\Phi(X)|\mathcal{H}_{t}]=\mathbb{\hat{E}}[\Phi(X)].
\]
} } }
\end{definition}

\begin{remark}
{ { { It is clear that all elements in $L_{G}^{1}(\mathcal{F})$ are
independent of $\mathcal{F}_{0}$. Just like the classical situation,
the increments of $G$-Brownian motion $(B_{t+s}-B_{s})_{t\geq0}$ is
independent of $\mathcal{F}_{s}$. In fact it is a new $G$--Brownian
motion since, just like the classical situation, the increments of
$B$ are identically distributed. } } }
\end{remark}

\begin{example}
{ { { \label{Exam-2}For each $n\in \mathbb{N},$ $0\leq t<\infty$ and
$X\in
L_{G}^{1}(\mathcal{F}_{t})$, since $\mathbb{\hat{E}[}B_{T-t}^{2n-1}%
]=\mathbb{\hat{E}[-}B_{T-t}^{2n-1}]$, we have, by \textbf{(f')} of
Proposition \ref{Prop-1-7},
\begin{align*}
\mathbb{\hat{E}}[X(B_{T}-B_{t})^{2n-1}]  &  =\mathbb{\hat{E}}[X^{+}%
\mathbb{\hat{E}[}(B_{T}-B_{t})^{2n-1}|\mathcal{H}_{t}]+X^{-}\mathbb{\hat{E}%
[-}(B_{T}-B_{t})^{2n-1}|\mathcal{H}_{t}]]\\
&  =\mathbb{\hat{E}}[|X|]\cdot \mathbb{\hat{E}[}B_{T-t}^{2n-1}],\\
\mathbb{\hat{E}}[X(B_{T}-B_{t})|\mathcal{H}_{t}]  &  =\mathbb{\hat{E}%
}[-X(B_{T}-B_{t})|\mathcal{H}_{t}]=0.
\end{align*}
We also have%
\[
\mathbb{\hat{E}}[X(B_{T}-B_{t})^{2}|\mathcal{H}_{t}]=X^{+}(T-t)-\sigma
^{2}X^{-}(T-t).
\]
} } }
\end{example}

\begin{remark}
{ { {It is clear that we can define an expectation
$\mathbf{E}[\cdot]$ on $L_{ip}^{0}(\mathcal{F})$ in the same way as
in Definition \ref{Def-3} with the standard normal distribution
}$\mathbf{F=}\mathcal{N}(0,1)${ in place
of $\mathbb{F}_{\xi}=\mathcal{N}(0;[\sigma,1])$ on }$(\mathbb{R}%
,\mathcal{B}(\mathbb{R}))$.{ Since }$\mathbf{F}${ is dominated by
$\mathbb{F}_{\xi}$ in the sense $\mathbf{F}[\varphi]-\mathbf{F}[\psi
]\leq \mathbb{F}_{\xi}[\varphi-\psi]$, then $\mathbf{E}[\cdot]$ can
be continuously extended to $L_{G}^{1}(\mathcal{F})$.
$\mathbf{E}[\cdot]$ is a linear expectation under which
$(B_{t})_{t\geq0}$ behaves as a Brownian motion. We have
\begin{equation}
\mathbf{E}[X]\leq \mathbb{\hat E}[X],\  \  \forall X\in L_{G}^{1}(\mathcal{F}%
).\label{Eg-domi}%
\end{equation}
In particular, $\mathbb{\hat{E}[}B_{T-t}^{2n-1}]=\mathbb{\hat{E}[-}%
B_{T-t}^{2n-1}]\geq \mathbf{E}\mathbb{[-}B_{T-t}^{2n-1}]=0$. Such
kind of extension under a domination relation was discussed in
details in \cite{Peng2005}. } } }
\end{remark}

The following property is very useful.

\begin{proposition}
{ { { \label{E-x+y}Let $X,Y\in L_{G}^{1}(\mathcal{F})$ be such that
$\mathbb{\hat{E}}[Y|\mathcal{H}_{t}]=-\mathbb{\hat{E}}[-Y|\mathcal{H}_{t}]$,
then we have%
\[
\mathbb{\hat{E}}[X+Y|\mathcal{H}_{t}]=\mathbb{\hat{E}}[X|\mathcal{H}%
_{t}]+\mathbb{\hat{E}}[Y|\mathcal{H}_{t}].
\]
In particular, if }}$t=0$ and{{ {$\mathbb{\hat{E}}[Y|\mathcal{H}_{0}]=$%
}$\mathbb{\hat{E}}[Y]=\mathbb{\hat{E}}[-Y]=0$, then $\mathbb{\hat{E}%
}[X+Y]=\mathbb{\hat{E}}[X]$. } } }
\end{proposition}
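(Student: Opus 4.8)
The plan is to mimic the unconditional argument of Proposition \ref{Prop-X+Y}, replacing $\mathbb{\hat{E}}[\cdot]$ by $\mathbb{\hat{E}}[\cdot|\mathcal{H}_{t}]$ throughout and using the conditional self-domination property \textbf{(c')} of Proposition \ref{Prop-1-7} in place of ordinary sub-additivity. The asserted equality will be obtained by establishing the two inequalities separately and then invoking the hypothesis to make them match.

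First, for the upper bound I would apply \textbf{(c')} in the form $\mathbb{\hat{E}}[U|\mathcal{H}_{t}]-\mathbb{\hat{E}}[V|\mathcal{H}_{t}]\leq \mathbb{\hat{E}}[U-V|\mathcal{H}_{t}]$ with $U=X+Y$ and $V=X$, which gives
\[
\mathbb{\hat{E}}[X+Y|\mathcal{H}_{t}]\leq \mathbb{\hat{E}}[X|\mathcal{H}_{t}]+\mathbb{\hat{E}}[Y|\mathcal{H}_{t}].
\]
For the reverse inequality I would apply the same property with the roles swapped, $U=X$ and $V=X+Y$, obtaining $\mathbb{\hat{E}}[X|\mathcal{H}_{t}]-\mathbb{\hat{E}}[X+Y|\mathcal{H}_{t}]\leq \mathbb{\hat{E}}[-Y|\mathcal{H}_{t}]$, that is
\[
\mathbb{\hat{E}}[X+Y|\mathcal{H}_{t}]\geq \mathbb{\hat{E}}[X|\mathcal{H}_{t}]-\mathbb{\hat{E}}[-Y|\mathcal{H}_{t}].
\]
At this point the hypothesis $\mathbb{\hat{E}}[Y|\mathcal{H}_{t}]=-\mathbb{\hat{E}}[-Y|\mathcal{H}_{t}]$ — the statement that $Y$ carries no conditional mean uncertainty — converts $-\mathbb{\hat{E}}[-Y|\mathcal{H}_{t}]$ into $\mathbb{\hat{E}}[Y|\mathcal{H}_{t}]$, so the lower bound coincides with the upper bound and the two combine to yield the claimed identity.

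For the \emph{in particular} clause I would specialize to $t=0$, where $\mathbb{\hat{E}}[\cdot|\mathcal{H}_{0}]$ coincides with $\mathbb{\hat{E}}[\cdot]$ because every element of $L_{G}^{1}(\mathcal{F})$ is independent of $\mathcal{F}_{0}$. The assumption $\mathbb{\hat{E}}[Y]=\mathbb{\hat{E}}[-Y]=0$ makes the hypothesis of the first part hold (both conditional terms vanish), so the general identity reduces to $\mathbb{\hat{E}}[X+Y]=\mathbb{\hat{E}}[X]+\mathbb{\hat{E}}[Y]=\mathbb{\hat{E}}[X]$.

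This argument is entirely formal; there is no genuine analytic obstacle once \textbf{(c')} is available on $L_{G}^{1}(\mathcal{F})$, which it is by Proposition \ref{Prop-1-7}. The only point requiring a little attention is that \textbf{(c')} must be used in \emph{both} directions, and that the hypothesis is precisely what is needed to turn the lower-bound term $-\mathbb{\hat{E}}[-Y|\mathcal{H}_{t}]$ into $\mathbb{\hat{E}}[Y|\mathcal{H}_{t}]$; the entire content of the proposition is packaged into that single substitution.
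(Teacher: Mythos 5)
Your proof is correct and is essentially identical to the paper's: the paper also applies the conditional sub-additivity \textbf{(c')} twice, once in each direction, and then uses the hypothesis $\mathbb{\hat{E}}[Y|\mathcal{H}_{t}]=-\mathbb{\hat{E}}[-Y|\mathcal{H}_{t}]$ to make the lower bound $\mathbb{\hat{E}}[X|\mathcal{H}_{t}]-\mathbb{\hat{E}}[-Y|\mathcal{H}_{t}]$ collapse onto the upper bound. Your handling of the $t=0$ case likewise matches the intended specialization, so nothing further is needed.
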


\begin{proof}
{ { { We just need to use twice the sub-additivity of $\mathbb{\hat{E}}%
[\cdot|\mathcal{H}_{t}]$:
\begin{align*}
\mathbb{\hat{E}}[X+Y{{|\mathcal{H}_{t}}}]  &  \geq \mathbb{\hat{E}%
}[X{{|\mathcal{H}_{t}}}]-\mathbb{\hat{E}}[-Y{{|\mathcal{H}_{t}}}%
]=\mathbb{\hat{E}}[X{{|\mathcal{H}_{t}}}]+\mathbb{\hat{E}}[Y{{|\mathcal{H}%
_{t}}}]\\
&  \geq \mathbb{\hat{E}}[X+Y{{|\mathcal{H}_{t}}}].
\end{align*}
} } }
\end{proof}

\begin{example}
{ { { \label{Exam-B2}We have%
\begin{align*}
\mathbb{\hat{E}}[B_{t}^{2}-B_{s}^{2}|\mathcal{H}_{s}]  &  =\mathbb{\hat{E}%
}[(B_{t}-B_{s}+B_{s})^{2}-B_{s}^{2}|\mathcal{H}_{s}]\\
&  =E[(B_{t}-B_{s})^{2}+2(B_{t}-B_{s})B_{s}|\mathcal{H}_{s}]\\
&  =t-s,
\end{align*}
since $2(B_{t}-B_{s})B_{s}$ satisfies the condition for $Y$ in
Proposition
\ref{E-x+y}, and%
\begin{align*}
\mathbb{\hat{E}}[(B_{t}^{2}-B_{s}^{2})^{2}|\mathcal{H}_{s}]  &
=\mathbb{\hat
{E}}[\{(B_{t}-B_{s}+B_{s})^{2}-B_{s}^{2}\}^{2}|\mathcal{H}_{s}]\\
&  =\mathbb{\hat{E}}[\{(B_{t}-B_{s})^{2}+2(B_{t}-B_{s})B_{s}\}^{2}%
|\mathcal{H}_{s}]\\
&  =\mathbb{\hat{E}}[(B_{t}-B_{s})^{4}+4(B_{t}-B_{s})^{3}B_{s}+4(B_{t}%
-B_{s})^{2}B_{s}^{2}|\mathcal{H}_{s}]\\
&  \leq \mathbb{\hat{E}}[(B_{t}-B_{s})^{4}]+4\mathbb{\hat{E}}[|B_{t}-B_{s}%
|^{3}]|B_{s}|+4(t-s)B_{s}^{2}\\
&  =3(t-s)^{2}+8(t-s)^{3/2}|B_{s}|+4(t-s)B_{s}^{2}.
\end{align*}
}}}
\end{example}

{ { } }

\subsection{{{{It\^{o}'s integral of $G$--Brownian motion}}}}

\subsubsection{{Bochner's integral}}

\begin{definition}
{ { { \label{Def-4}For $T\in \mathbb{R}_{+}$, a partition $\pi_{T}$
of $[0,T]$ is a finite ordered subset $\pi=\{t_{1},\cdots,t_{N}\}$
such that $0=t_{0}<t_{1}<\cdots<t_{N}=T$. We denote
\[
\mu(\pi_{T})=\max \{|t_{i+1}-t_{i}|:\ \ i=0,1,\cdots,N-1\} \text{.}%
\]
We use $\pi_{T}^{N}=\{t_{0}^{N},t_{1}^{N},\cdots,t_{N}^{N}\}$ to
denote a sequence of partitions of $[0,T]$ such that
$\lim_{N\rightarrow \infty}\mu (\pi_{T}^{N})=0$. } } }
\end{definition}

{ { { Let $p\geq1$ be fixed. We consider the following type of
simple processes: For a given partition
$\{t_{0},\cdots,t_{N}\}=\pi_{T}$ of $[0,T]$,
we set%
\[
\eta_{t}(\omega)=\sum_{j=0}^{N-1}\xi_{j}(\omega)\mathbf{I}_{[t_{j},t_{j+1}%
)}(t),
\]
where $\xi_{i}\in L_{G}^{p}(\mathcal{F}_{t_{i}})$,
$i=0,1,2,\cdots,N-1$, are
given. The collection of this form of processes is denoted by $M_{G}%
^{p,0}(0,T)$. } } }

\begin{definition}
{ { { \label{Def-5}For an $\eta \in M_{G}^{p,0}(0,T)$ with
$\eta_{t}=\sum
_{j=0}^{N-1}\xi_{j}(\omega)\mathbf{I}_{[t_{j},t_{j+1})}(t)$ the
related Bochner integral is
\[
\int_{0}^{T}\eta_{t}(\omega)dt=\sum_{j=0}^{N-1}\xi_{j}(\omega)(t_{j+1}%
-t_{j}).
\]
} } }
\end{definition}

\begin{remark}
{ { { For each $\eta \in M_{G}^{p,0}(0,T)$ we set
\[
\mathbb{\hat{E}}_{T}[\eta]:=\frac{1}{T}\int_{0}^{T}\mathbb{\hat{E}}[\eta
_{t}]dt=\frac{1}{T}\sum_{j=0}^{N-1}\mathbb{\hat{E}[}\xi_{j}(\omega
)](t_{j+1}-t_{j}).
\]
It is easy to check that
$\mathbb{\hat{E}}_{T}:M_{G}^{p,0}(0,T)\longmapsto \mathbb{R}$ forms
a sublinear expectation satisfying (a)--(d) of Definition
\ref{Def-1}. From Remark \ref{Rem-2} we can introduce a natural norm
$\left \Vert \eta \right \Vert _{M_{G}^{p}(0,T)}=\left \{  {{\int_{0}%
^{T}\mathbb{\hat{E}}[|\eta_{t}|^{p}]dt}}\right \}  ^{1/p}$. Under
this norm $M_{G}^{p,0}(0,T)$ can be continuously extended to {{a
Banach} space}. } } }
\end{remark}

\begin{definition}
 For each $p\geq1$, we will denote by $M_{G}^{p}(0,T)$ the
completion of
$M_{G}^{p,0}(0,T)$ under the norm%
\[
{{\left \Vert \eta \right \Vert _{M_{G}^{p}(0,T)}=\left \{  {{\int_{0}%
^{T}\mathbb{\hat{E}}[|\eta_{t}|^{p}]dt}}\right \}  ^{1/p}}}.
\]
\end{definition}
For $\eta\in M_{G}^{p,0}(0,T)$, we have
\begin{align*}
\mathbb{\hat{E}}[|\frac{1}{T}\int_{0}^{T}\eta_{t}(\omega)dt|^{p}]  &  \leq \frac{1}{T}\sum_{j=0}^{N-1}\mathbb{\hat{E}[}|\xi_{j}(\omega)|^{p}](t_{j+1}-t_{j})\\
&  =\frac{1}{T}\int_{0}^{T}\mathbb{\hat{E}}[|\eta_{t}|^{p}]dt.
\end{align*}
We then have

\begin{proposition}
The linear mapping $\int_{0}^{T}\eta_{t}(\omega)dt:M_{G}^{p,0}%
(0,T)\mapsto L_{G}^{p}(\mathcal{F}_{T})$ is continuous; thus it can
be continuously extended to $M_{G}^{p}(0,T)\mapsto
L_{G}^{p}(\mathcal{F}_{T})$. We still denote this extended mapping
by $\int_{0}^{T}\eta_{t}(\omega)dt$,
$\eta \in M_{G}^{p}(0,T)$. We have%
\begin{equation}
\mathbb{\hat{E}}[|\frac{1}{T}\int_{0}^{T}\eta_{t}(\omega)dt|^{p}]\leq \frac{1}{T}\int_{0}%
^{T}\mathbb{\hat{E}}[|\eta_{t}|^{p}]dt,\  \  \  \forall \eta \in
M_{G}^{p}(0,T).
\label{ine-dt}%
\end{equation}

\end{proposition}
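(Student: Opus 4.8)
The plan is to combine the inequality already verified for simple processes with the standard fact that a bounded linear map into a Banach space extends uniquely and continuously to the completion of its domain. Writing $I(\eta):=\int_{0}^{T}\eta_{t}(\omega)\,dt$, the display immediately preceding the statement gives, for every $\eta \in M_{G}^{p,0}(0,T)$,
\[
\mathbb{\hat{E}}\Big[\big|\tfrac{1}{T}I(\eta)\big|^{p}\Big]\leq \frac{1}{T}\int_{0}^{T}\mathbb{\hat{E}}[|\eta_{t}|^{p}]\,dt=\frac{1}{T}\,\|\eta\|_{M_{G}^{p}(0,T)}^{p},
\]
where the last equality is just the definition of the norm on $M_{G}^{p}(0,T)$. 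Rewriting the left side in terms of the $L_{G}^{p}$-norm $\|X\|_{p}=\mathbb{\hat{E}}[|X|^{p}]^{1/p}$, this reads $\|I(\eta)\|_{p}\leq T^{1-1/p}\|\eta\|_{M_{G}^{p}(0,T)}$, so the linear map $I:M_{G}^{p,0}(0,T)\to L_{G}^{p}(\mathcal{F}_{T})$ is bounded, hence continuous.

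First I would invoke the definitions already in place: $M_{G}^{p}(0,T)$ is the completion of $M_{G}^{p,0}(0,T)$ under $\|\cdot\|_{M_{G}^{p}(0,T)}$, and $L_{G}^{p}(\mathcal{F}_{T})$ is a Banach space. The bounded-linear-transformation principle then furnishes a unique continuous linear extension of $I$ to all of $M_{G}^{p}(0,T)$, carrying the same bound; this extension is the object the statement denotes $\int_{0}^{T}\eta_{t}\,dt$ for $\eta \in M_{G}^{p}(0,T)$.

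Next I would pass the inequality $(\ref{ine-dt})$ to the limit. Given $\eta \in M_{G}^{p}(0,T)$, choose simple processes $\eta^{n}\in M_{G}^{p,0}(0,T)$ with $\|\eta^{n}-\eta\|_{M_{G}^{p}(0,T)}\to 0$. By continuity of the extended $I$, we have $I(\eta^{n})\to I(\eta)$ in $L_{G}^{p}(\mathcal{F}_{T})$, so the left side $\|\tfrac{1}{T}I(\eta^{n})\|_{p}^{p}$ converges to $\|\tfrac{1}{T}I(\eta)\|_{p}^{p}$; meanwhile the right side equals $\tfrac{1}{T}\|\eta^{n}\|_{M_{G}^{p}(0,T)}^{p}$, which converges to $\tfrac{1}{T}\|\eta\|_{M_{G}^{p}(0,T)}^{p}=\tfrac{1}{T}\int_{0}^{T}\mathbb{\hat{E}}[|\eta_{t}|^{p}]\,dt$. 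Since the inequality holds for each $\eta^{n}$, it survives in the limit, establishing $(\ref{ine-dt})$ for all $\eta \in M_{G}^{p}(0,T)$.

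There is no serious obstacle here; the argument is the routine extension-by-density. The only point that deserves a moment's attention is the continuity of the right-hand side under $\|\cdot\|_{M_{G}^{p}(0,T)}$, which is immediate once one recognizes that $\frac{1}{T}\int_{0}^{T}\mathbb{\hat{E}}[|\eta_{t}|^{p}]\,dt$ is exactly $\frac{1}{T}$ times the $p$-th power of the completion norm, so that no separate estimate is needed for it.
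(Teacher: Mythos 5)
Your proof is correct and follows essentially the same route as the paper: the displayed inequality for simple processes in $M_{G}^{p,0}(0,T)$ shows the map is bounded, and the standard extension-by-density to the completion, together with continuity of both sides of (\ref{ine-dt}) under the respective norms, yields the result. The paper leaves these routine steps implicit; you have merely written them out in full.
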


We have $M_{G}^{p}(0,T)\supset M_{G}^{q}(0,T)$, for $p\leq q$.

\subsubsection{{It\^{o}'s integral of $G$--Brownian motion}}

\begin{definition}
For each $\eta \in M_{G}^{2,0}(0,T)$ with the form
\[\eta_{t}(\omega
)=\sum_{j=0}^{N-1}\xi_{j}(\omega)\mathbf{I}_{[t_{j},t_{j+1})}(t),
\]
 we define
\[
I(\eta)=\int_{0}^{T}\eta(s)dB_{s}:=\sum_{j=0}^{N-1}\xi_{j}(B_{t_{j+1}%
}-B_{t_{j}})\mathbf{.}%
\]

\end{definition}

\begin{lemma}
{ { { \label{bdd}The mapping $I:M_{G}^{2,0}(0,T)\longmapsto L_{G}%
^{2}(\mathcal{F}_{T})$ is a linear continuous mapping and thus can
be
continuously extended to $I:M_{G}^{2}(0,T)\longmapsto L_{G}^{2}(\mathcal{F}%
_{T})$: We have
\begin{align}
\mathbb{\hat{E}}[\int_{0}^{T}\eta(s)dB_{s}]  &  =0,\  \  \label{e1}\\
\mathbb{\hat{E}}[(\int_{0}^{T}\eta(s)dB_{s})^{2}]  &  \leq \int_{0}%
^{T}\mathbb{\hat{E}}[(\eta(t))^{2}]dt. \label{e2}%
\end{align}
} } }
\end{lemma}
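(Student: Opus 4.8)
The plan is to prove the two estimates (\ref{e1}) and (\ref{e2}) directly on a simple process $\eta\in M_{G}^{2,0}(0,T)$, where $I(\eta)=\sum_{j=0}^{N-1}\xi_{j}(B_{t_{j+1}}-B_{t_{j}})$ is an explicit finite sum, and then to read off continuity from (\ref{e2}) and extend by density. Throughout I would abbreviate $\Delta_{j}=B_{t_{j+1}}-B_{t_{j}}$ and $S_{k}=\sum_{j=0}^{k-1}\xi_{j}\Delta_{j}$, so that $S_{N}=I(\eta)$ and each $S_{k}\in L_{ip}^{0}(\mathcal{F}_{t_{k}})$. Linearity of $I$ on $M_{G}^{2,0}(0,T)$ is immediate from the definition, so only the two moment bounds require work.

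For (\ref{e1}) I would argue by induction on $N$, conditioning on $\mathcal{H}_{t_{N-1}}$ and using the tower property (e'). Writing $I(\eta)=S_{N-1}+\xi_{N-1}\Delta_{N-1}$, the summand $S_{N-1}$ is $\mathcal{H}_{t_{N-1}}$--measurable, so by (b')+(c') it passes out of the conditional expectation; property (d') together with $\mathbb{\hat{E}}[\Delta_{N-1}|\mathcal{H}_{t_{N-1}}]=\mathbb{\hat{E}}[-\Delta_{N-1}|\mathcal{H}_{t_{N-1}}]=0$ (Example \ref{Exam-1}, Example \ref{Exam-2}) gives $\mathbb{\hat{E}}[\xi_{N-1}\Delta_{N-1}|\mathcal{H}_{t_{N-1}}]=0$. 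Hence $\mathbb{\hat{E}}[I(\eta)|\mathcal{H}_{t_{N-1}}]=S_{N-1}$, and $\mathbb{\hat{E}}[I(\eta)]=\mathbb{\hat{E}}[S_{N-1}]$, which vanishes by the induction hypothesis (the base case being a single conditioning step).

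The core of the argument is (\ref{e2}), and this is where sublinearity must be handled with care: one cannot merely expand the square and take expectations term by term. Expanding $I(\eta)^{2}=S_{N-1}^{2}+2S_{N-1}\xi_{N-1}\Delta_{N-1}+\xi_{N-1}^{2}\Delta_{N-1}^{2}$ and conditioning on $\mathcal{H}_{t_{N-1}}$, the decisive observation is that the cross term $Y=2S_{N-1}\xi_{N-1}\Delta_{N-1}$ has conditional mean--certainty: since $\mathbb{\hat{E}}[\pm\Delta_{N-1}|\mathcal{H}_{t_{N-1}}]=0$, property (d') gives $\mathbb{\hat{E}}[Y|\mathcal{H}_{t_{N-1}}]=\mathbb{\hat{E}}[-Y|\mathcal{H}_{t_{N-1}}]=0$. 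Proposition \ref{E-x+y} then lets me peel $Y$ off additively; using that $S_{N-1}^{2}$ is measurable and that (d') with $\xi_{N-1}^{2}\geq0$, combined with $\mathbb{\hat{E}}[\Delta_{N-1}^{2}|\mathcal{H}_{t_{N-1}}]=t_{N}-t_{N-1}$ (Example \ref{Exam-1}), evaluates the last term, I obtain
\[
\mathbb{\hat{E}}[I(\eta)^{2}|\mathcal{H}_{t_{N-1}}]=S_{N-1}^{2}+\xi_{N-1}^{2}(t_{N}-t_{N-1}).
\]
Taking $\mathbb{\hat{E}}$, splitting the two summands by subadditivity, and applying the induction hypothesis to $\mathbb{\hat{E}}[S_{N-1}^{2}]$ yields $\mathbb{\hat{E}}[I(\eta)^{2}]\leq\sum_{j=0}^{N-1}\mathbb{\hat{E}}[\xi_{j}^{2}](t_{j+1}-t_{j})=\int_{0}^{T}\mathbb{\hat{E}}[(\eta(t))^{2}]\,dt$, which is exactly (\ref{e2}).

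Finally, (\ref{e2}) says precisely that $\mathbb{\hat{E}}[I(\eta)^{2}]^{1/2}\leq\|\eta\|_{M_{G}^{2}(0,T)}$, so $I$ is a bounded, hence continuous, linear map from the dense subspace $M_{G}^{2,0}(0,T)$ into the Banach space $L_{G}^{2}(\mathcal{F}_{T})$; by the standard bounded-linear-extension theorem it extends uniquely and continuously to all of $M_{G}^{2}(0,T)$, and both identities (\ref{e1}) and (\ref{e2}) pass to the limit since each side is continuous in $\eta$. The main obstacle is the step for (\ref{e2}): because $\mathbb{\hat{E}}$ is only subadditive, the vanishing of the cross term is not automatic and must be forced through its conditional mean--certainty together with Proposition \ref{E-x+y}; everything else is routine bookkeeping around the induction.
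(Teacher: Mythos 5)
Your proposal is correct and takes essentially the same route as the paper's own proof: both peel off the final summand $\xi_{N-1}(B_{t_{N}}-B_{t_{N-1}})$, annihilate the cross term through its conditional mean--certainty (Proposition \ref{E-x+y} together with (d') and Example \ref{Exam-2}), evaluate $\mathbb{\hat{E}}[\xi_{N-1}^{2}(B_{t_{N}}-B_{t_{N-1}})^{2}|\mathcal{H}_{t_{N-1}}]=\xi_{N-1}^{2}(t_{N}-t_{N-1})$, and then use subadditivity and iteration to reach (\ref{e2}). The only cosmetic differences are that you phrase the repetition as a formal induction and spell out the density-extension step that the paper folds into the statement of the lemma.
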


\begin{definition}
{ { { We define, for a fixed $\eta \in M_{G}^{2}(0,T)$, the
stochastic integral
\[
\int_{0}^{T}\eta(s)dB_{s}:=I(\eta).
\]
It is clear that (\ref{e1}) and (\ref{e2}) still hold for $\eta \in M_{G}%
^{2}(0,T)$. } } }
\end{definition}

{ { { \textbf{Proof of Lemma \ref{bdd}. }From Example \ref{Exam-2},
for each $j$,
\[
\mathbb{\hat{E}}\mathbf{[}\xi_{j}(B_{t_{j+1}}-B_{t_{j}})|\mathcal{H}_{t_{j}%
}]=\mathbb{\hat{E}}\mathbf{[-}\xi_{j}(B_{t_{j+1}}-B_{t_{j}})|\mathcal{H}%
_{t_{j}}]=0.
\]
We have%
\begin{align*}
\mathbb{\hat{E}}[\int_{0}^{T}\eta(s)dB_{s}]  &
=\mathbb{\hat{E}[}\int
_{0}^{t_{N-1}}\eta(s)dB_{s}+\xi_{N-1}(B_{t_{N}}-B_{t_{N-1}})]\\
&  =\mathbb{\hat{E}[}\int_{0}^{t_{N-1}}\eta(s)dB_{s}+\mathbb{\hat{E}%
}\mathbf{[}\xi_{N-1}(B_{t_{N}}-B_{t_{N-1}})|\mathcal{H}_{t_{N-1}}]]\\
&  =\mathbb{\hat{E}[}\int_{0}^{t_{N-1}}\eta(s)dB_{s}].
\end{align*}
We then can repeat this procedure to obtain (\ref{e1}). We now prove
(\ref{e2}):
\begin{align*}
\mathbb{\hat{E}}[(\int_{0}^{T}\eta(s)dB_{s})^{2}]  &  =\mathbb{\hat{E}%
[}\left(  \int_{0}^{t_{N-1}}\eta(s)dB_{s}+\xi_{N-1}(B_{t_{N}}-B_{t_{N-1}%
})\right)  ^{2}]\\
&  =\mathbb{\hat{E}[}\left(  \int_{0}^{t_{N-1}}\eta(s)dB_{s}\right)
^{2}+\mathbb{\hat{E}}[2\left(
\int_{0}^{t_{N-1}}\eta(s)dB_{s}\right)
\xi_{N-1}(B_{t_{N}}-B_{t_{N-1}})\\
&  +\xi_{N-1}^{2}(B_{t_{N}}-B_{t_{N-1}})^{2}|\mathcal{H}_{t_{N-1}}]]\\
&  =\mathbb{\hat{E}[}\left(  \int_{0}^{t_{N-1}}\eta(s)dB_{s}\right)  ^{2}%
+\xi_{N-1}^{2}(t_{N}-t_{N-1})].
\end{align*}
Thus $\mathbb{\hat{E}}[(\int_{0}^{t_{N}}\eta(s)dB_{s})^{2}]\leq
\mathbb{\hat
{E}[}\left(  \int_{0}^{t_{N-1}}\eta(s)dB_{s}\right)  ^{2}]+\mathbb{\hat{E}%
}[\xi_{N-1}^{2}](t_{N}-t_{N-1})$. We then repeat this procedure to
deduce
\[
\mathbb{\hat{E}}[(\int_{0}^{T}\eta(s)dB_{s})^{2}]\leq \sum_{j=0}^{N-1}%
\mathbb{\hat{E}}[(\xi_{j})^{2}](t_{j+1}-t_{j})=\int_{0}^{T}\mathbb{\hat{E}%
}[(\eta(t))^{2}]dt.
\]
$\blacksquare$ } } }

{ { { We list some main properties of the It\^{o}'s integral of
$G$--Brownian motion. We denote for some $0\leq s\leq t\leq T$,
\[
\int_{s}^{t}\eta_{u}dB_{u}:=\int_{0}^{T}\mathbf{I}_{[s,t]}(u)\eta_{u}dB_{u}.
\]
We have } } }

\begin{proposition}
{ { { \label{Prop-Integ}Let $\eta,\theta \in M_{G}^{2}(0,T)$ and let
$0\leq s\leq r\leq t\leq T$. Then in $L_{G}^{1}(\mathcal{F}_{T})$ we
have\newline\textsl{(i)}
$\int_{s}^{t}\eta_{u}dB_{u}=\int_{s}^{r}\eta_{u}dB_{u}+\int_{r}^{t}\eta
_{u}dB_{u},$\newline\textsl{(ii)} $\int_{s}^{t}(\alpha \eta_{u}+\theta_{u})dB_{u}%
=\alpha \int_{s}^{t}\eta_{u}dB_{u}+\int_{s}^{t}\theta_{u}dB_{u},\
$if$\  \alpha$ is bounded and in
$L_{G}^{1}(\mathcal{F}_{s})$,\newline\textsl{(iii)} $\mathbb{\hat
{E}[}X+\int_{r}^{T}\eta_{u}dB_{u}|\mathcal{H}_{s}]=\mathbb{\hat{E}[}X]$,
$\forall X\in L_{G}^{1}(\mathcal{F})$. } } }
\end{proposition}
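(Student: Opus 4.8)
The plan is to establish each of (i)--(iii) first for simple integrands in $M_{G}^{2,0}(0,T)$, where $\int\eta\,dB$ is a genuine finite sum, and then to pass to general $\eta,\theta\in M_{G}^{2}(0,T)$ using that $M_{G}^{2,0}(0,T)$ is dense in $M_{G}^{2}(0,T)$ by construction and that $I(\cdot)$ is $L_{G}^{2}$-continuous by Lemma \ref{bdd}. For (i), I would refine the underlying partition so that $s$, $r$ and $t$ are all nodes; refining a partition leaves the integral of a simple process unchanged, and once $r$ is a node the defining sum of $\int_{s}^{t}\eta\,dB=\int_{0}^{T}\mathbf{I}_{[s,t]}(u)\eta_{u}\,dB_{u}$ separates into the nodes lying in $[s,r]$ and those lying in $[r,t]$, which is exactly $\int_{s}^{r}\eta\,dB+\int_{r}^{t}\eta\,dB$. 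Since $\mathbf{I}_{[s,t]}=\mathbf{I}_{[s,r]}+\mathbf{I}_{[r,t]}$ as elements of $M_{G}^{2}(0,T)$ and $\eta\mapsto\int_{s}^{t}\eta\,dB$ is continuous, (i) survives the limit.

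For (ii), the key observation is that a bounded $\alpha\in L_{G}^{1}(\mathcal{F}_{s})$ belongs to $L_{G}^{2}(\mathcal{F}_{t_{j}})$ for every node $t_{j}\geq s$, so for a simple $\eta$ the process $\alpha\eta+\theta$ is again simple on $[s,t]$ and its integral is the finite sum $\sum_{j}(\alpha\xi_{j}+\theta_{j})(B_{t_{j+1}}-B_{t_{j}})$. The common, $\mathcal{F}_{s}$-measurable factor $\alpha$ can be pulled directly out of the sum over $[s,t]$, giving $\alpha\int_{s}^{t}\eta\,dB+\int_{s}^{t}\theta\,dB$. Extending to $M_{G}^{2}(0,T)$ only requires that multiplication by the fixed bounded $\alpha$ and the integral $I$ are both continuous, so the identity is preserved in the limit.

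The substantive statement is (iii), and this is where I expect the main obstacle. The heart of the matter is the claim that the forward integral carries no conditional mean uncertainty at level $s$:
\[
\mathbb{\hat{E}}\Big[\int_{r}^{T}\eta_{u}\,dB_{u}\,\Big|\,\mathcal{H}_{s}\Big]=-\mathbb{\hat{E}}\Big[-\int_{r}^{T}\eta_{u}\,dB_{u}\,\Big|\,\mathcal{H}_{s}\Big]=0.
\]
For a simple $\eta$ this follows by the same backward telescoping used to prove (\ref{e1}) in Lemma \ref{bdd}, but now executed with the conditional operator $\mathbb{\hat{E}}[\cdot\,|\,\mathcal{H}_{t_{j}}]$: because every node satisfies $t_{j}\geq r\geq s$, Example \ref{Exam-2} gives $\mathbb{\hat{E}}[\pm\xi_{j}(B_{t_{j+1}}-B_{t_{j}})\,|\,\mathcal{H}_{t_{j}}]=0$, and peeling off the last increment via the tower property (property (e') of Proposition \ref{Prop-1-7}) removes one node at a time until only $\mathcal{H}_{s}$ remains. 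With this no-mean-uncertainty property in hand, I would apply Proposition \ref{E-x+y} with $Y=\int_{r}^{T}\eta_{u}\,dB_{u}$ to conclude $\mathbb{\hat{E}}[X+\int_{r}^{T}\eta_{u}\,dB_{u}\,|\,\mathcal{H}_{s}]=\mathbb{\hat{E}}[X\,|\,\mathcal{H}_{s}]$.

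The delicate point, and the step I would treat most carefully, is transferring the conditional-mean-zero identity from simple $\eta$ to a general $\eta\in M_{G}^{2}(0,T)$. Approximating $\eta$ by simple $\eta^{n}\to\eta$ in $M_{G}^{2}$, the integrals converge in $L_{G}^{2}(\mathcal{F}_{T})$ by Lemma \ref{bdd}, hence in $L_{G}^{1}$ via $\|\cdot\|_{1}\leq\|\cdot\|_{2}$; since $\mathbb{\hat{E}}[\cdot\,|\,\mathcal{H}_{s}]$ is a $\|\cdot\|_{1}$-contraction (a consequence of (c') and the tower property (e') of Proposition \ref{Prop-1-7}, yielding $\mathbb{\hat{E}}[|\mathbb{\hat{E}}[Y|\mathcal{H}_{s}]-\mathbb{\hat{E}}[Y'|\mathcal{H}_{s}]|]\leq\mathbb{\hat{E}}[|Y-Y'|]$), both identities $\mathbb{\hat{E}}[\pm\int_{r}^{T}\eta^{n}\,dB\,|\,\mathcal{H}_{s}]=0$ pass to the limit. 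Proposition \ref{E-x+y} then applies verbatim to the limit object, completing (iii).
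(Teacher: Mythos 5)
Your proof is correct and is precisely the argument the paper intends: Proposition \ref{Prop-Integ} is stated without proof, and your handling of (iii) --- the backward telescoping over the partition nodes via Example \ref{Exam-2} and the tower property (e') of Proposition \ref{Prop-1-7}, then passage from $M_{G}^{2,0}(0,T)$ to $M_{G}^{2}(0,T)$ using Lemma \ref{bdd} together with the $\Vert \cdot \Vert_{1}$-contractivity of $\mathbb{\hat{E}}[\cdot|\mathcal{H}_{s}]$, and finally Proposition \ref{E-x+y} --- is exactly the conditional version of the paper's own proof of (\ref{e1}) in Lemma \ref{bdd}, while (i) and (ii) are the standard partition-refinement and continuity arguments (for (ii), your assertion that a bounded element of $L_{G}^{1}(\mathcal{F}_{s})$ lies in $L_{G}^{2}(\mathcal{F}_{s})$ is right but deserves the one-line truncation argument: for bounded $\alpha,\alpha'$ one has $\mathbb{\hat{E}}[|\alpha-\alpha'|^{2}]\leq 2C\,\mathbb{\hat{E}}[|\alpha-\alpha'|]$, so truncated $L_{ip}^{0}$-approximants Cauchy in $\Vert \cdot \Vert_{1}$ are Cauchy in $\Vert \cdot \Vert_{2}$). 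One further point in your favor: what you actually prove in (iii) is $\mathbb{\hat{E}}[X+\int_{r}^{T}\eta_{u}dB_{u}|\mathcal{H}_{s}]=\mathbb{\hat{E}}[X|\mathcal{H}_{s}]$, which is the correct statement and the one appearing in the multidimensional analogue, Proposition \ref{d-Prop-Integ}-(iii); the right-hand side $\mathbb{\hat{E}}[X]$ printed in the one-dimensional statement is a typo, since it already fails for $\eta \equiv 0$ and $X=B_{s}$.
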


\subsubsection{{{{Quadratic variation process of $G$--Brownian motion}}}}

{ { { We now study a very interesting process of the $G$-Brownian
motion. Let $\pi_{t}^{N}$, $N=1,2,\cdots$, be a sequence of
partitions of $[0,t]$. We consider} } }

{ { {
\begin{align*}
B_{t}^{2}  &  =\sum_{j=0}^{N-1}[B_{t_{j+1}^{N}}^{2}-B_{t_{j}^{N}}^{2}]\\
&
=\sum_{j=0}^{N-1}2B_{t_{j}^{N}}(B_{t_{j+1}^{N}}-B_{t_{j}^{N}})+\sum
_{j=0}^{N-1}(B_{t_{j+1}^{N}}-B_{t_{j}^{N}})^{2}.
\end{align*}
As $\mu(\pi_{t}^{N})\rightarrow0$ the first term of the right side
tends to $2\int_{0}^{t}B_{s}dB_{s}$. The second term must converge.
We denote its limit by $\left \langle B\right \rangle _{t}$, i.e.,
\begin{equation}
\left \langle B\right \rangle
_{t}=\lim_{\mu(\pi_{t}^{N})\rightarrow0}\sum
_{j=0}^{N-1}(B_{t_{j+1}^{N}}-B_{t_{j}^{N}})^{2}=B_{t}^{2}-2\int_{0}^{t}%
B_{s}dB_{s}. \label{quadra-def}%
\end{equation}
By the above construction $\{\left \langle B\right \rangle
_{t}\}_{t\geq0}$, is an increasing process with $\left \langle
B\right \rangle _{0}=0$. We call it the \textbf{quadratic variation
process} of the $G$--Brownian motion $B$. Clearly $\left \langle
B\right \rangle $ is an increasing process. It characterizes the
part of statistic uncertainty of $G$--Brownian motion. It is
important to keep in mind that $\left \langle B\right \rangle _{t}$
is not a deterministic process unless the case $\sigma=1$, i.e.,
when $B$ is a classical Brownian motion. In fact we have } } }

\begin{lemma}
{ { { \label{Lem-Q1}We have, for each $0\leq s\leq t<\infty$%
\begin{align}
\mathbb{\hat{E}}[\left \langle B\right \rangle _{t}-\left \langle
B\right \rangle
_{s}|\mathcal{H}_{s}]  &  =t-s,\  \  \label{quadra}\\
\mathbb{\hat{E}}[-(\left \langle B\right \rangle _{t}-\left \langle
B\right \rangle _{s})|\mathcal{H}_{s}]  &  =-\sigma^{2}(t-s). \label{quadra1}%
\end{align}
} } }
\end{lemma}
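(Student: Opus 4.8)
The plan is to reduce the increment $\langle B\rangle_t - \langle B\rangle_s$ to quantities built only from the increments of $B$ after time $s$, so that the conditional-expectation rules already established apply. Starting from the definition (\ref{quadra-def}) one has $\langle B\rangle_t - \langle B\rangle_s = B_t^2 - B_s^2 - 2\int_s^t B_u\,dB_u$. The key algebraic step is to recenter each piece at the value $B_s$: using $B_t^2 - B_s^2 = (B_t - B_s)^2 + 2B_s(B_t - B_s)$ together with $\int_s^t B_u\,dB_u = \int_s^t (B_u - B_s)\,dB_u + B_s(B_t - B_s)$ (the latter by linearity of the integral, Proposition \ref{Prop-Integ}(ii), pulling out the $\mathcal{F}_s$-measurable factor $B_s$), the two copies of $B_s(B_t - B_s)$ cancel and I obtain the clean identity
\[
\langle B\rangle_t - \langle B\rangle_s = (B_t - B_s)^2 - 2\int_s^t (B_u - B_s)\,dB_u.
\]

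Next I would argue that the stochastic integral term $J := \int_s^t (B_u - B_s)\,dB_u$ carries no mean uncertainty given $\mathcal{H}_s$, i.e. $\mathbb{\hat{E}}[J|\mathcal{H}_s] = \mathbb{\hat{E}}[-J|\mathcal{H}_s] = 0$. For a simple integrand approximating $u \mapsto (B_u - B_s)\mathbf{I}_{[s,t]}(u)$ along a partition $s = t_0 < \cdots < t_N = t$, the coefficients $\xi_j = B_{t_j} - B_s$ lie in $L_G^2(\mathcal{F}_{t_j})$, and by Example \ref{Exam-2} each term satisfies $\mathbb{\hat{E}}[\xi_j(B_{t_{j+1}} - B_{t_j})|\mathcal{H}_{t_j}] = \mathbb{\hat{E}}[-\xi_j(B_{t_{j+1}} - B_{t_j})|\mathcal{H}_{t_j}] = 0$. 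Peeling off the terms from the top exactly as in the proof of Lemma \ref{bdd}, and using the tower property (e') of Proposition \ref{Prop-1-7}, yields the claim, which is also the content of Proposition \ref{Prop-Integ}(iii).

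With the vanishing conditional mean of $-2J$ in hand, I would invoke Proposition \ref{E-x+y} with $X = (B_t - B_s)^2$ and $Y = -2J$ to split the conditional expectation:
\[
\mathbb{\hat{E}}[\langle B\rangle_t - \langle B\rangle_s|\mathcal{H}_s] = \mathbb{\hat{E}}[(B_t - B_s)^2|\mathcal{H}_s] + \mathbb{\hat{E}}[-2J|\mathcal{H}_s] = \mathbb{\hat{E}}[(B_t - B_s)^2|\mathcal{H}_s].
\]
By Example \ref{Exam-1} this equals $t - s$, giving (\ref{quadra}). For (\ref{quadra1}) I take instead $Y = 2J$ (again no mean uncertainty), and the same splitting gives $\mathbb{\hat{E}}[-(\langle B\rangle_t - \langle B\rangle_s)|\mathcal{H}_s] = \mathbb{\hat{E}}[-(B_t - B_s)^2|\mathcal{H}_s]$, which by the $n=2$ instance of the formula $\mathbb{\hat{E}}[-|B_t - B_s|^n|\mathcal{H}_s] = -\sigma^n\,\mathbb{\hat{E}}[|B_{t-s}|^n]$ in Example \ref{Exam-1} equals $-\sigma^2(t - s)$.

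The main obstacle I anticipate is the passage to the limit in Steps 1 and 2: strictly speaking $\langle B\rangle_t - \langle B\rangle_s$ and the integral $J$ are defined as $L_G^2$-limits of simple-process approximations, so I must check that the identity of Step 1 and the vanishing conditional mean of Step 2 survive the completion. This I would handle using the continuity of $\mathbb{\hat{E}}[\cdot|\mathcal{H}_s]$ in the norm $\|\cdot\|$ and the $L_G^2$-estimate (\ref{e2}) for the integral, together with a truncation argument to justify pulling out the unbounded factor $B_s$ in Step 1. Once these continuity points are secured, the algebra and the cited moment computations finish the proof.
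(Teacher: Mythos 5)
Your proof is correct and follows essentially the same route as the paper: the paper discards the term $-2\int_s^t B_u\,dB_u$ directly via Proposition \ref{Prop-Integ}(iii) and then evaluates $\mathbb{\hat{E}}[B_t^2-B_s^2|\mathcal{H}_s]$ as in Example \ref{Exam-B2}, which is exactly your recentering $B_t^2-B_s^2=(B_t-B_s)^2+2B_s(B_t-B_s)$ combined with Proposition \ref{E-x+y}, so the two arguments use identical ingredients (the peeling of Lemma \ref{bdd}, Proposition \ref{E-x+y}, and the moment formulas of Example \ref{Exam-1}) in a slightly different order. Your Step 1 identity $\langle B\rangle_t-\langle B\rangle_s=(B_t-B_s)^2-2\int_s^t(B_u-B_s)\,dB_u$ is precisely the paper's Lemma \ref{Lem-Qua2}, and your closing remarks on truncating the unbounded factor $B_s$ and passing the identities through the $L_G^2$-completion address points the paper likewise leaves implicit.
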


\begin{proof}
{ { { By the definition of $\left \langle B\right \rangle $ and
Proposition \ref{Prop-Integ}-(iii),
\begin{align*}
\mathbb{\hat{E}}[\left \langle B\right \rangle _{t}-\left \langle
B\right \rangle
_{s}|\mathcal{H}_{s}]  &  =\mathbb{\hat{E}}[B_{t}^{2}-B_{s}^{2}-2\int_{s}%
^{t}B_{u}dB_{u}|\mathcal{H}_{s}]\\
&  =\mathbb{\hat{E}}[B_{t}^{2}-B_{s}^{2}|\mathcal{H}_{s}]=t-s.
\end{align*}
The last step can be checked as in Example \ref{Exam-B2}. We then
have (\ref{quadra}). (\ref{quadra1}) can be proved analogously with
the
consideration of $\mathbb{\hat{E}}[-(B_{t}^{2}-B_{s}^{2})|\mathcal{H}%
_{s}]=-\sigma^{2}(t-s)$. } } }
\end{proof}

{ { { To define the integration of a process $\eta \in
M_{G}^{1}(0,T)$ with
respect to $d\left \langle B\right \rangle $, we first define a mapping:%
\[
Q_{0,T}(\eta)=\int_{0}^{T}\eta(s)d\left \langle B\right \rangle
_{s}:=\sum _{j=0}^{N-1}\xi_{j}(\left \langle B\right \rangle
_{t_{j+1}}-\left \langle B\right \rangle
_{t_{j}}):M_{G}^{1,0}(0,T)\mapsto L^{1}(\mathcal{F}_{T}).
\]
} } }

\begin{lemma}
{ { { \label{Lem-Q2}For each $\eta \in M_{G}^{1,0}(0,T)$,
\begin{equation}
\mathbb{\hat{E}}[|Q_{0,T}(\eta)|]\leq
\int_{0}^{T}\mathbb{\hat{E}}[|\eta
_{s}|]ds.\  \label{dA}%
\end{equation}
Thus $Q_{0,T}:M_{G}^{1,0}(0,T)\mapsto L^{1}(\mathcal{F}_{T})$ is a
continuous linear mapping. Consequently, $Q_{0,T}$ can be uniquely
extended to
$L_{\mathcal{F}}^{1}(0,T)$. We still denote this mapping \ by%
\[
\int_{0}^{T}\eta(s)d\left \langle B\right \rangle
_{s}=Q_{0,T}(\eta),\  \  \eta \in
M_{G}^{1}(0,T)\text{.}%
\]
We still have
\begin{equation}
\mathbb{\hat{E}}[|\int_{0}^{T}\eta(s)d\left \langle B\right \rangle _{s}%
|]\leq \int_{0}^{T}\mathbb{\hat{E}}[|\eta_{s}|]ds,\  \  \forall \eta \in M_{G}%
^{1}(0,T)\text{.} \label{qua-ine}%
\end{equation}
} } }
\end{lemma}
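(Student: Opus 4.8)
The plan is to prove the estimate (\ref{dA}) first for a simple process $\eta_t=\sum_{j=0}^{N-1}\xi_j\mathbf{I}_{[t_j,t_{j+1})}(t)\in M_G^{1,0}(0,T)$, and then obtain the general case by continuous extension. The decisive structural fact I would exploit is that $\left\langle B\right\rangle$ is an \emph{increasing} process, so that each increment $\Delta_j:=\left\langle B\right\rangle_{t_{j+1}}-\left\langle B\right\rangle_{t_j}$ is nonnegative. By the triangle inequality and the monotonicity of $\mathbb{\hat{E}}$,
\[
\mathbb{\hat{E}}[|Q_{0,T}(\eta)|]=\mathbb{\hat{E}}\Big[\Big|\sum_{j=0}^{N-1}\xi_j\Delta_j\Big|\Big]\leq \mathbb{\hat{E}}\Big[\sum_{j=0}^{N-1}|\xi_j|\,\Delta_j\Big],
\]
so the whole problem reduces to bounding the right-hand side, in which every coefficient $|\xi_j|\geq 0$ and every increment $\Delta_j\geq 0$. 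This nonnegativity is exactly what ensures that only the \emph{upper} conditional moment $\mathbb{\hat{E}}[\Delta_j|\mathcal{H}_{t_j}]=t_{j+1}-t_j$ from (\ref{quadra}) of Lemma \ref{Lem-Q1} will enter, rather than the strictly smaller lower value $-\sigma^2(t_{j+1}-t_j)$ from (\ref{quadra1}).

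Next I would run a backward induction on the number of terms, peeling off the last summand. Writing $Y:=\sum_{j=0}^{N-2}|\xi_j|\Delta_j$, which is nonnegative and $\mathcal{F}_{t_{N-1}}$-measurable, I would condition on $\mathcal{H}_{t_{N-1}}$. Since $|\xi_{N-1}|\geq 0$ is $\mathcal{F}_{t_{N-1}}$-measurable, property \textbf{(d')} of Proposition \ref{Prop-1-7} together with (\ref{quadra}) gives
\[
\mathbb{\hat{E}}[|\xi_{N-1}|\Delta_{N-1}|\mathcal{H}_{t_{N-1}}]=|\xi_{N-1}|\,(t_N-t_{N-1}),
\]
where the $\eta^{-}$-term of \textbf{(d')} vanishes precisely because $|\xi_{N-1}|$ has no negative part. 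Using the cash-invariance relation $\mathbb{\hat{E}}[Y+\zeta|\mathcal{H}_{t_{N-1}}]=Y+\mathbb{\hat{E}}[\zeta|\mathcal{H}_{t_{N-1}}]$ for the $\mathcal{H}_{t_{N-1}}$-measurable $Y$, then the tower property \textbf{(e')} and positive homogeneity together with sub-additivity, I obtain
\[
\mathbb{\hat{E}}\Big[\sum_{j=0}^{N-1}|\xi_j|\Delta_j\Big]=\mathbb{\hat{E}}\big[Y+|\xi_{N-1}|(t_N-t_{N-1})\big]\leq \mathbb{\hat{E}}[Y]+\mathbb{\hat{E}}[|\xi_{N-1}|]\,(t_N-t_{N-1}).
\]
Since $Y$ has exactly the same form with one fewer term, iterating this step yields $\mathbb{\hat{E}}[\sum_j|\xi_j|\Delta_j]\leq\sum_{j=0}^{N-1}\mathbb{\hat{E}}[|\xi_j|](t_{j+1}-t_j)=\int_0^T\mathbb{\hat{E}}[|\eta_s|]\,ds$, which is precisely (\ref{dA}).

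Finally, $Q_{0,T}$ is linear on $M_G^{1,0}(0,T)$, and (\ref{dA}) states that it is a contraction from $(M_G^{1,0}(0,T),\|\cdot\|_{M_G^1(0,T)})$ into $L^1(\mathcal{F}_T)$; since $M_G^{1,0}(0,T)$ is dense in its completion $M_G^{1}(0,T)$, the bounded-linear-extension principle produces a unique continuous extension, and letting simple approximants converge in (\ref{dA}) passes the bound to the limit, giving (\ref{qua-ine}). I expect the only genuine obstacle to be the inductive bookkeeping with the sublinear conditional expectation: one must verify that the coefficients stay nonnegative at every stage so that \textbf{(d')} always activates the upper moment $t_{j+1}-t_j$ and never the lower one $-\sigma^2(t_{j+1}-t_j)$, which is what keeps the final constant sharp. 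A minor technical point is the boundedness hypothesis attached to \textbf{(d')} in Proposition \ref{Prop-1-7}; if the $\xi_j$ are unbounded one first truncates them to apply the argument and then passes to the limit in $\|\cdot\|_{M_G^1(0,T)}$.
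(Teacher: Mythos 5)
Your proposal is correct and is essentially the paper's own argument: both proofs bound $|Q_{0,T}(\eta)|$ by $\sum_{j}|\xi_{j}|\,(\langle B\rangle_{t_{j+1}}-\langle B\rangle_{t_{j}})$ using that $\langle B\rangle$ is increasing, then evaluate each term through the tower property, property \textbf{(d')} with the nonnegative coefficient $|\xi_{j}|$, and (\ref{quadra}) of Lemma \ref{Lem-Q1}, and finally extend by density; your backward peeling induction is just the paper's term-by-term sub-additivity written out one step at a time. Your closing remark on truncating unbounded $\xi_{j}$ before invoking the boundedness hypothesis in \textbf{(d')} is a careful technical point that the paper leaves implicit.
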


\begin{proof}
{ { { From Lemma \ref{Lem-Q1} (\ref{dA}) can be checked as follows:%
\begin{align*}
\mathbb{\hat{E}}[|\sum_{j=0}^{N-1}\xi_{j}(\left \langle B\right
\rangle
_{t_{j+1}}-\left \langle B\right \rangle _{t_{j}})|]  &  \leq \sum_{j=0}%
^{N-1}\mathbb{\hat{E}[}|\xi_{j}|\cdot \mathbb{\hat{E}}[\left \langle
B\right \rangle _{t_{j+1}}-\left \langle B\right \rangle _{t_{j}}|\mathcal{H}%
_{t_{j}}]]\\
&  =\sum_{j=0}^{N-1}\mathbb{\hat{E}[}|\xi_{j}|](t_{j+1}-t_{j})\\
&  =\int_{0}^{T}\mathbb{\hat{E}}[|\eta_{s}|]ds.
\end{align*}
} } }
\end{proof}

{ { { A very interesting point of the quadratic variation process
$\left \langle B\right \rangle $ is, just like the $G$--Brownian
motion $B$ itself, the increment $\left \langle B\right \rangle
_{t+s}-\left \langle B\right \rangle _{s}$ is independent of
$\mathcal{F}_{s}$ and identically distributed like $\left \langle
B\right \rangle _{t}$. In fact we have } } }

\begin{lemma}
{ { { \label{Lem-Qua2}For each fixed $s\geq0$, $(\left \langle
B\right \rangle _{s+t}-\left \langle B\right \rangle _{s})_{t\geq0}$
is independent of $\mathcal{F}_{s}$. It is the quadratic variation
process of the Brownian motion $B_{t}^{s}=B_{s+t}-B_{s}$, $t\geq0$,
i.e., $\left \langle B\right \rangle _{s+t}-\left \langle B\right
\rangle _{s}=\left \langle B^{s}\right \rangle _{t}$. } } }
\end{lemma}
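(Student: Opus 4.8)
The plan is to establish the algebraic identity $\langle B\rangle_{s+t}-\langle B\rangle_s=\langle B^s\rangle_t$ first, and then deduce the independence from the fact --- recorded in the remark preceding this lemma --- that $B^s_u:=B_{s+u}-B_s$ is itself a $G$--Brownian motion independent of $\mathcal{F}_s$.

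For the identity, I would start from the It\^o representation (\ref{quadra-def}) of the quadratic variation. Applying it at the two endpoints $s+t$ and $s$ and subtracting, the additivity of the integral over $[0,s]$ and $[s,s+t]$ (Proposition \ref{Prop-Integ}(i)) gives
\[
\langle B\rangle_{s+t}-\langle B\rangle_s=B_{s+t}^2-B_s^2-2\int_s^{s+t}B_u\,dB_u .
\]
Applying (\ref{quadra-def}) instead to the shifted motion $B^s$ gives $\langle B^s\rangle_t=(B^s_t)^2-2\int_0^t B^s_u\,dB^s_u$. The one nonroutine ingredient is a time--shift formula for the $G$--It\^o integral,
\[
\int_0^t B^s_u\,dB^s_u=\int_s^{s+t}(B_r-B_s)\,dB_r=\int_s^{s+t}B_r\,dB_r-B_s(B_{s+t}-B_s),
\]
which I would verify first for simple integrands straight from the definition of $I(\eta)$ --- the defining sums $\sum_j B^s_{u_j}(B^s_{u_{j+1}}-B^s_{u_j})$ for a partition of $[0,t]$ coincide term by term with $\sum_j (B_{s+u_j}-B_s)(B_{s+u_{j+1}}-B_{s+u_j})$ for the shifted partition of $[s,s+t]$ --- and then extend via the $L_G^2$--continuity of the integral from Lemma \ref{bdd}. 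Substituting this and expanding $(B^s_t)^2=B_{s+t}^2-2B_sB_{s+t}+B_s^2$, the cross terms cancel and one arrives at exactly the displayed expression for $\langle B\rangle_{s+t}-\langle B\rangle_s$; this is a short mechanical step.

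For the independence, I would argue by approximation. By (\ref{quadra-def}) the variable $\langle B^s\rangle_t$ is the $L_G^1$--limit of the finite sums $Q^N:=\sum_j(B^s_{t^N_{j+1}}-B^s_{t^N_j})^2$, each a $C_{l.Lip}$--function of finitely many increments of $B^s$ and hence, by the cited remark, independent of $\mathcal{F}_s$; that is, $\hat{\mathbb{E}}[\Phi(Q^N)|\mathcal{H}_s]=\hat{\mathbb{E}}[\Phi(Q^N)]$ for the admissible test functions $\Phi$. Since $\hat{\mathbb{E}}[\cdot|\mathcal{H}_s]$ is a contraction under $\|\cdot\|$ (the continuity established just before Proposition \ref{Prop-1-7}), letting $N\to\infty$ transports this equality to the limit $\langle B^s\rangle_t=\langle B\rangle_{s+t}-\langle B\rangle_s$, and the same argument run simultaneously at finitely many times $t_1,\dots,t_k$ (again using that $B^s$ is a genuine $G$--Brownian motion independent of $\mathcal{F}_s$) gives independence of the whole process.

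The step I expect to demand the most care is interchanging the limit that defines $\langle B\rangle$ with the independence relation: one must keep the test functions $\Phi$ and the auxiliary $\mathcal{F}_s$--measurable factors inside the class where $\hat{\mathbb{E}}[\cdot|\mathcal{H}_s]$ is continuous, so that independence --- an identity of conditional expectations rather than of a measure --- genuinely survives the passage to the $L_G^1$ limit. The time--shift formula for the integral, by contrast, is routine once it is checked on simple processes.
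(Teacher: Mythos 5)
Your proposal is correct and takes essentially the same route as the paper, which likewise computes $\left\langle B\right\rangle_{s+t}-\left\langle B\right\rangle_{s} = (B_{s+t}-B_{s})^{2}-2\int_{s}^{s+t}(B_{r}-B_{s})\,d(B_{r}-B_{s}) = \left\langle B^{s}\right\rangle_{t}$ directly from (\ref{quadra-def}) and then states that the independence follows at once from this representation in terms of the increments $B_{s+u}-B_{s}$. The only difference is explicitness: you justify the time-shift formula for the It\^{o} integral on simple integrands and carry the independence relation through the $L_{G}^{1}$ limit with the contraction property of $\mathbb{\hat{E}}[\cdot|\mathcal{H}_{s}]$, details the paper leaves implicit.
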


\begin{proof}
{ { { The independence follows directly from
\begin{align*}
\left \langle B\right \rangle _{s+t}-\left \langle B\right \rangle
_{s}  &
=B_{t+s}^{2}-2\int_{0}^{s+t}B_{r}dB_{r}-[B_{s}^{2}-2\int_{0}^{s}B_{r}dB_{r}]\\
&  =(B_{t+s}-B_{s})^{2}-2\int_{s}^{s+t}(B_{r}-B_{s})d(B_{r}-B_{s})\\
&  =\left \langle B^{s}\right \rangle _{t}.
\end{align*}
} } }
\end{proof}

\begin{proposition}
{ { { \label{Prop-temp}}}Let $0\leq s\leq t$, $\xi \in L_{G}^{1}(F_{s})$. Then%
\begin{align*}
\mathbb{\hat{E}}[X+\xi(B_{t}^{2}-B_{s}^{2})]  &
=\mathbb{\hat{E}}[X+\xi
(B_{t}-B_{s})^{2}]\\
&  =\mathbb{\hat{E}}[X+\xi(\left \langle B\right \rangle _{t}-\left
\langle B\right \rangle _{s})].
\end{align*}
{ } }
\end{proposition}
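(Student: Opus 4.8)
The plan is to reduce each of the two claimed equalities to the cancellation of a single ``cross term'' that has no mean uncertainty at level $s$, and then to invoke Proposition \ref{E-x+y} followed by the tower property (e') of Proposition \ref{Prop-1-7}. Throughout I would assume $\xi$ bounded (the general case $\xi\in L_G^1(\mathcal{F}_s)$ then following by truncation and the continuity of $\mathbb{\hat{E}}$), which guarantees that all the products below lie in $L_G^1(\mathcal{F})$ and that $\mathcal{F}_s$-measurable factors may be pulled through the It\^o integral via Proposition \ref{Prop-Integ}(ii).

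For the first equality I start from the algebraic identity
\[
B_t^2 - B_s^2 = (B_t - B_s)^2 + 2B_s(B_t - B_s),
\]
so that the two integrands differ by $Y_1 := 2\xi B_s (B_t - B_s)$. Since $2\xi B_s \in L_G^1(\mathcal{F}_s)$, Example \ref{Exam-2} gives $\mathbb{\hat{E}}[Y_1|\mathcal{H}_s] = \mathbb{\hat{E}}[-Y_1|\mathcal{H}_s] = 0$, whence in particular $\mathbb{\hat{E}}[Y_1|\mathcal{H}_s] = -\mathbb{\hat{E}}[-Y_1|\mathcal{H}_s]$. Applying Proposition \ref{E-x+y} with this $Y_1$ and $Z := X + \xi(B_t - B_s)^2$ yields $\mathbb{\hat{E}}[X + \xi(B_t^2 - B_s^2)\,|\,\mathcal{H}_s] = \mathbb{\hat{E}}[X + \xi(B_t - B_s)^2\,|\,\mathcal{H}_s]$, and taking $\mathbb{\hat{E}}[\cdot]$ of both sides via (e') gives the first asserted equality.

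For the second equality I would use the increment representation of the quadratic variation. Either directly from the definition (\ref{quadra-def}), writing $\langle B\rangle_t - \langle B\rangle_s = B_t^2 - B_s^2 - 2\int_s^t B_u\,dB_u$ and simplifying, or more quickly from Lemma \ref{Lem-Qua2}, one obtains
\[
\langle B\rangle_t - \langle B\rangle_s = (B_t - B_s)^2 - 2\int_s^t (B_u - B_s)\,dB_u .
\]
Hence the two integrands now differ by $Y_2 := 2\xi\int_s^t (B_u - B_s)\,dB_u = 2\int_s^t \xi(B_u - B_s)\,dB_u$, the last step by Proposition \ref{Prop-Integ}(ii). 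Taking $X=0$, $r=s$, $T=t$ in Proposition \ref{Prop-Integ}(iii), applied to the integrand $\pm\,2\xi(B_u - B_s)\in M_G^2(s,t)$, gives $\mathbb{\hat{E}}[Y_2|\mathcal{H}_s] = \mathbb{\hat{E}}[-Y_2|\mathcal{H}_s] = 0$. A second application of Proposition \ref{E-x+y} with $Z := X + \xi(B_t - B_s)^2$, followed again by (e'), converts this into $\mathbb{\hat{E}}[X + \xi(B_t - B_s)^2] = \mathbb{\hat{E}}[X + \xi(\langle B\rangle_t - \langle B\rangle_s)]$, which is the second equality.

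The algebra is immediate; I expect the only real care to lie in the integrability bookkeeping. The main obstacle is to justify that the two cross terms genuinely have no mean uncertainty at level $s$: for $Y_1$ this is exactly Example \ref{Exam-2}, but for $Y_2$ one must confirm that $\xi(B_u - B_s)$ belongs to $M_G^2(s,t)$ and that Proposition \ref{Prop-Integ}(iii) applies with the lower integration limit $r$ equal to the conditioning time $s$, so that the conditional $\mathbb{\hat{E}}$ of the stochastic integral and of its negative both vanish. Once $\xi$ is taken bounded these points are routine, and the passage to general $\xi\in L_G^1(\mathcal{F}_s)$ is by approximation.
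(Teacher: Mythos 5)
Your proof is correct and follows essentially the paper's own argument: both equalities are reduced to cancelling a cross term with no mean uncertainty via Proposition \ref{E-x+y}, the only (cosmetic) difference being that the paper links the left expression to each of the other two using (\ref{quadra-def}) directly, i.e. $B_t^2-B_s^2=\left\langle B\right\rangle_t-\left\langle B\right\rangle_s+2\int_s^t B_u\,dB_u$ and $B_t^2-B_s^2=(B_t-B_s)^2+2B_s(B_t-B_s)$, whereas you link the middle to the right through the incremental identity $\left\langle B\right\rangle_t-\left\langle B\right\rangle_s=(B_t-B_s)^2-2\int_s^t(B_u-B_s)\,dB_u$ from Lemma \ref{Lem-Qua2}. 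Your conditional-then-tower formulation and the explicit boundedness/approximation bookkeeping for $\xi$ are sound and, if anything, more careful than the paper, which applies Proposition \ref{E-x+y} unconditionally and leaves those points implicit.
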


\begin{proof}
{ { { By (\ref{quadra-def}) and Proposition \ref{E-x+y} we have%
\begin{align*}
\mathbb{\hat{E}}[X+\xi(B_{t}^{2}-B_{s}^{2})]  &  =\mathbb{\hat{E}}%
[X+\xi(\left \langle B\right \rangle _{t}-\left \langle B\right \rangle _{s}%
+2\int_{s}^{t}B_{u}dB_{u})]\\
&  =\mathbb{\hat{E}}[X+\xi(\left \langle B\right \rangle _{t}-\left
\langle B\right \rangle _{s})].
\end{align*}
We also have
\begin{align*}
\mathbb{\hat{E}}[X+\xi(B_{t}^{2}-B_{s}^{2})]  &  =\mathbb{\hat{E}}%
[X+\xi \{(B_{t}-B_{s})^{2}+2(B_{t}-B_{s})B_{s}^{{}}\}]\\
&  =\mathbb{\hat{E}}[X+\xi(B_{t}-B_{s})^{2}].
\end{align*}
} } }
\end{proof}

{ { { We have the following isometry } } }

\begin{proposition}
{ { { Let $\eta \in M_{G}^{2}(0,T)$. We have%
\begin{equation}
\mathbb{\hat{E}}[(\int_{0}^{T}\eta(s)dB_{s})^{2}]=\mathbb{\hat{E}}[\int
_{0}^{T}\eta^{2}(s)d\left \langle B\right \rangle _{s}]. \label{isometry}%
\end{equation}
} } }
\end{proposition}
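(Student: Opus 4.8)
The plan is to prove the identity first for simple integrands and then pass to the limit. First I would take $\eta \in M_{G}^{2,0}(0,T)$ of the form $\eta_{t}=\sum_{j=0}^{N-1}\xi_{j}\mathbf{I}_{[t_{j},t_{j+1})}(t)$ with $\xi_{j}\in L_{G}^{2}(\mathcal{F}_{t_{j}})$ (we may assume the $\xi_{j}$ bounded, as such integrands are dense in $M_{G}^{2}(0,T)$, so that all the products below lie in $L_{G}^{1}$). Abbreviate $I_{k}=\sum_{j=0}^{k-1}\xi_{j}(B_{t_{j+1}}-B_{t_{j}})$ and $A_{k}=\sum_{j=k}^{N-1}\xi_{j}^{2}(\langle B\rangle_{t_{j+1}}-\langle B\rangle_{t_{j}})$, so the two sides of (\ref{isometry}) are $\mathbb{\hat{E}}[I_{N}^{2}]$ and $\mathbb{\hat{E}}[A_{0}]$. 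The core of the argument is the claim that $\mathbb{\hat{E}}[I_{N}^{2}]=\mathbb{\hat{E}}[I_{k}^{2}+A_{k}]$ for every $k$, which I would prove by downward induction from $k=N$ (trivial, since $A_{N}=0$) to $k=0$ (the desired identity, since $I_{0}=0$).

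For the inductive step from $k$ to $k-1$, I expand $I_{k}^{2}=I_{k-1}^{2}+2I_{k-1}\xi_{k-1}(B_{t_{k}}-B_{t_{k-1}})+\xi_{k-1}^{2}(B_{t_{k}}-B_{t_{k-1}})^{2}$ and write $A_{k-1}=\xi_{k-1}^{2}(\langle B\rangle_{t_{k}}-\langle B\rangle_{t_{k-1}})+A_{k}$. Two reductions then finish the step. First, applying Proposition \ref{Prop-temp} with $s=t_{k-1}$, $t=t_{k}$, $\xi=\xi_{k-1}^{2}\in L_{G}^{1}(\mathcal{F}_{t_{k-1}})$ and $X=I_{k-1}^{2}+2I_{k-1}\xi_{k-1}(B_{t_{k}}-B_{t_{k-1}})+A_{k}$, I may replace $\xi_{k-1}^{2}(B_{t_{k}}-B_{t_{k-1}})^{2}$ by $\xi_{k-1}^{2}(\langle B\rangle_{t_{k}}-\langle B\rangle_{t_{k-1}})$ without changing the expectation. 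Second, the cross term $Y=2I_{k-1}\xi_{k-1}(B_{t_{k}}-B_{t_{k-1}})$ satisfies $\mathbb{\hat{E}}[Y|\mathcal{H}_{t_{k-1}}]=-\mathbb{\hat{E}}[-Y|\mathcal{H}_{t_{k-1}}]=0$ by Example \ref{Exam-2} (since $I_{k-1}\xi_{k-1}\in L_{G}^{1}(\mathcal{F}_{t_{k-1}})$), so Proposition \ref{E-x+y} lets me delete it. Together these give $\mathbb{\hat{E}}[I_{k}^{2}+A_{k}]=\mathbb{\hat{E}}[I_{k-1}^{2}+A_{k-1}]$, completing the induction. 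The crucial point is that both cited propositions carry an arbitrary $X\in L_{G}^{1}$ alongside the term being modified, which is exactly why the tail $A_{k}$ causes no trouble.

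Finally I would extend from $M_{G}^{2,0}(0,T)$ to $M_{G}^{2}(0,T)$ by continuity. Given $\eta\in M_{G}^{2}(0,T)$, pick simple $\eta_{n}\to\eta$ in $M_{G}^{2}(0,T)$. On the left, the isometry-type bound (\ref{e2}) together with the H\"{o}lder estimate (\ref{ee4.6}) applied to $I(\eta_{n})^{2}-I(\eta)^{2}=(I(\eta_{n})-I(\eta))(I(\eta_{n})+I(\eta))$ gives $\mathbb{\hat{E}}[I(\eta_{n})^{2}]\to\mathbb{\hat{E}}[I(\eta)^{2}]$. On the right, the same estimate yields $\|\eta_{n}^{2}-\eta^{2}\|_{M_{G}^{1}(0,T)}\leq\|\eta_{n}-\eta\|_{M_{G}^{2}(0,T)}\,\|\eta_{n}+\eta\|_{M_{G}^{2}(0,T)}\to0$, whence the continuity of $Q_{0,T}$, i.e. (\ref{qua-ine}), gives $\mathbb{\hat{E}}[\int_{0}^{T}\eta_{n}^{2}(s)d\langle B\rangle_{s}]\to\mathbb{\hat{E}}[\int_{0}^{T}\eta^{2}(s)d\langle B\rangle_{s}]$. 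Passing to the limit in the simple-process identity yields (\ref{isometry}).

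I expect the main obstacle to be the simple-process recursion rather than the limiting argument. Because $\mathbb{\hat{E}}$ is only sublinear, the off-diagonal terms in the expansion of $I_{N}^{2}$ cannot all be discarded at once, so one must peel off exactly one block at a time and, at each stage, invoke the "no mean uncertainty" results in the correct order: first the quadratic-variation substitution of Proposition \ref{Prop-temp}, then the cancellation of the martingale-type cross term via Example \ref{Exam-2} and Proposition \ref{E-x+y}. The only other point needing care is tracking integrability of the products $I_{k-1}\xi_{k-1}$ and $\xi_{k-1}^{2}$, which is what the initial reduction to bounded simple integrands is designed to guarantee.
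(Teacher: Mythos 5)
Your proof is correct and follows essentially the same route as the paper's: reduce to simple integrands, delete the cross terms via Example \ref{Exam-2} and Proposition \ref{E-x+y}, replace each $\xi_{j}^{2}(B_{t_{j+1}}-B_{t_{j}})^{2}$ by $\xi_{j}^{2}(\left\langle B\right\rangle_{t_{j+1}}-\left\langle B\right\rangle_{t_{j}})$ via Proposition \ref{Prop-temp} (both of which carry the arbitrary $X\in L_{G}^{1}$ needed to handle the remaining terms), and then extend by continuity using (\ref{e2}) and (\ref{qua-ine}). Your downward induction merely makes explicit the one-block-at-a-time bookkeeping the paper leaves implicit, and your limit step spells out the details behind the paper's one-line extension to $M_{G}^{2}(0,T)$.
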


\begin{proof}
{ { { We first consider $\eta \in M_{G}^{2,0}(0,T)$ with the form
\[
\eta_{t}(\omega)=\sum_{j=0}^{N-1}\xi_{j}(\omega)\mathbf{I}_{[t_{j},t_{j+1}%
)}(t)
\]
and thus $\int_{0}^{T}\eta(s)dB_{s}:=\sum_{j=0}^{N-1}\xi_{j}(B_{t_{j+1}%
}-B_{t_{j}})$\textbf{.} From Proposition \ref{E-x+y} we have
\[
\mathbb{\hat{E}}[X+2\xi_{j}(B_{t_{j+1}}-B_{t_{j}})\xi_{i}(B_{t_{i+1}}%
-B_{t_{i}})]=\mathbb{\hat{E}}[X]\text{, for }X\in L_{G}^{1}(\mathcal{F)}%
\text{, }i\not =j.
\]
Thus%
\[
\mathbb{\hat{E}}[(\int_{0}^{T}\eta(s)dB_{s})^{2}]=\mathbb{\hat{E}[}\left(
\sum_{j=0}^{N-1}\xi_{j}(B_{t_{j+1}}-B_{t_{j}})\right)  ^{2}]=\mathbb{\hat{E}%
[}\sum_{j=0}^{N-1}\xi_{j}^{2}(B_{t_{j+1}}-B_{t_{j}})^{2}].
\]
From this and Proposition \ref{Prop-temp} it follows that
\[
\mathbb{\hat{E}}[(\int_{0}^{T}\eta(s)dB_{s})^{2}]=\mathbb{\hat{E}[}\sum
_{j=0}^{N-1}\xi_{j}^{2}(\left \langle B\right \rangle
_{t_{j+1}}-\left \langle
B\right \rangle _{t_{j}})]=\mathbb{\hat{E}[}\int_{0}^{T}\eta^{2}%
(s)d\left \langle B\right \rangle _{s}].
\]
Thus (\ref{isometry}) holds for $\eta \in M_{G}^{2,0}(0,T)$. We can
continuously extend the above equality to the case $\eta \in
M_{G}^{2}(0,T)$ and prove (\ref{isometry}). }}}
\end{proof}

\newpage

\subsubsection{The distribution of $\left \langle B\right \rangle _{t}$}

The quadratic variation process $\left \langle B\right \rangle $ of
$G$-Brownian motion $B$ is a very interesting process. We have seen
that the $G$-Brownian motion $B$ is a typical process with variance
uncertainty but without mean-uncertainty. This uncertainty is
concentrated in $\left \langle B\right \rangle $. Moreover, $\left
\langle B\right \rangle $ itself is a typical process with
mean-variance. This fact will be applied to measure the
mean-uncertainty of risky positions.

\begin{lemma}
{{{We have
\begin{equation}
\mathbb{\hat{E}}[\left \langle B^{s}\right \rangle _{t}^{2}|\mathcal{H}%
_{s}]=\mathbb{\hat{E}}[\left \langle B\right \rangle
_{t}^{2}]\leq10t^{2}.
\label{Qua2}%
\end{equation}
}}}
\end{lemma}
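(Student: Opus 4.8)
The plan is to prove the two assertions separately: the conditional-expectation identity on the left is a direct consequence of the structural Lemma~\ref{Lem-Qua2}, whereas the quantitative estimate on the right is an elementary second-moment computation built from the defining formula for $\langle B\rangle$.

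For the first equality I would invoke Lemma~\ref{Lem-Qua2}, which tells us that $\langle B^{s}\rangle_{t}=\langle B\rangle_{s+t}-\langle B\rangle_{s}$ is independent of $\mathcal{F}_{s}$ and is itself the quadratic variation process of the $G$--Brownian motion $B^{s}_{\cdot}=B_{s+\cdot}-B_{s}$. Independence of $\mathcal{F}_{s}$, in the sense of the Definition preceding Example~\ref{Exam-2}, applied to $\Phi(x)=x^{2}$, gives $\mathbb{\hat{E}}[\langle B^{s}\rangle_{t}^{2}|\mathcal{H}_{s}]=\mathbb{\hat{E}}[\langle B^{s}\rangle_{t}^{2}]$; and since $B^{s}$ is again a $G$--Brownian motion, its quadratic variation satisfies $\langle B^{s}\rangle_{t}\sim\langle B\rangle_{t}$, so the right-hand side equals $\mathbb{\hat{E}}[\langle B\rangle_{t}^{2}]$. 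The admissibility of $\Phi(x)=x^{2}$ is justified a posteriori by the finiteness established in the next step.

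For the inequality I would start from the defining identity (\ref{quadra-def}), namely $\langle B\rangle_{t}=B_{t}^{2}-2\int_{0}^{t}B_{u}dB_{u}$, and apply the elementary pointwise inequality $(a-b)^{2}\leq 2a^{2}+2b^{2}$ with $a=B_{t}^{2}$ and $b=2\int_{0}^{t}B_{u}dB_{u}$, obtaining $\langle B\rangle_{t}^{2}\leq 2B_{t}^{4}+8\left(\int_{0}^{t}B_{u}dB_{u}\right)^{2}$. This square-splitting is the key device: it replaces the sign-sensitive cross term by a dominating sum of squares, so that afterwards only monotonicity and subadditivity of $\mathbb{\hat{E}}$ are needed. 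Taking $\mathbb{\hat{E}}$ and using subadditivity then yields $\mathbb{\hat{E}}[\langle B\rangle_{t}^{2}]\leq 2\mathbb{\hat{E}}[B_{t}^{4}]+8\,\mathbb{\hat{E}}\!\left[\left(\int_{0}^{t}B_{u}dB_{u}\right)^{2}\right]$.

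It then remains to evaluate the two terms from results already in hand. The fourth moment is $\mathbb{\hat{E}}[B_{t}^{4}]=3t^{2}$ by Example~\ref{Exam-1}. For the integral term I would apply the isometry (\ref{isometry}) to rewrite $\mathbb{\hat{E}}[(\int_{0}^{t}B_{u}dB_{u})^{2}]=\mathbb{\hat{E}}[\int_{0}^{t}B_{u}^{2}d\langle B\rangle_{u}]$, then use the estimate (\ref{qua-ine}) together with $\mathbb{\hat{E}}[B_{u}^{2}]=u$ to bound this by $\int_{0}^{t}\mathbb{\hat{E}}[B_{u}^{2}]\,du=t^{2}/2$. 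Substituting gives $\mathbb{\hat{E}}[\langle B\rangle_{t}^{2}]\leq 2\cdot 3t^{2}+8\cdot \tfrac{t^{2}}{2}=10t^{2}$, exactly as claimed. I do not expect a serious obstacle; the only point demanding care is that the equality $\mathbb{\hat{E}}[X+Y]=\mathbb{\hat{E}}[X]+\mathbb{\hat{E}}[Y]$ fails under sublinearity, which is precisely why one must first dominate $\langle B\rangle_{t}^{2}$ by a sum of nonnegative squares before invoking subadditivity.
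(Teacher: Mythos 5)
Your proposal is correct and follows essentially the same route as the paper: the paper's proof is precisely the chain $\mathbb{\hat{E}}[\left\langle B\right\rangle_{t}^{2}]=\mathbb{\hat{E}}[\{B_{t}^{2}-2\int_{0}^{t}B_{u}dB_{u}\}^{2}]\leq2\mathbb{\hat{E}}[B_{t}^{4}]+8\mathbb{\hat{E}}[(\int_{0}^{t}B_{u}dB_{u})^{2}]\leq6t^{2}+8\int_{0}^{t}\mathbb{\hat{E}}[B_{u}^{2}]du=10t^{2}$, with the conditional equality left implicit via Lemma \ref{Lem-Qua2} exactly as you argue it. The only cosmetic difference is that you bound the stochastic-integral term by detouring through the isometry (\ref{isometry}) and the estimate (\ref{qua-ine}), whereas the paper applies the continuity bound (\ref{e2}) directly; both yield $\int_{0}^{t}\mathbb{\hat{E}}[B_{u}^{2}]du=t^{2}/2$.
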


\begin{proof}
{{{
\begin{align*}
{{{\mathbb{\hat{E}}[\left \langle B\right \rangle _{t}^{2}].}}}  &
=\mathbb{\hat{E}}[\{(B_{t})^{2}-2\int_{0}^{t}B_{u}dB_{u}\}^{2}]\\
&  \leq2\mathbb{\hat{E}}[(B_{t})^{4}]+8\mathbb{\hat{E}}[(\int_{0}^{t}%
B_{u}dB_{u})^{2}]\\
&  \leq6t^{2}+8\int_{0}^{t}\mathbb{\hat{E}[(}B_{u})^{2}]du\\
&  =10t^{2}.
\end{align*}
}}}
\end{proof}

\begin{definition}
A random variable $\xi$ of a sublinear expectation space{{
$(\Omega,\widetilde{\mathcal{H}},\widetilde{\mathbb{E}})$ is said to
be }}$\mathcal{U}_{[\underline{\mu},\overline{\mu}]}$-distributed
for some given interval $[\underline{\mu},\overline{\mu}]\subset
\mathbb{R}$ if for each $\varphi \in
C_{l.Lip}(\mathbb{R})$ we have%
\[
{\widetilde{\mathbb{E}}[\varphi(\xi)]=\sup_{x\in \lbrack
\underline{\mu
},\overline{\mu}]}\varphi(x).}%
\]
{{ \newline}}
\end{definition}

\begin{theorem}
 Let $(b_{t})_{t\geq0}$ be a process defined in
$(\Omega,\widetilde{\mathcal{H}},\widetilde{\mathbb{E}})$ such that
\newline\textsl{(i)} $b_{0}=0$.\newline\textsl{(ii)} For each
$t,s\geq0$, the difference $b${$_{t+s}%
-b_{t}$ and }$b${$_{s}$ are identically distributed and independent
to $(b_{t_{1}},b_{t_{2}},\cdots,b_{t_{n}})$ for each $n\in \mathbb{N}$ and $0\leq t_{1}%
,\cdots,t_{n}\leq t$. \newline\textsl{(iii)} }$b_{0}=0$ and
$\lim_{t\downarrow0}\widetilde{\mathbb{E}}${$[b_{t}^{2}]t^{-1}=0$.
\newline Then }$b_{t}$ is
$\mathcal{U}_{[\underline{\mu}t,\overline{\mu}t]}$-distributed{{
with }}$\overline{\mu}${{$=$}$\widetilde{\mathbb{E}}${$[b_{1}]$ and
$\underline{\mu}=-$}$\widetilde{\mathbb{E}}${$[-b_{1}]$. } }
\end{theorem}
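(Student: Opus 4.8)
The plan is to follow the same route used above to characterize the $G$-normal law via its $G$-heat equation, the only change being that here the governing PDE is of \emph{first} order. Concretely, I expect that $b_t$ is $\mathcal{U}_{[\underline{\mu}t,\overline{\mu}t]}$-distributed is equivalent to saying that, for each $\varphi\in C_{l.Lip}(\mathbb{R})$, the function $u(t,x):=\widetilde{\mathbb{E}}[\varphi(x+b_t)]$ is the unique viscosity solution of the Hamilton--Jacobi equation $\partial_t u-g(\partial_x u)=0$, $u|_{t=0}=\varphi$, with sublinear Hamiltonian $g(p):=\widetilde{\mathbb{E}}[p\,b_1]=\overline{\mu}p^{+}-\underline{\mu}p^{-}$. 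The explicit viscosity solution of this equation is the Hopf--Lax formula $\hat u(t,x)=\sup_{\underline{\mu}\le v\le\overline{\mu}}\varphi(x+vt)=\sup_{y\in[\underline{\mu}t,\overline{\mu}t]}\varphi(x+y)$, so that once $u\equiv\hat u$ is established, evaluating at $x=0$ gives $\widetilde{\mathbb{E}}[\varphi(b_t)]=\sup_{y\in[\underline{\mu}t,\overline{\mu}t]}\varphi(y)$, which is exactly the required distribution.

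First I would pin down the two boundary functions $\overline{\mu}$ and $\underline{\mu}$. Setting $\bar m(t):=\widetilde{\mathbb{E}}[b_t]$ and $\underline m(t):=-\widetilde{\mathbb{E}}[-b_t]$, the stationarity and independence in (ii), together with Proposition \ref{E-x+y} applied to the mean-certain increment, give $\bar m(t+s)=\bar m(t)+\bar m(s)$ and likewise for $\underline m$, exactly as $b(t)=\widetilde{\mathbb{E}}[\tilde B_t^2]$ was shown to be additive in the Brownian theorem. Condition (iii) forces continuity at $0$, since $\widetilde{\mathbb{E}}[|b_t|]\le\widetilde{\mathbb{E}}[b_t^2]^{1/2}\to0$, so these additive functions are linear: $\bar m(t)=\overline{\mu}t$ and $\underline m(t)=\underline{\mu}t$ with $\overline{\mu}=\widetilde{\mathbb{E}}[b_1]$, $\underline{\mu}=-\widetilde{\mathbb{E}}[-b_1]$. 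In particular $\widetilde{\mathbb{E}}[p\,b_t]=g(p)\,t$ for every constant $p$.

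Next I would fix $\varphi\in C_{l.Lip}(\mathbb{R})$ and establish the regularity and dynamic programming of $u$. Local Lipschitz continuity in $x$ follows from the $C_{l.Lip}$ bound on $\varphi$ as in the earlier proofs, and continuity in $t$ follows from the dynamic programming identity below together with $\widetilde{\mathbb{E}}[|b_\delta|]\le\widetilde{\mathbb{E}}[b_\delta^2]^{1/2}=o(\delta^{1/2})$. Using the stationary, independent increments one gets, for $\delta\in(0,t)$, the relation $u(t,x)=\widetilde{\mathbb{E}}[u(t-\delta,x+b_\delta)]$, word for word as in the characterization of $\tilde B$. Then for a smooth test function $v$ touching $u$ from above at an interior point $(t,x)$ one has $0\le\widetilde{\mathbb{E}}[v(t-\delta,x+b_\delta)-v(t,x)]$, and Taylor's expansion yields $0\le -\partial_t v(t,x)\delta+\widetilde{\mathbb{E}}[\partial_x v(t,x)\,b_\delta]+\tfrac12\widetilde{\mathbb{E}}[\partial_{xx}^2 v(t,x)\,b_\delta^2]+\widetilde{\mathbb{E}}[|I_\delta|]$. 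Here is where (iii) does the essential work: the second-order term is bounded by $\tfrac12\|\partial_{xx}^2 v\|_\infty\,\widetilde{\mathbb{E}}[b_\delta^2]=o(\delta)$, the remainder $I_\delta$ is likewise $o(\delta)$, and the first-order term equals $g(\partial_x v(t,x))\delta$ by the previous paragraph. Dividing by $\delta$ and letting $\delta\downarrow0$ gives $\partial_t v-g(\partial_x v)\le0$ at $(t,x)$; the reverse inequality for test functions touching from below is symmetric, so $u$ is a viscosity solution.

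Finally, uniqueness of the viscosity solution (Crandall--Ishii--Lions) identifies $u$ with the Hopf--Lax solution $\hat u$, and setting $x=0$ delivers the claim. The main obstacle is the viscosity step: one must verify carefully that (iii) suppresses the second-order Taylor contribution so that the limiting operator is the first-order $g(\partial_x\cdot)$ rather than a second-order one. This is precisely what turns $b$ into a mean-uncertain (maximal) distribution rather than a variance-uncertain ($G$-normal) one, and it is the place where the hypothesis $\widetilde{\mathbb{E}}[b_t^2]t^{-1}\to0$ is indispensable. A secondary point to confirm is that $\hat u$ really is the viscosity solution of $\partial_t u-g(\partial_x u)=0$, which follows from the standard control representation of convex Hamilton--Jacobi equations since $g(p)=\sup_{\underline{\mu}\le v\le\overline{\mu}}vp$.
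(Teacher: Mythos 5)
Your proposal follows essentially the same route as the paper's proof: establish additivity and hence linearity of $t\mapsto\widetilde{\mathbb{E}}[b_t]$ and $t\mapsto-\widetilde{\mathbb{E}}[-b_t]$, derive the dynamic programming identity $u(t,x)=\widetilde{\mathbb{E}}[u(t-\delta,x+b_\delta)]$, show via Taylor expansion that $u$ is the viscosity solution of the first-order equation $\partial_t u-2G(\partial_x u)=0$ (your $g(p)=\overline{\mu}p^{+}-\underline{\mu}p^{-}$ is exactly the paper's $2G$), and invoke uniqueness to identify $u$ with $\sup_{v\in[\underline{\mu},\overline{\mu}]}\varphi(x+vt)$. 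The only cosmetic difference is that you write an explicit second-order Taylor term $\tfrac12\partial_{xx}^2v\,b_\delta^2$ and kill it by $\widetilde{\mathbb{E}}[b_\delta^2]=o(\delta)$, whereas the paper uses a first-order expansion with integral remainder $I_\delta$ and shows $\widetilde{\mathbb{E}}[|I_\delta|]\delta^{-1}\to0$ from the same hypothesis (iii) — the same mechanism in both cases.
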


\begin{proof}
We need only to prove that $b_{t}$ is
$\mathcal{U}_{[\underline{\mu}t,\overline{\mu}t]}$-distributed. We
first prove that
\[
\widetilde{\mathbb{E}}[b_{t}]=\overline{\mu}t\text{
\ and }-\widetilde{\mathbb{E}}[-b_{t}%
]=\underline{\mu}t.
\]
We set $\varphi(t):=\widetilde{\mathbb{E}}[b_{t}]$. Then $\varphi(0)=0$ and $\lim_{t\downarrow0}\varphi(t)$%
{$\rightarrow0$.} Since for each $t,s\geq0$%
\begin{align*}
\varphi(t+s)  &  =\widetilde{\mathbb{E}}[b_{t+s}]=\widetilde{\mathbb{E}}[(b_{t+s}%
-b_{s})+b_{s}]\\
&  =\varphi(t)+\varphi(s).
\end{align*}
Thus $\varphi(t)$ is linear and uniformly continuous in $t$ which
means $\widetilde{\mathbb{E}}[b_{t}]=\overline{\mu}t$. Similarly
$-\widetilde{\mathbb{E}}[-b_{t}]=\underline{\mu}t$.

We now prove that $b_{t}$ is $\mathcal{U}
_{[\underline{\mu}t,\overline{\mu}t]}$-distributed. We just need to prove that for each fixed $\varphi \in C_{l.Lip}%
(\mathbb{R)}$, the function
\[
u(t,x):=\widetilde{\mathbb{E}}[\varphi(x+b_{t})],\  \ (t,x)\in \lbrack0,\infty)\times \mathbb{R}%
\]
is the viscosity solution of the following ${G_{\underline{\mu}%
,\overline{\mu}}}$-drift equation
\begin{equation}
\partial_{t}u-2G(\partial_{x}u)=0,\  \text{for }t>0,\  \ \  \ u|_{t=0}=\varphi
\label{G-mean}\end{equation}
with ${G(a)=G_{\underline{\mu},\overline{\mu}}(a)=\frac{1}{2}(}$%
{$\overline{\mu}$}${a^{+}-}${$\underline{\mu}$}${a^{-})}$. Since the
function
\[
\widehat{u}(t,x)=\max_{v\in
[\underline{\mu},\overline{\mu}]}\varphi(x+vt)
\]
is the unique solution of the PDE (\ref{G-mean}).

We first prove that $u$ is locally Lipschitz in $(t,x)$. In fact,
{for each fixed }$t$,{ }$u(t,\cdot)\in ${$C_{l.Lip}(\mathbb{R)}$
}since
\begin{align*}
|\widetilde{\mathbb{E}}[\varphi(x+b_{t}%
)]-\widetilde{\mathbb{E}}[\varphi(y+b_{t})]|  &
\leq|\widetilde{\mathbb{E}}[|\varphi(x+b_{t})-\varphi(y+b_{t})|]|\\
&  \leq|\widetilde{\mathbb{E}}[C(1+|x|^{m}+|y|^{m}+|b_t|^m)|x-y|]|\\
&  \leq C_1(1+|x|^{m}+|y|^{m})|x-y|.
\end{align*}
{For each $\delta \in \lbrack0,t]$, since $b_{t}-b_{\delta}$ is
independent to $b_{\delta}$, we also have
\begin{align*}
u(t,x)  &
=\widetilde{\mathbb{E}}[\mathbb{\varphi(}x+b_{\delta}+(b_{t}-
b_{\delta})]\\
&
=\widetilde{\mathbb{E}}[\widetilde{\mathbb{E}}[\varphi(y+(b_{t}-b_{\delta}))]_{y=x+b_{\delta}}]
\end{align*}
hence}%
\begin{equation}
u(t,x)=\widetilde{\mathbb{E}}[u(t-\delta,x+b_{\delta})].
\label{eq4.21}\end{equation}
{Thus}%
\begin{align*}
|u(t,x)-u(t-\delta,x)|  &
=|\widetilde{\mathbb{E}}[u(t-\delta,x+b_{\delta})-u(t-\delta,x)]|\\
&  \leq \widetilde{\mathbb{E}}[|u(t-\delta,x+b_{\delta})-u(t-\delta,x)|]\\
&  \leq \widetilde{\mathbb{E}}[C(1+|x|^{m}+|b_{\delta}|^{m})|b_{\delta}|]\\
&  \leq C_{1}(1+|x|^{m})\delta.
\end{align*}

To prove that $u$ is a viscosity solution of the PDE (\ref{G-mean})
we fix a time-space point $(t,x)\in(0,\infty)\times \mathbb{R}$ and
let $v\in C_b^{2,2}([0,\infty)\times \mathbb{R})$ be such that
$v\geq u$ and $v(t,x)=u(t,x)$. From (\ref{eq4.21}) we have
\[
v(t,x)=\widetilde{\mathbb{E}}[u(t-\delta,x+b_{\delta})]\leq
\widetilde{\mathbb{E}}[v(t-\delta,x+b_{\delta})]
\]
Therefore from Taylor's expansion
\begin{align*}
0  &  \leq \widetilde{\mathbb{E}}[v(t-\delta,x+b_{\delta})-v(t,x)]\\
&  =\widetilde{\mathbb{E}}[v(t-\delta,x+b_{\delta
})-v(t,x+b_{\delta})+(v(t,x+b_{\delta})-v(t,x))]\\
&  =\widetilde{\mathbb{E}}[-\partial_{t}v(t,x)\delta+\partial_{x}%
v(t,x)b_{\delta}+I_{\delta}]\\
&  \leq-\partial_{t}v(t,x)\delta+\widetilde{\mathbb{E}}[\partial
_{x}v(t,x)b_{\delta}]+\widetilde{\mathbb{E}%
}[I_{\delta}]\\
&  =-\partial_{t}v(t,x)\delta+2G(\partial_{x}(t,x))\delta+\widetilde
{\mathbb{E}}[I_{\delta}].
\end{align*}
where%
\begin{align*}
I_{\delta}  & =\int_{0}^{1}[-\partial_{t}v(t-\beta \delta,x+\beta
b_{\delta
})+\partial_{t}v(t,x)]d\beta \delta \\
& +\int_{0}^{1}[\partial_{x}v(t-\beta \delta,x+\beta
b_{\delta})-\partial
_{x}v(t,x)]d\beta b_{\delta}.%
\end{align*}
With the assumption that
{$\lim_{t\downarrow0}$}$\widetilde{\mathbb{E}}[b_{t}^{2}]t^{-1}=0$
we can check that \[\lim_{\delta
\downarrow0}\widetilde{\mathbb{E}}[|I_{\delta}|]\delta^{-1}=0;\]
from which we get $\partial_{t}v(t,x)-2G(\partial_{x}(t,x))\leq0$;
hence $u$ is a viscosity supersolution of (\ref{G-mean}). We can
analogously prove that $u$ is a viscosity subsolution. It follows
that $b_1$ is
$\mathcal{U}_{[\underline{\mu},\overline{\mu}]}$--distributed. The
proof is complete.
\end{proof}

\begin{corollary}
For each $t\leq T<\infty$, we have
\[
\underline{\mu}(T-t)\leq b_{T}-b_{t}%
\leq \overline{\mu}(T-t),\  \  \  \text{in
}\mathbb{L}_{G}^{1}(\mathcal{F}).\ \ \
\]

\end{corollary}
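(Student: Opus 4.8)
The plan is to deduce the two-sided bound directly from the $\mathcal{U}$-distribution just established, reading the order ``$\geq$'' in $L_G^{1}(\mathcal{F})$ through its definition $X\geq 0 \iff X=X^{+}$. First I would record that, by stationarity and independence of the increments (hypothesis \textsl{(ii)}), the increment $b_{T}-b_{t}$ is identically distributed with $b_{T-t}$, so by the Theorem it is $\mathcal{U}_{[\underline{\mu}(T-t),\overline{\mu}(T-t)]}$-distributed; abbreviate $c=\underline{\mu}(T-t)$ and $d=\overline{\mu}(T-t)$. The key reduction is that, in the completed space $(\mathcal{\hat{H}}_{1},\|\cdot\|_{1})$, the assertion $X\geq 0$ means $\|X^{-}\|_{1}=\widetilde{\mathbb{E}}[X^{-}]=0$, since $X-X^{+}=-X^{-}$ and $\|\cdot\|_{1}=\widetilde{\mathbb{E}}[|\cdot|]$. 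Thus it suffices to prove the two scalar identities $\widetilde{\mathbb{E}}[(c-(b_{T}-b_{t}))^{+}]=0$ and $\widetilde{\mathbb{E}}[((b_{T}-b_{t})-d)^{+}]=0$.

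Next I would evaluate these two quantities directly. The functions $\varphi(x)=(c-x)^{+}$ and $\psi(x)=(x-d)^{+}$ are Lipschitz, hence lie in $C_{l.Lip}(\mathbb{R})$, so the defining property of the $\mathcal{U}$-distribution applies and yields
\[
\widetilde{\mathbb{E}}[(c-(b_{T}-b_{t}))^{+}]=\sup_{x\in[c,d]}(c-x)^{+}=0,
\qquad
\widetilde{\mathbb{E}}[((b_{T}-b_{t})-d)^{+}]=\sup_{x\in[c,d]}(x-d)^{+}=0,
\]
because for every $x\in[c,d]$ one has $c-x\leq 0$ and $x-d\leq 0$, so each supremand is identically zero. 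The first identity gives $(b_{T}-b_{t})-c\geq 0$ and the second gives $d-(b_{T}-b_{t})\geq 0$, which together are exactly the claimed $\underline{\mu}(T-t)\leq b_{T}-b_{t}\leq \overline{\mu}(T-t)$.

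The only genuine care needed is the bridge between the distributional statement and the partial order in the completed Banach space: the Theorem is phrased as an identity for $\widetilde{\mathbb{E}}[\varphi(\cdot)]$ over test functions $\varphi\in C_{l.Lip}(\mathbb{R})$, whereas the corollary asserts an inequality in the sense of the order introduced on $(\mathcal{\hat{H}}_{1},\|\cdot\|_{1})$. I would make this explicit by using precisely the characterization $X\geq 0\iff\widetilde{\mathbb{E}}[X^{-}]=0$ recorded above, so that each one-sided bound collapses to a single evaluation of $\widetilde{\mathbb{E}}$ against an admissible test function. No estimates or completion/continuity arguments are required here, since $(c-x)^{+}$ and $(x-d)^{+}$ are already admissible and the supremum formula of the $\mathcal{U}$-distribution does the rest; in the intended application $b=\langle B\rangle$ this reproduces $\sigma^{2}(T-t)\leq\langle B\rangle_{T}-\langle B\rangle_{t}\leq(T-t)$.
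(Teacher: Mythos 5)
Your proposal is correct and is essentially the paper's own argument: the paper likewise reduces each one-sided bound to showing $\mathbb{\hat{E}}[(b_{T}-b_{t}-\overline{\mu}(T-t))^{+}]=0$ and $\mathbb{\hat{E}}[(b_{T}-b_{t}-\underline{\mu}(T-t))^{-}]=0$, evaluated via the supremum formula of the $\mathcal{U}_{[\underline{\mu}(T-t),\overline{\mu}(T-t)]}$-distribution of the increment. Your explicit bridge from ``$\mathbb{\hat{E}}[X^{-}]=0$'' to the partial order $X\geq 0$ in the completed space $(\mathcal{\hat{H}}_{1},\left\Vert \cdot \right\Vert _{1})$ is a welcome clarification of a step the paper leaves implicit (and its displayed formulas contain typographical slips that your version avoids).
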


\begin{proof}
It is a direct consequence of
\[
\mathbb{\hat{E}[(}b_{T}-b_{t}-(T-t))^{+}]=\sup_{\underline{\mu}\leq \eta \leq \overline{\mu}}%
\mathbb{\hat{E}}[(\eta-\overline{\mu})^{+}(T-t)]=0
\]
and%
\[
\mathbb{\hat{E}[(}b_{T}-b_{t}-\sigma^{2}(T-t))^{-}]=\sup_{\underline{\mu}\leq \eta \leq \overline{\mu}%
}\mathbb{\hat{E}}[(\eta-\underline{\mu}T)^{-}(T-t)]=0.
\]

\end{proof}

\begin{corollary}
{ { { \label{Lem-Qua2 copy(1)} We have
\begin{equation}
\mathbb{\hat{E}}[\left \langle B^{s}\right \rangle _{t}^{2}|\mathcal{H}%
_{s}]=\mathbb{\hat{E}}[\left \langle B\right \rangle _{t}^{2}]=t^{2}%
\end{equation}
as well as%
\[
\mathbb{\hat{E}}[\left \langle B^{s}\right \rangle _{t}^{3}|\mathcal{H}%
_{s}]=\mathbb{\hat{E}}[\left \langle B\right \rangle _{t}^{2}]=t^{3}%
,\  \  \  \mathbb{\hat{E}}[\left \langle B^{s}\right \rangle _{t}^{4}%
|\mathcal{H}_{s}]=\mathbb{\hat{E}}[\left \langle B\right \rangle _{t}^{4}%
]=t^{4}.
\]
}}}
\end{corollary}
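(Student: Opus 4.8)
The plan is to recognize that the quadratic variation process $\langle B\rangle$ satisfies precisely the hypotheses of the theorem characterizing $\mathcal{U}$-distributed processes, and then to read off the moments from the supremum formula in the definition of the $\mathcal{U}$-distribution.

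First I would check the three conditions of that theorem for $(\langle B\rangle_t)_{t\geq0}$. Condition (i) is immediate, since $\langle B\rangle_0=0$ by construction. Condition (ii), the stationarity and independence of increments, is exactly the content of Lemma \ref{Lem-Qua2}: the increment $\langle B\rangle_{s+t}-\langle B\rangle_s=\langle B^s\rangle_t$ is independent of $\mathcal{F}_s$ and is distributed like $\langle B\rangle_t$. Condition (iii) follows from the moment bound $\mathbb{\hat{E}}[\langle B\rangle_t^2]\leq10t^2$ proved in the immediately preceding lemma, which gives $\mathbb{\hat{E}}[\langle B\rangle_t^2]t^{-1}\leq10t\to0$ as $t\downarrow0$.

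Next I would identify the two endpoints. The theorem then asserts that $\langle B\rangle_t$ is $\mathcal{U}_{[\underline{\mu}t,\overline{\mu}t]}$-distributed with $\overline{\mu}=\mathbb{\hat{E}}[\langle B\rangle_1]$ and $\underline{\mu}=-\mathbb{\hat{E}}[-\langle B\rangle_1]$. Applying Lemma \ref{Lem-Q1} with $s=0$ gives $\mathbb{\hat{E}}[\langle B\rangle_1]=1$ and $-\mathbb{\hat{E}}[-\langle B\rangle_1]=\sigma^2$, so that $\overline{\mu}=1$, $\underline{\mu}=\sigma^2$, and hence $\langle B\rangle_t\sim\mathcal{U}_{[\sigma^2 t,\,t]}$. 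The moments now follow directly from the definition of the $\mathcal{U}$-distribution applied to $\varphi(x)=x^n$: since $x\mapsto x^n$ is increasing on $[0,\infty)$ and $0<\sigma\leq1$,
\[
\mathbb{\hat{E}}[\langle B\rangle_t^n]=\sup_{x\in[\sigma^2 t,\,t]}x^n=t^n,\qquad n=2,3,4.
\]

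Finally, to pass to the conditional statements I would invoke Lemma \ref{Lem-Qua2} once more: because $\langle B^s\rangle_t$ is independent of $\mathcal{F}_s$ and identically distributed to $\langle B\rangle_t$, the conditional expectation reduces to the unconditional one, yielding $\mathbb{\hat{E}}[\langle B^s\rangle_t^n\,|\,\mathcal{H}_s]=\mathbb{\hat{E}}[\langle B\rangle_t^n]=t^n$. I do not anticipate any genuine obstacle; the entire argument is a verification that the hypotheses of the $\mathcal{U}$-distribution theorem hold followed by the evaluation of an elementary supremum. The only point requiring care is condition (iii), which is exactly why the quadratic moment bound $\mathbb{\hat{E}}[\langle B\rangle_t^2]\leq10t^2$ is needed as a preliminary.
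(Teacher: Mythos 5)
Your proof is correct and is exactly the derivation the paper intends: the corollary is placed directly after the $\mathcal{U}_{[\underline{\mu}t,\overline{\mu}t]}$-distribution theorem so that it follows by verifying its hypotheses for $\langle B\rangle$ --- (ii) from Lemma \ref{Lem-Qua2}, (iii) from the bound $\mathbb{\hat{E}}[\langle B\rangle_t^{2}]\leq 10t^{2}$ of the preceding lemma --- identifying $\overline{\mu}=1$, $\underline{\mu}=\sigma^{2}$ via Lemma \ref{Lem-Q1}, and computing $\sup_{x\in[\sigma^{2}t,\,t]}x^{n}=t^{n}$. Your final reduction of the conditional moments to the unconditional ones through the independence and identical distribution of $\langle B^{s}\rangle_t$ stated in Lemma \ref{Lem-Qua2} is likewise the intended step, so nothing is missing.
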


\begin{theorem}
For each $x\in \mathbb{R}$, $Z\in M_{G}^{2}(0,T)$ and $\eta \in
M_{G}^{1}(0,T)$
the process%
\[
M_{t}=x+\int_{0}^{t}Z_{s}dB_{t}+\int_{0}^{t}\eta_{s}d\left \langle
B\right \rangle _{s}-\int_{0}^{t}2G(\eta_{s})ds,\  \ t\in
\lbrack0,T]
\]
is a martingale:
\[
\mathbb{\hat{E}}[M_{t}|\mathcal{H}_{s}]=M_{s},\  \  \ 0\leq s\leq
t\leq T.
\]

\end{theorem}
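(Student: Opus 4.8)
The plan is to reduce the martingale identity to the single statement $\mathbb{\hat{E}}[M_t-M_s|\mathcal{H}_s]=0$. Since $M_s\in L_G^1(\mathcal{F}_s)$, the remark following Proposition \ref{Prop-1-7-1} gives $\mathbb{\hat{E}}[M_t|\mathcal{H}_s]=\mathbb{\hat{E}}[(M_t-M_s)+M_s|\mathcal{H}_s]=\mathbb{\hat{E}}[M_t-M_s|\mathcal{H}_s]+M_s$, so the claim is equivalent to the vanishing of the conditional expectation of the increment $M_t-M_s=\int_s^t Z_u\,dB_u+\int_s^t\eta_u\,d\langle B\rangle_u-\int_s^t 2G(\eta_u)\,du$. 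I would first prove this for simple integrands $Z,\eta$ subordinate to a common partition $s=t_0<\cdots<t_N=t$, say $Z_u=\sum_j\xi_j\mathbf{I}_{[t_j,t_{j+1})}(u)$ and $\eta_u=\sum_j\zeta_j\mathbf{I}_{[t_j,t_{j+1})}(u)$ with $\xi_j,\zeta_j\in L_G^1(\mathcal{F}_{t_j})$, and then pass to the limit by continuity.

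The heart of the argument is a one-step computation on the building block $K_j:=\xi_j(B_{t_{j+1}}-B_{t_j})+\zeta_j(\langle B\rangle_{t_{j+1}}-\langle B\rangle_{t_j})-2G(\zeta_j)(t_{j+1}-t_j)$. For the stochastic-integral part, Example \ref{Exam-2} gives $\mathbb{\hat{E}}[\xi_j(B_{t_{j+1}}-B_{t_j})|\mathcal{H}_{t_j}]=\mathbb{\hat{E}}[-\xi_j(B_{t_{j+1}}-B_{t_j})|\mathcal{H}_{t_j}]=0$, so this term has no mean uncertainty. For the quadratic-variation part, property \textbf{(d')} of Proposition \ref{Prop-1-7} combined with Lemma \ref{Lem-Q1} yields $\mathbb{\hat{E}}[\zeta_j(\langle B\rangle_{t_{j+1}}-\langle B\rangle_{t_j})|\mathcal{H}_{t_j}]=\zeta_j^+(t_{j+1}-t_j)-\sigma^2\zeta_j^-(t_{j+1}-t_j)=2G(\zeta_j)(t_{j+1}-t_j)$, and subtracting the $\mathcal{F}_{t_j}$-measurable drift gives $\mathbb{\hat{E}}[\zeta_j(\langle B\rangle_{t_{j+1}}-\langle B\rangle_{t_j})-2G(\zeta_j)(t_{j+1}-t_j)|\mathcal{H}_{t_j}]=0$. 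Because the stochastic-integral part is symmetric, Proposition \ref{E-x+y} lets me separate the two contributions and conclude $\mathbb{\hat{E}}[K_j|\mathcal{H}_{t_j}]=0$.

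Next I would assemble the telescoping sum $M_t-M_s=\sum_{j=0}^{N-1}K_j$ by backward induction using the tower property \textbf{(e')}. Setting $L_{N-1}=\sum_{j=0}^{N-2}K_j\in L_G^1(\mathcal{F}_{t_{N-1}})$, the additivity remark after Proposition \ref{Prop-1-7-1} and the one-step result give $\mathbb{\hat{E}}[\sum_j K_j|\mathcal{H}_{t_{N-1}}]=L_{N-1}+\mathbb{\hat{E}}[K_{N-1}|\mathcal{H}_{t_{N-1}}]=L_{N-1}$; applying $\mathbb{\hat{E}}[\cdot|\mathcal{H}_s]$ and \textbf{(e')} then reduces the $N$-term sum to the $(N-1)$-term sum, and iterating down to the empty sum yields $\mathbb{\hat{E}}[M_t-M_s|\mathcal{H}_s]=0$ for simple integrands. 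Finally I would approximate general $Z\in M_G^2(0,T)$ and $\eta\in M_G^1(0,T)$ by simple processes in the respective norms, using the isometry (\ref{isometry}) and estimate (\ref{e2}) for $\int Z\,dB_u$, the estimate (\ref{qua-ine}) for $\int\eta\,d\langle B\rangle$, and the Lipschitz continuity of $G$ for $\int 2G(\eta)\,du$, together with the fact that $\mathbb{\hat{E}}[\cdot|\mathcal{H}_s]$ is a contraction under $\|\cdot\|_1$, to pass to the limit.

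The main obstacle is conceptual rather than computational: the combined drift increment $\zeta_j(\langle B\rangle_{t_{j+1}}-\langle B\rangle_{t_j})-2G(\zeta_j)(t_{j+1}-t_j)$ is a genuinely one-sided $G$-martingale increment. Its \emph{forward} conditional expectation vanishes, but a direct calculation gives $\mathbb{\hat{E}}[-(\cdots)|\mathcal{H}_{t_j}]=(1-\sigma^2)|\zeta_j|(t_{j+1}-t_j)\neq0$, so it carries mean uncertainty in the reverse direction. Consequently this term cannot simply be peeled off via Proposition \ref{E-x+y} (which requires two-sided mean certainty); its vanishing forward conditional expectation must be exploited through the tower property inside the backward induction, which is exactly what forces the inductive rather than term-by-term additive argument. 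A secondary technical point will be checking that $M$ genuinely lies in $L_G^1(\mathcal{F}_t)$ and that the approximation is legitimate under the weaker hypothesis $\eta\in M_G^1(0,T)$.
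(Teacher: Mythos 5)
Your proposal is correct and follows essentially the same route as the paper's own proof: the paper likewise strips off the symmetric $dB$-integral, reduces to simple $\eta$, and verifies the one-step identity $\mathbb{\hat{E}}[\xi_{k}(\langle B\rangle_{t_{k+1}}-\langle B\rangle_{t_{k}})-2G(\xi_{k})(t_{k+1}-t_{k})|\mathcal{H}_{t_{k}}]=0$ via property (d') and Lemma \ref{Lem-Q1}, leaving the backward induction and the density passage implicit. Your write-up simply makes those implicit steps explicit, including the accurate observation that the drift increment is only one-sidedly mean-certain, which is exactly why the stepwise tower argument (rather than a direct application of Proposition \ref{E-x+y} to the whole drift part) is needed.
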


\begin{proof}
Since
$\mathbb{\hat{E}}[\int_{s}^{t}Z_{r}dB_{t}|\mathcal{H}_{s}]=\mathbb{\hat
{E}}[-\int_{s}^{t}Z_{r}dB_{t}|\mathcal{H}_{s}]=0$ we only need to
prove that
\[
\bar{M}_{t}=\int_{0}^{t}\eta_{s}d\left \langle B\right \rangle _{s}-\int_{0}%
^{t}2G(\eta_{s})ds
\]
is a $G$-martingale. It suffices to consider the case where $\eta$
is a
simple process, i.e., $\eta_{t}=\sum_{k=0}^{N-1}\xi_{k}\mathbf{1}%
_{[t_{k},t_{k+1})}(t)$. In fact we only need to consider one-step
case in which we have
\begin{align*}
\mathbb{\hat{E}}[\bar{M}_{t_{k+1}}-\bar{M}_{t_{k}}|\mathcal{H}_{t_{k}}]
& =\mathbb{\hat{E}}[\xi_{k}(\left \langle B\right \rangle
_{t_{k+1}}-\left \langle
B\right \rangle _{t_{k}})-2G(\xi_{k})(t_{k+1}-t_{k})|\mathcal{H}_{t_{k}}]\\
&  =\mathbb{\hat{E}}[\xi_{k}(\left \langle B\right \rangle _{t_{k+1}%
}-\left \langle B\right \rangle
_{t_{k}})|\mathcal{H}_{t_{k}}]-2G(\xi
_{k})(t_{k+1}-t_{k})\\
&  =\xi_{k}^{+}\mathbb{\hat{E}}[\left \langle B\right \rangle _{t_{k+1}%
}-\left \langle B\right \rangle _{t_{k}}]+\xi_{k}^{-}\mathbb{\hat{E}%
}[-(\left \langle B\right \rangle _{t_{k+1}}-\left \langle B\right
\rangle
_{t_{k}})]\\
& \ \ \ -2G(\xi_{k})(t_{k+1}-t_{k})\\
&  =0.
\end{align*}

\end{proof}

\begin{problem}
For each
$\xi=\varphi(B_{t_{1}}-B_{t_{0}},\cdots,B_{t_{N}}-B_{t_{N-1}})$ we
have the following representation:%
\[
\xi=\mathbb{\hat{E}[\xi]+}\int_{0}^{T}z_{s}dB_{s}+\int_{0}^{T}\eta
_{s}d\left \langle B\right \rangle _{s}-\int_{0}^{T}2G(\eta_{s})ds.
\]
\end{problem}

\section{It\^{o}'s formula for $G$--Brownian motion}

{ { { We have the corresponding It\^{o}'s formula of $\Phi(X_{t})$
for a \textquotedblleft$G$-It\^{o} process\textquotedblright \ $X$.
For simplification we only treat the case where the function $\Phi$
is sufficiently regular. We first consider a simple situation. } } }

\begin{lemma} \label{Lem-26}Let $\Phi \in C^{2}(\mathbb{R}^{n})$ be
bounded with bounded derivatives and $\{
\partial_{x^{\mu}x^{\nu}}^{2}\Phi \}_{\mu,\nu =1}^{n}$ are uniformly
Lipschitz. Let $s\in \lbrack0,T]$ be fixed and let
$X=(X^{1},\cdots,X^{n})^{T}$ be an $n$--dimensional process on
$[s,T]$ of the form
\[
X_{t}^{\nu}=X_{s}^{\nu}+\alpha^{\nu}(t-s)+\eta^{\nu}(\left \langle
B\right \rangle _{t}-\left \langle B\right \rangle _{s})+\beta^{\nu}(B_{t}%
-B_{s}),
\]
where, for $\nu=1,\cdots,n$, $\alpha^{\nu}$, $\eta^{\nu}$ and
$\beta^{\nu}$,
are bounded elements of $L_{G}^{2}(\mathcal{F}_{s})$ and $X_{s}=(X_{s}%
^{1},\cdots,X_{s}^{n})^{T}$ is a given $\mathbb{R}^{n}$--vector in $L_{G}%
^{2}(\mathcal{F}_{s})$. Then we have
\begin{align}
\Phi(X_{t})-\Phi(X_{s})  &  =\int_{s}^{t}\partial_{x^{\nu}}\Phi(X_{u}%
)\beta^{\nu}dB_{u}+\int_{s}^{t}\partial_{x_{\nu}}\Phi(X_{u})\alpha^{\nu
}du\label{B-Ito}\\
&
+\int_{s}^{t}[D_{x^{\nu}}\Phi(X_{u})\eta^{\nu}+\frac{1}{2}\partial_{x^{\mu
}x^{\nu}}^{2}\Phi(X_{u})\beta^{\mu}\beta^{\nu}]d\left \langle
B\right \rangle _{u}.\nonumber
\end{align}
Here we use the Einstein convention, i.e., each single term with
repeated indices $\mu$ and/or $\nu$ implies the summation.
\end{lemma}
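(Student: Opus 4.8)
The plan is to establish (\ref{B-Ito}) by the Riemann-sum method, but with all limits taken in the norm of $L_{G}^{2}(\mathcal{F}_{t})$ instead of pathwise. Fix $t\in[s,T]$ and a sequence of partitions $\{s=t_{0}<t_{1}<\cdots<t_{N}=t\}$ of $[s,t]$ with mesh $\mu(\pi^{N})\to0$, and abbreviate $\Delta t_{k}=t_{k+1}-t_{k}$, $\Delta B_{k}=B_{t_{k+1}}-B_{t_{k}}$, $\Delta\langle B\rangle_{k}=\langle B\rangle_{t_{k+1}}-\langle B\rangle_{t_{k}}$ and $\Delta X_{k}^{\nu}=X_{t_{k+1}}^{\nu}-X_{t_{k}}^{\nu}$. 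Telescoping and applying Taylor's theorem to second order gives
$$\Phi(X_{t})-\Phi(X_{s})=\sum_{k=0}^{N-1}\Bigl[\partial_{x^{\nu}}\Phi(X_{t_{k}})\Delta X_{k}^{\nu}+\frac{1}{2}\partial_{x^{\mu}x^{\nu}}^{2}\Phi(X_{t_{k}})\Delta X_{k}^{\mu}\Delta X_{k}^{\nu}+R_{k}\Bigr],$$
where the remainder obeys $|R_{k}|\le C|\Delta X_{k}|^{3}$ by the assumed uniform Lipschitz continuity of the second derivatives. Substituting $\Delta X_{k}^{\nu}=\alpha^{\nu}\Delta t_{k}+\eta^{\nu}\Delta\langle B\rangle_{k}+\beta^{\nu}\Delta B_{k}$, the whole proof reduces to computing the limit of each of the three resulting sums.

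For the first-order sum I would treat the $du$-, $d\langle B\rangle$- and $dB$-pieces separately. Since $\alpha^{\nu},\eta^{\nu},\beta^{\nu}\in L_{G}^{2}(\mathcal{F}_{s})$ are frozen at time $s$ and $u\mapsto X_{u}$ is $L_{G}^{2}$-continuous (immediate from its affine form and the moment bounds on $B$ and $\langle B\rangle$), the step process $\sum_{k}\partial_{x^{\nu}}\Phi(X_{t_{k}})\mathbf{I}_{[t_{k},t_{k+1})}$ converges in $M_{G}^{2}(s,t)$ to $\partial_{x^{\nu}}\Phi(X_{\cdot})$, because $\partial_{x^{\nu}}\Phi$ is bounded and Lipschitz. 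Continuity of the Bochner integral, of the $d\langle B\rangle$-integral (Lemma \ref{Lem-Q2}) and of the It\^{o} integral (Lemma \ref{bdd}) then produce exactly the three first-order integrals appearing in (\ref{B-Ito}).

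The essential step is the second-order sum. Expanding $\Delta X_{k}^{\mu}\Delta X_{k}^{\nu}$ yields nine products; using $\mathbb{\hat{E}}[(\Delta B_{k})^{2}]=\Delta t_{k}$, the bound $\mathbb{\hat{E}}[(\Delta\langle B\rangle_{k})^{2}]\le10(\Delta t_{k})^{2}$ from (\ref{Qua2}) and H\"{o}lder's inequality (\ref{ee4.6}), every product except $\beta^{\mu}\beta^{\nu}(\Delta B_{k})^{2}$ gives a sum whose $\mathbb{\hat{E}}[|\cdot|]$ is $O(\mu(\pi^{N})^{1/2})$ and hence vanishes. For the surviving term I would invoke (\ref{quadra-def}) and Lemma \ref{Lem-Qua2} in the form $\Delta\langle B\rangle_{k}=(\Delta B_{k})^{2}-2\int_{t_{k}}^{t_{k+1}}(B_{u}-B_{t_{k}})\,dB_{u}$, so that with $\xi_{k}:=\partial_{x^{\mu}x^{\nu}}^{2}\Phi(X_{t_{k}})\beta^{\mu}\beta^{\nu}$ (bounded and $\mathcal{F}_{t_{k}}$-measurable) the difference $\sum_{k}\xi_{k}[(\Delta B_{k})^{2}-\Delta\langle B\rangle_{k}]$ is a single It\^{o} integral; the estimate (\ref{e2}) bounds its second moment by $O(\mu(\pi^{N}))$. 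One may therefore replace $(\Delta B_{k})^{2}$ by $\Delta\langle B\rangle_{k}$, and the same $M_{G}^{2}$-approximation as before sends $\frac{1}{2}\sum_{k}\xi_{k}\Delta\langle B\rangle_{k}$ to $\frac{1}{2}\int_{s}^{t}\partial_{x^{\mu}x^{\nu}}^{2}\Phi(X_{u})\beta^{\mu}\beta^{\nu}\,d\langle B\rangle_{u}$.

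I expect the main obstacle to be precisely this second-order analysis: the replacement of $(\Delta B_{k})^{2}$ by $d\langle B\rangle$ cannot be argued pathwise as in the classical theory, so it must be made quantitative in $L_{G}^{2}$ through the identity for $\Delta\langle B\rangle_{k}$ together with the isometry (\ref{isometry}) and the bound (\ref{e2}). A secondary point of care is that every convergence is taken in the $L_{G}^{1}$ (or $L_{G}^{2}$) norm, so the Taylor remainder must be dispatched by $\mathbb{\hat{E}}[|R_{k}|]\le C\,\mathbb{\hat{E}}[|\Delta X_{k}|^{3}]=O((\Delta t_{k})^{3/2})$, which sums to $O(\mu(\pi^{N})^{1/2})\to0$ using the third-moment bounds on $B$ and $\langle B\rangle$.
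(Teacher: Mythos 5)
Your proof is correct, and its overall scaffolding coincides with the paper's: telescoping plus a second-order Taylor expansion, the remainder dispatched by $\mathbb{\hat{E}}[|R_{k}|]\leq C\,\mathbb{\hat{E}}[|\Delta X_{k}|^{3}]=O((\Delta t_{k})^{3/2})$, the first-order sums handled by step-process convergence in $M_{G}^{2}$ together with the continuity of the three integral mappings, and the second-order cross terms killed by the moment estimates (\ref{Qua2}), (\ref{ee4.6}) and the conditioning trick (these are exactly the estimates the paper isolates in the remark following its proof). Where you genuinely depart from the paper is at the crux, the surviving term $\sum_{k}\xi_{k}(\Delta B_{k})^{2}$ with $\xi_{k}=\partial_{x^{\mu}x^{\nu}}^{2}\Phi(X_{t_{k}})\beta^{\mu}\beta^{\nu}$. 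The paper disposes of it by declaring that "from the definitions of the integrations with respect to $dt$, $dB_{t}$ and $d\left\langle B\right\rangle_{t}$" the limit of $\xi_{t}^{N}$ is the right-hand side of (\ref{B-Ito}); but (\ref{quadra-def}) only defines $\left\langle B\right\rangle$ as the limit of the \emph{unweighted} sums $\sum_{k}(\Delta B_{k})^{2}$, and the convergence of the weighted sums is made explicit only in the multidimensional chapter, as Lemma \ref{d-Lem-mutual}, which is itself stated without proof. You close precisely this gap: using the exact identity $\Delta\left\langle B\right\rangle_{k}=(\Delta B_{k})^{2}-2\int_{t_{k}}^{t_{k+1}}(B_{u}-B_{t_{k}})\,dB_{u}$ (which is the computation in Lemma \ref{Lem-Qua2}), the discrepancy $\sum_{k}\xi_{k}[(\Delta B_{k})^{2}-\Delta\left\langle B\right\rangle_{k}]$ becomes the single It\^{o} integral of the process $2\sum_{k}\xi_{k}(B_{u}-B_{t_{k}})\mathbf{I}_{[t_{k},t_{k+1})}(u)$, and since $\xi_{k}$ is bounded and $\mathcal{F}_{t_{k}}$-measurable, (\ref{e2}) gives second moment $\leq 4C^{2}\sum_{k}\int_{t_{k}}^{t_{k+1}}(u-t_{k})\,du=O(\mu(\pi^{N}))$, an explicit quantitative rate in $L_{G}^{2}$. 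What your route buys is self-containedness and a proof of the weighted-sum convergence that the paper's argument tacitly assumes (and which, as you correctly stress, cannot be argued pathwise here); what the paper's route buys is brevity and a formulation (Lemma \ref{d-Lem-mutual}) that transfers directly to mutual variations $\left\langle B^{\mathbf{a}},B^{\mathbf{\bar{a}}}\right\rangle$ in the vector-valued case, whereas your identity argument would there be run separately for $B^{\mathbf{a}+\mathbf{\bar{a}}}$ and $B^{\mathbf{a}-\mathbf{\bar{a}}}$ and polarized.
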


\begin{proof}
{ { { For each positive integer $N$ we set $\delta=(t-s)/N$ and take
the partition
\[
\pi_{\lbrack
s,t]}^{N}=\{t_{0}^{N},t_{1}^{N},\cdots,t_{N}^{N}\}=\{s,s+\delta
,\cdots,s+N\delta=t\}.
\]
We have
\begin{align}
\Phi(X_{t})  &  =\Phi(X_{s})+\sum_{k=0}^{N-1}[\Phi(X_{t_{k+1}^{N}}%
)-\Phi(X_{t_{k}^{N}})]\nonumber \\
&  =\Phi(X_{s})+\sum_{k=0}^{N-1}[\partial_{x^{\mu}}\Phi(X_{t_{k}^{N}%
})(X_{t_{k+1}^{N}}^{\mu}-X_{t_{k}^{N}}^{\mu})\nonumber \\
&  +\frac{1}{2}[\partial_{x^{\mu}x^{\nu}}^{2}\Phi(X_{t_{k}^{N}})(X_{t_{k+1}%
^{N}}^{\mu}-X_{t_{k}^{N}}^{\mu})(X_{t_{k+1}^{N}}^{\nu}-X_{t_{k}^{N}}^{\nu
})+\eta_{k}^{N}]] \label{Ito}%
\end{align}
where
\[
\eta_{k}^{N}=[\partial_{x^{\mu}x^{\nu}}^{2}\Phi(X_{t_{k}^{N}}+\theta
_{k}(X_{t_{k+1}^{N}}-X_{t_{k}^{N}}))-\partial_{x^{\mu}x^{\nu}}^{2}%
\Phi(X_{t_{k}^{N}})](X_{t_{k+1}^{N}}^{\mu}-X_{t_{k}^{N}}^{\mu})(X_{t_{k+1}%
^{N}}^{\nu}-X_{t_{k}^{N}}^{\nu})
\]
with $\theta_{k}\in \lbrack0,1]$. We have%
\begin{align*}
\mathbb{\hat{E}}[|\eta_{k}^{N}|]  &
=\mathbb{\hat{E}}[|[\partial_{x^{\mu
}x^{\nu}}^{2}\Phi(X_{t_{k}^{N}}+\theta_{k}(X_{t_{k+1}^{N}}-X_{t_{k}^{N}%
}))-\partial_{x^{\mu}x^{\nu}}^{2}\Phi(X_{t_{k}^{N}})](X_{t_{k+1}^{N}}^{\mu
}-X_{t_{k}^{N}}^{\mu})\times \\ & \ \ \ \times (X_{t_{k+1}^{N}}^{\nu}-X_{t_{k}^{N}}^{\nu})|]\\
&  \leq c\mathbb{\hat{E}[}|X_{t_{k+1}^{N}}-X_{t_{k}^{N}}|^{3}]\leq
C[\delta^{3}+\delta^{3/2}],
\end{align*}
where $c$ is the Lipschitz constant of $\{ \partial_{x^{\mu}x^{\nu}}^{2}%
\Phi \}_{\mu,\nu=1}^{n}$. Thus $\sum_{k}\mathbb{\hat{E}}[|\eta_{k}%
^{N}|]\rightarrow0$. The rest terms in the summation of the right
side of
(\ref{Ito}) are $\xi_{t}^{N}+\zeta_{t}^{N}$ with%
\begin{align*}
\xi_{t}^{N}  &  =\sum_{k=0}^{N-1}\{ \partial_{x^{\mu}}\Phi(X_{t_{k}^{N}%
})[\alpha^{\mu}(t_{k+1}^{N}-t_{k}^{N})+\eta^{\mu}(\left \langle
B\right \rangle _{t_{k+1}^{N}}-\left \langle B\right \rangle
_{t_{k}^{N}})+\beta^{\mu
}(B_{t_{k+1}^{N}}-B_{t_{k}^{N}})]\\
&
+\frac{1}{2}\partial_{x^{\mu}x^{\nu}}^{2}\Phi(X_{t_{k}^{N}})\beta^{\mu
}\beta^{\nu}(B_{t_{k+1}^{N}}-B_{t_{k}^{N}})(B_{t_{k+1}^{N}}-B_{t_{k}^{N}})\}
\end{align*}
and
\begin{align*}
\zeta_{t}^{N}  &  =\frac{1}{2}\sum_{k=0}^{N-1}\partial_{x^{\mu}x^{\nu}}%
^{2}\Phi(X_{t_{k}^{N}})[\alpha^{\mu}(t_{k+1}^{N}-t_{k}^{N})+\eta^{\mu
}(\left \langle B\right \rangle _{t_{k+1}^{N}}-\left \langle B\right
\rangle
_{t_{k}^{N}})]\\
&  \times \lbrack
\alpha^{\nu}(t_{k+1}^{N}-t_{k}^{N})+\eta^{\nu}(\left \langle
B\right \rangle _{t_{k+1}^{N}}-\left \langle B\right \rangle _{t_{k}^{N}})]\\
&  +\beta^{\nu}[\alpha^{\mu}(t_{k+1}^{N}-t_{k}^{N})+\eta^{\mu}(\left
\langle
B\right \rangle _{t_{k+1}^{N}}-\left \langle B\right \rangle _{t_{k}^{N}%
})](B_{t_{k+1}^{N}}-B_{t_{k}^{N}}).
\end{align*}
We observe that, for each $u\in \lbrack t_{k}^{N},t_{k+1}^{N})$
\begin{align*}
&  \mathbb{\hat{E}}[|\partial_{x^{\mu}}\Phi(X_{u})-\sum_{k=0}^{N-1}%
\partial_{x^{\mu}}\Phi(X_{t_{k}^{N}})\mathbf{I}_{[t_{k}^{N},t_{k+1}^{N}%
)}(u)|^{2}]\\
&  =\mathbb{\hat{E}}[|\partial_{x^{\mu}}\Phi(X_{u})-\partial_{x^{\mu}}%
\Phi(X_{t_{k}^{N}})|^{2}]\\
&  \leq c^{2}\mathbb{\hat{E}}[|X_{u}-X_{t_{k}^{N}}|^{2}]\leq
C[\delta +\delta^{2}].
\end{align*}
Thus $\sum_{k=0}^{N-1}\partial_{x^{\mu}}\Phi(X_{t_{k}^{N}})\mathbf{I}%
_{[t_{k}^{N},t_{k+1}^{N})}(\cdot)$ tends to
$\partial_{x^{\mu}}\Phi(X_{\cdot })$ in $M_{G}^{2}(0,T)$. Similarly,
\[
\sum_{k=0}^{N-1}\partial_{x^{\mu}x^{\nu}}^{2}\Phi(X_{t_{k}^{N}})\mathbf{I}%
_{[t_{k}^{N},t_{k+1}^{N})}(\cdot)\rightarrow \partial_{x^{\mu}x^{\nu}}^{2}%
\Phi(X_{\cdot})\text{ in \ }M_{G}^{2}(0,T).
\]
Let $N\rightarrow \infty$. From the definitions of the integrations
with respect to $dt$, $dB_{t}$ and $d\left \langle B\right \rangle
_{t}$ the limit of $\xi_{t}^{N}$ in $L_{G}^{2}(\mathcal{F}_{t})$ is
just the right hand of (\ref{B-Ito}). By the estimates of the next
remark we also have $\zeta _{t}^{N}\rightarrow0$ in
$L_{G}^{1}(\mathcal{F}_{t})$. We then have proved (\ref{B-Ito}). } }
}
\end{proof}

\begin{remark}
{ { { We have the following estimates: For $\psi^{N}\in
M_{G}^{1,0}(0,T)$ such
that $\psi_{t}^{N}=\sum_{k=0}^{N-1}\xi_{t_{k}}^{N}\mathbf{I}_{[t_{k}%
^{N},t_{k+1}^{N})}(t)$ and $\pi_{T}^{N}=\{0\leq
t_{0},\cdots,t_{N}=T\}$ with
$\lim_{N\rightarrow \infty}\mu(\pi_{T}^{N})=0$ and $\sum_{k=0}^{N-1}%
\mathbb{\hat{E}}[|\xi_{t_{k}}^{N}|](t_{k+1}^{N}-t_{k}^{N})\leq C$
for all $N=1,2,\cdots$, we have
\[
\mathbb{\hat{E}}[|\sum_{k=0}^{N-1}\xi_{k}^{N}(t_{k+1}^{N}-t_{k}^{N}%
)^{2}|]\rightarrow0,
\]
and, thanks to Lemma \ref{Lem-Qua2},%
\begin{align*}
\mathbb{\hat{E}}[|\sum_{k=0}^{N-1}\xi_{k}^{N}(\left \langle B\right
\rangle _{t_{k+1}^{N}}-\left \langle B\right \rangle
_{t_{k}^{N}})^{2}|]  &  \leq
\sum_{k=0}^{N-1}\mathbb{\hat{E}[}|\xi_{k}^{N}|\cdot \mathbb{\hat{E}%
}[(\left \langle B\right \rangle _{t_{k+1}^{N}}-\left \langle
B\right \rangle
_{t_{k}^{N}})^{2}|\mathcal{H}_{t_{k}^{N}}]]\\
&  =\sum_{k=0}^{N-1}\mathbb{\hat{E}[}|\xi_{k}^{N}|](t_{k+1}^{N}-t_{k}^{N}%
)^{2}\rightarrow0,
\end{align*}
as well as%
\begin{align*}
&  \mathbb{\hat{E}}[|\sum_{k=0}^{N-1}\xi_{k}^{N}(\left \langle
B\right \rangle
_{t_{k+1}^{N}}-\left \langle B\right \rangle _{t_{k}^{N}})(B_{t_{k+1}^{N}%
}-B_{t_{k}^{N}})|]\\
&  \leq \sum_{k=0}^{N-1}\mathbb{\hat{E}[}|\xi_{k}^{N}|]\mathbb{\hat{E}%
[}(\left \langle B\right \rangle _{t_{k+1}^{N}}-\left \langle
B\right \rangle
_{t_{k}^{N}})|B_{t_{k+1}^{N}}-B_{t_{k}^{N}}|]\\
&  \leq \sum_{k=0}^{N-1}\mathbb{\hat{E}[}|\xi_{k}^{N}|]\mathbb{\hat{E}%
[}(\left \langle B\right \rangle _{t_{k+1}^{N}}-\left \langle
B\right \rangle
_{t_{k}^{N}})^{2}]^{1/2}\mathbb{\hat{E}[}|B_{t_{k+1}^{N}}-B_{t_{k}^{N}}%
|^{2}]^{1/2}\\
&  =\sum_{k=0}^{N-1}\mathbb{\hat{E}[}|\xi_{k}^{N}|](t_{k+1}^{N}-t_{k}%
^{N})^{3/2}\rightarrow0.
\end{align*}
We also have
\begin{align*}
&  \mathbb{\hat{E}}[|\sum_{k=0}^{N-1}\xi_{k}^{N}(\left \langle
B\right \rangle
_{t_{k+1}^{N}}-\left \langle B\right \rangle _{t_{k}^{N}})(t_{k+1}^{N}-t_{k}%
^{N})|]\\
&  \leq \sum_{k=0}^{N-1}\mathbb{\hat{E}[}|\xi_{k}^{N}|(t_{k+1}^{N}-t_{k}%
^{N})\cdot \mathbb{\hat{E}}[(\left \langle B\right \rangle _{t_{k+1}^{N}%
}-\left \langle B\right \rangle _{t_{k}^{N}})|\mathcal{H}_{t_{k}^{N}}]]\\
&  =\sum_{k=0}^{N-1}\mathbb{\hat{E}[}|\xi_{k}^{N}|](t_{k+1}^{N}-t_{k}^{N}%
)^{2}\rightarrow0
\end{align*}
and
\begin{align*}
\mathbb{\hat{E}}[|\sum_{k=0}^{N-1}\xi_{k}^{N}(t_{k+1}^{N}-t_{k}^{N}%
)(B_{t_{k+1}^{N}}-B_{t_{k}^{N}})|]  &  \leq \sum_{k=0}^{N-1}\mathbb{\hat{E}%
[}|\xi_{k}^{N}|](t_{k+1}^{N}-t_{k}^{N})\mathbb{\hat{E}}[|B_{t_{k+1}^{N}%
}-B_{t_{k}^{N}}|]\\
&  =\sqrt{\frac{2}{\pi}}\sum_{k=0}^{N-1}\mathbb{\hat{E}[}|\xi_{k}%
^{N}|](t_{k+1}^{N}-t_{k}^{N})^{3/2}\rightarrow0.\
\end{align*}
} } }\endproof
\end{remark}

{ { { We now consider a more general form of It\^{o}'s formula. Consider%
\[
X_{t}^{\nu}=X_{0}^{\nu}+\int_{0}^{t}\alpha_{s}^{\nu}ds+\int_{0}^{t}\eta
_{s}^{\nu}d\left \langle B,B\right \rangle
_{s}+\int_{0}^{t}\beta_{s}^{\nu }dB_{s}.
\]
} } }

\begin{proposition}
{ { { \label{Prop-Ito}Let $\alpha^{\nu}$, $\beta^{\nu}$ and
$\eta^{\nu}$, $\nu=1,\cdots,n$, are bounded processes of
$M_{G}^{2}(0,T)$. Then for each
$t\geq0$ and $\Phi$ in $L_{G}^{2}(\mathcal{F}_{t})$ we have%
\begin{align}
\Phi(X_{t})-\Phi(X_{s})  &  =\int_{s}^{t}\partial_{x^{\nu}}\Phi(X_{u}%
)\beta_{u}^{\nu}dB_{u}+\int_{s}^{t}\partial_{x_{\nu}}\Phi(X_{u})\alpha
_{u}^{\nu}du\label{Ito-form1}\\
&  +\int_{s}^{t}[\partial_{x^{\nu}}\Phi(X_{u})\eta_{u}^{\nu}+\frac{1}%
{2}\partial_{x^{\mu}x^{\nu}}^{2}\Phi(X_{u})\beta_{u}^{\mu}\beta_{u}^{\nu
}]d\left \langle B\right \rangle _{u}.\nonumber
\end{align}
} } }
\end{proposition}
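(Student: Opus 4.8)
The plan is to reduce the general statement to the special case already settled in Lemma~\ref{Lem-26}, where $\alpha^\nu,\eta^\nu,\beta^\nu$ are bounded elements of $L_G^2(\mathcal{F}_s)$ (hence frozen on the integration interval), by a density-and-limit argument. I fix the interval $[s,t]$ and approximate each of the bounded processes $\alpha^\nu,\eta^\nu,\beta^\nu\in M_G^2(0,T)$ by simple processes: choose partitions $\pi^N=\{s=t_0^N<\cdots<t_N^N=t\}$ with $\mu(\pi^N)\to0$ and step processes
\[
\alpha^{\nu,N}_u=\sum_{k}\alpha^\nu_{t_k^N}\mathbf{I}_{[t_k^N,t_{k+1}^N)}(u),
\]
and similarly $\eta^{\nu,N},\beta^{\nu,N}$, arranged so that they converge to $\alpha^\nu,\eta^\nu,\beta^\nu$ in $M_G^2(0,T)$ while staying uniformly bounded. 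Let $X^N$ be the process on $[s,t]$ built from these step coefficients with $X^N_s:=X_s$.

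On each subinterval $[t_k^N,t_{k+1}^N)$ the coefficients of $X^N$ are frozen at the $\mathcal{F}_{t_k^N}$-measurable values $\alpha^\nu_{t_k^N},\eta^\nu_{t_k^N},\beta^\nu_{t_k^N}$, so $X^N$ has exactly the form treated in Lemma~\ref{Lem-26} there. Applying Lemma~\ref{Lem-26} on each $[t_k^N,t_{k+1}^N]$ with initial value $X^N_{t_k^N}$ and summing, the left side telescopes and the right side assembles into the three integrals of (\ref{Ito-form1}) with $X$ replaced by $X^N$ and the coefficients by their step versions:
\begin{align*}
\Phi(X^N_t)-\Phi(X_s)&=\int_s^t \partial_{x^\nu}\Phi(X^N_u)\beta^{\nu,N}_u\,dB_u+\int_s^t\partial_{x_\nu}\Phi(X^N_u)\alpha^{\nu,N}_u\,du\\
&\quad+\int_s^t\Big[\partial_{x^\nu}\Phi(X^N_u)\eta^{\nu,N}_u+\tfrac12\partial^2_{x^\mu x^\nu}\Phi(X^N_u)\beta^{\mu,N}_u\beta^{\nu,N}_u\Big]d\langle B\rangle_u.
\end{align*}

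It then remains to let $N\to\infty$. From the $M_G^2$-convergence of the coefficients together with the continuity estimates (\ref{e2}) for the $dB$-integral, (\ref{ine-dt}) for the $du$-integral and (\ref{qua-ine}) for the $d\langle B\rangle$-integral, one first obtains $X^N_u\to X_u$ in $L_G^2(\mathcal{F}_u)$. Using the boundedness and uniform Lipschitz continuity of the derivatives of $\Phi$, each integrand then converges to its counterpart with $X$ and the true coefficients: $\partial_{x^\nu}\Phi(X^N_\cdot)\beta^{\nu,N}_\cdot$ in $M_G^2(0,T)$ for the $dB$-part, the $\alpha$-integrand in $M_G^1(0,T)$, and the bracketed $d\langle B\rangle$-integrand in $M_G^1(0,T)$. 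Passing to the limit term by term, using $\Phi(X^N_t)\to\Phi(X_t)$, yields (\ref{Ito-form1}).

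The main obstacle is the simultaneous control of two approximations inside the nonlinear integrands. To show $\partial_{x^\nu}\Phi(X^N)\beta^{\nu,N}\to\partial_{x^\nu}\Phi(X)\beta^\nu$ I would split the difference into $[\partial_{x^\nu}\Phi(X^N)-\partial_{x^\nu}\Phi(X)]\beta^{\nu,N}$, bounded via the uniform bound on $\beta^{\nu,N}$ and the Lipschitz estimate $\|\partial_{x^\nu}\Phi(X^N)-\partial_{x^\nu}\Phi(X)\|\le c\|X^N-X\|$, plus $\partial_{x^\nu}\Phi(X)(\beta^{\nu,N}-\beta^\nu)$, bounded via the boundedness of $\partial_{x^\nu}\Phi$. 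The quadratic term $\partial^2_{x^\mu x^\nu}\Phi(X^N)\beta^{\mu,N}\beta^{\nu,N}$ is the most delicate, since it lives only in $M_G^1$ and carries a product of two approximated coefficients; here the uniform Lipschitz hypothesis on the second derivatives and the uniform boundedness of $\beta^{\nu,N}$, combined with the H\"older inequality (\ref{ee4.6}), are precisely what force the product to converge.
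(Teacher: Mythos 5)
Your proposal is correct and takes essentially the same route as the paper's own proof: approximate the coefficients by uniformly bounded step processes converging in $M_{G}^{2}(0,T)$ (note that density of step processes holds automatically by the definition of $M_{G}^{2}(0,T)$ as the completion of $M_{G}^{2,0}(0,T)$, so your freezing of $\alpha^{\nu}$ at partition points --- pointwise evaluation being not generally meaningful for an $M_{G}^{2}$ element --- is unnecessary), apply Lemma \ref{Lem-26} on each subinterval and telescope, then pass to the limit via the estimates (\ref{ine-dt}), (\ref{e2}), (\ref{qua-ine}) and the boundedness and Lipschitz continuity of the derivatives of $\Phi$. The paper states the same argument more tersely; your explicit splitting of the nonlinear integrands and use of H\"{o}lder's inequality merely fill in details the paper leaves implicit.
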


\begin{proof}
{ { { We first consider the case where $\alpha$, $\eta$ and $\beta$
are step
processes of the form%
\[
\eta_{t}(\omega)=\sum_{k=0}^{N-1}\xi_{k}(\omega)\mathbf{I}_{[t_{k},t_{k+1}%
)}(t).
\]
From Lemma \ref{Lem-26} it is clear that (\ref{Ito-form1}) holds
true. Now let
\[
X_{t}^{\nu,N}=X_{0}^{\nu}+\int_{0}^{t}\alpha_{s}^{\nu,N}ds+\int_{0}^{t}%
\eta_{s}^{\nu,N}d\left \langle B\right \rangle _{s}+\int_{0}^{t}\beta_{s}%
^{\nu,N}dB_{s},%
\]
where $\alpha^{N}$, $\eta^{N}$ and $\beta^{N}$ are uniformly bounded
step processes that converge to $\alpha$, $\eta$ and $\beta$ in
$M_{G}^{2}(0,T)$ as
$N\rightarrow \infty$, respectively. From Lemma \ref{Lem-26}%
\begin{align}
\Phi(X_{t}^{\nu,N})-\Phi(X_{0})  &
=\int_{s}^{t}\partial_{x^{\nu}}\Phi
(X_{u}^{N})\beta_{u}^{\nu,N}dB_{u}+\int_{s}^{t}\partial_{x_{\nu}}\Phi
(X_{u}^{N})\alpha_{u}^{\nu,N}du\label{N-Ito}\\
&  +\int_{s}^{t}[\partial_{x^{\nu}}\Phi(X_{u}^{N})\eta_{u}^{\nu,N}+\frac{1}%
{2}\partial_{x^{\mu}x^{\nu}}^{2}\Phi(X_{u}^{N})\beta_{u}^{\mu,N}\beta_{u}%
^{\nu,N}]d\left \langle B\right \rangle _{u}.\nonumber
\end{align}
Since%
\begin{align*}
\mathbb{\hat{E}[}|X_{t}^{\nu,N}-X_{t}^{\nu}|^{2}]  &  \leq3\mathbb{\hat{E}%
[}|\int_{0}^{t}(\alpha_{s}^{N}-\alpha_{s})ds|^{2}]+3\mathbb{\hat{E}[}|\int
_{0}^{t}(\eta_{s}^{\nu,N}-\eta_{s}^{\nu})d\left \langle B\right
\rangle
_{s}|^{2}]\\
+3\mathbb{\hat{E}[}|\int_{0}^{t}(\beta_{s}^{\nu,N}-\beta_{s}^{\nu})dB_{s}%
|^{2}]  &
\leq3\int_{0}^{T}\mathbb{\hat{E}}[(\alpha_{s}^{\nu,N}-\alpha
_{s}^{\nu})^{2}]ds+3\int_{0}^{T}\mathbb{\hat{E}}[|\eta_{s}^{\nu,N}-\eta
_{s}^{\nu}|^{2}]ds\\
&
+3\int_{0}^{T}\mathbb{\hat{E}}[(\beta_{s}^{\nu,N}-\beta_{s}^{\nu})^{2}]ds.
\end{align*}
Furthermore
\begin{align*}
\partial_{x^{\nu}}\Phi(X_{\cdot}^{N})\eta_{\cdot}^{\nu,N}+\partial_{x^{\mu
}x^{\nu}}^{2}\Phi(X_{\cdot}^{N})\beta_{\cdot}^{\mu,N}\beta_{\cdot}^{\nu,N}
& \rightarrow
\partial_{x^{\nu}}\Phi(X_{\cdot})\eta_{\cdot}^{\nu}+\partial
_{x^{\mu}x^{\nu}}^{2}\Phi(X_{\cdot})\beta_{\cdot}^{\mu}\beta_{\cdot}^{\nu};\\
\partial_{x_{\nu}}\Phi(X_{\cdot}^{N})\alpha_{\cdot}^{\nu,N}  &  \rightarrow
\partial_{x_{\nu}}\Phi(X_{\cdot})\alpha_{\cdot}^{\nu};\\
\partial_{x^{\nu}}\Phi(X_{\cdot}^{N})\beta_{\cdot}^{\nu,N}  &  \rightarrow
\partial_{x^{\nu}}\Phi(X_{\cdot})\beta_{\cdot}^{\nu}.%
\end{align*}
We then can pass limit in both sides of (\ref{N-Ito}) and get (\ref{Ito-form1}%
). } } }\newline \newpage
\end{proof}

\section{{{{Stochastic differential equations}}}}

{ { { We consider the following SDE defined on $M_{G}^{2}(0,T;\mathbb{R}^{n}%
)$:
\begin{equation}
X_{t}=X_{0}+\int_{0}^{t}b(X_{s})ds+\int_{0}^{t}h(X_{s})d\left
\langle B\right \rangle _{s}+\int_{0}^{t}\sigma(X_{s})dB_{s},\ t\in
\lbrack0,T].
\label{SDE}%
\end{equation}
where the initial condition $X_{0}\in \mathbb{R}^{n}$ is given and
$b,h,\sigma:\mathbb{R}^{n}\mapsto \mathbb{R}^{n}$ are given
Lipschitz functions, i.e., $|\varphi(x)-\varphi(x^{\prime})|\leq
K|x-x^{\prime}|$, for each $x$, $x^{\prime}\in \mathbb{R}^{n}$,
$\varphi=b$, $h$ and $\sigma$, respectively. Here the horizon
$[0,T]$ can be arbitrarily large. The solution is a process $X\in
M_{G}^{2}(0,T;\mathbb{R}^{n})$ satisfying the above SDE. We first
introduce
the following mapping on a fixed interval $[0,T]$:%
\[
\Lambda_{\cdot}(Y):=Y\in M_{G}^{2}(0,T;\mathbb{R}^{n})\longmapsto M_{G}%
^{2}(0,T;\mathbb{R}^{n})\  \
\]
by setting $\Lambda_{t}$ with
\[
\Lambda_{t}(Y)=X_{0}+\int_{0}^{t}b(Y_{s})ds+\int_{0}^{t}h(Y_{s})d\left
\langle B\right \rangle _{s}+\int_{0}^{t}\sigma(Y_{s})dB_{s},\ t\in
\lbrack0,T].
\]
} } }

{ { { We immediately have } } }

\begin{lemma}
{ { { For each $Y,Y^{\prime}\in M_{G}^{2}(0,T;\mathbb{R}^{n})$ we
have the
following estimate:%
\[
\mathbb{\hat{E}}[|\Lambda_{t}(Y)-\Lambda_{t}(Y^{\prime})|^{2}]\leq
C\int _{0}^{t}\mathbb{\hat{E}}[|Y_{s}-Y_{s}^{\prime}|^{2}]ds,\ t\in
\lbrack0,T],
\]
where $C=3K^{2}$. } } }
\end{lemma}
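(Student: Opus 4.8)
The plan is to use the definition of $\Lambda$ to write the difference $\Lambda_t(Y)-\Lambda_t(Y')$ as a sum of three integrals and to estimate each of them in $L^2$ under $\mathbb{\hat{E}}$. Writing
\[
\Lambda_t(Y)-\Lambda_t(Y')=\int_0^t\big(b(Y_s)-b(Y'_s)\big)\,ds+\int_0^t\big(h(Y_s)-h(Y'_s)\big)\,d\langle B\rangle_s+\int_0^t\big(\sigma(Y_s)-\sigma(Y'_s)\big)\,dB_s,
\]
I would first invoke the pointwise bound $|a+b+c|^2\le 3(|a|^2+|b|^2+|c|^2)$ together with the monotonicity (a) and sub-additivity (c) of $\mathbb{\hat{E}}$, so that $\mathbb{\hat{E}}[|\Lambda_t(Y)-\Lambda_t(Y')|^2]$ is dominated by $3$ times the sum of the three separate second moments. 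The Lipschitz hypothesis, in the form $|\varphi(Y_s)-\varphi(Y'_s)|\le K|Y_s-Y'_s|$ for $\varphi=b,h,\sigma$, then converts every integrand estimate into the factor $K^2\mathbb{\hat{E}}[|Y_s-Y'_s|^2]$.

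For the It\^o integral the bound is immediate from the isometry inequality (\ref{e2}):
\[
\mathbb{\hat{E}}\Big[\Big(\int_0^t(\sigma(Y_s)-\sigma(Y'_s))\,dB_s\Big)^2\Big]\le\int_0^t\mathbb{\hat{E}}[|\sigma(Y_s)-\sigma(Y'_s)|^2]\,ds\le K^2\int_0^t\mathbb{\hat{E}}[|Y_s-Y'_s|^2]\,ds.
\]
For the $ds$ integral I would combine the pointwise Cauchy--Schwarz bound $|\int_0^t\eta_s\,ds|^2\le t\int_0^t|\eta_s|^2\,ds$ with (\ref{ine-dt}), obtaining $tK^2\int_0^t\mathbb{\hat{E}}[|Y_s-Y'_s|^2]\,ds$.

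The main obstacle is the $d\langle B\rangle$ integral, for which the excerpt provides only the $L^1$ estimate (\ref{qua-ine}) rather than an $L^2$ isometry. My plan is to exploit that the increments of $\langle B\rangle$ are dominated by those of time: the corollary bounding the increments of a $\mathcal{U}$-distributed process, applied to $\langle B\rangle$, gives $\langle B\rangle_t-\langle B\rangle_s\le t-s$ in $L_G^1$, hence $\langle B\rangle_t\le t$. Working first with a simple integrand $\eta_s=\sum_k\xi_k\mathbf{1}_{[t_k,t_{k+1})}(s)$, the discrete Cauchy--Schwarz inequality against the nonnegative weights $\Delta\langle B\rangle_k=\langle B\rangle_{t_{k+1}}-\langle B\rangle_{t_k}$ gives pointwise $\big(\int_0^t\eta_s\,d\langle B\rangle_s\big)^2\le\langle B\rangle_t\int_0^t\eta_s^2\,d\langle B\rangle_s\le t\int_0^t\eta_s^2\,d\langle B\rangle_s$; applying $\mathbb{\hat{E}}$ and then (\ref{qua-ine}) to the nonnegative process $\eta^2$ yields $t\int_0^t\mathbb{\hat{E}}[\eta_s^2]\,ds$, and a density argument extends this to all of $M_G^2(0,T)$. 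With $\eta=h(Y)-h(Y')$ this contributes $tK^2\int_0^t\mathbb{\hat{E}}[|Y_s-Y'_s|^2]\,ds$. Summing the three pieces and bounding the harmless time factors by $t\le T$ gives the claimed estimate with $C=3K^2$ (absorbing the $t$-factors; keeping them explicit yields the sharper constant $3K^2(1+2t)$). This lemma is then the key contraction ingredient for the Picard iteration establishing existence and uniqueness for (\ref{SDE}).
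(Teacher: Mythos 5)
Your proof is correct, and at the structural level it is exactly the route the paper intends: the paper's entire proof of this lemma is the one-line remark that it is ``a direct consequence of the inequalities (\ref{ine-dt}), (\ref{e2}) and (\ref{qua-ine})'', which presupposes precisely your decomposition into the $ds$, $d\left\langle B\right\rangle_s$ and $dB_s$ terms, the elementary bound $|a+b+c|^{2}\leq 3(|a|^{2}+|b|^{2}+|c|^{2})$, and the Lipschitz hypothesis. Where you go genuinely beyond the paper is the middle term, and you have correctly identified the real issue: (\ref{qua-ine}) is only a first-moment estimate, so the ``direct consequence'' claim silently omits the needed $L^{2}$ bound for $\int_{0}^{t}\eta_{s}\,d\left\langle B\right\rangle_{s}$. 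Your fix --- discrete Cauchy--Schwarz against the nonnegative weights $\left\langle B\right\rangle_{t_{k+1}}-\left\langle B\right\rangle_{t_{k}}$ for simple integrands, the bound $\left\langle B\right\rangle_{t}\leq t$ (which indeed follows from the corollary on increments of $\mathcal{U}_{[\underline{\mu}t,\overline{\mu}t]}$-distributed processes, using the chapter's normalization $\overline{\sigma}=1$), then (\ref{qua-ine}) applied to the nonnegative process $\eta^{2}$, and a density passage to $M_{G}^{2}(0,T)$ --- is a complete and natural way to supply the missing step; in the density argument, note that you should apply the simple-process inequality to differences $\eta^{N}-\eta^{M}$ to get Cauchyness of the integrals in $L_{G}^{2}$, since the $L^{1}$ continuity of $Q_{0,T}$ alone does not let you pass second moments to the limit. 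You are also right about the constant: the three pieces give $3K^{2}(1+2t)\int_{0}^{t}\mathbb{\hat{E}}[|Y_{s}-Y_{s}^{\prime}|^{2}]ds$, so the paper's stated $C=3K^{2}$ is slightly inaccurate and should carry a factor depending on $T$ (compare the multidimensional version of the lemma, where the paper only asserts that $C$ depends on the data); this is harmless, since the subsequent $e^{-2Ct}$-weighted contraction argument for (\ref{SDE}) works for any constant depending only on $K$ and $T$.
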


\begin{proof}
{ { { This is a direct consequence of the inequalities
(\ref{ine-dt}), (\ref{e2}) and (\ref{qua-ine}). } } }
\end{proof}

{ { { We now prove that SDE (\ref{SDE}) has a unique solution. By
multiplying $e^{-2Ct}$ on both sides of the above inequality and
then integrating them on
$[0,T]$ it follows that%
\begin{align*}
\int_{0}^{T}\mathbb{\hat{E}}[|\Lambda_{t}(Y)-\Lambda_{t}(Y^{\prime}%
)|^{2}]e^{-2Ct}dt  &  \leq C\int_{0}^{T}e^{-2Ct}\int_{0}^{t}\mathbb{\hat{E}%
}[|Y_{s}-Y_{s}^{\prime}|^{2}]dsdt\\
&
=C\int_{0}^{T}\int_{s}^{T}e^{-2Ct}dt\mathbb{\hat{E}}[|Y_{s}-Y_{s}^{\prime
}|^{2}]ds\\
&  =(2C)^{-1}C\int_{0}^{T}(e^{-2Cs}-e^{-2CT})\mathbb{\hat{E}}[|Y_{s}%
-Y_{s}^{\prime}|^{2}]ds.
\end{align*}
We then have
\[
\int_{0}^{T}\mathbb{\hat{E}}[|\Lambda_{t}(Y)-\Lambda_{t}(Y^{\prime}%
)|^{2}]e^{-2Ct}dt\leq \frac{1}{2}\int_{0}^{T}\mathbb{\hat{E}}[|Y_{t}%
-Y_{t}^{\prime}|^{2}]e^{-2Ct}dt.
\]
We observe that the following two norms are equivalent in $M_{G}%
^{2}(0,T;\mathbb{R}^{n})$:
\[
\int_{0}^{T}\mathbb{\hat{E}}[|Y_{t}|^{2}]dt\thicksim \int_{0}^{T}%
\mathbb{\hat{E}}[|Y_{t}|^{2}]e^{-2Ct}dt.
\]
From this estimate we can obtain that $\Lambda(Y)$ is a contraction
mapping. Consequently, we have } } }

\begin{theorem}
{ { { There exists a unique solution $X\in
M_{G}^{2}(0,T;\mathbb{R}^{n})$ of the stochastic differential
equation (\ref{SDE}). } } }
\end{theorem}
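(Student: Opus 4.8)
The plan is to exhibit the map $\Lambda$ as a strict contraction on the complete space $M_{G}^{2}(0,T;\mathbb{R}^{n})$ and to invoke the Banach fixed point theorem; its unique fixed point is precisely the unique solution of (\ref{SDE}).

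First I would check that $\Lambda$ maps $M_{G}^{2}(0,T;\mathbb{R}^{n})$ into itself. For $Y\in M_{G}^{2}(0,T;\mathbb{R}^{n})$, the Lipschitz property of $b,h,\sigma$ (which also forces linear growth) shows that $b(Y),h(Y),\sigma(Y)$ are again in $M_{G}^{2}(0,T;\mathbb{R}^{n})$; then the continuity estimates (\ref{ine-dt}), (\ref{e2}) and (\ref{qua-ine}) for the $dt$-, $dB$- and $d\langle B\rangle$-integrals respectively guarantee that each of the three integral terms, and hence $\Lambda_{\cdot}(Y)$, lies in $M_{G}^{2}(0,T;\mathbb{R}^{n})$. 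Since this space is the completion of the simple processes under $\left\Vert \cdot \right\Vert _{M_{G}^{2}(0,T)}$, it is a Banach space.

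The core estimate is already supplied by the preceding Lemma,
\[
\mathbb{\hat{E}}[|\Lambda_{t}(Y)-\Lambda_{t}(Y^{\prime})|^{2}]\leq C\int_{0}^{t}\mathbb{\hat{E}}[|Y_{s}-Y_{s}^{\prime}|^{2}]\,ds,\quad C=3K^{2}.
\]
Multiplying by $e^{-2Ct}$ and integrating over $[0,T]$, exactly as carried out in the text, yields
\[
\int_{0}^{T}\mathbb{\hat{E}}[|\Lambda_{t}(Y)-\Lambda_{t}(Y^{\prime})|^{2}]e^{-2Ct}\,dt \leq \tfrac{1}{2}\int_{0}^{T}\mathbb{\hat{E}}[|Y_{t}-Y_{t}^{\prime}|^{2}]e^{-2Ct}\,dt.
\]
Because the weighted norm $\bigl(\int_{0}^{T}\mathbb{\hat{E}}[|Y_{t}|^{2}]e^{-2Ct}\,dt\bigr)^{1/2}$ is equivalent to $\left\Vert \cdot \right\Vert _{M_{G}^{2}(0,T)}$ on $[0,T]$, the map $\Lambda$ is a contraction with constant $1/\sqrt{2}<1$ for this equivalent complete norm. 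The Banach fixed point theorem then provides a unique $X\in M_{G}^{2}(0,T;\mathbb{R}^{n})$ with $\Lambda(X)=X$, i.e.\ a unique solution of (\ref{SDE}).

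The main obstacle is the a priori estimate of the Lemma itself, namely controlling the three stochastic integrals simultaneously under the sublinear expectation, where the $d\langle B\rangle$-integral is handled via (\ref{qua-ine}) and the $dB$-integral via the isometry-type bound (\ref{e2}); but this has been established beforehand. Once it is in hand, the only remaining subtlety is the choice of the exponential weight $e^{-2Ct}$, which is exactly what converts the linear Gr\"onwall-type growth constant $C$ into a contraction factor strictly below $1$. An equally valid alternative would be to iterate the Lemma directly and observe that $\mathbb{\hat{E}}[|\Lambda_{t}^{k}(Y)-\Lambda_{t}^{k}(Y^{\prime})|^{2}]\leq \frac{(Ct)^{k}}{k!}\sup_{s\leq T}\mathbb{\hat{E}}[|Y_{s}-Y_{s}^{\prime}|^{2}]$, so that some iterate $\Lambda^{k}$ is a contraction and the classical Picard argument again delivers a unique fixed point.
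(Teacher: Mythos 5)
Your proposal is correct and follows essentially the same route as the paper: the a priori Lipschitz estimate $\mathbb{\hat{E}}[|\Lambda_{t}(Y)-\Lambda_{t}(Y^{\prime})|^{2}]\leq C\int_{0}^{t}\mathbb{\hat{E}}[|Y_{s}-Y_{s}^{\prime}|^{2}]ds$ from the inequalities (\ref{ine-dt}), (\ref{e2}), (\ref{qua-ine}), followed by multiplication with the exponential weight $e^{-2Ct}$, integration over $[0,T]$, and the equivalence of the weighted and unweighted norms to make $\Lambda$ a contraction on the Banach space $M_{G}^{2}(0,T;\mathbb{R}^{n})$. Your explicit verification that $\Lambda$ maps the space into itself, and the alternative of iterating to get $\Lambda^{k}$ contractive, are sound minor additions but do not change the argument.
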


\section{Backward SDE}

We consider the following type of BSDE%
\begin{equation}
Y_{t}=\mathbb{\hat{E}}[\xi+\int_{t}^{T}f(s,Y_{s})ds|\mathcal{H}_{t}%
],\  \ t\in \lbrack0,T]. \label{BSDE}%
\end{equation}
where $\xi \in \mathbb{L}_{G}^{1}(\mathcal{F}_{T};\mathbb{R}^{m})$,
$f(t,y)\in \mathbb{M}_{G}^{1}(0,T;\mathbb{R}^{m})$, $t\in
\lbrack0,T]$,
$y\in \mathbb{R}^{m}$, are given such that%
\[
|f(t,y)-f(t,y^{\prime})|\leq k|y-y^{\prime}|.
\]
We have

\begin{theorem}
There exists a unique solution $(Y_{t})_{t\in \lbrack0,T]}\in \mathbb{M}%
^{1}(0,T)$.
\[
\Lambda_{t}(Y)=\mathbb{\hat{E}}[\xi+\int_{t}^{T}f(s,Y_{s})ds|\mathcal{H}%
_{t}]\
\]%
\begin{align*}
\int_{0}^{T}\mathbb{\hat{E}}[|\Lambda_{t}(Y)-\Lambda_{t}(Y^{\prime})|]e^{\beta
t}dt  &  \leq \int_{0}^{T}\mathbb{\hat{E}}[\int_{t}^{T}|f(s,Y_{s}%
)-f(s,Y_{s}^{\prime})|ds]e^{\beta t}dt\\
&  \leq
C\int_{0}^{T}\int_{t}^{T}\mathbb{\hat{E}}[|Y_{s}-Y_{s}^{\prime
}|]e^{\beta t}dsdt\\
&
=C\int_{0}^{T}\mathbb{\hat{E}}[|Y_{s}-Y_{s}^{\prime}|]\int_{0}^{s}e^{\beta
t}dtds\\
&  =\frac{C}{\beta}\int_{0}^{T}\mathbb{\hat{E}}[|Y_{s}-Y_{s}^{\prime
}|](e^{\beta s}-1)ds\\
&  \leq
\frac{C}{\beta}\int_{0}^{T}\mathbb{\hat{E}}[|Y_{s}-Y_{s}^{\prime
}|]e^{\beta s}ds.
\end{align*}
We choose $\beta=2C$, then%
\[
\int_{0}^{T}\mathbb{\hat{E}}[|\Lambda_{t}(Y)-\Lambda_{t}(Y^{\prime})|]e^{\beta
t}dt\leq
\frac{1}{2}\int_{0}^{T}\mathbb{\hat{E}}[|Y_{s}-Y_{s}^{\prime
}|]e^{\beta s}ds.
\]

\end{theorem}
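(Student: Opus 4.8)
The plan is to solve (\ref{BSDE}) by a Banach fixed-point argument that runs exactly parallel to the proof of existence and uniqueness for the forward SDE (\ref{SDE}). I would work in the Banach space $M_{G}^{1}(0,T)$ (the solution space of the theorem) and study the map
\[
\Lambda_{t}(Y)=\mathbb{\hat{E}}\Big[\xi+\int_{t}^{T}f(s,Y_{s})\,ds\,\Big|\,\mathcal{H}_{t}\Big],\qquad t\in[0,T].
\]
First I would verify that $\Lambda$ maps $M_{G}^{1}(0,T)$ into itself. If $Y\in M_{G}^{1}(0,T)$, then $f(\cdot,Y_{\cdot})\in M_{G}^{1}(0,T)$ by the Lipschitz bound on $f$ and $f(\cdot,0)\in M_{G}^{1}(0,T)$, so the Bochner integral $\int_{t}^{T}f(s,Y_{s})\,ds$ lies in $L_{G}^{1}(\mathcal{F}_{T})$ by (\ref{ine-dt}); the conditional expectation $\mathbb{\hat{E}}[\cdot\mid\mathcal{H}_{t}]$ then sends it into $L_{G}^{1}(\mathcal{F}_{t})$, continuously, by the properties collected in Proposition \ref{Prop-1-7}. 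Adaptedness and the time-integrability $\int_{0}^{T}\mathbb{\hat{E}}[|\Lambda_{t}(Y)|]\,dt<\infty$ then have to be checked to place $\Lambda(Y)$ genuinely in $M_{G}^{1}(0,T)$.

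The core of the argument is the contraction estimate. For $Y,Y'\in M_{G}^{1}(0,T)$ I would invoke the sub-additivity of the conditional expectation (property \textbf{(c')} of Proposition \ref{Prop-1-7}) to obtain
\[
|\Lambda_{t}(Y)-\Lambda_{t}(Y')|\leq\mathbb{\hat{E}}\Big[\int_{t}^{T}|f(s,Y_{s})-f(s,Y_{s}')|\,ds\,\Big|\,\mathcal{H}_{t}\Big].
\]
Taking the unconditional $G$-expectation, using the tower property $\mathbb{\hat{E}}[\mathbb{\hat{E}}[\cdot\mid\mathcal{H}_{t}]]=\mathbb{\hat{E}}[\cdot]$ (property \textbf{(e')}), the time-integral inequality (\ref{ine-dt}) with $p=1$, and the Lipschitz bound $|f(s,y)-f(s,y')|\leq k|y-y'|$, I would arrive at
\[
\mathbb{\hat{E}}[|\Lambda_{t}(Y)-\Lambda_{t}(Y')|]\leq k\int_{t}^{T}\mathbb{\hat{E}}[|Y_{s}-Y_{s}'|]\,ds.
\]
Multiplying by $e^{\beta t}$, integrating over $[0,T]$, and interchanging the order of integration via $\int_{0}^{s}e^{\beta t}\,dt=(e^{\beta s}-1)/\beta$ then yields the weighted estimate
\[
\int_{0}^{T}\mathbb{\hat{E}}[|\Lambda_{t}(Y)-\Lambda_{t}(Y')|]\,e^{\beta t}\,dt\leq\frac{k}{\beta}\int_{0}^{T}\mathbb{\hat{E}}[|Y_{s}-Y_{s}'|]\,e^{\beta s}\,ds.
\]

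To conclude, I would choose $\beta=2k$ so that $\Lambda$ is a $\tfrac{1}{2}$-contraction for the norm $\|Y\|_{\beta}:=\int_{0}^{T}\mathbb{\hat{E}}[|Y_{t}|]\,e^{\beta t}\,dt$, which is equivalent to the native norm of $M_{G}^{1}(0,T)$ because $e^{\beta t}\in[1,e^{\beta T}]$ on $[0,T]$. Completeness of $M_{G}^{1}(0,T)$ and the contraction property then give, by the Banach fixed-point theorem, a unique fixed point $Y$, which is precisely the unique solution of (\ref{BSDE}). The step I expect to demand the most care is the well-definedness of $\Lambda$ rather than the contraction bound: one must confirm that $t\mapsto\Lambda_{t}(Y)$ is an admissible, suitably measurable process in $M_{G}^{1}(0,T)$, and in particular that the conditional $G$-expectation depends on $t$ in a way compatible with membership in the solution space. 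The contraction inequality itself, as the display above indicates, is a routine consequence of sub-additivity, the tower property, and the Lipschitz hypothesis already established in the excerpt.
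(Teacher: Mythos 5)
Your proposal is correct and follows essentially the same route as the paper: the same map $\Lambda$, the same sub-additivity/Lipschitz chain giving $\mathbb{\hat{E}}[|\Lambda_{t}(Y)-\Lambda_{t}(Y^{\prime})|]\leq k\int_{t}^{T}\mathbb{\hat{E}}[|Y_{s}-Y_{s}^{\prime}|]\,ds$, the same exchange of integration order, and the same choice $\beta=2k$ making $\Lambda$ a $\tfrac{1}{2}$-contraction in the weighted norm equivalent to that of $M_{G}^{1}(0,T)$. Your added remarks on the well-definedness of $\Lambda$ (via (\ref{ine-dt}) and Proposition \ref{Prop-1-7}) merely fill in details the paper leaves implicit, not a different argument.
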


\noindent\textbf{Excercise.} We define a deterministic function
$u=u(t,x)$ on $(t,x)\in [0,T]\times\mathbb{R}^d$ by
\[
dX_{s}^{x,t}=f(X_{s}^{t,x})ds+h(X_{s}^{t,x})d\left \langle B\right
\rangle _{s}+\sigma(X_{s}^{t,x})dB_{s},\ \ s\in[t,T], \ \ \ X_{t}^{x,t}=x%
\]%
\[
Y_{s}^{t,x}=\mathbb{\hat{E}}[\Phi(X_{T}^{t,x})+\int_{s}^{T}g(X_{r}^{t.x}%
,Y_{r}^{t,x})dr|\mathcal{H}_{s}],\  \ s\in \lbrack t,T].
\]
where $\Phi$ and $g$ are given $\mathbb{R}^m$-valued Lipschitz
functions on $\mathbb{R}^d$ and $\mathbb{R}^d\times \mathbb{R}^m$,
respectively. $u$ is a viscosity solution of a fully nonlinear
parabolic PDE. Try to find this PDE and prove this interpretation.

\chapter{Vector Valued $G$-Brownian Motion}

\section{Multidimensional $G$--normal distributions}

For a given positive integer $n$ we will denote by $(x,y)$ the
scalar product of $x$, $y\in \mathbb{R}^{n}$ and by $\left \vert
x\right \vert =(x,x)^{1/2}$ the Euclidean norm of $x$. We denote by
$\mathbb{S}(n)$ the space of all
$d\times d$ symmetric matrices. $\mathbb{S}(n)$ is a $\frac{d(d+1)}{2}%
$-dimensional Euclidean space of the norm $\left(  A,B\right)
:=$tr$[AB]$. We also denote $\mathbb{S}(n)^{+}$ subset of
non-negative matrices in $\mathbb{S}(n)$.

For a given $d$-dimensional random vector
$\xi=(\xi_{1},\cdots,\xi_{d})$ in a sublinear expectation space
$(\Omega,\mathcal{H},\mathbb{\hat{E}})$ with zero
mean uncertainty:%
\[
\mathbb{\hat{E}}[\xi]=(\mathbb{\hat{E}}[\xi_{1}],\cdots,\mathbb{\hat{E}}%
[\xi_{d}])=0,\  \mathbb{\hat{E}}[-\xi]=0.
\]
We set%
\[
G(A)=G_{\xi}(A):=\frac{1}{2}\mathbb{\hat{E}}[\left(  A\xi,\xi
\right) ],\  \ A\in \mathbb{S}(d).
\]
This function $G(\cdot):\mathbb{S}(n)\longmapsto \mathbb{R}$
characterizes the variance-uncertainty of $\xi$. Since $\xi \in
\mathcal{H}$ implies $\mathbb{\hat{E}}[|\xi|^{2}]<\infty$, there
exists a constant $C$ such that
\[
|G(A)|\leq C|A|,\  \  \  \forall A\in \mathbb{S}(n).
\]%
\[
\  \
\]

\begin{proposition}
The function $G(\cdot):\mathbb{S}(n)\longmapsto \mathbb{R}$ is a
monotonic and
sublinear mapping:\newline(a)\ $A\geq B$ $\implies \ G(A)\geq G(B)$%
;\newline(b)\ $G(\lambda
A)=\lambda^{+}G(A)+\lambda^{-}G(-A)$;\newline(c) $G(A+B)\leq
G(A)+G(B).$\newline Moreover, there exists a bounded, convex and
closed subset $\Gamma \subset \mathbb{S}^{+}(d)$ (the non-negative
elements of $\mathbb{S}(d)$) such that
\begin{equation}
G(A)=\frac{1}{2}\sup_{B\in \Gamma}\left(  A,B\right)  . \label{Gamma}%
\end{equation}

\end{proposition}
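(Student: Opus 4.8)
The plan is to read off the three structural properties (a)--(c) directly from the defining formula $G(A)=\frac{1}{2}\mathbb{\hat{E}}[(A\xi,\xi)]$, exploiting that for each fixed $\omega$ the quadratic form $A\mapsto(A\xi,\xi)$ is \emph{linear} in $A$, and then to produce the set $\Gamma$ of (\ref{Gamma}) by recognizing $2G$ as a support function.

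First I would dispose of (a)--(c). Since $((A-B)\xi,\xi)=(A\xi,\xi)-(B\xi,\xi)$ pointwise, and $A\geq B$ means $A-B\in\mathbb{S}^{+}(d)$ so that $((A-B)\xi,\xi)\geq0$ for every $\omega$, monotonicity of $\mathbb{\hat{E}}$ gives $G(A)\geq G(B)$, proving (a). The identity $((A+B)\xi,\xi)=(A\xi,\xi)+(B\xi,\xi)$ together with sub-additivity of $\mathbb{\hat{E}}$ gives (c). For (b), positive homogeneity of $\mathbb{\hat{E}}$ handles $\lambda\geq0$ at once; for $\lambda<0$ one writes $(\lambda A\xi,\xi)=|\lambda|((-A)\xi,\xi)$ and pulls out $|\lambda|\geq0$, so $G(\lambda A)=|\lambda|G(-A)$. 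Recording the two cases as $\lambda^{+}G(A)+\lambda^{-}G(-A)$ gives the claimed formula.

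Next, for the representation I would invoke the standard fact that a finite-valued, positively homogeneous, convex function on a finite-dimensional Euclidean space is the support function of a unique nonempty closed convex set; convexity and positive homogeneity of $2G$ are exactly (b)--(c). Concretely set $\Gamma:=\{B\in\mathbb{S}(d):(A,B)\leq 2G(A)\text{ for all }A\in\mathbb{S}(d)\}$, which is closed and convex by construction, and the Fenchel--Moreau (bipolar) theorem gives $2G(A)=\sup_{B\in\Gamma}(A,B)$, i.e.\ (\ref{Gamma}). Boundedness of $\Gamma$ is forced by the a priori bound $|G(A)|\leq C|A|$: each $B\in\Gamma$ obeys $(A,B)\leq 2C|A|$ for all $A$, hence $|B|\leq 2C$. (Alternatively one can avoid the abstract duality and apply the earlier representation theorem $\mathbb{\hat{E}}[\cdot]=\max_{Q\in\mathcal{Q}}E_{Q}[\cdot]$ together with $(A\xi,\xi)=(A,\xi\xi^{\top})$, exhibiting $\Gamma$ as the closed convex hull of the second-moment matrices $E_{Q}[\xi\xi^{\top}]$.)

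The step I expect to be the main obstacle is showing $\Gamma\subset\mathbb{S}^{+}(d)$, since nothing so far rules out indefinite $B$. Here I would exploit monotonicity (a): fix $B_{0}\in\Gamma$ and $v\in\mathbb{R}^{d}$, and test against the rank-one matrix $N=vv^{\top}\in\mathbb{S}^{+}(d)$. From $0\geq -N$ and (a) we get $G(-N)\leq G(0)=0$, while the defining inequality for $\Gamma$ gives $2G(-N)\geq(-N,B_{0})=-(B_{0}v,v)$; combining these, $(B_{0}v,v)\geq0$. As $v$ is arbitrary, $B_{0}\in\mathbb{S}^{+}(d)$. (In the representation-theorem route this is automatic, every $E_{Q}[\xi\xi^{\top}]$ being a second-moment matrix and hence nonnegative definite.) Having shown $\Gamma$ to be nonempty, bounded, closed, convex and contained in $\mathbb{S}^{+}(d)$, formula (\ref{Gamma}) is established.
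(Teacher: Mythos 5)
Your proof is correct, and the paper states this proposition without proof, so there is no argument of the author's to compare against line by line; yours is the natural one and is the finite-dimensional counterpart of the paper's Chapter~2 representation theorem for sublinear expectations (there proved via Hahn--Banach separation). Reading (a)--(c) off the monotonicity, sub-additivity and positive homogeneity of $\mathbb{\hat{E}}$ applied to the family $(A\xi,\xi)$, which is linear in $A$ pointwise, is exactly right, as is the support-function step: sublinearity plus the a priori bound $|G(A)|\leq C|A|$ makes $2G$ the support function of the nonempty set $\Gamma=\{B\in\mathbb{S}(d):(A,B)\leq 2G(A)\ \forall A\}$, which is closed and convex by construction and bounded by testing $A=B/|B|$. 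Your treatment of the one genuinely delicate point, $\Gamma\subset\mathbb{S}^{+}(d)$, is also sound: from $-vv^{\top}\leq 0$, monotonicity (a) and $G(0)=0$ give $G(-vv^{\top})\leq 0$, and combining with $(-vv^{\top},B_{0})\leq 2G(-vv^{\top})$ yields $(B_{0}v,v)\geq 0$ for every $v$. One caution about your parenthetical alternative route: the representation theorem $\mathbb{\hat{E}}[\cdot]=\max_{Q}E_{Q}[\cdot]$ is established in Chapter~2 on a Banach space of \emph{bounded} positions, whereas $(A\xi,\xi)$ is an unbounded element of $\mathcal{H}$, so invoking it here would require an extension argument; your primary route via convex duality on the finite-dimensional space $\mathbb{S}(d)$ avoids this entirely and is self-contained.
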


We introduce $d$-dimensional $G$--normal distribution.

\begin{definition}
In a sublinear expectation space
$(\Omega,\mathbb{\hat{E}},\mathcal{H})$ a $d$-dimensional random
vector $\xi=(\xi_{1},\cdots,\xi_{d})\in \mathcal{H}^{d}$ with is
said to be $G$-normal {distributed if for any random vector }$\zeta$
in $\mathcal{H}^{d}$ independent to $\xi$ such that $\zeta \sim \xi$ we have%
\[
a\xi+b\zeta \sim \sqrt{a^{2}+b^{2}}\xi,\  \  \  \forall a,b\geq0.
\]
Here $G(\cdot)$ is a real sublinear function defined on $\mathbb{S}(n)$ by%
\[
G(A):=\frac{1}{2}\mathbb{\hat{E}}[(A\xi,\xi)],\  \  \ A\in
\mathbb{S}(n).
\]
Since $G$ is uniquely determined by $\Gamma \subset \mathbb{S}(n)$
via (\ref{Gamma}) $\xi$ is also called $\mathcal{N}(0,\Gamma)$
distributed, denoted by $\xi \sim \mathcal{N}(0,\Gamma)$.
\end{definition}

Proposition \ref{Prop-2-10} tells us how to construct the above
$(\xi,\zeta)$ based on the distribution of $\xi$.

\begin{remark}
By the above definition, we have $\sqrt{2}\mathbb{\hat{E}}[\xi]=\mathbb{\hat{E}%
}[\xi+\zeta]=2\mathbb{\hat{E}}[\xi]$ and
$\sqrt{2}\mathbb{\hat{E}}[-\xi]=\mathbb{\hat
{E}}[-\xi-\zeta]=2\mathbb{\hat{E}}[-\xi]$ it follows that
\[
\mathbb{\hat{E}}[\xi]=\mathbb{\hat{E}}[-\xi]=0,
\]
Namely $G$--normal distributed random variable $\xi$ has no mean
uncertainty.
\end{remark}

\begin{remark}
If $\xi$ is independent to $\zeta$ and $\xi\sim \zeta$, such that
(\ref{G-normal}) satisfies, then $-\xi$ is independent to $-\zeta$
and $-\xi\sim-\zeta$. We also have $a(-\xi)+b(-\zeta)\sim
\sqrt{a^{2}+b^{2}}(-\xi)$, $a,b\geq0$. Thus
\[
\xi\sim \mathcal{N}(0;\Gamma)\  \  \  \hbox{\textit { iff }  }\  \ \
-\xi\sim \mathcal{N}(0;\Gamma).
\]

\end{remark}
 The following proposition and corollary show that
$\mathcal{N}(0;\Gamma)$ is a
uniquely defined sublinear distribution on $(\mathbb{R}^{n},C_{l.Lip}%
(\mathbb{R}^{n}))$. We will show that an {$\mathcal{N}(0;\Gamma)$
distribution is characterized, or generated, by the following
parabolic PDE }defined on $[0,\infty)\times \mathbb{R}$:

\begin{proposition}
For a given $\varphi \in C_{l.Lip}(\mathbb{R}^{d})$ the function
\[
u(t,x)=\mathbb{\hat{E}}[\varphi(x+\sqrt{t}\xi)],\ (t,x)\in \lbrack
0,\infty)\times \mathbb{R}^{n}%
\]
satisfies
\[
u(t+s,x)=\mathbb{\hat{E}}[u(t,x+\sqrt{s}\xi)]
\]
$u(t,\cdot)\in C_{l.Lip}(\mathbb{R}^{n})$ for each $t$ and $u$ is
locally H\"{o}lder in $t$. Moreover $u$ is the unique viscosity
solution of the following PDE
\begin{equation}
\frac{\partial u}{\partial t}-G(D^{2}u)=0,\  \ u|_{t=0}=\varphi
,\  \label{d-eq-heat}%
\end{equation}
here $D^{2}u$ is the Hessian matrix of $u$, i.e., $D^{2}u=(\partial
_{x^{i}x^{j}}^{2}u)_{i,j=1}^{d}$.
\end{proposition}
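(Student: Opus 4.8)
The plan is to follow exactly the scheme of the one-dimensional Proposition proved above, replacing $\partial_{xx}^{2}$ by the Hessian $D^{2}$ and the scalar generating function by $G(A)=\frac12\mathbb{\hat{E}}[(A\xi,\xi)]$. First I would establish the spatial regularity. For $x,y\in\mathbb{R}^{d}$, sub-additivity and the $C_{l.Lip}$ bound on $\varphi$ give
\[
u(t,x)-u(t,y)\leq\mathbb{\hat{E}}[\varphi(x+\sqrt{t}\xi)-\varphi(y+\sqrt{t}\xi)]\leq\mathbb{\hat{E}}[C(1+|x|^{m}+|y|^{m}+|\xi|^{m})]\,|x-y|,
\]
and since $\xi\in\mathcal{H}^{d}$ forces $\mathbb{\hat{E}}[|\xi|^{m}]<\infty$, this yields $u(t,\cdot)\in C_{l.Lip}(\mathbb{R}^{d})$ with a constant uniform for $t$ in compact sets.

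Next I would prove the dynamic programming identity. Taking $\zeta\in\mathcal{H}^{d}$ independent to $\xi$ with $\zeta\sim\xi$ (such a $\zeta$ exists by Proposition \ref{Prop-2-10}), $G$-normality gives $\sqrt{s}\xi+\sqrt{t}\zeta\sim\sqrt{t+s}\xi$, so
\begin{align*}
u(t+s,x) &= \mathbb{\hat{E}}[\varphi(x+\sqrt{t+s}\xi)] = \mathbb{\hat{E}}[\varphi(x+\sqrt{s}\xi+\sqrt{t}\zeta)]\\
&= \mathbb{\hat{E}}[\mathbb{\hat{E}}[\varphi(x+\sqrt{s}z+\sqrt{t}\zeta)]_{z=\xi}] = \mathbb{\hat{E}}[u(t,x+\sqrt{s}\xi)],
\end{align*}
using the independence of $\zeta$ to $\xi$. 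Combined with the spatial estimate this gives the $\tfrac12$-H\"older continuity in $t$, since $u(t+s,x)-u(t,x)=\mathbb{\hat{E}}[u(t,x+\sqrt{s}\xi)-u(t,x)]$ is controlled by $C(1+|x|^{k})\,s^{1/2}\,\mathbb{\hat{E}}[(1+|\xi|^{k})|\xi|]$.

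The central step is the viscosity verification. Fix $(t,x)\in(0,\infty)\times\mathbb{R}^{d}$ and a smooth $\psi$ with $\psi\geq u$, $\psi(t,x)=u(t,x)$. Using the dynamic programming identity and a Taylor expansion of $\psi$ about $(t,x)$, for $\delta\in(0,t)$ I obtain $0\leq\mathbb{\hat{E}}[\psi(t-\delta,x+\sqrt{\delta}\xi)-\psi(t,x)]$. The first-order term $(\nabla_{x}\psi(t,x),\sqrt{\delta}\xi)$ has no mean uncertainty: for any fixed vector $p$ the scalar $(p,\xi)$ is itself one-dimensional $G$-normal (apply the defining relation $a\xi+b\zeta\sim\sqrt{a^{2}+b^{2}}\xi$ to the linear functional $(p,\cdot)$), hence $\mathbb{\hat{E}}[(p,\xi)]=\mathbb{\hat{E}}[-(p,\xi)]=0$, so by Proposition \ref{Prop-X+Y} it drops out. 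The second-order term contributes
\[
\mathbb{\hat{E}}[\tfrac12(D^{2}\psi(t,x)\sqrt{\delta}\xi,\sqrt{\delta}\xi)]=\tfrac{\delta}{2}\mathbb{\hat{E}}[(D^{2}\psi(t,x)\xi,\xi)]=\delta\,G(D^{2}\psi(t,x)).
\]
Bounding the Taylor remainder by $O(\delta^{3/2})$ via $\mathbb{\hat{E}}[|\xi|^{3}]<\infty$, dividing by $\delta$ and letting $\delta\downarrow0$ yields $\partial_{t}\psi(t,x)-G(D^{2}\psi(t,x))\leq0$, so $u$ is a viscosity supersolution; the symmetric argument with a smooth $\psi\leq u$ gives the subsolution property. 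Uniqueness then follows from the comparison principle for (\ref{d-eq-heat}) quoted from Crandall, Ishii and Lions \cite{CIL}, $G$ being Lipschitz and degenerate elliptic by the monotonicity and sublinearity established in the Proposition on $G$ above. I expect the main obstacle to be precisely the handling of the linear term and the cross terms of the quadratic form: one must check that $(p,\xi)$ inherits zero mean from the full $G$-normality relation rather than merely from the componentwise $\mathbb{\hat{E}}[\xi]=\mathbb{\hat{E}}[-\xi]=0$, and track the off-diagonal entries of $D^{2}\psi$ carefully so that exactly $G(D^{2}\psi)$, and not a coarser bound, appears in the limit.
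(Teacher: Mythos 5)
Your proposal is correct and follows essentially the same route as the paper: the multidimensional proposition is stated there without a separate proof, the intended argument being exactly the dimension-$d$ transcription of the one-dimensional proof (spatial $C_{l.Lip}$ estimate, dynamic programming via independence of $\zeta$ to $\xi$, Taylor expansion with the first-order term removed by Proposition \ref{Prop-X+Y}, and uniqueness from the comparison principle of \cite{CIL}), which is what you carry out. Your key precaution --- that the zero mean of $(p,\xi)$ must come from the scalar $G$-normality of $(\mathbf{a},\xi)$ rather than from the componentwise identities $\mathbb{\hat{E}}[\xi_{i}]=\mathbb{\hat{E}}[-\xi_{i}]=0$, which sublinearity alone would not propagate to linear combinations --- is precisely the content of the paper's Lemma \ref{d-a-x}, so nothing is missing.
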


\begin{remark}
In the case where $\xi$ has zero variance--uncertainty, i.e., there
exists
$\gamma_{0}\in \mathbb{S}(n)^{+}$, such that%
\[
G(A)=\frac{1}{2}\left(  A,\gamma_{0}\right).
\]
The above PDE becomes a standard linear heat equation and thus, for
$G^{0}=G_{\{ \gamma_{0}^{2}\}}$, the corresponding
$G_{\xi_{0}}$--distribution is just the $d$--dimensional classical
normal distribution with zero mean and covariance $\gamma_{0}$,
i.e., $\xi_{0}\sim \mathcal{N}(0,\gamma_{0})$. In a typical case
where $\gamma_{0}=I_{d}\in \Gamma$ we have
\[
\mathbb{\hat{E}}[\varphi(\xi)]=\frac{1}{(2\pi)^{d/2}}\int_{\mathbb{R}^{d}}%
\exp[-\sum_{i=1}^{d}\frac{(x^{i})^{2}}{2}]\varphi(x)dx,\  \varphi
\in C_{l.Lip}(\mathbb{R}^{d}).
\]

\end{remark}

In the case where $\gamma_{0}\in \Gamma$ from comparison theorem of
PDE
\begin{equation}
\mathbb{\hat{F}}_{\xi}[\varphi]\geq
\mathbb{\hat{F}}_{\xi_{0}}[\varphi
],\  \forall \varphi \in C_{l.Lip}(\mathbb{R}^{d})\text{.}\  \label{d-compar}%
\end{equation}
More generally, for each subset $\Gamma^{\prime}\subset \Gamma$ let%
\[
G^{\prime}(A):=\sup_{\gamma \in \Gamma^{\prime}}\left \langle
A,\gamma \right \rangle
\]
the corresponding $G$-normal distribution
$\mathcal{N}(0,\Gamma^{\prime})$ is
dominated by $P^{G}$ in the following sense:%
\[
\mathbb{\hat{F}}_{G_{\Gamma^{\prime}}}[\varphi]-\mathbb{\hat{F}}%
_{G_{\Gamma^{\prime}}}[\psi]\leq
\mathbb{\hat{F}}_{G_{\Gamma}}[\varphi -\psi],\  \  \forall
\varphi,\psi \in C_{l.Lip}(\mathbb{R}^{d}).
\]

\begin{remark}
In the previous chapters we have discussed $1$--dimensional case
which
corresponds to $d=1$ and $\Gamma=[\underline{\sigma}^{2},\overline{\sigma}%
^{2}]\subset \mathbb{R}$. In this case the nonlinear heat equation
(\ref{d-eq-heat}) becomes%
\[
\frac{\partial u}{\partial t}-G(\partial_{xx}^{2}u)=0,\  \
u|_{t=0}=\varphi \in C_{l.Lip}(\mathbb{R}),
\]
with $G(a)=\frac{1}{2}(\overline{\sigma}^{2}a^{+}-\underline{\sigma}^{2}%
a^{-})$. In the multidimensional cases we also have the following
typical
nonlinear heat equation:%
\[
\frac{\partial u}{\partial t}-\sum_{i=1}^{d}G_{i}(\partial_{x^{i}x^{i}}%
^{2}u)=0,\  \  \ u|_{t=0}=\varphi \in C_{l.Lip}(\mathbb{R}^{d}),
\]
where
$G_{i}(a)=\frac{1}{2}(\overline{\sigma}_{i}^{2}a^{+}-\underline{\sigma
}_{i}^{2}a^{-})$ and $0\leq \underline{\sigma}_{i}\leq
\overline{\sigma}_{i}$ are given constants. This corresponds to:
\[
\Gamma=\{diag[\gamma_{1},\cdots,\gamma_{d}]:\gamma_{i}\in \lbrack
\underline{\sigma}_{i}^{2},\overline{\sigma}_{i}^{2}],\ \
i=1,\cdots,d\}.
\]

\end{remark}

$x+\sqrt{t}\xi$ is $\mathcal{N}(x,t\Gamma)$-distributed.

\begin{definition}
\label{d-Def-2}We denote
\begin{equation}
P_{t}^{G}(\varphi)(x)=\mathbb{\hat{E}}[\varphi(x+\sqrt{t}\times \xi
))=u(t,x),\  \ (t,x)\in \lbrack0,\infty)\times \mathbb{R}^{d}. \label{d-P_tx}%
\end{equation}
Since for each $\varphi \in C_{l.Lip}(\mathbb{R}^{d})$ we have the
Markov-Nisio chain rule:
\[
P_{t+s}^{G}(\varphi)(x)=P_{s}^{G}(P_{t}^{G}(\varphi)(\cdot))(x).
\]

\end{definition}

\begin{remark}
This chain rule was first established  by Nisio \cite{Nisio1} and
\cite{Nisio2} in
terms of \textquotedblleft envelope of Markovian semi-groups\textquotedblright%
. See \cite{Peng2005} for an application to generates a sublinear
expectation space.
\end{remark}

\begin{lemma}
\label{d-a-x}Let $\xi\sim\mathcal{N}(x,t\Gamma)$. Then for each
$\mathbf{a}\in \mathbb{R}^{d}$, the random variable $\left(
\mathbf{a},\xi \right)  $ is $\mathcal{N}(0,[\underline{\sigma}^{2}%
,\overline{\sigma}^{2}])$ distributed, where
\[
\overline{\sigma}^{2}=\mathbb{\hat{E}}[(\mathbf{a},\xi)^{2}],\  \
\underline {\sigma}^{2}=-\mathbb{\hat{E}}[-(\mathbf{a},\xi)^{2}].
\]

\end{lemma}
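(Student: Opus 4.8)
The plan is to reduce this $d$-dimensional statement to the one-dimensional Definition~\ref{Def-Gnormal} by pushing everything forward through the linear map $y\mapsto(\mathbf{a},y)$. Write $\eta:=(\mathbf{a},\xi)$. Since the two numbers $\overline{\sigma}^{2}=\mathbb{\hat{E}}[\eta^{2}]$ and $\underline{\sigma}^{2}=-\mathbb{\hat{E}}[-\eta^{2}]$ are precisely the parameters named in the statement, there is nothing to prove about them; the entire content is to verify the self-scaling relation $\alpha\eta+\beta\eta'\sim\sqrt{\alpha^{2}+\beta^{2}}\,\eta$ required by Definition~\ref{Def-Gnormal}, where $\eta'$ is an independent copy of $\eta$. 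Throughout I treat $\xi$ as the centered $G$-normal vector (the constant $x$ and scale $t$ in $\mathcal{N}(x,t\Gamma)$ only shift and rescale; it is the zero-mean $G$-normal structure, with $\mathbb{\hat{E}}[\xi]=\mathbb{\hat{E}}[-\xi]=0$, that carries the argument).

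First I would invoke Proposition~\ref{Prop-2-10} to produce, in a possibly enlarged sublinear expectation space, a random vector $\zeta$ with $\zeta\sim\xi$ and $\zeta$ independent to $\xi$, and set $\eta':=(\mathbf{a},\zeta)$. The key elementary observation is that for any $\varphi\in C_{l.Lip}(\mathbb{R})$ the composition $y\mapsto\varphi((\mathbf{a},y))$ again lies in $C_{l.Lip}(\mathbb{R}^{d})$, and likewise $\Psi(u,v):=\varphi((\mathbf{a},u),(\mathbf{a},v))$ lies in $C_{l.Lip}(\mathbb{R}^{d}\times\mathbb{R}^{d})$. Feeding the former into $\zeta\sim\xi$ yields $\eta'\sim\eta$, and feeding the latter into the independence of $\zeta$ from $\xi$ yields that $\eta'$ is independent to $\eta$.

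Then the defining relation of the $d$-dimensional $G$-normal vector, namely $\alpha\xi+\beta\zeta\sim\sqrt{\alpha^{2}+\beta^{2}}\,\xi$ for $\alpha,\beta\geq0$, transfers directly: setting $\psi(y):=\varphi((\mathbf{a},y))\in C_{l.Lip}(\mathbb{R}^{d})$ one computes
\[
\mathbb{\hat{E}}[\varphi(\alpha\eta+\beta\eta')]=\mathbb{\hat{E}}[\psi(\alpha\xi+\beta\zeta)]=\mathbb{\hat{E}}[\psi(\sqrt{\alpha^{2}+\beta^{2}}\,\xi)]=\mathbb{\hat{E}}[\varphi(\sqrt{\alpha^{2}+\beta^{2}}\,\eta)],
\]
which is exactly $\alpha\eta+\beta\eta'\sim\sqrt{\alpha^{2}+\beta^{2}}\,\eta$. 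The zero-mean property $\mathbb{\hat{E}}[\eta]=\mathbb{\hat{E}}[-\eta]=0$ then follows from the remark after Definition~\ref{Def-Gnormal} (or directly from $\mathbb{\hat{E}}[\pm\xi]=0$), so $\eta\sim\mathcal{N}(0;[\underline{\sigma}^{2},\overline{\sigma}^{2}])$.

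The one genuinely delicate point, and the step I expect to need the most care, is that Definition~\ref{Def-Gnormal} demands the scaling relation for \emph{every} independent copy $Y$ of $\eta$, whereas the construction above supplies only the single copy $\eta'=(\mathbf{a},\zeta)$. I would close this gap by the general fact that, under a sublinear expectation, the joint distribution of a pair $(\eta,Y)$ with $Y\sim\eta$ and $Y$ independent to $\eta$ is already determined by the marginal law of $\eta$: for $\varphi\in C_{l.Lip}(\mathbb{R}^{2})$ the function $g(x):=\mathbb{\hat{E}}[\varphi(x,Y)]$ equals $\mathbb{\hat{E}}[\varphi(x,\eta)]$ by $Y\sim\eta$ (since $\varphi(x,\cdot)\in C_{l.Lip}(\mathbb{R})$ for fixed $x$), and then $\mathbb{\hat{E}}[\varphi(\eta,Y)]=\mathbb{\hat{E}}[g(\eta)]$ depends on $\eta$ alone. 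Hence all independent copies share the same joint law with $\eta$, so verifying the relation for the one copy $\eta'$ is enough, completing the argument.
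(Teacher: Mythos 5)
Your proof is correct and follows essentially the same route as the paper's: the paper likewise takes $\zeta\sim\xi$ independent to $\xi$, notes that $(\mathbf{a},\zeta)$ is an identically distributed independent copy of $(\mathbf{a},\xi)$, and transfers the scaling relation via $\alpha(\mathbf{a},\xi)+\beta(\mathbf{a},\zeta)=(\mathbf{a},\alpha\xi+\beta\zeta)\sim\sqrt{\alpha^{2}+\beta^{2}}\,(\mathbf{a},\xi)$. Your final observation---that the joint law of $(\eta,Y)$ with $Y\sim\eta$ independent to $\eta$ is determined by the marginal of $\eta$, so checking the relation for one copy suffices---is a correct and welcome justification of a step the paper leaves implicit.
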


\begin{proof}
If $\zeta$ is independent to $\xi$ and $\zeta \sim \xi$ then $(\mathbf{a}%
,\zeta)$ is also independent to $(\mathbf{a},\zeta)$ and $(\mathbf{a}%
,\zeta)\sim(\mathbf{a},\xi)$. It follows from $\alpha \xi+\beta
\zeta \sim
\sqrt{\alpha^{2}+\beta^{2}}\xi$ that%
\[
\alpha(\mathbf{a},\xi)+\beta(\mathbf{a},\zeta)=(\mathbf{a},\alpha
\xi +\beta \zeta)\sim \sqrt{\alpha^{2}+\beta^{2}}(\mathbf{a},\xi).
\]
Thus $(\mathbf{a},\xi)\sim
\mathcal{N}(0,[\underline{\sigma}^{2},\overline {\sigma}^{2}])$.
\end{proof}

\section{$G$--Brownian motions under $G$--expectations}

In the rest of this paper, we set
$\Omega=C_{0}^{d}(\mathbb{R}^{+})$, the space of all
$\mathbb{R}^{d}$--valued continuous paths $(\omega_{t})_{t\in
\mathbb{R}^{+}}$, with $\omega_{0}=0$. $\Omega$ is the classical
canonical space and $\omega=(\omega _{t})_{t\geq0}$ is the
corresponding canonical process. It is well--known that in this
canonical space there exists a Wiener measure $(\mathbf{\Omega
},\mathcal{F},P)$ under which the canonical process
$B_{t}(\omega)=\omega_{t}$ is a $d$--dimensional Brownian motion.

For each fixed $T\geq0$ we consider the following space of random
variables:
\begin{align*}
L_{ip}^{0}(\mathcal{F}_{T}):=&\{X(\omega)=\varphi(\omega_{t_{1}},\cdots
,\omega_{t_{m}}):\ \forall m\geq1,\;t_{1},\cdots,t_{m}\in
\lbrack0,T],\\
& \varphi \in C_{l.Lip}(\mathbb{R}^{d\times m}) \}.
\end{align*}
It is clear that $\{\mathcal{H}_t^0\}_{t\geq0}$ constitute a family
of sub-lattices such that $\mathcal{H}_t^0\subseteq
L_{ip}^{0}(\mathcal{F}_{T})$, for $t\leq T<\infty$. $L_{ip}^{0}(\mathcal{F}%
_{t})$ representing the past history of $\omega$ at the time $t$.
It's
completion will play the same role of Brownian filtration $\mathcal{F}_{t}%
^{B}$ as in classical stochastic analysis. We also denote%
\[
L_{ip}^{0}(\mathcal{F}):=%
{\displaystyle \bigcup \limits_{n=1}^{\infty}}
L_{ip}^{0}(\mathcal{F}_{n}).
\]

\begin{remark}
It is clear that $C_{l.Lip}(\mathbb{R}^{d\times m})$ and then $L_{ip}%
^{0}(\mathcal{F}_{T})$, $L_{ip}^{0}(\mathcal{F})$ are vector
lattices. Moreover, since $\varphi,\psi \in
C_{l.Lip}(\mathbb{R}^{d\times m})$ imply $\varphi \cdot \psi \in
C_{l.Lip}(\mathbb{R}^{d\times m})$ thus $X$, $Y\in
L_{ip}^{0}(\mathcal{F}_{T})$ imply $X\cdot Y\in L_{ip}^{0}(\mathcal{F}_{T}%
)$; $X$, $Y\in L_{ip}^{0}(\mathcal{F})$ imply $X\cdot Y\in L_{ip}%
^{0}(\mathcal{F})$.
\end{remark}

We will consider the canonical space and set
$B_{t}(\omega)=\omega_{t}$, $t\in \lbrack0,\infty)$, for $\omega \in
\Omega$.

\begin{definition}
\label{d-Def-3}The $d$-dimensional process $(B_{t})_{t\geq0}$ is
called a $G$\textbf{--Brownian} \textbf{motion} under a sublinear
expectation $(\Omega,\mathcal{H},\mathbb{\hat{E}})$ defined on
$L_{ip}^{0}(\mathcal{F})$ if $B_{0}=0$ and \newline \textsl{(i) }For
each $s,t\geq0$ and $\psi \in
C_{l.Lip}(\mathbb{R}^{d})$, $B_{t}\sim B_{t+s}-B_{s}\sim \mathcal{N}%
(0,G)$;\newline \textsl{(ii) }For each $m=1,2,\cdots$, $0=t_{0}<t_{1}%
<\cdots<t_{m}<\infty$ the increment \newline \text{\ \ \ \ \
}$B_{t_{m}}-B_{t_{m-1}}$ is independent to
$B_{t_{1}}$,$\cdots,B_{t_{m-1}}$.
\end{definition}

\begin{definition}\label{d-Def-3-1}
The related conditional expectation of $\varphi(B_{t_{1}}-B_{t_{0}}%
,\cdots,B_{t_{m}}-B_{t_{m-1}})$ under $\mathcal{H}_{t_{k}}$ is defined by%
\begin{equation}
\mathbb{E}[\varphi(B_{t_{1}}-B_{t_{0}},\cdots,B_{t_{m}}-B_{t_{m}}%
)|\mathcal{H}_{t_{k}}]=\varphi_{m-k}(B_{t_{1}},\cdots,B_{t_{k}}),
\label{d-Condition}%
\end{equation}
where
$\varphi_{m-k}(x^{1},\cdots,x^{k})=\mathbb{E}[\varphi(x^{1},\cdots
,x^{k},B_{t_{k+1}}-B_{t_{k}},\cdots,B_{t_{m}}-B_{t_{k}})].$
\end{definition}
 It is proved in \cite{Peng2005} that $\mathbb{E}[\cdot]$
consistently defines a nonlinear expectation satisfying (a)--(d) on
the vector lattice $L_{ip}^{0}(\mathcal{F}_{T})$ as well as on
$L_{ip}^{0}(\mathcal{F})$. It follows that
$\mathbb{E}[|X|]$, $X\in L_{ip}^{0}(\mathcal{F}_{T})$ (respectively $L_{ip}%
^{0}(\mathcal{F})$) is a norm and thus $L_{ip}^{0}(\mathcal{F}_{T})$
(respectively, $L_{ip}^{0}(\mathcal{F})$) can be extended, under
this norm, to a Banach space. We denote this space by
\[
\mathcal{H}_{T}=L_{G}^{1}(\mathcal{F}_{T}),\  \ T\in [0,\infty),\ \ \ \text{(respectively, } \mathcal{H}=L_{G}%
^{1}(\mathcal{F})).
\]
 For each $0\leq t\leq T<\infty$, we have $L_{G}%
^{1}(\mathcal{F}_{t})\subseteq L_{G}^{1}(\mathcal{F}_{T})\subset L_{G}%
^{1}(\mathcal{F})$. In $L_{G}^{1}(\mathcal{F}_{T})$ (respectively, $L_{G}%
^{1}(\mathcal{F}_{T})$), $\mathbb{E}[\cdot]$ still satisfies
(a)--(d) in Definition \ref{Def-1}.

\begin{remark}
It is suggestive to denote $L_{ip}^0(\mathcal{F}_t)$ by $\mathcal{H}%
_{t}^{0}$,  $L_{G}^{1}(\mathcal{F}_{t})$ by $\mathcal{H}_{t}$ and $L_{G}%
^{1}(\mathcal{F})$ by $\mathcal{H}$ and thus consider the
conditional expectation $\mathbb{E}[\cdot|\mathcal{H}_{t}]$ as a
projective mapping from $\mathcal{H}$ to $\mathcal{H}_{t}$. The
notation $L_{G}^{1}(\mathcal{F}_{t})$ is due to the similarity with
$L^{1}(\Omega,\mathcal{F}_{t},P)$ in classical stochastic analysis.
\end{remark}

\begin{definition}
The expectation $\mathbb{E}[\cdot]:L_{G}^{1}(\mathcal{F})\mapsto
\mathbb{R}$ introduced through the above procedure is called
$G$\textbf{--expectation}, or $G$--Brownian expectation. The
corresponding canonical process $B$ is said to be a $G$--Brownian
motion under $\mathbb{\hat{E}}[\cdot]$.
\end{definition}

For a given $p>1$ we also denote $L_{G}^{p}(\mathcal{F})=\{X\in L_{G}%
^{1}(\mathcal{F}):\ |X|^{p}\in L_{G}^{1}(\mathcal{F})\}$. $L_{G}%
^{p}(\mathcal{F})$ is also a Banach space under the norm $\left
\Vert X\right \Vert _{p}:=(\mathbb{\hat{E}}[|X|^{p}])^{1/p}$. We
have (see Appendix)
\[
\left \Vert X+Y\right \Vert _{p}\leq \left \Vert X\right \Vert
_{p}+\left \Vert
Y\right \Vert _{p}%
\]
and, for each $X\in L_{G}^{p}$, $Y\in L_{G}^{q}(Q)$ with
$\frac{1}{p}+\frac
{1}{q}=1$,%
\[
\left \Vert XY\right \Vert =\mathbb{\hat{E}}[|XY|]\leq \left \Vert
X\right \Vert _{p}\left \Vert X\right \Vert _{q}.
\]
With this we have $\left \Vert X\right \Vert _{p}\leq \left \Vert
X\right \Vert _{p^{\prime}}$ if $p\leq p^{\prime}$.

We now consider the conditional expectation introduced in
Definition\ref{d-Def-3-1} (see (\ref{d-Condition})).
For each fixed $t=t_{k}\leq T$ the conditional expectation $\mathbb{\hat{E}%
}[\cdot|\mathcal{H}_{t}]:L_{ip}^{0}(\mathcal{F}_{T})\mapsto L_{ip}%
^{0}(\mathcal{F}_{t})$ is a continuous mapping under $\left \Vert
\cdot \right \Vert $. Indeed, we have $\mathbb{\hat{E}}[\mathbb{\hat{E}%
}[X|\mathcal{H}_{t}]]=\mathbb{\hat{E}}[X]$, $X\in
L_{ip}^{0}(\mathcal{F}_{T})$ and since $P_{t}^{G}$ is subadditive
\[
\mathbb{\hat{E}}[X|\mathcal{H}_{t}]-\mathbb{\hat{E}}[Y|\mathcal{H}_{t}%
]\leq \mathbb{\hat{E}}[X-Y|\mathcal{H}_{t}]\leq \mathbb{\hat{E}}%
[|X-Y||\mathcal{H}_{t}]
\]
We thus obtain%
\[
\mathbb{\hat{E}[\hat{E}}[X|\mathcal{H}_{t}]-\mathbb{\hat{E}}[Y|\mathcal{H}%
_{t}]]\leq \mathbb{\hat{E}}[X-Y]
\]
and
\[
\left \Vert \mathbb{\hat{E}}[X|\mathcal{H}_{t}]-\mathbb{\hat{E}}[Y|\mathcal{H}%
_{t}]\right \Vert \leq \left \Vert X-Y\right \Vert .
\]
It follows that $\mathbb{\hat{E}}[\cdot|\mathcal{H}_{t}]$ can be
also extended
as a continuous mapping
\[\mathbb{\hat{E}}[\cdot|\mathcal{H}_{t}]:L_{G}^{1}(\mathcal{F}_{T})\mapsto L_{G}%
^{1}(\mathcal{F}_{t})\]. If the above $T$ is not fixed then we can
obtain
$\mathbb{\hat{E}}[\cdot|\mathcal{H}_{t}]:L_{G}^{1}(\mathcal{F})\mapsto
L_{G}^{1}(\mathcal{F}_{t})$.

\begin{proposition}
\label{d-Prop-1-7}We list the properties of $\mathbb{\hat{E}}[\cdot
|\mathcal{H}_{t}]$, $t\in \lbrack0,T]$ that hold in $L_{ip}^{0}(\mathcal{F}%
_{T})$ and still hold for $X$, $Y\in$ $L_{G}^{1}(\mathcal{F}_{T})$%
:\newline \newline \textsl{(i)
}$\mathbb{\hat{E}}[X|\mathcal{H}_{t}]=X$, for $X\in
L_{G}^{1}(\mathcal{F}_{t})$, $t\leq T$.\newline \textsl{(ii) }If
$X\geq
Y$ then $\mathbb{\hat{E}}[X|\mathcal{H}_{t}]\geq \mathbb{\hat{E}%
}[Y|\mathcal{H}_{t}]$.\newline \textsl{(iii) }$\mathbb{\hat{E}}[X|\mathcal{H}%
_{t}]-\mathbb{\hat{E}}[Y|\mathcal{H}_{t}]\leq \mathbb{\hat{E}}[X-Y|\mathcal{H}%
_{t}].$\newline \textsl{(iv) }$\mathbb{\hat{E}}[\mathbb{\hat{E}}[X|\mathcal{H}%
_{t}]|\mathcal{H}_{s}]=\mathbb{\hat{E}}[X|\mathcal{H}_{t\wedge s}]$,
$\mathbb{\hat{E}}[\mathbb{\hat{E}}[X|\mathcal{H}_{t}]]=\mathbb{\hat{E}}%
[X].$\newline \textsl{(v)} $\mathbb{\hat{E}}[X+\eta|\mathcal{H}_{t}%
]=\mathbb{\hat{E}}[X|\mathcal{H}_{t}]+\eta$, $\eta \in L_{G}^{1}(\mathcal{F}%
_{t})$.\newline \textsl{(vi)} $\mathbb{\hat{E}}[\eta
X|\mathcal{H}_{t}]=\eta
^{+}\mathbb{\hat{E}}[X|\mathcal{H}_{t}]+\eta^{-}\mathbb{\hat{E}}%
[-X|\mathcal{H}_{t}]$ for bounded $\eta \in L_{G}^{1}(\mathcal{F}_{t}%
).$\newline \textsl{(vii)} We have the following independence:
\[
\mathbb{\hat{E}}[X|\mathcal{H}_{t}]=\mathbb{\hat{E}}[X],\medskip \
\  \forall X\in L_{G}^{1}(\mathcal{F}_{T}^{t}),\  \forall T\geq0,
\]
where $L_{G}^{1}(\mathcal{F}_{T}^{t})$ is the extension under $\left
\Vert \cdot \right \Vert $ of $L_{ip}^{0}(\mathcal{F}_{T}^{t})$
which
consists of random variables of the form $\varphi(B_{t_{1}}^{t},B_{t_{2}}%
^{t},\cdots,B_{t_{m}}^{t})$, $\varphi \in
C_{l.Lip}(\mathbb{R}^{m})$,
$t_{1},\cdots,t_{m}\in \lbrack0,T]$, $m=1,2,\cdots$. Here we denote%
\[
B_{s}^{t}=B_{t+s}-B_{t},\  \ s\geq0.
\]
\newline \textsl{(viii)} The increments of $B$ are identically distributed:%
\[
\mathbb{\hat{E}}[\varphi(B_{t_{1}}^{t},B_{t_{2}}^{t},\cdots,B_{t_{m}}%
^{t})]=\mathbb{\hat{E}}[\varphi(B_{t_{1}},B_{t_{2}},\cdots,B_{t_{m}})].
\]

\end{proposition}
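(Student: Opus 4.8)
The plan is to establish all eight identities first on the dense subspace $L_{ip}^{0}(\mathcal{F}_{T})$, where $\mathbb{\hat{E}}[\cdot|\mathcal{H}_{t}]$ is given by the explicit formula (\ref{d-Condition}) of Definition \ref{d-Def-3-1}, and then to transport each of them to the completion $L_{G}^{1}(\mathcal{F}_{T})$ by a density argument, using the continuity of $\mathbb{\hat{E}}[\cdot|\mathcal{H}_{t}]:L_{G}^{1}(\mathcal{F}_{T})\mapsto L_{G}^{1}(\mathcal{F}_{t})$ proved immediately above. Since the conditional expectation is, by construction, nothing but the operator that replaces the future increments $B_{t_{j}}-B_{t_{k}}$ (for $j>k$) by integration against the $G$-normal law while freezing the past variables, each property reduces to a pointwise statement about the sublinear functional $\mathbb{\hat{E}}$ and the operators $P_{t}^{G}$ of Definition \ref{d-Def-2}. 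This is exactly the route of the one-dimensional Propositions \ref{Prop-1-7-1} and \ref{Prop-1-7}, and I expect the multidimensional case to require only cosmetic changes.

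Concretely, for $X=\varphi(B_{t_{1}}-B_{t_{0}},\cdots,B_{t_{m}}-B_{t_{m-1}})$ with the partition refined so that $t=t_{k}$, write $\mathbb{\hat{E}}[X|\mathcal{H}_{t_{k}}]=\varphi_{m-k}(B_{t_{1}},\cdots,B_{t_{k}})$ as in (\ref{d-Condition}). Property (i) is immediate, since for $X\in L_{G}^{1}(\mathcal{F}_{t})$ there are no future increments to integrate out, so $\varphi_{m-k}=\varphi$; freezing a past variable and invoking the constant-preserving and positive-homogeneity axioms (b),(d) of Definition \ref{Def-1} gives (v). Monotonicity (ii) and sub-additivity (iii) follow from axioms (a),(c) applied inside the integration defining $\varphi_{m-k}$, because $\mathbb{\hat{E}}$ is argument-wise order preserving and sub-additive. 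For (vi) I would split $\eta=\eta^{+}-\eta^{-}$, freeze $\eta$ as a function of $(B_{t_{1}},\cdots,B_{t_{k}})$, and use positive homogeneity together with $\mathbb{\hat{E}}[\eta^{+}Z]=\eta^{+}\mathbb{\hat{E}}[Z]$ and $\mathbb{\hat{E}}[-\eta^{-}Z]=\eta^{-}\mathbb{\hat{E}}[-Z]$; boundedness of $\eta$ guarantees $\eta X\in L_{ip}^{0}$ and keeps the norms finite.

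The tower property (iv) is the heart of the matter and is supplied by the Markov--Nisio chain rule $P_{t+s}^{G}(\varphi)=P_{s}^{G}(P_{t}^{G}(\varphi))$ recorded in Definition \ref{d-Def-2}: applying $\mathbb{\hat{E}}[\cdot|\mathcal{H}_{s}]$ to $\varphi_{m-k}$ re-integrates the increments between $s$ and $t$, and the chain rule identifies this iterated integration with the single integration defining $\mathbb{\hat{E}}[\cdot|\mathcal{H}_{t\wedge s}]$; taking $s=0$ yields the second assertion. Properties (vii) and (viii) are structural: for $X\in L_{ip}^{0}(\mathcal{F}_{T}^{t})$ built from the shifted increments $B_{s}^{t}=B_{t+s}-B_{t}$, the defining formula integrates out \emph{all} of its variables, so no past variable survives and $\mathbb{\hat{E}}[X|\mathcal{H}_{t}]=\mathbb{\hat{E}}[X]$; the identical-distribution statement (viii) then follows because the $G$-normal law assigned to each increment $B_{t_{j}}-B_{t_{j-1}}$ depends only on the length $t_{j}-t_{j-1}$, so shifting the time origin leaves every integral unchanged.

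The main obstacle I anticipate is the passage to the limit in $L_{G}^{1}(\mathcal{F}_{T})$. Each identity must be shown stable under the norm $\|\cdot\|=\mathbb{\hat{E}}[|\cdot|]$: for $X_{n}\to X$ in $L_{ip}^{0}$ both sides of (i)--(iv), (vii) converge by the already-established contraction estimate $\|\mathbb{\hat{E}}[X_{n}|\mathcal{H}_{t}]-\mathbb{\hat{E}}[X|\mathcal{H}_{t}]\|\leq\|X_{n}-X\|$, so these identities pass to the limit intact. The delicate case is (vi), where a factor $\eta\in L_{G}^{1}(\mathcal{F}_{t})$ multiplies $X$: here I would approximate $\eta$ by bounded Lipschitz functionals, control $\|\eta X_{n}-\eta X\|\leq\|\eta\|_{\infty}\|X_{n}-X\|$ via the boundedness hypothesis, and verify that $\eta^{+}\mathbb{\hat{E}}[X|\mathcal{H}_{t}]+\eta^{-}\mathbb{\hat{E}}[-X|\mathcal{H}_{t}]$ is itself continuous in $X$, the H\"older-type inequality (\ref{ee4.6}) for $\mathbb{\hat{E}}$ furnishing the needed bound; once this continuity is in hand, (vi) closes by density.
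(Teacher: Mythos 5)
Your proposal is correct and takes essentially the same route as the paper: properties (i)--(viii) are verified on the dense lattice $L_{ip}^{0}(\mathcal{F}_{T})$ directly from the defining formula (\ref{d-Condition}) --- with the tower property (iv) resting, as you say, on the Markov--Nisio chain rule $P_{t+s}^{G}(\varphi)=P_{s}^{G}(P_{t}^{G}(\varphi))$ and (vii)--(viii) holding by construction of the increment laws --- and are then transported to $L_{G}^{1}(\mathcal{F}_{T})$ by the contraction estimate $\left\Vert \mathbb{\hat{E}}[X|\mathcal{H}_{t}]-\mathbb{\hat{E}}[Y|\mathcal{H}_{t}]\right\Vert \leq \left\Vert X-Y\right\Vert$ established immediately before the proposition. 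Your added care in the limit passage for (vi), exploiting the boundedness of $\eta$ and the H\"older-type inequality to get continuity of both sides in $X$, correctly supplies the one step the paper leaves implicit.
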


The meaning of the independence in \textbf{(vii)} is similar to the
classical one:

\begin{definition}
An $\mathbb{R}^{n}$ valued random variable
$Y\in(L_{G}^{1}(\mathcal{F}))^{n}$ is said to be independent to
$\mathcal{H}_{t}$ for some given $t$ if for each
$\varphi \in C_{l.Lip}(\mathbb{R}^{n})$ we have%
\[
\mathbb{\hat{E}}[\varphi(Y)|\mathcal{H}_{t}]=\mathbb{\hat{E}}[\varphi(Y)].
\]

\end{definition}

It is seen that the above property \textbf{(vii)} also holds for the
situation $X\in$ $L_{G}^{1}(\mathcal{F}^{t})$ where
$L_{G}^{1}(\mathcal{F}^{t})$ is the completion of the sub-lattice
$\cup_{T\geq0}L_{G}^{1}(\mathcal{F}_{T}^{t})$ under $\left \Vert
\cdot \right \Vert $.

From the above results we have

\begin{proposition}
For each fixed $t\geq0$ $(B_{s}^{t})_{s\geq0}$ is a $G$--Brownian
motion in
$L_{G}^{1}(\mathcal{F}^{t})$ under the same $G$--expectation $\mathbb{\hat{E}%
}[\cdot]$.
\end{proposition}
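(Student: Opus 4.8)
The plan is to verify directly that the shifted process $(B^t_s)_{s\geq 0}$, with $B^t_s := B_{t+s}-B_t$, satisfies the three defining clauses of Definition \ref{d-Def-3}, observing first that all of these clauses are statements about the \emph{joint sublinear distributions of finitely many increments}. The normalization is immediate, since $B^t_0 = B_t-B_t = 0$, and each $B^t_s$ is of the form $\varphi(B^t_s)$ with $\varphi$ the identity, hence belongs to $L_{ip}^0(\mathcal{F}^t_T)\subset L_G^1(\mathcal{F}^t)$; thus the ambient space and the expectation $\mathbb{\hat{E}}[\cdot]$ are already the correct ones. The decisive input is property (viii) of Proposition \ref{d-Prop-1-7}, valid for every shift $t\geq 0$, which asserts that for all $0\leq s_1<\cdots<s_m$ the joint distribution of $(B^t_{s_1},\dots,B^t_{s_m})$ coincides with that of $(B_{s_1},\dots,B_{s_m})$.

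For the distributional clause (i), I would note that $B^t_s = B_{t+s}-B_t$ is itself an increment of $B$, hence $\mathcal{N}(0,G)$-distributed by clause (i) of Definition \ref{d-Def-3} applied to the $G$-Brownian motion $B$. For the stationarity assertion, $B^t_{s+u}-B^t_u = B_{t+s+u}-B_{t+u} = B^{t+u}_s$ is an increment over an interval of length $s$, so property (viii) applied at the shifts $t+u$ and $t$ gives $B^{t+u}_s \sim B_s \sim B^t_s$; the same input also yields at once that the full finite family $(B^t_{s_1},\dots,B^t_{s_m})$ has the joint distribution of $(B_{s_1},\dots,B_{s_m})$.

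The independence clause (ii) is the step requiring care, because independence under a sublinear expectation is not symmetric and so the ordering must be respected. Here I would exploit that the relation \textquotedblleft$Y$ is independent to $X$\textquotedblright, namely $\mathbb{\hat{E}}[\varphi(X,Y)]=\mathbb{\hat{E}}[\mathbb{\hat{E}}[\varphi(x,Y)]_{x=X}]$, is entirely a property of the joint distribution of $(X,Y)$, since both sides are computed from that joint law. Fixing $0=s_0<s_1<\cdots<s_m$ and taking $Y=B^t_{s_m}-B^t_{s_{m-1}}$ in the later position and $X=(B^t_{s_1},\dots,B^t_{s_{m-1}})$ in the earlier position, the equality of joint laws from (viii) transfers the independence relation from the increments of $B$ — which holds by clause (ii) of Definition \ref{d-Def-3} at the ordered times $t\le t+s_1\le\cdots\le t+s_m$ — to the increments of $B^t$. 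Alternatively one may argue directly: $B_{t+s_m}-B_{t+s_{m-1}}$ is independent to the vector $(B_t,B_{t+s_1},\dots,B_{t+s_{m-1}})$, and independence to a vector implies independence to any $C_{l.Lip}$ function of it (apply the defining identity to the test function $(x,y)\mapsto\psi(g(x),y)$), with $g$ the linear map producing the differences $B_{t+s_i}-B_t$.

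I expect the main obstacle to be precisely this independence clause: one must keep the later increment in the \textquotedblleft independent-to\textquotedblright\ position and the past in the conditioned position, matching the asymmetric convention of Definition \ref{d-Def-3}, and must justify that the independence relation is determined by the joint law (so that it survives the passage from $B$ to $B^t$) or, equivalently, that independence relative to the raw vector $(B_t,B_{t+s_1},\dots)$ passes to independence relative to its increments. Once this is settled, combining clauses (i) and (ii) with $B^t_0=0$ completes the verification that $(B^t_s)_{s\geq 0}$ is a $G$-Brownian motion under the same $\mathbb{\hat{E}}[\cdot]$; properties (vii) and (viii) of Proposition \ref{d-Prop-1-7} are exactly the inputs that make the shift from $B$ to $B^t$ transparent.
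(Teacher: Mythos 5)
Your proposal is correct and follows the same route the paper intends: the paper states this proposition without proof as an immediate consequence of Proposition \ref{d-Prop-1-7}, parts \textsl{(vii)} and \textsl{(viii)} (``From the above results we have\ldots''), and your verification simply supplies the details of exactly that derivation. Your careful handling of the asymmetric independence clause --- transferring it via equality of joint laws, or equivalently via the test-function substitution $(x,y)\mapsto\psi(g(x),y)$ applied to Definition \ref{d-Def-3}\textsl{(ii)} at the times $t\leq t+s_{1}\leq\cdots\leq t+s_{m}$ --- is precisely the point the paper leaves implicit, and it is handled correctly.
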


\begin{remark}
We can also prove that the time scaling of $B$, i.e.,
$\tilde{B}=(\sqrt{\lambda}B_{t/\lambda})_{t\geq0}$ also constitutes
a $G$--Brownian motion.
\end{remark}

The following property is very useful.

\begin{proposition}
\label{d-E-x+y}Let $X,Y\in L_{G}^{1}(\mathcal{F})$ be such that
$\mathbb{\hat
{E}}[Y|\mathcal{H}_{t}]=-\mathbb{\hat{E}}[-Y|\mathcal{H}_{t}]$, for
some
$t\in \lbrack0,T]$. Then we have%
\[
\mathbb{\hat{E}}[X+Y|\mathcal{H}_{t}]=\mathbb{\hat{E}}[X|\mathcal{H}%
_{t}]+\mathbb{\hat{E}}[Y|\mathcal{H}_{t}].
\]
In particular, if $\mathbb{\hat{E}}[Y|\mathcal{H}_{t}]=\mathbb{\hat{E}%
}[-Y|\mathcal{H}_{t}]=0$ then $\mathbb{\hat{E}}[X+Y|\mathcal{H}%
_{t}]=\mathbb{\hat{E}}[X|\mathcal{H}_{t}]$.
\end{proposition}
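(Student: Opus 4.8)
The plan is to follow verbatim the argument used for the one-dimensional Proposition \ref{E-x+y}, since in the multidimensional setting the conditional $G$-expectation $\mathbb{\hat{E}}[\cdot|\mathcal{H}_t]$ still enjoys monotonicity and sub-additivity, as recorded in Proposition \ref{d-Prop-1-7}. The only tool needed is the self-domination property \textsl{(iii)} of that proposition, namely $\mathbb{\hat{E}}[X|\mathcal{H}_t]-\mathbb{\hat{E}}[Y|\mathcal{H}_t]\leq \mathbb{\hat{E}}[X-Y|\mathcal{H}_t]$, which I would apply twice so as to squeeze $\mathbb{\hat{E}}[X+Y|\mathcal{H}_t]$ between two expressions that both collapse to $\mathbb{\hat{E}}[X|\mathcal{H}_t]+\mathbb{\hat{E}}[Y|\mathcal{H}_t]$ under the stated hypothesis. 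Note that $X,Y\in L_{G}^{1}(\mathcal{F})$ and that $L_{G}^{1}(\mathcal{F})$ is a linear space, so $X\pm Y$ and $-Y$ lie in the domain of $\mathbb{\hat{E}}[\cdot|\mathcal{H}_t]$ and all the manipulations below are legitimate.

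First I would obtain the upper bound from ordinary sub-additivity (take the pair $(X+Y,Y)$ in property \textsl{(iii)}):
\[
\mathbb{\hat{E}}[X+Y|\mathcal{H}_t]\leq \mathbb{\hat{E}}[X|\mathcal{H}_t]+\mathbb{\hat{E}}[Y|\mathcal{H}_t].
\]
For the matching lower bound I would apply property \textsl{(iii)} to the pair $(X,-Y)$, which gives
\[
\mathbb{\hat{E}}[X|\mathcal{H}_t]-\mathbb{\hat{E}}[-Y|\mathcal{H}_t]\leq \mathbb{\hat{E}}[X+Y|\mathcal{H}_t].
\]
Invoking the hypothesis $\mathbb{\hat{E}}[Y|\mathcal{H}_t]=-\mathbb{\hat{E}}[-Y|\mathcal{H}_t]$ rewrites the left-hand side as $\mathbb{\hat{E}}[X|\mathcal{H}_t]+\mathbb{\hat{E}}[Y|\mathcal{H}_t]$, and combining the two displayed inequalities yields the asserted equality.

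The \emph{in particular} clause then follows at once: the condition $\mathbb{\hat{E}}[Y|\mathcal{H}_t]=\mathbb{\hat{E}}[-Y|\mathcal{H}_t]=0$ is a special case of $\mathbb{\hat{E}}[Y|\mathcal{H}_t]=-\mathbb{\hat{E}}[-Y|\mathcal{H}_t]$, and substituting $\mathbb{\hat{E}}[Y|\mathcal{H}_t]=0$ into the equality just proved collapses its right-hand side to $\mathbb{\hat{E}}[X|\mathcal{H}_t]$. I anticipate no real obstacle here: the statement is precisely the conditional, $d$-dimensional restatement of Proposition \ref{E-x+y}, and the one point worth a sentence of care is verifying that self-domination is applied to elements genuinely in $L_{G}^{1}(\mathcal{F})$, which the linearity of the space guarantees.
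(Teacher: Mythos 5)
Your proposal is correct and coincides with the paper's own argument: both derive the equality by applying the sub-additivity (self-domination) of $\mathbb{\hat{E}}[\cdot|\mathcal{H}_{t}]$ twice, once to bound $\mathbb{\hat{E}}[X+Y|\mathcal{H}_{t}]$ from above by $\mathbb{\hat{E}}[X|\mathcal{H}_{t}]+\mathbb{\hat{E}}[Y|\mathcal{H}_{t}]$ and once, via the pair $(X,-Y)$, to get the lower bound $\mathbb{\hat{E}}[X|\mathcal{H}_{t}]-\mathbb{\hat{E}}[-Y|\mathcal{H}_{t}]$, which the hypothesis turns into the same quantity. Your additional remark that $X\pm Y$ and $-Y$ remain in $L_{G}^{1}(\mathcal{F})$ by linearity is a fine (if unremarkable) point of hygiene the paper leaves implicit.
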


\begin{proof}
This follows from the two inequalities $\mathbb{\hat{E}}[X+Y|\mathcal{H}_{t}%
]\leq
\mathbb{\hat{E}}[X|\mathcal{H}_{t}]+\mathbb{\hat{E}}[Y|\mathcal{H}_{t}]$
and
\[
\mathbb{\hat{E}}[X+Y|\mathcal{H}_{t}]\geq \mathbb{\hat{E}}[X|\mathcal{H}%
_{t}]-\mathbb{\hat{E}}[-Y|\mathcal{H}_{t}]=\mathbb{\hat{E}}[X|\mathcal{H}%
_{t}]+\mathbb{\hat{E}}[Y|\mathcal{H}_{t}]\text{.}%
\]

\end{proof}

\begin{example}
\label{d-Exm-GBM-14a}
We have%
\[
\mathbb{\hat{E}}[(AB_{t},B_{t})]=\sigma_{A}t=2G(A)t,\  \  \forall
A\in \mathbb{S}(d).
\]
More general, for each $s\leq t$ and
$\eta=(\eta^{ij})_{i,j=1}^{d}\in
L_{G}^{2}(\mathcal{F}_{s};\mathbb{S}(d))$
\begin{equation}
\mathbb{\hat{E}}[(\eta
B_{t}^{s},B_{t}^{s})|\mathcal{H}_{s}]=\sigma_{\eta
}t=2G(\eta)t,\ s,t\geq0. \label{d-eq-GMB-14a}%
\end{equation}

\end{example}

\begin{definition}
We will denote in the rest of this paper
\begin{equation}
B_{t}^{\mathbf{a}}=(\mathbf{a},B_{t}),\  \  \  \text{for each }\mathbf{a}%
=(a_{1},\cdots,a_{d})^{T}\in \mathbb{R}^{d}. \label{d-a-Brown}%
\end{equation}
Since $(\mathbf{a},B_{t})$ is normal distributed $(\mathbf{a},B_{t}
)\sim
\mathcal{N}(0,[\sigma_{\mathbf{aa}^{T}},\sigma_{-\mathbf{aa}^{T}}])$:
\[
\mathbb{\hat{E}}[\varphi(B_{t}^{\mathbf{a}})]=\mathbb{\hat{E}}[\varphi
((\mathbf{a},\sqrt{t}\xi)))
\]
Thus, according to Definition \ref{d-Def-3} for $d$--dimensional
$G$--Brownian
motion $B^{\mathbf{a}}$ forms a $1$--dimensional $G_{\mathbf{a}}%
$--\textbf{Brownian motion} for which the
$G_{\mathbf{a}}$--expectation coincides with
$\mathbb{\hat{E}}[\cdot]$.
\end{definition}

\begin{example}
\label{d-Exam-1}For each $0\leq s-t$ we have
\[
\mathbb{\hat{E}}[\psi(B_{t}-B_{s})|\mathcal{H}_{s}]=\mathbb{\hat{E}}%
[\psi(B_{t}-B_{s})].
\]
If $\varphi$ is a real convex function on $\mathbb{R}$ and at least
not
growing too fast then%
\begin{align*}
&  \mathbb{\hat{E}}[X\varphi(B_{T}^{\mathbf{a}}-B_{t}^{\mathbf{a}%
})|\mathcal{H}_{t}]\\
&  =X^{+}\mathbb{\hat{E}}[\varphi(B_{T}^{\mathbf{a}}-B_{t}^{\mathbf{a}%
})|\mathcal{H}_{t}]+X^{-}\mathbb{\hat{E}}[-\varphi(B_{T}^{\mathbf{a}}%
-B_{t}^{\mathbf{a}})|\mathcal{H}_{t}]\\
&
=\frac{X^{+}}{\sqrt{2\pi(T-t)\sigma_{\mathbf{aa}^{T}}}}\int_{-\infty
}^{\infty}\varphi(x)\exp(-\frac{x^{2}}{2(T-t)\sigma_{\mathbf{aa}^{T}}})dx\\
&
-\frac{X^{-}}{\sqrt{2\pi(T-t)|\sigma_{-\mathbf{aa}^{T}}|}}\int_{-\infty
}^{\infty}\varphi(x)\exp(-\frac{x^{2}}{2(T-t)|\sigma_{-\mathbf{aa}^{T}}|})dx.
\end{align*}
In particular, for $n=1,2,\cdots,$%
\begin{align*}
\mathbb{\hat{E}}[|B_{t}^{\mathbf{a}}-B_{s}^{\mathbf{a}}|^{n}|\mathcal{H}_{s}]
&  =\mathbb{\hat{E}}[|B_{t-s}^{\mathbf{a}}|^{n}]\\
&
=\frac{1}{\sqrt{2\pi(t-s)\sigma_{\mathbf{aa}^{T}}}}\int_{-\infty}^{\infty
}|x|^{n}\exp(-\frac{x^{2}}{2(t-s)\sigma_{\mathbf{aa}^{T}}})dx.
\end{align*}
But we have $\mathbb{\hat{E}}[-|B_{t}^{\mathbf{a}}-B_{s}^{\mathbf{a}}%
|^{n}|\mathcal{H}_{s}]=\mathbb{\hat{E}}[-|B_{t-s}^{\mathbf{a}}|^{n}]$
which is $0$ when $\sigma_{-\mathbf{aa}^{T}}=0$ and equals to
\[
\frac{-1}{\sqrt{2\pi(t-s)|\sigma_{-\mathbf{aa}^{T}}|}}\int_{-\infty}^{\infty
}|x|^{n}\exp(-\frac{x^{2}}{2(t-s)|\sigma_{-\mathbf{aa}^{T}}|})dx,\
\  \text{if }\sigma_{-\mathbf{aa}^{T}}<0.\
\]
Exactly as in the classical cases we have $\mathbb{\hat{E}}[B_{t}^{\mathbf{a}%
}-B_{s}^{\mathbf{a}}|\mathcal{H}_{s}]=0$ and
\begin{align*}
\mathbb{\hat{E}}[(B_{t}^{\mathbf{a}}-B_{s}^{\mathbf{a}})^{2}|\mathcal{H}_{s}]
&  =\sigma_{\mathbf{aa}^{T}}(t-s),\  \  \  \mathbb{\hat{E}}[(B_{t}^{\mathbf{a}%
}-B_{s}^{\mathbf{a}})^{4}|\mathcal{H}_{s}]=3\sigma_{\mathbf{aa}^{T}}%
^{2}(t-s)^{2},\\
\mathbb{\hat{E}}[(B_{t}^{\mathbf{a}}-B_{s}^{\mathbf{a}})^{6}|\mathcal{H}_{s}]
&  =15\sigma_{\mathbf{aa}^{T}}^{3}(t-s)^{3},\  \  \mathbb{\hat{E}}%
[(B_{t}^{\mathbf{a}}-B_{s}^{\mathbf{a}})^{8}|\mathcal{H}_{s}]=105\sigma
_{\mathbf{aa}^{T}}^{4}(t-s)^{4},\\
\mathbb{\hat{E}}[|B_{t}^{\mathbf{a}}-B_{s}^{\mathbf{a}}||\mathcal{H}_{s}]
&
=\frac{\sqrt{2(t-s)\sigma_{\mathbf{aa}^{T}}}}{\sqrt{\pi}},\  \  \mathbb{\hat{E}%
}[|B_{t}^{\mathbf{a}}-B_{s}^{\mathbf{a}}|^{3}|\mathcal{H}_{s}]=\frac{2\sqrt
{2}[(t-s)\sigma_{\mathbf{aa}^{T}}]^{3/2}}{\sqrt{\pi}},\\
\mathbb{\hat{E}}[|B_{t}^{\mathbf{a}}-B_{s}^{\mathbf{a}}|^{5}|\mathcal{H}_{s}]
&
=8\frac{\sqrt{2}[(t-s)\sigma_{\mathbf{aa}^{T}}]^{5/2}}{\sqrt{\pi}}.
\end{align*}

\end{example}

\begin{example}
\label{d-Exam-2}For each $n=1,2,\cdots,$ $0\leq t\leq T$ and $X\in L_{G}%
^{1}(\mathcal{F}_{t})$, we have%
\[
\mathbb{\hat{E}}[X(B_{T}^{\mathbf{a}}-B_{t}^{\mathbf{a}})|\mathcal{H}%
_{t}]=X^{+}\mathbb{\hat{E}}[(B_{T}^{\mathbf{a}}-B_{t}^{\mathbf{a}%
})|\mathcal{H}_{t}]+X^{-}\mathbb{\hat{E}}[-(B_{T}^{\mathbf{a}}-B_{t}%
^{\mathbf{a}})|\mathcal{H}_{t}]=0.
\]
This, together with Proposition \ref{d-E-x+y}, yields%
\[
\mathbb{\hat{E}}[Y+X(B_{T}^{\mathbf{a}}-B_{t}^{\mathbf{a}})|\mathcal{H}%
_{t}]=\mathbb{\hat{E}}[Y|\mathcal{H}_{t}],\  \ Y\in
L_{G}^{1}(\mathcal{F}).
\]
We also have,
\begin{align*}
\mathbb{\hat{E}}[X(B_{T}^{\mathbf{a}}-B_{t}^{\mathbf{a}})^{2}|\mathcal{H}%
_{t}]  &  =X^{+}\mathbb{\hat{E}}[(B_{T}^{\mathbf{a}}-B_{t}^{\mathbf{a}}%
)^{2}|\mathcal{H}_{t}]+X^{-}\mathbb{\hat{E}}[-(B_{T}^{\mathbf{a}}%
-B_{t}^{\mathbf{a}})^{2}|\mathcal{H}_{t}]\\
&
=[X^{+}\sigma_{\mathbf{aa}^{T}}+X^{-}\sigma_{-\mathbf{aa}^{T}}](T-t).
\end{align*}

\end{example}

\begin{remark}
\label{d-Rem-17}It is clear that we can define an expectation
$E^{0}[\cdot]$ on $L_{ip}^{0}(\mathcal{F})$ in the same way as in
Definition \ref{d-Def-3} with the standard normal distribution
$\mathcal{N}(0,1)$ in place of $\mathcal{N}(0,\Gamma)$. If $I_{d}\in
\Gamma$ then it follows from (\ref{d-compar}) that
$\mathcal{N}(0,1)$ is dominated by $\mathcal{N}(0,\Gamma )$:
\[
E^{0}[\varphi(B_{t})]-E^{0}[\psi(B_{t})]\leq
\mathbb{\hat{E}}[(\varphi -\psi)(B_{t})].
\]
Then $E^{0}[\cdot]$ can be continuously extended to
$L_{G}^{1}(\mathcal{F})$. $E^{0}[\cdot]$ is a linear expectation
under which $(B_{t})_{t\geq0}$ behaves as a classical Brownian
motion. We have
\begin{equation}
-\mathbb{\hat{E}}[-X]\leq E^{0}[X]\leq \mathbb{\hat{E}}[X],\  \
-\mathbb{\hat
{E}}[-X|\mathcal{H}_{t}]\leq E^{0}[X|\mathcal{H}_{t}]\leq \mathbb{\hat{E}%
}[X|\mathcal{H}_{t}].
\end{equation}
More generally, if $\Gamma^{\prime}\subset \Gamma$ since the
corresponding $P^{\prime}=P^{G_{\Gamma^{\prime}}}$ is dominated by
$P^{G}=P^{G_{\Gamma}}$, thus the corresponding expectation
$\mathbb{\hat{E}}^{\prime}$ is well--defined in
$L_{G}^{1}(\mathcal{F})$ and $\mathbb{\hat{E}}^{\prime}$ is
dominated by $\mathbb{\hat{E}}$:
\[
\mathbb{\hat{E}}^{\prime}[X]-\mathbb{\hat{E}}^{\prime}[Y]\leq \mathbb{\hat{E}%
}[X-Y],\  \ X,Y\in L_{G}^{1}(\mathcal{F}).
\]
Such kind of extension through the above type of domination
relations was discussed in details in \cite{Peng2005}. With this
domination we then can introduce a large kind of time consistent
linear or nonlinear expectations and the corresponding conditional
expectations, not necessarily to be positive
homogeneous and/or subadditive, as continuous functionals in $L_{G}%
^{1}(\mathcal{F})$. See Example \ref{d-Exa-AB} for a further
discussion.
\end{remark}

\begin{example}
\label{d-Exam-B2}Since
\[
\mathbb{\hat{E}}[2B_{s}^{\mathbf{a}}(B_{t}^{\mathbf{a}}-B_{s}^{\mathbf{a}%
})|\mathcal{H}_{s}]=\mathbb{\hat{E}}[-2B_{s}^{\mathbf{a}}(B_{t}^{\mathbf{a}%
}-B_{s}^{\mathbf{a}})|\mathcal{H}_{s}]=0
\]
we have
\begin{align*}
\mathbb{\hat{E}}[(B_{t}^{\mathbf{a}})^{2}-(B_{s}^{\mathbf{a}})^{2}%
|\mathcal{H}_{s}]  &  =\mathbb{\hat{E}}[(B_{t}^{\mathbf{a}}-B_{s}^{\mathbf{a}%
}+B_{s}^{\mathbf{a}})^{2}-(B_{s}^{\mathbf{a}})^{2}|\mathcal{H}_{s}]\\
&  =\mathbb{\hat{E}}[(B_{t}^{\mathbf{a}}-B_{s}^{\mathbf{a}})^{2}%
+2(B_{t}^{\mathbf{a}}-B_{s}^{\mathbf{a}})B_{s}^{\mathbf{a}}|\mathcal{H}_{s}]\\
&  =\sigma_{\mathbf{aa}^{T}}(t-s)
\end{align*}
and%
\begin{align*}
\mathbb{\hat{E}}[((B_{t}^{\mathbf{a}})^{2}-(B_{s}^{\mathbf{a}})^{2}%
)^{2}|\mathcal{H}_{s}]  &  =\mathbb{\hat{E}}[\{(B_{t}^{\mathbf{a}}%
-B_{s}^{\mathbf{a}}+B_{s}^{\mathbf{a}})^{2}-(B_{s}^{\mathbf{a}})^{2}%
\}^{2}|\mathcal{H}_{s}]\\
&  =\mathbb{\hat{E}}[\{(B_{t}^{\mathbf{a}}-B_{s}^{\mathbf{a}})^{2}%
+2(B_{t}^{\mathbf{a}}-B_{s}^{\mathbf{a}})B_{s}^{\mathbf{a}}\}^{2}%
|\mathcal{H}_{s}]\\
&  =\mathbb{\hat{E}}[(B_{t}^{\mathbf{a}}-B_{s}^{\mathbf{a}})^{4}%
+4(B_{t}^{\mathbf{a}}-B_{s}^{\mathbf{a}})^{3}B_{s}^{\mathbf{a}}+4(B_{t}%
^{\mathbf{a}}-B_{s}^{\mathbf{a}})^{2}(B_{s}^{\mathbf{a}})^{2}|\mathcal{H}%
_{s}]\\
&  \leq \mathbb{\hat{E}}[(B_{t}^{\mathbf{a}}-B_{s}^{\mathbf{a}})^{4}%
]+4\mathbb{\hat{E}}[|B_{t}^{\mathbf{a}}-B_{s}^{\mathbf{a}}|^{3}]|B_{s}%
^{\mathbf{a}}|+4\sigma_{\mathbf{aa}^{T}}(t-s)(B_{s}^{\mathbf{a}})^{2}\\
&  =3\sigma_{\mathbf{aa}^{T}}^{2}(t-s)^{2}+8\sqrt{\frac{2}{\pi}}%
[\sigma_{\mathbf{aa}^{T}}(t-s)]^{3/2}|B_{s}^{\mathbf{a}}|+4\sigma
_{\mathbf{aa}^{T}}(t-s)(B_{s}^{\mathbf{a}})^{2}%
\end{align*}

\end{example}

\section{It\^{o}'s integral of $G$--Brownian motion}

\subsection{Bochner's integral}

\begin{definition}
\label{d-Def-4}For $T\in \mathbb{R}_{+}$ a partition $\pi_{T}$ of
$[0,T]$ is a
finite ordered subset $\pi=\{t_{1},\cdots,t_{N}\}$ such that $0=t_{0}%
<t_{1}<\cdots<t_{N}=T$.
\[
\mu(\pi_{T})=\max \{|t_{i+1}-t_{i}|\ :\ i=0,1,\cdots,N-1\} \text{.}%
\]
We use $\pi_{T}^{N}=\{t_{0}^{N},t_{1}^{N},\cdots,t_{N}^{N}\}$ to
denote a sequence of partitions of $[0,T]$ such that
$\lim_{N\rightarrow \infty}\mu (\pi_{T}^{N})=0$.
\end{definition}

Let $p\geq1$ be fixed. We consider the following type of simple
processes: For
a given partition $\{t_{0},\cdots,t_{N}\}=\pi_{T}$ of $[0,T]$ we set%
\[
\eta_{t}(\omega)=\sum_{k=0}^{N-1}\xi_{k}(\omega)\mathbf{I}_{[t_{k},t_{k+1}%
)}(t),
\]
where $\xi_{k}\in L_{G}^{p}(\mathcal{F}_{t_{k}})$,
$k=0,1,2,\cdots,N-1$ are
given. The collection of these processes is denoted by $M_{G}%
^{p,0}(0,T)$.

\begin{definition}
\label{d-Def-5}For an $\eta \in M_{G}^{p,0}(0,T)$ with $\eta_{t}=\sum_{k=0}%
^{N-1}\xi_{k}(\omega)\mathbf{I}_{[t_{k},t_{k+1})}(t)$, the related
Bochner integral is
\[
\int_{0}^{T}\eta_{t}(\omega)dt=\sum_{k=0}^{N-1}\xi_{k}(\omega)(t_{k+1}%
-t_{k}).
\]

\end{definition}

\begin{remark}
We set for each $\eta \in M_{G}^{p,0}(0,T)$
\[
\mathbb{\tilde{E}}_{T}[\eta]:=\frac{1}{T}\int_{0}^{T}\mathbb{\hat{E}}[\eta
_{t}]dt=\frac{1}{T}\sum_{k=0}^{N-1}\mathbb{\hat{E}}\xi_{k}(\omega
)(t_{k+1}-t_{k}).
\]
It is easy to check that
$\mathbb{\tilde{E}}_{T}:M_{G}^{1,0}(0,T)\longmapsto \mathbb{R}$
forms a nonlinear expectation satisfying (a)--(d) of Definition
\ref{Def-1}. We then can introduce a natural norm
\[
\left \Vert \eta \right \Vert _{T}^{1}=\mathbb{\tilde{E}}_{T}[|\eta|]=\frac{1}%
{T}\int_{0}^{T}\mathbb{\hat{E}}[|\eta_{t}|]dt.
\]
Under this norm $M_{G}^{p,0}(0,T)$ can be extended to
$M_{G}^{p}(0,T)$ which is a Banach space.
\end{remark}

\begin{definition}
For each $p\geq1$ we denote by $M_{G}^{p}(0,T)$ the completion of
$M_{G}^{p,0}(0,T)$ under the norm%
\[
(\frac{1}{T}\int_{0}^{T}\left \Vert |\eta_{t}|^{p}\right \Vert
dt)^{1/p}=\left(
\frac{1}{T}\sum_{k=0}^{N-1}\mathbb{\hat{E}[}|\xi_{k}(\omega)|^{p}%
](t_{k+1}-t_{k})\right)  ^{1/p}.
\]
\end{definition}

We observe that,
\begin{equation}
\mathbb{\hat{E}}[|\frac{1}{T}\int_{0}^{T}\eta_{t}(\omega)dt|]\leq \frac{1}{T}\sum_{k=0}%
^{N-1}\left \Vert \xi_{k}(\omega)\right \Vert (t_{k+1}-t_{k})=\frac{1}{T}\int_{0}%
^{T}\mathbb{\hat{E}}[|\eta_{t}|]dt. \label{d-Bohner}%
\end{equation}
We then have

\begin{proposition}
The linear mapping
$\int_{0}^{T}\eta_{t}(\omega)dt:M_{G}^{1,0}(0,T)\mapsto
L_{G}^{1}(\mathcal{F}_{T})$ is continuous and thus can be
continuously extended to $M_{G}^{1}(0,T)\mapsto
L_{G}^{1}(\mathcal{F}_{T})$. We still denote this extended mapping
by $\int_{0}^{T}\eta_{t}(\omega)dt$, $\eta \in M_{G}^{1}(0,T)$.
\end{proposition}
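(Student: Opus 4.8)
The plan is to derive continuity of the Bochner integral on the dense subspace $M_{G}^{1,0}(0,T)$ directly from the a priori estimate (\ref{d-Bohner}) already recorded above, and then to invoke the standard bounded-linear-transformation extension theorem to pass to the completion $M_{G}^{1}(0,T)$. The entire content is functional-analytic bookkeeping once (\ref{d-Bohner}) is in hand.

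First I would check that the map is well defined and linear on $M_{G}^{1,0}(0,T)$. For a simple process $\eta_{t}=\sum_{k=0}^{N-1}\xi_{k}\mathbf{I}_{[t_{k},t_{k+1})}(t)$ with $\xi_{k}\in L_{G}^{p}(\mathcal{F}_{t_{k}})$, Definition \ref{d-Def-5} gives $\int_{0}^{T}\eta_{t}\,dt=\sum_{k=0}^{N-1}\xi_{k}(t_{k+1}-t_{k})$, which is a finite $\mathbb{R}$-linear combination of elements of $L_{G}^{1}(\mathcal{F}_{t_{k}})\subseteq L_{G}^{1}(\mathcal{F}_{T})$ and hence lies in $L_{G}^{1}(\mathcal{F}_{T})$. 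The value is unchanged under refinement of the partition, so the assignment is independent of the chosen representation and is manifestly linear in $\eta$.

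For continuity I would simply read off from (\ref{d-Bohner}) that
\[
\mathbb{\hat{E}}\Big[\Big|\int_{0}^{T}\eta_{t}(\omega)\,dt\Big|\Big]\leq\int_{0}^{T}\mathbb{\hat{E}}[|\eta_{t}|]\,dt=T\left\Vert\eta\right\Vert_{M_{G}^{1}(0,T)},
\]
where the final equality is just the definition of the $M_{G}^{1}(0,T)$-norm. Thus the map is bounded with operator norm at most $T$ relative to the relevant norms; boundedness of a linear operator is equivalent to continuity, so continuity on $M_{G}^{1,0}(0,T)$ follows. The underlying inequality is itself a consequence of the sub-additivity (c) and positive homogeneity (d) of $\mathbb{\hat{E}}$ applied termwise to the finite sum.

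Finally, since $M_{G}^{1,0}(0,T)$ is by construction dense in its completion $M_{G}^{1}(0,T)$ and the target $L_{G}^{1}(\mathcal{F}_{T})$ is a Banach space, the standard extension theorem for uniformly continuous (here, bounded linear) maps yields a unique continuous linear extension to $M_{G}^{1}(0,T)$ satisfying the same bound; we retain the notation $\int_{0}^{T}\eta_{t}(\omega)\,dt$ for it. The only points requiring care are the completeness of the codomain and the density statement, both of which are already supplied by the construction of $L_{G}^{1}(\mathcal{F}_{T})$ and of $M_{G}^{1}(0,T)$; there is no genuine obstacle beyond verifying that the estimate passes through the limit, which it does because the bound is preserved under the extension.
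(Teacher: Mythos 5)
Your proposal is correct and follows exactly the paper's route: the paper simply displays the estimate (\ref{d-Bohner}) (obtained termwise from sub-additivity and positive homogeneity of $\mathbb{\hat{E}}$) and then extends by density, which is precisely your argument. Your additional checks --- well-definedness under partition refinement, linearity, and completeness of the codomain $L_{G}^{1}(\mathcal{F}_{T})$ --- are points the paper leaves implicit, and your operator-norm bound of $T$ is consistent with the normalized norm $\frac{1}{T}\int_{0}^{T}\mathbb{\hat{E}}[|\eta_{t}|]\,dt$ used in this chapter.
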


Since $M_{G}^{p}(0,T)\subset M_{G}^{1}(0,T)$ for $p\geq1$, this
definition makes sense for $\eta \in M_{G}^{p}(0,T)$.

\subsection{It\^{o}'s integral of $G$--Brownian motion}

We still use $B_{t}^{\mathbf{a}}:= (\mathbf{a},B_{t})$ as in
(\ref{d-a-Brown}).

\begin{definition}
For each $\eta \in M_{G}^{2,0}(0,T)$ with the form
\[\eta_{t}(\omega)=\sum
_{k=0}^{N-1}\xi_{k}(\omega)\mathbf{I}_{[t_{k},t_{k+1})}(t)\]
 we
define
\[
I(\eta)=\int_{0}^{T}\eta(s)dB_{s}^{\mathbf{a}}:=\sum_{k=0}^{N-1}\xi
_{k}(B_{t_{k+1}}^{\mathbf{a}}-B_{t_{k}}^{\mathbf{a}})\mathbf{.}%
\]

\end{definition}

\begin{lemma}
\label{d-bdd} We have, for each $\eta \in M_{G}^{2,0}(0,T)$,
\begin{align}
\mathbb{\hat{E}}[\int_{0}^{T}\eta(s)dB_{s}^{\mathbf{a}}]  &  =0,\  \  \label{d-e1}%
\\
\mathbb{\hat{E}}[(\int_{0}^{T}\eta(s)dB_{s}^{\mathbf{a}})^{2}]  &
\leq
\sigma_{\mathbf{aa}^{T}}\int_{0}^{T}\mathbb{\hat{E}}[\eta^{2}(s)]ds.
\label{d-e2}%
\end{align}
Consequently, the linear mapping $I:M_{G}^{2,0}(0,T)\longmapsto L_{G}%
^{2}(\mathcal{F}_{T})$ is continuous and thus can be extended to
$I:M_{G}^{2}(0,T)\longmapsto L_{G}^{2}(\mathcal{F}_{T})$.
\end{lemma}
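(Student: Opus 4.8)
The plan is to establish the two estimates \eqref{d-e1} and \eqref{d-e2} directly on simple processes $\eta \in M_G^{2,0}(0,T)$ and then invoke continuity of $I$ together with the fact that $M_G^{2,0}(0,T)$ is dense in its completion $M_G^2(0,T)$. The proof is the exact $d$-dimensional analogue of Lemma \ref{bdd} in the one-dimensional chapter, with $B_t$ replaced by $B_t^{\mathbf a}=(\mathbf a,B_t)$ and the constant $t-s$ replaced by $\sigma_{\mathbf{aa}^T}(t-s)$. The key tool throughout is Example \ref{d-Exam-2}, which supplies the two facts that for $\xi_k \in L_G^2(\mathcal F_{t_k})$ one has
\[
\mathbb{\hat E}[\xi_k(B_{t_{k+1}}^{\mathbf a}-B_{t_k}^{\mathbf a})\mid\mathcal H_{t_k}]=\mathbb{\hat E}[-\xi_k(B_{t_{k+1}}^{\mathbf a}-B_{t_k}^{\mathbf a})\mid\mathcal H_{t_k}]=0,
\]
and $\mathbb{\hat E}[X(B_{t_{k+1}}^{\mathbf a}-B_{t_k}^{\mathbf a})^2\mid\mathcal H_{t_k}]=[X^+\sigma_{\mathbf{aa}^T}+X^-\sigma_{-\mathbf{aa}^T}](t_{k+1}-t_k)$.

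First I would prove \eqref{d-e1}. Writing $\int_0^T\eta\,dB^{\mathbf a}=\sum_{k=0}^{N-1}\xi_k(B_{t_{k+1}}^{\mathbf a}-B_{t_k}^{\mathbf a})$, I peel off the last term and use property \textbf{(iv)} of Proposition \ref{d-Prop-1-7} (tower property) together with the first displayed identity above to get
\[
\mathbb{\hat E}\Big[\int_0^{t_N}\eta\,dB^{\mathbf a}\Big]=\mathbb{\hat E}\Big[\int_0^{t_{N-1}}\eta\,dB^{\mathbf a}+\mathbb{\hat E}[\xi_{N-1}(B_{t_N}^{\mathbf a}-B_{t_{N-1}}^{\mathbf a})\mid\mathcal H_{t_{N-1}}]\Big]=\mathbb{\hat E}\Big[\int_0^{t_{N-1}}\eta\,dB^{\mathbf a}\Big],
\]
and iterate down to $0$. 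For \eqref{d-e2} I expand the square $(\int_0^{t_N}\eta\,dB^{\mathbf a})^2=(\int_0^{t_{N-1}}\eta\,dB^{\mathbf a})^2+2(\int_0^{t_{N-1}}\eta\,dB^{\mathbf a})\xi_{N-1}(B_{t_N}^{\mathbf a}-B_{t_{N-1}}^{\mathbf a})+\xi_{N-1}^2(B_{t_N}^{\mathbf a}-B_{t_{N-1}}^{\mathbf a})^2$. Conditioning on $\mathcal H_{t_{N-1}}$, the cross term vanishes since it is of the form $Y(B_{t_N}^{\mathbf a}-B_{t_{N-1}}^{\mathbf a})$ with $Y\in L_G^1(\mathcal F_{t_{N-1}})$, and the last term's conditional expectation is bounded using $\mathbb{\hat E}[\xi_{N-1}^2(B_{t_N}^{\mathbf a}-B_{t_{N-1}}^{\mathbf a})^2\mid\mathcal H_{t_{N-1}}]\le\sigma_{\mathbf{aa}^T}\xi_{N-1}^2(t_N-t_{N-1})$, which follows from the second identity above together with $\sigma_{-\mathbf{aa}^T}\le\sigma_{\mathbf{aa}^T}$. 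Sub-additivity of $\mathbb{\hat E}$ then yields $\mathbb{\hat E}[(\int_0^{t_N}\eta\,dB^{\mathbf a})^2]\le\mathbb{\hat E}[(\int_0^{t_{N-1}}\eta\,dB^{\mathbf a})^2]+\sigma_{\mathbf{aa}^T}\mathbb{\hat E}[\xi_{N-1}^2](t_N-t_{N-1})$, and iterating gives \eqref{d-e2}.

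The continuity claim is then immediate: \eqref{d-e2} says $\|I(\eta)\|_{L_G^2}^2\le\sigma_{\mathbf{aa}^T}\int_0^T\mathbb{\hat E}[\eta^2(s)]\,ds=\sigma_{\mathbf{aa}^T}\|\eta\|_{M_G^2(0,T)}^2$, so $I$ is a bounded linear map on the dense subspace $M_G^{2,0}(0,T)$ and extends uniquely and continuously to all of $M_G^2(0,T)$, with \eqref{d-e1} and \eqref{d-e2} persisting by passing to the limit. I expect the only point requiring genuine care — rather than routine bookkeeping — is the handling of the cross term and the asymmetry between $\sigma_{\mathbf{aa}^T}$ and $\sigma_{-\mathbf{aa}^T}$: in the sublinear setting one must argue with the conditional sub-additivity and the explicit formula of Example \ref{d-Exam-2} rather than with a linear "orthogonality" of increments, and one must remember that the variance bound produces an inequality in \eqref{d-e2}, not an equality, precisely because $\sigma_{-\mathbf{aa}^T}$ may be strictly smaller than $\sigma_{\mathbf{aa}^T}$.
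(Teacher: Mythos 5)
Your proposal is correct and takes essentially the same route as the paper's proof: peel off the last increment, eliminate it and the cross term through the conditional mean-zero identities of Example \ref{d-Exam-2} together with Proposition \ref{d-E-x+y}, evaluate the squared-increment term conditionally, iterate backward, and extend $I$ by density. One small correction to your closing diagnosis: since $\xi_{N-1}^{2}\geq 0$, Example \ref{d-Exam-2} gives the conditional second moment as an exact equality $\xi_{N-1}^{2}\sigma_{\mathbf{aa}^{T}}(t_{N}-t_{N-1})$ (the $\sigma_{-\mathbf{aa}^{T}}$ term drops out because $(\xi_{N-1}^{2})^{-}=0$), so the inequality in (\ref{d-e2}) arises from the sub-additivity $\mathbb{\hat{E}}[X+Y]\leq\mathbb{\hat{E}}[X]+\mathbb{\hat{E}}[Y]$ used to separate the two summands at each iteration, not from the asymmetry between $\sigma_{\mathbf{aa}^{T}}$ and $\sigma_{-\mathbf{aa}^{T}}$.
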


\begin{definition}
We define, for a fixed $\eta \in M_{G}^{2}(0,T)$, the stochastic
calculus
\[
\int_{0}^{T}\eta(s)dB_{s}^{\mathbf{a}}:=I(\eta).
\]
It is clear that (\ref{d-e1}) and (\ref{d-e2}) still hold for $\eta \in M_{G}%
^{2}(0,T)$.
\end{definition}

\noindent\textbf{Proof of Lemma \ref{d-bdd}. }From Example
\ref{d-Exam-2} for each $k$
\[
\mathbb{\hat{E}}\mathbf{[}\xi_{k}(B_{t_{k+1}}^{\mathbf{a}}-B_{t_{k}%
}^{\mathbf{a}})|\mathcal{H}_{t_{k}}]=0.
\]
We have%
\begin{align*}
\mathbb{\hat{E}}[\int_{0}^{T}\eta(s)dB_{s}^{\mathbf{a}}]  &
=\mathbb{\hat
{E}[}\int_{0}^{t_{N-1}}\eta(s)dB_{s}^{\mathbf{a}}+\xi_{N-1}(B_{t_{N}%
}^{\mathbf{a}}-B_{t_{N-1}}^{\mathbf{a}})]\\
&  =\mathbb{\hat{E}[}\int_{0}^{t_{N-1}}\eta(s)dB_{s}^{\mathbf{a}}%
+\mathbb{\hat{E}}\mathbf{[}\xi_{N-1}(B_{t_{N}}^{\mathbf{a}}-B_{t_{N-1}%
}^{\mathbf{a}})|\mathcal{H}_{t_{N-1}}]]\\
&  =\mathbb{\hat{E}[}\int_{0}^{t_{N-1}}\eta(s)dB_{s}^{\mathbf{a}}].
\end{align*}
We then can repeat this procedure to obtain (\ref{d-e1}). We now
prove (\ref{d-e2})
\begin{align*}
&  \mathbb{\hat{E}}[\left(
\int_{0}^{T}\eta(s)dB_{s}^{\mathbf{a}}\right)
^{2}]=\mathbb{\hat{E}[}\left(  \int_{0}^{t_{N-1}}\eta(s)dB_{s}^{\mathbf{a}%
}+\xi_{N-1}(B_{t_{N}}^{\mathbf{a}}-B_{t_{N-1}}^{\mathbf{a}})\right)  ^{2}]\\
&  =\mathbb{\hat{E}[}\left(  \int_{0}^{t_{N-1}}\eta(s)dB_{s}^{\mathbf{a}%
}\right)  ^{2}\\
&  +\mathbb{\hat{E}}[2\left(  \int_{0}^{t_{N-1}}\eta(s)dB_{s}^{\mathbf{a}%
}\right)
\xi_{N-1}(B_{t_{N}}^{\mathbf{a}}-B_{t_{N-1}}^{\mathbf{a}})+\xi
_{N-1}^{2}(B_{t_{N}}^{\mathbf{a}}-B_{t_{N-1}}^{\mathbf{a}})^{2}|\mathcal{H}%
_{t_{N-1}}]]\\
&  =\mathbb{\hat{E}[}\left(  \int_{0}^{t_{N-1}}\eta(s)dB_{s}^{\mathbf{a}%
}\right)
^{2}+\xi_{N-1}^{2}\sigma_{\mathbf{aa}^{T}}(t_{N}-t_{N-1})].
\end{align*}
Thus $\mathbb{\hat{E}}[(\int_{0}^{t_{N}}\eta(s)dB_{s}^{\mathbf{a}})^{2}%
]\leq \mathbb{\hat{E}[}\left(  \int_{0}^{t_{N-1}}\eta(s)dB_{s}^{\mathbf{a}%
}\right)  ^{2}]+\mathbb{\hat{E}}[\xi_{N-1}^{2}]\sigma_{\mathbf{aa}^{T}}%
(t_{N}-t_{N-1})$. We then repeat this procedure to deduce
\[
\mathbb{\hat{E}}[(\int_{0}^{T}\eta(s)dB_{s})^{2}]\leq \sigma_{\mathbf{aa}^{T}%
}\sum_{k=0}^{N-1}\mathbb{\hat{E}}[(\xi_{k})^{2}](t_{k+1}-t_{k})=\int_{0}%
^{T}\mathbb{\hat{E}}[(\eta(t))^{2}]dt.
\]
$\blacksquare$

We list some main properties of the It\^{o}'s integral of
$G$--Brownian motion. We denote for  $0\leq s\leq t\leq T$
\[
\int_{s}^{t}\eta_{u}dB_{u}^{\mathbf{a}}:=\int_{0}^{T}\mathbf{I}_{[s,t]}%
(u)\eta_{u}dB_{u}^{\mathbf{a}}.
\]
We have

\begin{proposition}
\label{d-Prop-Integ}Let $\eta,\theta \in M_{G}^{2}(0,T)$ and $0\leq
s\leq r\leq t\leq T$. Then in $L_{G}^{1}(\mathcal{F}_{T})$ we
have\newline\textsl{(i)}
$\int_{s}^{t}\eta_{u}dB_{u}^{\mathbf{a}}=\int_{s}^{r}\eta_{u}dB_{u}%
^{\mathbf{a}}+\int_{r}^{t}\eta_{u}dB_{u}^{\mathbf{a}}.$\newline\textsl{(ii)}
$\int
_{s}^{t}(\alpha \eta_{u}+\theta_{u})dB_{u}^{\mathbf{a}}=\alpha \int_{s}^{t}%
\eta_{u}dB_{u}^{\mathbf{a}}+\int_{s}^{t}\theta_{u}dB_{u}^{\mathbf{a}}%
,\ $if$\  \alpha$ is bounded and in
$L_{G}^{1}(\mathcal{F}_{s})$.\newline\textsl{(iii)}
$\mathbb{\hat{E}[}X+\int_{r}^{T}\eta_{u}dB_{u}^{\mathbf{a}}|\mathcal{H}%
_{s}]=\mathbb{\hat{E}[}X|\mathcal{H}_{s}]$, $\forall X\in L_{G}^{1}%
(\mathcal{F}).$\newline\textsl{(iv)} $\mathbb{\hat{E}[}(\int_{r}^{T}\eta_{u}%
dB_{u}^{\mathbf{a}})^{2}|\mathcal{H}_{s}]\leq
\sigma_{\mathbf{aa}^{T}}\int
_{r}^{T}\mathbb{\hat{E}[}|\eta_{u}|^{2}|\mathcal{H}_{s}]du.$
\end{proposition}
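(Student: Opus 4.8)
The plan is to prove all four statements first for simple integrands $\eta,\theta\in M_{G}^{2,0}(0,T)$, where the integral is the explicit finite sum $\sum_{k}\xi_{k}(B_{t_{k+1}}^{\mathbf{a}}-B_{t_{k}}^{\mathbf{a}})$, and then to pass to general $\eta\in M_{G}^{2}(0,T)$ by density of $M_{G}^{2,0}(0,T)$ together with the continuity of $I$ from Lemma \ref{d-bdd} and the $L_{G}^{1}$--contractivity of $\mathbb{\hat{E}}[\cdot|\mathcal{H}_{s}]$ (a consequence of (iii) of Proposition \ref{d-Prop-1-7}). Throughout I would first refine any partition so that both $s$ and $r$ occur as partition points, so that $\int_{s}^{t}$, $\int_{s}^{r}$, $\int_{r}^{t}$ and $\int_{r}^{T}$ are all genuine subsums; then (i) is immediate from $\mathbf{I}_{[s,t]}=\mathbf{I}_{[s,r]}+\mathbf{I}_{[r,t]}$ on simple processes. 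For (ii), when $\alpha$ is bounded and lies in $L_{G}^{1}(\mathcal{F}_{s})$ with $s$ not exceeding any partition time on $[s,t]$, the products $\alpha\xi_{k}$ stay in $L_{G}^{2}(\mathcal{F}_{t_{k}})$, so $\alpha\eta+\theta$ is again simple and the identity is the distributivity of the defining sum. Both (i) and (ii) extend to $M_{G}^{2}(0,T)$ since $I$ is linear and continuous.

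The substantive parts are (iii) and (iv), which I would obtain by backward induction on the partition, peeling off one increment at a time. For (iii), writing $I_{N}=\sum_{k=0}^{N-1}\xi_{k}(B_{t_{k+1}}^{\mathbf{a}}-B_{t_{k}}^{\mathbf{a}})$ with $r=t_{0}$, I apply Example \ref{d-Exam-2} in the form $\mathbb{\hat{E}}[Y+\xi_{N-1}(B_{t_{N}}^{\mathbf{a}}-B_{t_{N-1}}^{\mathbf{a}})|\mathcal{H}_{t_{N-1}}]=\mathbb{\hat{E}}[Y|\mathcal{H}_{t_{N-1}}]$ with $Y=X+\sum_{k=0}^{N-2}\xi_{k}(B_{t_{k+1}}^{\mathbf{a}}-B_{t_{k}}^{\mathbf{a}})\in L_{G}^{1}(\mathcal{F})$ and $\xi_{N-1}\in L_{G}^{1}(\mathcal{F}_{t_{N-1}})$. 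The tower property (Proposition \ref{d-Prop-1-7}(iv), valid since $s\leq t_{N-1}$) then gives $\mathbb{\hat{E}}[X+I_{N}|\mathcal{H}_{s}]=\mathbb{\hat{E}}[X+I_{N-1}|\mathcal{H}_{s}]$, and iterating down to the empty sum yields $\mathbb{\hat{E}}[X|\mathcal{H}_{s}]$.

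For (iv) I expand $(I_{N})^{2}=(I_{N-1})^{2}+2I_{N-1}\xi_{N-1}(B_{t_{N}}^{\mathbf{a}}-B_{t_{N-1}}^{\mathbf{a}})+\xi_{N-1}^{2}(B_{t_{N}}^{\mathbf{a}}-B_{t_{N-1}}^{\mathbf{a}})^{2}$ and condition on $\mathcal{H}_{t_{N-1}}$: the cross term has zero conditional $\mathbb{\hat{E}}$ and zero conditional $\mathbb{\hat{E}}$ of its negative, so Proposition \ref{d-E-x+y} removes it; Example \ref{d-Exam-2} evaluates $\mathbb{\hat{E}}[\xi_{N-1}^{2}(B_{t_{N}}^{\mathbf{a}}-B_{t_{N-1}}^{\mathbf{a}})^{2}|\mathcal{H}_{t_{N-1}}]=\sigma_{\mathbf{aa}^{T}}\xi_{N-1}^{2}(t_{N}-t_{N-1})$ since $\xi_{N-1}^{2}\geq0$; and subadditivity gives $\mathbb{\hat{E}}[(I_{N})^{2}|\mathcal{H}_{t_{N-1}}]\leq(I_{N-1})^{2}+\sigma_{\mathbf{aa}^{T}}\xi_{N-1}^{2}(t_{N}-t_{N-1})$. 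Conditioning further down to $\mathcal{H}_{s}$ via the tower property, using monotonicity and subadditivity and invoking the induction hypothesis for $(I_{N-1})^{2}$, produces $\sigma_{\mathbf{aa}^{T}}\sum_{k}\mathbb{\hat{E}}[\xi_{k}^{2}|\mathcal{H}_{s}](t_{k+1}-t_{k})=\sigma_{\mathbf{aa}^{T}}\int_{r}^{T}\mathbb{\hat{E}}[|\eta_{u}|^{2}|\mathcal{H}_{s}]\,du$.

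The final step is the extension of (iii) and (iv) to $\eta\in M_{G}^{2}(0,T)$: for $\eta^{n}\to\eta$ in $M_{G}^{2}(0,T)$, inequality (\ref{d-e2}) gives $\int_{r}^{T}\eta^{n}\,dB^{\mathbf{a}}\to\int_{r}^{T}\eta\,dB^{\mathbf{a}}$ in $L_{G}^{2}(\mathcal{F}_{T})$, hence the squares converge in $L_{G}^{1}$ and the $L_{G}^{1}$--contractivity of $\mathbb{\hat{E}}[\cdot|\mathcal{H}_{s}]$ transfers the equality (iii) and inequality (iv) to the limit; on the right of (iv) one checks $\int_{r}^{T}\mathbb{\hat{E}}[|\eta^{n}_{u}|^{2}|\mathcal{H}_{s}]\,du\to\int_{r}^{T}\mathbb{\hat{E}}[|\eta_{u}|^{2}|\mathcal{H}_{s}]\,du$ from $\int_{0}^{T}\mathbb{\hat{E}}[|\eta^{n}_{u}-\eta_{u}|^{2}]\,du\to0$. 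The main obstacle I anticipate is precisely this limit passage for (iv): keeping the conditional second--moment bookkeeping consistent and justifying convergence of the conditional integral on the right--hand side, rather than the algebraic induction, which is routine once Example \ref{d-Exam-2} and Proposition \ref{d-E-x+y} are in hand.
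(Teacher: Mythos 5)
Your proof is correct and uses exactly the method the paper itself employs for the unconditional analogues (\ref{d-e1}) and (\ref{d-e2}) in the proof of Lemma \ref{d-bdd}: peel off the last increment via Example \ref{d-Exam-2} and Proposition \ref{d-E-x+y}, iterate backward through the partition with the tower property, and extend by density using (\ref{d-e2}) together with the $L_{G}^{1}$-contractivity of $\mathbb{\hat{E}}[\cdot|\mathcal{H}_{s}]$. The paper states Proposition \ref{d-Prop-Integ} without proof, and your conditional version of that peeling argument, including the care you take with the limit passage on the right-hand side of (iv), where $\int_{r}^{T}\mathbb{\hat{E}}[|\eta_{u}|^{2}|\mathcal{H}_{s}]\,du$ must itself be understood as the $L_{G}^{1}$-limit of the simple-process expressions, is precisely the intended argument.
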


\subsection{Quadratic variation process of $G$--Brownian motion}

We now consider the quadratic variation process of $G$--Brownian
motion. It concentrically reflects the characteristic of the
`uncertainty' part of the $G$-Brownian motion $B$. This makes a
major difference from the classical Brownian motion.

Let $\pi_{t}^{N}$, $N=1,2,\cdots$, be a sequence of partitions of $[0,t]$. We consider%

\begin{align*}
(B_{t}^{\mathbf{a}})^{2}  &  =\sum_{k=0}^{N-1}[(B_{t_{k+1}^{N}}^{\mathbf{a}%
})^{2}-(B_{t_{k}^{N}}^{\mathbf{a}})^{2}]\\
&  =\sum_{k=0}^{N-1}2B_{t_{k}^{N}}^{\mathbf{a}}(B_{t_{k+1}^{N}}^{\mathbf{a}%
}-B_{t_{k}^{N}}^{\mathbf{a}})+\sum_{k=0}^{N-1}(B_{t_{k+1}^{N}}^{\mathbf{a}%
}-B_{t_{k}^{N}}^{\mathbf{a}})^{2}%
\end{align*}
As $\mu(\pi_{t}^{N})=\max_{0\leq k\leq N-1}(t_{k+1}^{N}-t_{k}^{N}%
)\rightarrow0$, the first term of the right side tends to $2\int_{0}^{t}%
B_{s}^{\mathbf{a}}dB_{s}^{\mathbf{a}}$. The second term must
converge. We denote its limit by $\left \langle B^{\mathbf{a}}\right
\rangle _{t}$, i.e.,
\begin{equation}
\left \langle B^{\mathbf{a}}\right \rangle _{t}=\lim_{\mu(\pi_{t}^{N}%
)\rightarrow0}\sum_{k=0}^{N-1}(B_{t_{k+1}^{N}}^{\mathbf{a}}-B_{t_{k}^{N}%
}^{\mathbf{a}})^{2}=(B_{t}^{\mathbf{a}})^{2}-2\int_{0}^{t}B_{s}^{\mathbf{a}%
}dB_{s}^{\mathbf{a}}. \label{d-quadra-def}%
\end{equation}
By the above construction $\left \langle B^{\mathbf{a}}\right
\rangle _{t}$,
$t\geq0$, is an increasing process with $\left \langle B^{\mathbf{a}%
}\right \rangle _{0}=0$. We call it the \textbf{quadratic variation
process} of the $G$--Brownian motion $B^{\mathbf{a}}$. Clearly
$\left \langle B^{\mathbf{a}}\right \rangle $ is an increasing
process. It is also clear that for each $0\leq s\leq t$ and  each
smooth real function $\psi$ such that
$\psi(\left \langle B^{\mathbf{a}}\right \rangle _{t-s})\in L_{G}^{1}%
(\mathcal{F}_{t-s})$ we have $\mathbb{\hat{E}}[\psi(\left \langle
B^{\mathbf{a}}\right \rangle _{t-s})]=\mathbb{\hat{E}}[\psi(\left
\langle B^{\mathbf{a}}\right \rangle _{t}-\left \langle
B^{\mathbf{a}}\right \rangle
_{s})]$. We also have%
\[
\left \langle B^{\mathbf{a}}\right \rangle _{t}=\left \langle B^{-\mathbf{a}%
}\right \rangle _{t}=\left \langle -B^{\mathbf{a}}\right \rangle
_{t}.
\]
It is important to keep in mind that $\left \langle
B^{\mathbf{a}}\right \rangle
_{t}$ is not a deterministic process except in the case $\sigma_{\mathbf{aa}^{T}%
}=-\sigma_{-\mathbf{aa}^{T}}$ and thus $B^{\mathbf{a}}$ becomes a
classical Brownian motion. In fact we have

\begin{lemma}
\label{d-Lem-Q1}For each $0\leq s\leq t<\infty$%
\begin{align}
\mathbb{\hat{E}}[\left \langle B^{\mathbf{a}}\right \rangle
_{t}-\left \langle
B^{\mathbf{a}}\right \rangle _{s}|\mathcal{H}_{s}]  &  =\sigma_{\mathbf{aa}%
^{T}}(t-s),\  \  \label{d-quadra}\\
\mathbb{\hat{E}}[-(\left \langle B^{\mathbf{a}}\right \rangle
_{t}-\left \langle
B^{\mathbf{a}}\right \rangle _{s})|\mathcal{H}_{s}]  &  =\sigma_{-\mathbf{aa}%
^{T}}(t-s). \label{d-quadra1}%
\end{align}

\end{lemma}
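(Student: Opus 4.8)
The plan is to follow exactly the route used in the one-dimensional Lemma \ref{Lem-Q1}, replacing $B$ by $B^{\mathbf{a}}$ and the scalar variances by the asymmetric constants $\sigma_{\mathbf{aa}^T}$ and $\sigma_{-\mathbf{aa}^T}$. The starting point is the defining identity (\ref{d-quadra-def}), which, taken as a difference over $[s,t]$, reads
\[
\langle B^{\mathbf{a}}\rangle_t-\langle B^{\mathbf{a}}\rangle_s=(B_t^{\mathbf{a}})^2-(B_s^{\mathbf{a}})^2-2\int_s^t B_u^{\mathbf{a}}\,dB_u^{\mathbf{a}}.
\]
First I would eliminate the stochastic integral. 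By Proposition \ref{d-Prop-Integ}(iii), for any $X\in L_G^1(\mathcal{F})$ we have $\mathbb{\hat{E}}[X+\int_s^t B_u^{\mathbf{a}}\,dB_u^{\mathbf{a}}\,|\,\mathcal{H}_s]=\mathbb{\hat{E}}[X|\mathcal{H}_s]$; applying this with $X=(B_t^{\mathbf{a}})^2-(B_s^{\mathbf{a}})^2$ reduces the whole question to evaluating $\mathbb{\hat{E}}[(B_t^{\mathbf{a}})^2-(B_s^{\mathbf{a}})^2\,|\,\mathcal{H}_s]$ and the conditional expectation of its negative.

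For (\ref{d-quadra}), this conditional expectation has already been computed in Example \ref{d-Exam-B2} to be $\sigma_{\mathbf{aa}^T}(t-s)$, so the first identity follows at once. For (\ref{d-quadra1}), I would expand $(B_t^{\mathbf{a}})^2-(B_s^{\mathbf{a}})^2=(B_t^{\mathbf{a}}-B_s^{\mathbf{a}})^2+2B_s^{\mathbf{a}}(B_t^{\mathbf{a}}-B_s^{\mathbf{a}})$ and observe, via Example \ref{d-Exam-2}, that the cross term $Y:=2B_s^{\mathbf{a}}(B_t^{\mathbf{a}}-B_s^{\mathbf{a}})$ satisfies $\mathbb{\hat{E}}[Y|\mathcal{H}_s]=-\mathbb{\hat{E}}[-Y|\mathcal{H}_s]=0$, i.e. it carries no mean uncertainty. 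Proposition \ref{d-E-x+y} then permits me to drop it, giving
\[
\mathbb{\hat{E}}[-(\langle B^{\mathbf{a}}\rangle_t-\langle B^{\mathbf{a}}\rangle_s)|\mathcal{H}_s]=\mathbb{\hat{E}}[-(B_t^{\mathbf{a}}-B_s^{\mathbf{a}})^2|\mathcal{H}_s].
\]
By the independence and stationarity of the increments (Proposition \ref{d-Prop-1-7}(vii)--(viii)) this equals $\mathbb{\hat{E}}[-(B_{t-s}^{\mathbf{a}})^2]=\sigma_{-\mathbf{aa}^T}(t-s)$, the last step being the definition of $\sigma_{-\mathbf{aa}^T}$ through $\mathbb{\hat{E}}[((-\mathbf{aa}^T)B_{t-s},B_{t-s})]=\sigma_{-\mathbf{aa}^T}(t-s)$ from Example \ref{d-Exm-GBM-14a}.

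Since every step is a direct transcription of the scalar case and all the ingredients (the integral-killing property, the increment computation for $(B^{\mathbf{a}})^2$, and the mean-certainty of the cross term) are already available as cited results, I do not expect a genuine obstacle. The only point requiring care is the bookkeeping of the two asymmetric constants: because $\mathbb{\hat{E}}$ is merely sublinear, $\mathbb{\hat{E}}[-(B_{t-s}^{\mathbf{a}})^2]\neq-\mathbb{\hat{E}}[(B_{t-s}^{\mathbf{a}})^2]$ in general, so one must keep $G$ evaluated at $-\mathbf{aa}^T$ rather than simply negating the upper variance $\sigma_{\mathbf{aa}^T}$.
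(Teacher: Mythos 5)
Your proof is correct and takes essentially the same route as the paper, which likewise cites the definition (\ref{d-quadra-def}), Proposition \ref{d-Prop-Integ}(iii) to kill the stochastic integral, and Example \ref{d-Exam-B2} to evaluate $\mathbb{\hat{E}}[(B_{t}^{\mathbf{a}})^{2}-(B_{s}^{\mathbf{a}})^{2}|\mathcal{H}_{s}]$, declaring (\ref{d-quadra1}) ``analogous'' via $\mathbb{\hat{E}}[-((B_{t}^{\mathbf{a}})^{2}-(B_{s}^{\mathbf{a}})^{2})|\mathcal{H}_{s}]=\sigma_{-\mathbf{aa}^{T}}(t-s)$. You simply fill in that analogous step explicitly (the expansion, Example \ref{d-Exam-2}, Proposition \ref{d-E-x+y}, and the identification of $\sigma_{-\mathbf{aa}^{T}}$ through Example \ref{d-Exm-GBM-14a}), and your closing remark that sublinearity forbids replacing $\mathbb{\hat{E}}[-(B_{t-s}^{\mathbf{a}})^{2}]$ by $-\mathbb{\hat{E}}[(B_{t-s}^{\mathbf{a}})^{2}]$ is exactly the right point of care.
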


\begin{proof}
From the definition of $\left \langle B^{\mathbf{a}}\right \rangle
$,  Proposition \ref{d-Prop-Integ}-(iii) and Example \ref{d-Exam-B2}
\begin{align*}
\mathbb{\hat{E}}[\left \langle B^{\mathbf{a}}\right \rangle
_{t}-\left \langle
B^{\mathbf{a}}\right \rangle _{s}|\mathcal{H}_{s}]  &  =\mathbb{\hat{E}}%
[(B_{t}^{\mathbf{a}})^{2}-(B_{s}^{\mathbf{a}})^{2}-2\int_{s}^{t}%
B_{u}^{\mathbf{a}}dB_{u}^{\mathbf{a}}|\mathcal{H}_{s}]\\
&  =\mathbb{\hat{E}}[(B_{t}^{\mathbf{a}})^{2}-(B_{s}^{\mathbf{a}}%
)^{2}|\mathcal{H}_{s}]=\sigma_{\mathbf{aa}^{T}}(t-s).
\end{align*}
We then have (\ref{d-quadra}). (\ref{d-quadra1}) can be proved
analogously by
using the equality $\mathbb{\hat{E}}[-((B_{t}^{\mathbf{a}})^{2}-(B_{s}%
^{\mathbf{a}})^{2})|\mathcal{H}_{s}]=\sigma_{-\mathbf{aa}^{T}}(t-s)$.
\end{proof}

An interesting new phenomenon of our $G$-Brownian motion is that its
quadratic process $\left \langle B\right \rangle $ also has
independent increments. In fact, we have

\begin{lemma}
An increment of $\left \langle B^{\mathbf{a}}\right \rangle $ is the
quadratic variation of the corresponding increment of
$B^{\mathbf{a}}$, i.e., for each
fixed $s\geq0$%
\[
\left \langle B^{\mathbf{a}}\right \rangle _{t+s}-\left \langle B^{\mathbf{a}%
}\right \rangle _{s}=\left \langle (B^{s})^{\mathbf{a}}\right \rangle _{t},%
\]
where $B_{t}^{s}=B_{t+s}-B_{s}$, $t\geq0$ and $(B^{s})_{t}^{\mathbf{a}%
}=(\mathbf{a},B_{s}^{t})$.
\end{lemma}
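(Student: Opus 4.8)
The plan is to reduce the statement to a direct algebraic identity between It\^{o} integrals, exactly as in the one-dimensional version (Lemma \ref{Lem-Qua2}). First I would write out both sides using the defining formula (\ref{d-quadra-def}) for the quadratic variation. For the left-hand side,
\[
\langle B^{\mathbf{a}}\rangle_{s+t}-\langle B^{\mathbf{a}}\rangle_s=(B^{\mathbf{a}}_{s+t})^2-2\int_0^{s+t}B^{\mathbf{a}}_rdB^{\mathbf{a}}_r-(B^{\mathbf{a}}_s)^2+2\int_0^sB^{\mathbf{a}}_rdB^{\mathbf{a}}_r,
\]
and by the interval additivity of the integral (Proposition \ref{d-Prop-Integ}(i)) the two integrals combine into $2\int_s^{s+t}B^{\mathbf{a}}_rdB^{\mathbf{a}}_r$. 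For the right-hand side I would use that $(B^s)^{\mathbf{a}}_t=(\mathbf{a},B_{s+t}-B_s)=B^{\mathbf{a}}_{s+t}-B^{\mathbf{a}}_s$, so that, since $(B^s_t)_{t\ge0}$ is again a $G$--Brownian motion (by the earlier Proposition), its quadratic variation exists and is given by (\ref{d-quadra-def}) as $\langle(B^s)^{\mathbf{a}}\rangle_t=(B^{\mathbf{a}}_{s+t}-B^{\mathbf{a}}_s)^2-2\int_0^t(B^{\mathbf{a}}_{s+r}-B^{\mathbf{a}}_s)d(B^{\mathbf{a}}_{s+r}-B^{\mathbf{a}}_s)$.

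The key step is then the identity
\[
(B^{\mathbf{a}}_{s+t}-B^{\mathbf{a}}_s)^2-2\int_s^{s+t}(B^{\mathbf{a}}_u-B^{\mathbf{a}}_s)dB^{\mathbf{a}}_u=(B^{\mathbf{a}}_{s+t})^2-(B^{\mathbf{a}}_s)^2-2\int_s^{s+t}B^{\mathbf{a}}_udB^{\mathbf{a}}_u,
\]
which I would verify by expanding the square and using linearity of the integral (Proposition \ref{d-Prop-Integ}(ii)) to pull the $\mathcal{F}_s$--measurable factor $B^{\mathbf{a}}_s$ out of $\int_s^{s+t}B^{\mathbf{a}}_sdB^{\mathbf{a}}_u=B^{\mathbf{a}}_s(B^{\mathbf{a}}_{s+t}-B^{\mathbf{a}}_s)$; the cross terms $2B^{\mathbf{a}}_sB^{\mathbf{a}}_{s+t}$ and $2(B^{\mathbf{a}}_s)^2$ then cancel against the expansion of the square. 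Before that I must justify the change of variable $\int_0^t(\cdots)d(B^{\mathbf{a}}_{s+r}-B^{\mathbf{a}}_s)=\int_s^{s+t}(B^{\mathbf{a}}_u-B^{\mathbf{a}}_s)dB^{\mathbf{a}}_u$, which identifies the It\^{o} integral against the shifted motion $B^s$ with an integral on $[s,s+t]$ against $B$; this follows on simple integrands directly from the definition of the integral and then by continuous extension in $M_G^2(0,T)$.

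The main obstacle I anticipate is the bookkeeping around the integral against the shifted process and the factoring out of $B^{\mathbf{a}}_s$. Proposition \ref{d-Prop-Integ}(ii) requires the scalar factor to be bounded as well as in $L_G^1(\mathcal{F}_s)$, whereas $B^{\mathbf{a}}_s$ is unbounded; I would handle this by first establishing the identity with $B^{\mathbf{a}}_s$ replaced by the truncation $(B^{\mathbf{a}}_s\wedge N)\vee(-N)$ and then passing to the limit $N\to\infty$ in $L_G^1(\mathcal{F}_{s+t})$, using the estimate (\ref{d-e2}) to control the integral terms. Everything else---interval additivity, linearity, and the defining formula for the quadratic variation---is routine once these two limiting arguments are in place, and the claimed equality $\langle B^{\mathbf{a}}\rangle_{s+t}-\langle B^{\mathbf{a}}\rangle_s=\langle(B^s)^{\mathbf{a}}\rangle_t$ then follows by comparing the two sides.
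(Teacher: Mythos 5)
Your proposal is correct and takes essentially the same route as the paper's own proof, which consists of exactly the chain of identities you describe: write both sides via (\ref{d-quadra-def}), combine the integrals into $2\int_s^{s+t}(B_u^{\mathbf{a}}-B_s^{\mathbf{a}})dB_u^{\mathbf{a}}$, and identify this with the integral against the shifted process. The only difference is that you supply the technical justifications the paper leaves implicit---extension from simple integrands for the change of variable, and truncation of the unbounded factor $B_s^{\mathbf{a}}$---which is a sensible tightening rather than a different method.
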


\begin{proof}%
\begin{align*}
\left \langle B^{\mathbf{a}}\right \rangle _{t+s}-\left \langle B^{\mathbf{a}%
}\right \rangle _{s}  &  =(B_{t+s}^{\mathbf{a}})^{2}-2\int_{0}^{t+s}%
B_{u}^{\mathbf{a}}dB_{u}^{\mathbf{a}}-\left(  (B_{s}^{\mathbf{a}})^{2}%
-2\int_{0}^{s}B_{u}^{\mathbf{a}}dB_{u}^{\mathbf{a}}\right) \\
&  =(B_{t+s}^{\mathbf{a}}-B_{s}^{\mathbf{a}})^{2}-2\int_{s}^{t+s}%
(B_{u}^{\mathbf{a}}-B_{s}^{\mathbf{a}})dB_{u}^{\mathbf{a}}\\
&  =(B_{t+s}^{\mathbf{a}}-B_{s}^{\mathbf{a}})^{2}-2\int_{0}^{t}(B_{s+u}%
^{\mathbf{a}}-B_{s}^{\mathbf{a}})d(B_{s+u}^{\mathbf{a}}-B_{s}^{\mathbf{a}})\\
&  =\left \langle (B^{s})^{\mathbf{a}}\right \rangle _{t}.
\end{align*}

\end{proof}

\begin{lemma}
We have
\begin{equation}
\mathbb{\hat{E}}[\left \langle B^{\mathbf{a}}\right \rangle _{t}^{2}%
]=\mathbb{\hat{E}}[(\left \langle B^{\mathbf{a}}\right \rangle _{t+s}%
-\left \langle B^{\mathbf{a}}\right \rangle _{s})^{2}|\mathcal{H}_{s}%
]=\sigma_{\mathbf{aa}^{T}}^{2}t^{2},\  \ s,t\geq0. \label{d-Qua2}%
\end{equation}

\end{lemma}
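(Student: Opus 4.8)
The plan is to split the identity (\ref{d-Qua2}) into its two equalities and treat them separately. For the first equality I would invoke the two lemmas immediately preceding: the increment $\left\langle B^{\mathbf{a}}\right\rangle_{t+s}-\left\langle B^{\mathbf{a}}\right\rangle_{s}$ equals $\left\langle (B^{s})^{\mathbf{a}}\right\rangle_{t}$, the quadratic variation of the shifted $G$--Brownian motion $B^{s}_{\cdot}=B_{s+\cdot}-B_{s}$, which is independent of $\mathcal{H}_{s}$ and identically distributed with $\left\langle B^{\mathbf{a}}\right\rangle_{t}$. Consequently
\[
\mathbb{\hat{E}}[(\left\langle B^{\mathbf{a}}\right\rangle_{t+s}-\left\langle B^{\mathbf{a}}\right\rangle_{s})^{2}|\mathcal{H}_{s}]=\mathbb{\hat{E}}[(\left\langle (B^{s})^{\mathbf{a}}\right\rangle_{t})^{2}|\mathcal{H}_{s}]=\mathbb{\hat{E}}[(\left\langle (B^{s})^{\mathbf{a}}\right\rangle_{t})^{2}]=\mathbb{\hat{E}}[\left\langle B^{\mathbf{a}}\right\rangle_{t}^{2}],
\]
the middle step using independence from $\mathcal{H}_{s}$ and the last step the stationarity of the increments.

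For the second equality, i.e. the value $\sigma_{\mathbf{aa}^{T}}^{2}t^{2}$, the idea is to apply the characterization theorem for $\mathcal{U}$-distributed processes to the increasing process $b_{t}:=\left\langle B^{\mathbf{a}}\right\rangle_{t}$. I would check its three hypotheses: $b_{0}=0$ is clear; stationarity and independence of increments follow from the two lemmas just cited; and the regularity condition $\lim_{t\downarrow0}\mathbb{\hat{E}}[b_{t}^{2}]t^{-1}=0$ reduces to an a priori $O(t^{2})$ bound on $\mathbb{\hat{E}}[\left\langle B^{\mathbf{a}}\right\rangle_{t}^{2}]$. Granting these, the theorem yields that $\left\langle B^{\mathbf{a}}\right\rangle_{t}$ is $\mathcal{U}_{[\underline{\mu}t,\overline{\mu}t]}$-distributed with $\overline{\mu}=\mathbb{\hat{E}}[\left\langle B^{\mathbf{a}}\right\rangle_{1}]=\sigma_{\mathbf{aa}^{T}}$ and $\underline{\mu}=-\mathbb{\hat{E}}[-\left\langle B^{\mathbf{a}}\right\rangle_{1}]=-\sigma_{-\mathbf{aa}^{T}}$, both read off from Lemma \ref{d-Lem-Q1}. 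Since $0\leq\underline{\mu}\leq\overline{\mu}$, the defining property of the $\mathcal{U}$-distribution gives
\[
\mathbb{\hat{E}}[\left\langle B^{\mathbf{a}}\right\rangle_{t}^{2}]=\sup_{x\in[\underline{\mu}t,\overline{\mu}t]}x^{2}=(\overline{\mu}t)^{2}=\sigma_{\mathbf{aa}^{T}}^{2}t^{2}.
\]

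The main obstacle, and the only computation requiring care, is the a priori bound needed for hypothesis (iii). I would establish it exactly as in the one-dimensional Lemma \ref{Lem-Q1}: writing $\left\langle B^{\mathbf{a}}\right\rangle_{t}=(B_{t}^{\mathbf{a}})^{2}-2\int_{0}^{t}B_{u}^{\mathbf{a}}dB_{u}^{\mathbf{a}}$ and using $|x+y|^{2}\leq 2x^{2}+2y^{2}$, then bounding $\mathbb{\hat{E}}[(B_{t}^{\mathbf{a}})^{4}]=3\sigma_{\mathbf{aa}^{T}}^{2}t^{2}$ (Example \ref{d-Exam-1}) together with $\mathbb{\hat{E}}[(\int_{0}^{t}B_{u}^{\mathbf{a}}dB_{u}^{\mathbf{a}})^{2}]\leq\sigma_{\mathbf{aa}^{T}}\int_{0}^{t}\mathbb{\hat{E}}[(B_{u}^{\mathbf{a}})^{2}]du=\sigma_{\mathbf{aa}^{T}}^{2}t^{2}/2$, via the isometry bound (\ref{d-e2}) and $\mathbb{\hat{E}}[(B_{u}^{\mathbf{a}})^{2}]=\sigma_{\mathbf{aa}^{T}}u$. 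This gives $\mathbb{\hat{E}}[\left\langle B^{\mathbf{a}}\right\rangle_{t}^{2}]\leq 10\sigma_{\mathbf{aa}^{T}}^{2}t^{2}$, hence $\mathbb{\hat{E}}[b_{t}^{2}]t^{-1}\to0$. I emphasize there is no circularity: hypothesis (iii) needs only this crude quadratic bound, and the $\mathcal{U}$-theorem then upgrades it to the exact value $\sigma_{\mathbf{aa}^{T}}^{2}t^{2}$.
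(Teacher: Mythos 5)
Your proposal is correct, but it proves the lemma by a genuinely different route than the paper does at this point. The paper's own proof is direct: after the same crude estimate $\varphi(t):=\mathbb{\hat{E}}[\left\langle B^{\mathbf{a}}\right\rangle_{t}^{2}]\leq10\sigma_{\mathbf{aa}^{T}}^{2}t^{2}$ that you derive, it exploits sub-additivity and Lemma \ref{d-Lem-Q1} to get $\varphi(t)\leq\varphi(s)+\varphi(t-s)+2\sigma_{\mathbf{aa}^{T}}^{2}s(t-s)$, iterates this over the uniform partition $t_{k}^{N}=kt/N$ so that the accumulated term $N\varphi(t/N)\leq10\sigma_{\mathbf{aa}^{T}}^{2}t^{2}/N$ vanishes, and obtains in the limit $\varphi(t)\leq2\sigma_{\mathbf{aa}^{T}}^{2}\int_{0}^{t}s\,ds=\sigma_{\mathbf{aa}^{T}}^{2}t^{2}$; the reverse inequality comes from the dominated linear expectation $E^{0}$ of Remark \ref{d-Rem-17}, with $\gamma_{0}\in\Gamma$ chosen to attain $\sup_{\gamma\in\Gamma}\mathrm{tr}[\gamma\gamma^{T}\mathbf{aa}^{T}]$, under which $B^{\mathbf{a}}$ behaves as a classical Brownian motion and $\left\langle B^{\mathbf{a}}\right\rangle_{t}=\sigma_{\mathbf{aa}^{T}}t$ almost surely. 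You instead transport the paper's \emph{one-dimensional} strategy: identify $\left\langle B^{\mathbf{a}}\right\rangle$ as a maximal, i.e.\ $\mathcal{U}_{[\underline{\mu}t,\overline{\mu}t]}$-distributed, process via the characterization theorem for such processes — exactly what the paper does for $d=1$ in the corollary following that theorem, where $\mathbb{\hat{E}}[\left\langle B\right\rangle_{t}^{2}]=t^{2}$ together with the third and fourth moments is read off. Your route buys more: all moments $\sigma_{\mathbf{aa}^{T}}^{k}t^{k}$ and the support statement $\left\langle B^{\mathbf{a}}\right\rangle_{t}\in[\underline{\mu}t,\overline{\mu}t]$ in one stroke, with the a priori $O(t^{2})$ bound cleanly quarantined in hypothesis (iii) — and you are right that there is no circularity; the price is invoking the viscosity-solution machinery behind the $\mathcal{U}$-theorem, whereas the paper's telescoping-plus-domination argument is self-contained within this chapter. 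Two points you should make explicit when writing it up: hypothesis (ii) of the theorem asks for stationarity and independence from the vector $(b_{t_{1}},\cdots,b_{t_{n}})$, which indeed follows from the shift identity $\left\langle B^{\mathbf{a}}\right\rangle_{t+s}-\left\langle B^{\mathbf{a}}\right\rangle_{s}=\left\langle (B^{s})^{\mathbf{a}}\right\rangle_{t}$ and the fact that $B^{s}$ is again a $G$-Brownian motion independent of $\mathcal{H}_{s}$; and your final step $\sup_{x\in[\underline{\mu}t,\overline{\mu}t]}x^{2}=(\overline{\mu}t)^{2}$ silently uses $\underline{\mu}=-\sigma_{-\mathbf{aa}^{T}}\geq0$, which holds because $\Gamma\subset\mathbb{S}^{+}(d)$ (equivalently, because $\left\langle B^{\mathbf{a}}\right\rangle$ is increasing).
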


\begin{proof}
We set $\varphi(t):=\mathbb{\hat{E}}[\left \langle
B^{\mathbf{a}}\right \rangle _{t}^{2}]$.
\begin{align*}
\varphi(t)  &  =\mathbb{\hat{E}}[\{(B_{t}^{\mathbf{a}})^{2}-2\int_{0}^{t}%
B_{u}^{\mathbf{a}}dB_{u}^{\mathbf{a}}\}^{2}]\\
&
\leq2\mathbb{\hat{E}}[(B_{t}^{\mathbf{a}})^{4}]+8\mathbb{\hat{E}}[(\int
_{0}^{t}B_{u}^{\mathbf{a}}dB_{u}^{\mathbf{a}})^{2}]\\
&
\leq6\sigma_{\mathbf{aa}^{T}}^{2}t^{2}+8\sigma_{\mathbf{aa}^{T}}\int
_{0}^{t}\mathbb{\hat{E}[(}B_{u}^{\mathbf{a}})^{2}]du\\
&  =10\sigma_{\mathbf{aa}^{T}}^{2}t^{2}.
\end{align*}
This also implies $\mathbb{\hat{E}}[(\left \langle
B^{\mathbf{a}}\right \rangle _{t}-\left \langle B^{\mathbf{a}}\right
\rangle _{s})^{2}]=\varphi
(t-s)\leq10\sigma_{\mathbf{aa}^{T}}^{2}(t-s)^{2}$. For each $s\in
\lbrack 0,t)$
\begin{align*}
\varphi(t)  &  =\mathbb{\hat{E}}[(\left \langle B^{\mathbf{a}}\right
\rangle
_{s}+\left \langle B^{\mathbf{a}}\right \rangle _{t}-\left \langle B^{\mathbf{a}%
}\right \rangle _{s})^{2}]\\
&  \leq \mathbb{\hat{E}}[(\left \langle B^{\mathbf{a}}\right \rangle _{s}%
)^{2}]+\mathbb{\hat{E}}[(\left \langle B^{\mathbf{a}}\right \rangle
_{t}-\left \langle B^{\mathbf{a}}\right \rangle _{s})^{2}]+2\mathbb{\hat{E}%
}[(\left \langle B^{\mathbf{a}}\right \rangle _{t}-\left \langle B^{\mathbf{a}%
}\right \rangle _{s})\left \langle B^{\mathbf{a}}\right \rangle _{s}]\\
&
=\varphi(s)+\varphi(t-s)+2\mathbb{\hat{E}}[\mathbb{\hat{E}}[(\left
\langle B^{\mathbf{a}}\right \rangle _{t}-\left \langle
B^{\mathbf{a}}\right \rangle
_{s})|\mathcal{H}_{s}]\left \langle B^{\mathbf{a}}\right \rangle _{s}]\\
&  =\varphi(s)+\varphi(t-s)+2\sigma_{\mathbf{aa}^{T}}^{2}s(t-s).
\end{align*}
We set $\delta_{N}=t/N$, $t_{k}^{N}=kt/N=k\delta_{N}$ for a positive
integer
$N$. From the above inequalities%
\begin{align*}
\varphi(t_{N}^{N})  &  \leq \varphi(t_{N-1}^{N})+\varphi(\delta_{N}%
)+2\sigma_{\mathbf{aa}^{T}}^{2}t_{N-1}^{N}\delta_{N}\\
&  \leq \varphi(t_{N-2}^{N})+2\varphi(\delta_{N})+2\sigma_{\mathbf{aa}^{T}}%
^{2}(t_{N-1}^{N}+t_{N-2}^{N})\delta_{N}\\
&  \cdots.\\
&
\end{align*}
We then have
\[
\varphi(t)\leq
N\varphi(\delta_{N})+2\sigma_{\mathbf{aa}^{T}}^{2}\sum
_{k=0}^{N-1}t_{k}^{N}\delta_{N}\leq10t^{2}\sigma_{\mathbf{aa}^{T}}%
^{2}/N+2\sigma_{\mathbf{aa}^{T}}^{2}\sum_{k=0}^{N-1}t_{k}^{N}\delta_{N}.
\]
Let $N\rightarrow \infty$ we have $\varphi(t)\leq2\sigma_{\mathbf{aa}^{T}}%
^{2}\int_{0}^{t}sds=\sigma_{\mathbf{aa}^{T}}^{2}t^{2}$. Thus $\mathbb{\hat{E}%
}[\left \langle B^{\mathbf{a}}\right \rangle _{t}^{2}]\leq \sigma_{\mathbf{aa}%
^{T}}^{2}t^{2}$. This, together with $\mathbb{\hat{E}}[\left \langle B^{\mathbf{a}%
}\right \rangle _{t}^{2}]\geq E^{0}[\left \langle
B^{\mathbf{a}}\right \rangle
_{t}^{2}]=\sigma_{\mathbf{aa}^{T}}^{2}t^{2}$, implies
(\ref{d-Qua2}). In the last
step, the classical normal distribution $P_{1}^{0}$, or $N(0,\gamma_{0}%
\gamma_{0}^{T})$, $\gamma_{0}\in \Gamma$, is chosen such that%
\[
tr[\gamma_{0}\gamma_{0}^{T}\mathbf{aa}^{T}]=\sigma_{\mathbf{aa}^{T}}^{2}%
=\sup_{\gamma \in \Gamma}tr[\gamma \gamma^{T}\mathbf{aa}^{T}].
\]

\end{proof}

Similarly we have
\begin{align}
\mathbb{\hat{E}}[(\left \langle B^{\mathbf{a}}\right \rangle
_{t}-\left \langle B^{\mathbf{a}}\right \rangle
_{s})^{3}|\mathcal{H}_{s}]  &  =\sigma
_{\mathbf{aa}^{T}}^{3}(t-s)^{3},\label{d-Quad3}\\
\mathbb{\hat{E}}[(\left \langle B^{\mathbf{a}}\right \rangle
_{t}-\left \langle B^{\mathbf{a}}\right \rangle
_{s})^{4}|\mathcal{H}_{s}]  &  =\sigma
_{\mathbf{aa}^{T}}^{4}(t-s)^{4}.\nonumber
\end{align}

\begin{proposition}
\label{d-Prop-temp}Let $0\leq s\leq t$, $\xi \in
L_{G}^{1}(\mathcal{F}_{s})$ and
$X\in L_{G}^{1}(\mathcal{F})$. Then%
\begin{align*}
\mathbb{\hat{E}}[X+\xi((B_{t}^{\mathbf{a}})^{2}-(B_{s}^{\mathbf{a}})^{2})]
&
=\mathbb{\hat{E}}[X+\xi(B_{t}^{\mathbf{a}}-B_{s}^{\mathbf{a}})^{2}]\\
&  =\mathbb{\hat{E}}[X+\xi(\left \langle B^{\mathbf{a}}\right
\rangle _{t}-\left \langle B^{\mathbf{a}}\right \rangle _{s})].
\end{align*}

\end{proposition}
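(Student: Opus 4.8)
The plan is to mirror the one--dimensional argument of Proposition \ref{Prop-temp}, splitting the claim into its two equalities and, in each case, discarding a ``mean--zero'' term by means of Proposition \ref{d-E-x+y} combined with the tower property $\mathbb{\hat{E}}[\mathbb{\hat{E}}[\,\cdot\,|\mathcal{H}_{s}]]=\mathbb{\hat{E}}[\,\cdot\,]$ from Proposition \ref{d-Prop-1-7}(iv). Both steps are conditioned at time $s$, so that the relevant cross terms have vanishing conditional $G$--expectation.

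For the equality relating $(B_{t}^{\mathbf{a}})^{2}-(B_{s}^{\mathbf{a}})^{2}$ to $(B_{t}^{\mathbf{a}}-B_{s}^{\mathbf{a}})^{2}$, I would first expand $(B_{t}^{\mathbf{a}})^{2}-(B_{s}^{\mathbf{a}})^{2}=(B_{t}^{\mathbf{a}}-B_{s}^{\mathbf{a}})^{2}+2B_{s}^{\mathbf{a}}(B_{t}^{\mathbf{a}}-B_{s}^{\mathbf{a}})$, so that $\xi((B_{t}^{\mathbf{a}})^{2}-(B_{s}^{\mathbf{a}})^{2})=\xi(B_{t}^{\mathbf{a}}-B_{s}^{\mathbf{a}})^{2}+Y$ with $Y:=2\xi B_{s}^{\mathbf{a}}(B_{t}^{\mathbf{a}}-B_{s}^{\mathbf{a}})$. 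Since $2\xi B_{s}^{\mathbf{a}}\in L_{G}^{1}(\mathcal{F}_{s})$, Example \ref{d-Exam-2} (applied with $(s,t)$ in place of $(t,T)$) gives $\mathbb{\hat{E}}[Y|\mathcal{H}_{s}]=\mathbb{\hat{E}}[-Y|\mathcal{H}_{s}]=0$. Hence $Y$ satisfies the hypothesis of Proposition \ref{d-E-x+y} at level $\mathcal{H}_{s}$, yielding $\mathbb{\hat{E}}[X+\xi(B_{t}^{\mathbf{a}}-B_{s}^{\mathbf{a}})^{2}+Y\,|\,\mathcal{H}_{s}]=\mathbb{\hat{E}}[X+\xi(B_{t}^{\mathbf{a}}-B_{s}^{\mathbf{a}})^{2}\,|\,\mathcal{H}_{s}]$; taking $\mathbb{\hat{E}}[\cdot]$ and invoking the tower property produces the desired identity.

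For the equality relating $(B_{t}^{\mathbf{a}})^{2}-(B_{s}^{\mathbf{a}})^{2}$ to $\langle B^{\mathbf{a}}\rangle_{t}-\langle B^{\mathbf{a}}\rangle_{s}$, I would use the defining relation (\ref{d-quadra-def}) in increment form, $(B_{t}^{\mathbf{a}})^{2}-(B_{s}^{\mathbf{a}})^{2}=\langle B^{\mathbf{a}}\rangle_{t}-\langle B^{\mathbf{a}}\rangle_{s}+2\int_{s}^{t}B_{u}^{\mathbf{a}}\,dB_{u}^{\mathbf{a}}$. By Proposition \ref{d-Prop-Integ}(ii) the factor $\xi\in L_{G}^{1}(\mathcal{F}_{s})$ is absorbed into the integrand, so $2\xi\int_{s}^{t}B_{u}^{\mathbf{a}}\,dB_{u}^{\mathbf{a}}=\int_{s}^{t}2\xi B_{u}^{\mathbf{a}}\,dB_{u}^{\mathbf{a}}$, and Proposition \ref{d-Prop-Integ}(iii) (with $r=s$) shows that this It\^{o} integral drops out under $\mathbb{\hat{E}}[\,\cdot\,|\mathcal{H}_{s}]$. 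A further application of the tower property then gives $\mathbb{\hat{E}}[X+\xi((B_{t}^{\mathbf{a}})^{2}-(B_{s}^{\mathbf{a}})^{2})]=\mathbb{\hat{E}}[X+\xi(\langle B^{\mathbf{a}}\rangle_{t}-\langle B^{\mathbf{a}}\rangle_{s})]$, which combined with the previous step closes the chain of equalities.

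The main obstacle I anticipate is not structural but one of integrability and admissibility: Example \ref{d-Exam-2}, Proposition \ref{d-E-x+y} and Proposition \ref{d-Prop-Integ} are stated for elements of $L_{G}^{1}$ or integrands in $M_{G}^{2}$, and one must verify that the products $\xi B_{s}^{\mathbf{a}}(B_{t}^{\mathbf{a}}-B_{s}^{\mathbf{a}})$ and $2\xi B_{u}^{\mathbf{a}}\mathbf{I}_{[s,t]}(u)$ genuinely lie in the relevant spaces. The clean way around this is to establish the statement first for bounded $\xi$ (equivalently for $\xi$ a simple $\mathcal{F}_{s}$--measurable variable), where every cited result applies verbatim, and then extend to arbitrary $\xi\in L_{G}^{1}(\mathcal{F}_{s})$ by approximation, using the continuity of $\mathbb{\hat{E}}[\cdot]$ and of the It\^{o} integral under the norm $\left\Vert\cdot\right\Vert$.
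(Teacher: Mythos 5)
Your proposal is correct and follows essentially the same route as the paper's own proof: the paper likewise deduces the second equality from the increment form of (\ref{d-quadra-def}) together with Proposition \ref{d-E-x+y} (the stochastic integral term having vanishing conditional expectation in both directions), and the first equality from the expansion $(B_{t}^{\mathbf{a}})^{2}-(B_{s}^{\mathbf{a}})^{2}=(B_{t}^{\mathbf{a}}-B_{s}^{\mathbf{a}})^{2}+2(B_{t}^{\mathbf{a}}-B_{s}^{\mathbf{a}})B_{s}^{\mathbf{a}}$ with the cross term discarded via Example \ref{d-Exam-2} and Proposition \ref{d-E-x+y}. Your closing remark on integrability --- proving the identity first for bounded $\xi$ and extending by density under $\left\Vert \cdot \right\Vert$ --- is a legitimate refinement that the paper passes over in silence, but it does not change the structure of the argument.
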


\begin{proof}
From (\ref{d-quadra-def}) and Proposition \ref{d-E-x+y} we have%
\begin{align*}
\mathbb{\hat{E}}[X+\xi((B_{t}^{\mathbf{a}})^{2}-(B_{s}^{\mathbf{a}})^{2})]
& =\mathbb{\hat{E}}[X+\xi(\left \langle B^{\mathbf{a}}\right \rangle
_{t}-\left \langle B^{\mathbf{a}}\right \rangle _{s}+2\int_{s}^{t}%
B_{u}^{\mathbf{a}}dB_{u}^{\mathbf{a}})]\\
&  =\mathbb{\hat{E}}[X+\xi(\left \langle B^{\mathbf{a}}\right
\rangle _{t}-\left \langle B^{\mathbf{a}}\right \rangle _{s})].
\end{align*}
We also have
\begin{align*}
\mathbb{\hat{E}}[X+\xi((B_{t}^{\mathbf{a}})^{2}-(B_{s}^{\mathbf{a}})^{2})]
&
=\mathbb{\hat{E}}[X+\xi \{(B_{t}^{\mathbf{a}}-B_{s}^{\mathbf{a}})^{2}%
+2(B_{t}^{\mathbf{a}}-B_{s}^{\mathbf{a}})B_{s}^{\mathbf{a}}\}]\\
&
=\mathbb{\hat{E}}[X+\xi(B_{t}^{\mathbf{a}}-B_{s}^{\mathbf{a}})^{2}].
\end{align*}

\end{proof}

\begin{example}
\label{d-Exa-AB}We assume that in a financial market a stock price
$(S_{t})_{t\geq0}$ is observed. Let $B_{t}=\log(S_{t})$, $t\geq0$,
be a
$1$-dimensional $G$-Brownian motion $(d=1)$ with $\Gamma=[\sigma_{\ast}%
,\sigma^{\ast}]$, with fixed $\sigma_{\ast}\in
\lbrack0,\frac{1}{2})$ and $\sigma^{\ast}\in \lbrack1,\infty)$. Two
traders $\mathbf{a}$ and $\mathbf{b}$ in a same bank are using their
own statistic to price a contingent claim $X=\left \langle B\right
\rangle _{T}$ with maturity $T$. Suppose, for example, under the
probability measure $\mathbb{P}_{\mathbf{a}}$ of $\mathbf{a}$,
$B_{t}(\omega)_{t\geq0}$ is a (classical) Brownian motion whereas
under $\mathbb{P}_{\mathbf{b}}$ of $\mathbf{b}$,
$\frac{1}{2}B_{t}(\omega)_{t\geq0}$
is a Brownian motion, here $\mathbb{P}_{\mathbf{a}}$ (respectively, $\mathbb{P}%
_{\mathbf{b}}$) is a classical probability measure with its linear
expectation $\mathbb{E}_{\mathbf{a}}$ (respectively,
$\mathbb{E}_{\mathbf{b}}$ ) generated by the heat equation
$\partial_{t}u=\frac{1}{2}\partial_{xx}^{2}u$ (respectively, $\partial_{t}u=\frac{1}{4}\partial_{xx}^{2}u$). Since $\mathbb{E}_{\mathbf{a}%
}$ and $\mathbb{E}_{\mathbf{b}}$ are both dominated by
$\mathbb{\hat{E}}$ in the sense of (3), they can be both
well--defined as a linear bounded functional in
$L_{G}^{1}(\mathcal{F})$. This framework cannot be provided by just
using a classical probability space because it is known that $\left
\langle B\right \rangle _{T}=T$, $\mathbb{P}_{\mathbf{a}}$--a.s.,
and
$\left \langle B\right \rangle _{T}=\frac{T}{4}$, $\mathbb{P}_{\mathbf{b}}%
$--a.s. Thus there is no probability measure on $\Omega$ with
respect to which $\mathbb{P}_{\mathbf{a}}$ and
$\mathbb{P}_{\mathbf{b}}$ are both absolutely continuous.
Practically this sublinear expectation $\mathbb{\hat{E}}$ provides a
realistic tool of dynamic risk measure for a risk supervisor of the
traders
$\mathbf{a}$ and $\mathbf{b}$: Given a risk position $X\in L_{G}%
^{1}(\mathcal{F}_{T})$ we always have $\mathbb{\hat{E}}[-X|\mathcal{H}%
_{t}]\geq \mathbb{E}_{\mathbf{a}}[-X|\mathcal{H}_{t}]\vee \mathbb{E}%
_{\mathbf{b}}[-X|\mathcal{H}_{t}]$ for the loss $-X$ of this
position. The meaning is that the supervisor uses a more sensitive
risk measure. Clearly no linear expectation can play this role. The
subset $\Gamma$ represents the uncertainty of the volatility model
of a risk regulator. The larger the subset $\Gamma$, the bigger the
uncertainty, thus the stronger the corresponding $\mathbb{\hat{E}}$.
\newline It is worth considering to create a hierarchic and dynamic
risk control system for a bank, or a banking system, in which the
Chief Risk Officer (CRO) uses
$\mathbb{\hat{E}}=\mathbb{\hat{E}}^{G}$ for his risk measure and the
Risk Officer of the $i$th division of the bank uses
$\mathbb{\hat{E}}^{i}=\mathbb{\hat{E}}^{G_{i}}$ for his one, where
\[
G(A)=\frac{1}{2}\sup_{\gamma \in \Gamma}\text{tr}[\gamma \gamma^{T}%
A],\  \ G_{i}(A)=\frac{1}{2}\sup_{\gamma \in
\Gamma_{i}}\text{tr}[\gamma \gamma^{T}A],\  \Gamma_{i}\subset \Gamma
\text{,\ }\ i=1,\cdots,I.
\]
Thus $\mathbb{\hat{E}}^{i}$ is dominated by $\mathbb{\hat{E}}$ for
each $i$.
For a large banking system we can even consider to create $\mathbb{\hat{E}%
}^{ij}=\mathbb{\hat{E}}^{G_{ij}}$ for its $(i,j)$th sub-division.
The reason is: In general, a risk regulator's statistics and
knowledge of a specific risk position $X$ are less than a trader who
is concretely involved in the business of the financial loss $X$.
\end{example}

To define the integration of a process $\eta \in M_{G}^{1}(0,T)$
with respect
to $d\left \langle B^{\mathbf{a}}\right \rangle $ we first define a mapping:%
\[
Q_{0,T}(\eta)=\int_{0}^{T}\eta(s)d\left \langle B^{\mathbf{a}}\right
\rangle _{s}:=\sum_{k=0}^{N-1}\xi_{k}(\left \langle
B^{\mathbf{a}}\right \rangle
_{t_{k+1}}-\left \langle B^{\mathbf{a}}\right \rangle _{t_{k}}):M_{G}%
^{1,0}(0,T)\mapsto L^{1}(\mathcal{F}_{T}).
\]

\begin{lemma}
\label{d-Lem-Q2}For each $\eta \in M_{G}^{1,0}(0,T)$
\begin{equation}
\mathbb{\hat{E}}[|Q_{0,T}(\eta)|]\leq \sigma_{\mathbf{aa}^{T}}\int_{0}%
^{T}\mathbb{\hat{E}}[|\eta_{s}|]ds.\  \label{d-dA}%
\end{equation}
Thus $Q_{0,T}:M_{G}^{1,0}(0,T)\mapsto L^{1}(\mathcal{F}_{T})$ is a
continuous
linear mapping. Consequently, $Q_{0,T}$ can be uniquely extended to $M_{G}%
^{1}(0,T)$. We still denote this mapping \ by%
\[
\int_{0}^{T}\eta(s)d\left \langle B^{\mathbf{a}}\right \rangle _{s}=Q_{0,T}%
(\eta),\  \  \eta \in M_{G}^{1}(0,T)\text{.}%
\]
We still have
\begin{equation}
\mathbb{\hat{E}}[|\int_{0}^{T}\eta(s)d\left \langle
B^{\mathbf{a}}\right \rangle _{s}|]\leq
\sigma_{\mathbf{aa}^{T}}\int_{0}^{T}\mathbb{\hat{E}}[|\eta
_{s}|]ds,\  \  \forall \eta \in M_{G}^{1}(0,T)\text{.} \label{d-qua-ine}%
\end{equation}

\end{lemma}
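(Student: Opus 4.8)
The plan is to mirror the one-dimensional argument of Lemma~\ref{Lem-Q2}: first establish the bound (\ref{d-dA}) for a simple integrand, and then pass to the completion $M_{G}^{1}(0,T)$ using that the resulting linear map is bounded. Accordingly I fix $\eta\in M_{G}^{1,0}(0,T)$ written as $\eta_{t}=\sum_{k=0}^{N-1}\xi_{k}\mathbf{I}_{[t_{k},t_{k+1})}(t)$ with $\xi_{k}\in L_{G}^{p}(\mathcal{F}_{t_{k}})$, and set $Z_{k}:=\left\langle B^{\mathbf{a}}\right\rangle _{t_{k+1}}-\left\langle B^{\mathbf{a}}\right\rangle _{t_{k}}$, so that $Q_{0,T}(\eta)=\sum_{k=0}^{N-1}\xi_{k}Z_{k}$. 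The decisive structural fact I would exploit is that each $Z_{k}\geq0$, since $\left\langle B^{\mathbf{a}}\right\rangle$ is an increasing process (as noted right after (\ref{d-quadra-def})); this lets me bring the modulus inside the sum, $|Q_{0,T}(\eta)|\leq\sum_{k=0}^{N-1}|\xi_{k}|Z_{k}$, and keeps the estimate tight.

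Applying sub-additivity of $\mathbb{\hat{E}}$ gives $\mathbb{\hat{E}}[|Q_{0,T}(\eta)|]\leq\sum_{k=0}^{N-1}\mathbb{\hat{E}}[|\xi_{k}|Z_{k}]$. For each term I would use the tower property (Proposition~\ref{d-Prop-1-7}(iv)) together with the factorization property (Proposition~\ref{d-Prop-1-7}(vi)) applied to the non-negative coefficient $\eta=|\xi_{k}|$, whose negative part vanishes, to write $\mathbb{\hat{E}}[|\xi_{k}|Z_{k}]=\mathbb{\hat{E}}[\,|\xi_{k}|\,\mathbb{\hat{E}}[Z_{k}|\mathcal{H}_{t_{k}}]\,]$. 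By Lemma~\ref{d-Lem-Q1} the inner conditional expectation equals the constant $\sigma_{\mathbf{aa}^{T}}(t_{k+1}-t_{k})$, hence the term is $\sigma_{\mathbf{aa}^{T}}(t_{k+1}-t_{k})\,\mathbb{\hat{E}}[|\xi_{k}|]$. Summation then yields the Riemann sum $\sigma_{\mathbf{aa}^{T}}\sum_{k}\mathbb{\hat{E}}[|\xi_{k}|](t_{k+1}-t_{k})=\sigma_{\mathbf{aa}^{T}}\int_{0}^{T}\mathbb{\hat{E}}[|\eta_{s}|]\,ds$, which is exactly (\ref{d-dA}).

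The inequality (\ref{d-dA}) exhibits $Q_{0,T}$ as a bounded linear operator from $(M_{G}^{1,0}(0,T),\|\cdot\|)$ into $L^{1}(\mathcal{F}_{T})$, its operator norm controlled by $\sigma_{\mathbf{aa}^{T}}$ relative to the $M_{G}^{1}(0,T)$ norm $\frac{1}{T}\int_{0}^{T}\mathbb{\hat{E}}[|\eta_{t}|]\,dt$. Since $M_{G}^{1,0}(0,T)$ is dense in its completion $M_{G}^{1}(0,T)$, the bounded linear extension theorem gives a unique continuous extension $Q_{0,T}$ to all of $M_{G}^{1}(0,T)$, and (\ref{d-qua-ine}) follows for general $\eta$ by continuity, both sides being continuous in $\eta$ for the $M_{G}^{1}$ norm.

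The one step needing genuine care is the factorization $\mathbb{\hat{E}}[|\xi_{k}|Z_{k}|\mathcal{H}_{t_{k}}]=|\xi_{k}|\,\mathbb{\hat{E}}[Z_{k}|\mathcal{H}_{t_{k}}]$, because Proposition~\ref{d-Prop-1-7}(vi) is stated for \emph{bounded} $\eta\in L_{G}^{1}(\mathcal{F}_{t_{k}})$, whereas $\xi_{k}$ need not be bounded. I would handle this by first proving the estimate with $|\xi_{k}|$ truncated at level $M$, where (vi) applies directly, and then letting $M\to\infty$; the passage is legitimate precisely because $Z_{k}\geq0$, so the truncated products increase monotonically to $|\xi_{k}|Z_{k}$ and the resulting bounds converge to the asserted one. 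Everything else is the routine Riemann-sum identity and the standard density/continuity extension, so this boundedness/unboundedness bookkeeping is the only real obstacle.
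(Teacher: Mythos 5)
Your proof is correct and is essentially the paper's own argument: bound $|Q_{0,T}(\eta)|$ by $\sum_{k}|\xi_{k}|\left(\left\langle B^{\mathbf{a}}\right\rangle _{t_{k+1}}-\left\langle B^{\mathbf{a}}\right\rangle _{t_{k}}\right)$ using that $\left\langle B^{\mathbf{a}}\right\rangle$ is increasing, condition on $\mathcal{H}_{t_{k}}$ and apply Lemma \ref{d-Lem-Q1} to obtain $\sigma_{\mathbf{aa}^{T}}(t_{k+1}-t_{k})\mathbb{\hat{E}}[|\xi_{k}|]$, sum the Riemann terms, and extend by density and boundedness of the linear map. Your closing truncation remark, addressing the boundedness hypothesis in Proposition \ref{d-Prop-1-7}\textsl{(vi)} for possibly unbounded $\xi_{k}$, is a legitimate refinement of a point the paper's proof passes over silently.
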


\begin{proof}
With the help of Lemma \ref{d-Lem-Q1} (\ref{d-dA}) can be checked as follows:%
\begin{align*}
\mathbb{\hat{E}}[|\sum_{k=0}^{N-1}\xi_{k}(\left \langle B^{\mathbf{a}%
}\right \rangle _{t_{k+1}}-\left \langle B^{\mathbf{a}}\right \rangle _{t_{k}%
})|]  &  \leq \sum_{k=0}^{N-1}\mathbb{\hat{E}[}|\xi_{k}|\cdot \mathbb{\hat{E}%
}[\left \langle B^{\mathbf{a}}\right \rangle _{t_{k+1}}-\left
\langle
B^{\mathbf{a}}\right \rangle _{t_{k}}|\mathcal{H}_{t_{k}}]]\\
&  =\sum_{k=0}^{N-1}\mathbb{\hat{E}[}|\xi_{k}|]\sigma_{\mathbf{aa}^{T}%
}(t_{k+1}-t_{k})\\
&
=\sigma_{\mathbf{aa}^{T}}\int_{0}^{T}\mathbb{\hat{E}}[|\eta_{s}|]ds.
\end{align*}

\end{proof}

We have the following isometry.

\begin{proposition}
Let $\eta \in M_{G}^{2}(0,T)$. We have%
\begin{equation}
\mathbb{\hat{E}}[(\int_{0}^{T}\eta(s)dB_{s}^{\mathbf{a}})^{2}]=\mathbb{\hat
{E}}[\int_{0}^{T}\eta^{2}(s)d\left \langle B^{\mathbf{a}}\right
\rangle _{s}].
\label{d-isometry}%
\end{equation}

\end{proposition}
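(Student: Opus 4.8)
The plan is to follow exactly the scheme used for the one-dimensional isometry established earlier in the paper, replacing the scalar increments by increments of $B^{\mathbf{a}}$ and invoking the vector-valued auxiliary results already available. First I would prove the identity for a simple process $\eta \in M_G^{2,0}(0,T)$ of the form $\eta_t = \sum_{k=0}^{N-1}\xi_k \mathbf{I}_{[t_k,t_{k+1})}(t)$ with $\xi_k \in L_G^2(\mathcal{F}_{t_k})$, so that by definition $\int_0^T \eta(s)\,dB_s^{\mathbf{a}} = \sum_{k=0}^{N-1}\xi_k(B_{t_{k+1}}^{\mathbf{a}} - B_{t_k}^{\mathbf{a}})$, and then pass to the general case by density.

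Expanding the square of this finite sum produces diagonal terms $\xi_k^2 (B_{t_{k+1}}^{\mathbf{a}} - B_{t_k}^{\mathbf{a}})^2$ together with off-diagonal terms $2\,\xi_j(B_{t_{j+1}}^{\mathbf{a}} - B_{t_j}^{\mathbf{a}})\,\xi_i(B_{t_{i+1}}^{\mathbf{a}} - B_{t_i}^{\mathbf{a}})$ for $i<j$. The main computational point is that the off-diagonal terms do not contribute to $\mathbb{\hat{E}}$. Indeed, for $i<j$ the factor $Z=\xi_j\xi_i(B_{t_{i+1}}^{\mathbf{a}} - B_{t_i}^{\mathbf{a}})$ belongs to $L_G^1(\mathcal{F}_{t_j})$, and by property (vi) of Proposition \ref{d-Prop-1-7} together with Example \ref{d-Exam-2} one gets $\mathbb{\hat{E}}[\,2Z(B_{t_{j+1}}^{\mathbf{a}}-B_{t_j}^{\mathbf{a}})\mid\mathcal{H}_{t_j}] = \mathbb{\hat{E}}[-2Z(B_{t_{j+1}}^{\mathbf{a}}-B_{t_j}^{\mathbf{a}})\mid\mathcal{H}_{t_j}]=0$, whence each cross term satisfies the hypothesis of Proposition \ref{d-E-x+y} and may be deleted inside $\mathbb{\hat{E}}$. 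This yields
\[
\mathbb{\hat{E}}\Big[\Big(\int_0^T \eta\, dB^{\mathbf{a}}\Big)^2\Big]
= \mathbb{\hat{E}}\Big[\sum_{k=0}^{N-1}\xi_k^2 (B_{t_{k+1}}^{\mathbf{a}} - B_{t_k}^{\mathbf{a}})^2\Big].
\]

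Next I would convert the squared increments into increments of the quadratic variation. Applying Proposition \ref{d-Prop-temp} repeatedly, peeling off the last summand at each stage with the preceding partial sum playing the role of $X$, replaces each $(B_{t_{k+1}}^{\mathbf{a}} - B_{t_k}^{\mathbf{a}})^2$ by $\langle B^{\mathbf{a}}\rangle_{t_{k+1}} - \langle B^{\mathbf{a}}\rangle_{t_k}$, giving
\[
\mathbb{\hat{E}}\Big[\sum_{k=0}^{N-1}\xi_k^2 (B_{t_{k+1}}^{\mathbf{a}} - B_{t_k}^{\mathbf{a}})^2\Big]
= \mathbb{\hat{E}}\Big[\sum_{k=0}^{N-1}\xi_k^2 (\langle B^{\mathbf{a}}\rangle_{t_{k+1}} - \langle B^{\mathbf{a}}\rangle_{t_k})\Big]
= \mathbb{\hat{E}}\Big[\int_0^T \eta^2(s)\, d\langle B^{\mathbf{a}}\rangle_s\Big],
\]
which establishes (\ref{d-isometry}) for $\eta \in M_G^{2,0}(0,T)$.

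Finally I would extend the identity to arbitrary $\eta \in M_G^2(0,T)$ by approximation: choose simple $\eta^N \to \eta$ in $M_G^2(0,T)$. The left-hand sides converge because $I:M_G^2(0,T)\to L_G^2(\mathcal{F}_T)$ is continuous by Lemma \ref{d-bdd}, so convergence of $I(\eta^N)$ in $\|\cdot\|_2$ forces convergence of $\mathbb{\hat{E}}[I(\eta^N)^2]$. The right-hand sides converge via Lemma \ref{d-Lem-Q2}, after writing $|\eta^2-(\eta^N)^2| = |\eta-\eta^N|\,|\eta+\eta^N|$ and applying the Hölder inequality (\ref{ee4.6}) to bound $\int_0^T\mathbb{\hat{E}}[\,|(\eta^N)^2-\eta^2|\,]\,ds$ by a constant times $\|\eta^N-\eta\|_{M_G^2(0,T)}\big(\|\eta^N\|_{M_G^2(0,T)}+\|\eta\|_{M_G^2(0,T)}\big)$. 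The single delicate point, and the only place I expect real friction, is precisely this continuity of $\eta\mapsto\mathbb{\hat{E}}[\int_0^T\eta^2\,d\langle B^{\mathbf{a}}\rangle]$ in the $M_G^2$-norm, since squaring is not globally Lipschitz; the uniform $M_G^2$-bound on the approximating sequence is what makes the Hölder estimate close the argument.
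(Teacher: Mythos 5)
Your proposal is correct and follows essentially the same route as the paper's own proof: establish the identity for simple processes, delete the cross terms via Proposition \ref{d-E-x+y} (with the conditional mean-zero property from Example \ref{d-Exam-2}), convert squared increments to quadratic-variation increments via Proposition \ref{d-Prop-temp}, and extend by density. Your only addition is to spell out the continuity of $\eta\mapsto\mathbb{\hat{E}}[\int_{0}^{T}\eta^{2}\,d\left\langle B^{\mathbf{a}}\right\rangle_{s}]$ on $M_{G}^{2}(0,T)$-bounded sets via Lemma \ref{d-Lem-Q2} and the H\"older inequality, a step the paper asserts without detail, and your treatment of it is sound.
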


\begin{proof}
We first consider $\eta \in M_{G}^{2,0}(0,T)$ with the form
\[
\eta_{t}(\omega)=\sum_{k=0}^{N-1}\xi_{k}(\omega)\mathbf{I}_{[t_{k},t_{k+1}%
)}(t)
\]
and thus
$\int_{0}^{T}\eta(s)dB_{s}^{\mathbf{a}}:=\sum_{k=0}^{N-1}\xi
_{k}(B_{t_{k+1}}^{\mathbf{a}}-B_{t_{k}}^{\mathbf{a}})$\textbf{.} By
Proposition \ref{d-E-x+y} we have
\[
\mathbb{\hat{E}}[X+2\xi_{k}(B_{t_{k+1}}^{\mathbf{a}}-B_{t_{k}}^{\mathbf{a}%
})\xi_{l}(B_{t_{l+1}}^{\mathbf{a}}-B_{t_{l}}^{\mathbf{a}})]=\mathbb{\hat{E}%
}[X]\text{, for }X\in L_{G}^{1}(\mathcal{F)}\text{, }l\not =k.
\]
Therefore%
\[
\mathbb{\hat{E}}[(\int_{0}^{T}\eta(s)dB_{s}^{\mathbf{a}})^{2}]=\mathbb{\hat
{E}[}\left(  \sum_{k=0}^{N-1}\xi_{k}(B_{t_{k+1}}^{\mathbf{a}}-B_{t_{k}%
}^{\mathbf{a}})\right)  ^{2}]=\mathbb{\hat{E}[}\sum_{k=0}^{N-1}\xi_{k}%
^{2}(B_{t_{k+1}}^{\mathbf{a}}-B_{t_{k}}^{\mathbf{a}})^{2}].
\]
This, together with Proposition \ref{d-Prop-temp}, yields that
\[
\mathbb{\hat{E}}[(\int_{0}^{T}\eta(s)dB_{s}^{\mathbf{a}})^{2}]=\mathbb{\hat
{E}[}\sum_{k=0}^{N-1}\xi_{k}^{2}(\left \langle B^{\mathbf{a}}\right
\rangle _{t_{k+1}}-\left \langle B^{\mathbf{a}}\right \rangle
_{t_{k}})]=\mathbb{\hat {E}[}\int_{0}^{T}\eta^{2}(s)d\left \langle
B^{\mathbf{a}}\right \rangle _{s}].
\]
Thus (\ref{d-isometry}) holds for $\eta \in M_{G}^{2,0}(0,T)$. Then
we can continuously extend this equality to the case $\eta \in
M_{G}^{2}(0,T)$ and obtain (\ref{d-isometry}).
\end{proof}

\subsection{Mutual variation processes for $G$--Brownian motion}

Let $\mathbf{a}=(a_{1},\cdots,a_{d})^{T}$ and $\mathbf{\bar{a}}=(\bar{a}%
_{1},\cdots,\bar{a}_{d})^{T}$ be two given vectors in
$\mathbb{R}^{d}$. We
then have their quadratic variation processes $\left \langle B^{\mathbf{a}%
}\right \rangle $ and $\left \langle B^{\mathbf{\bar{a}}}\right
\rangle $. We
then can define their mutual variation processes by%
\begin{align*}
\left \langle B^{\mathbf{a}},B^{\mathbf{\bar{a}}}\right \rangle _{t}
& :=\frac{1}{4}[\left \langle
B^{\mathbf{a}}+B^{\mathbf{\bar{a}}}\right \rangle
_{t}-\left \langle B^{\mathbf{a}}-B^{\mathbf{\bar{a}}}\right \rangle _{t}]\\
&  =\frac{1}{4}[\left \langle B^{\mathbf{a}+\mathbf{\bar{a}}}\right
\rangle _{t}-\left \langle B^{\mathbf{a}-\mathbf{\bar{a}}}\right
\rangle _{t}].
\end{align*}
Since $\left \langle B^{\mathbf{a}-\mathbf{\bar{a}}}\right \rangle
=\left \langle
B^{\mathbf{\bar{a}}-\mathbf{a}}\right \rangle =\left \langle -B^{\mathbf{a}%
-\mathbf{\bar{a}}}\right \rangle $ we see that $\left \langle B^{\mathbf{a}%
},B^{\mathbf{\bar{a}}}\right \rangle _{t}=\left \langle B^{\mathbf{\bar{a}}%
},B^{\mathbf{a}}\right \rangle _{t}$. In particular we have $\left
\langle
B^{\mathbf{a}},B^{\mathbf{a}}\right \rangle =\left \langle B^{\mathbf{a}%
}\right \rangle $. Let $\pi_{t}^{N}$, $N=1,2,\cdots$, be a sequence
of
partitions of $[0,t]$. We observe that%
\[
\sum_{k=0}^{N-1}(B_{t_{k+1}^{N}}^{\mathbf{a}}-B_{t_{k}^{N}}^{\mathbf{a}%
})(B_{t_{k+1}^{N}}^{\mathbf{\bar{a}}}-B_{t_{k}^{N}}^{\mathbf{\bar{a}}}%
)=\frac{1}{4}\sum_{k=0}^{N-1}[(B_{t_{k+1}}^{\mathbf{a}+\mathbf{\bar{a}}%
}-B_{t_{k}}^{\mathbf{a}+\mathbf{\bar{a}}})^{2}-(B_{t_{k+1}}^{\mathbf{a}%
-\mathbf{\bar{a}}}-B_{t_{k}}^{\mathbf{a}-\mathbf{\bar{a}}})^{2}].
\]
Thus as $\mu(\pi_{t}^{N})\rightarrow0$ we have%
\[
\lim_{N\rightarrow0}\sum_{k=0}^{N-1}(B_{t_{k+1}^{N}}^{\mathbf{a}}-B_{t_{k}%
^{N}}^{\mathbf{a}})(B_{t_{k+1}^{N}}^{\mathbf{\bar{a}}}-B_{t_{k}^{N}%
}^{\mathbf{\bar{a}}})=\left \langle B^{\mathbf{a}},B^{\mathbf{\bar{a}}%
}\right \rangle _{t}.
\]
We also have
\begin{align*}
\left \langle B^{\mathbf{a}},B^{\mathbf{\bar{a}}}\right \rangle _{t}
& =\frac{1}{4}[\left \langle B^{\mathbf{a}+\mathbf{\bar{a}}}\right
\rangle
_{t}-\left \langle B^{\mathbf{a}-\mathbf{\bar{a}}}\right \rangle _{t}]\\
&  =\frac{1}{4}[(B_{t}^{\mathbf{a}+\mathbf{\bar{a}}})^{2}-2\int_{0}^{t}%
B_{s}^{\mathbf{a}+\mathbf{\bar{a}}}dB_{s}^{\mathbf{a}+\mathbf{\bar{a}}}%
-(B_{t}^{\mathbf{a}-\mathbf{\bar{a}}})^{2}+2\int_{0}^{t}B_{s}^{\mathbf{a}%
-\mathbf{\bar{a}}}dB_{s}^{\mathbf{a}-\mathbf{\bar{a}}}]\\
&  =B_{t}^{\mathbf{a}}B_{t}^{\mathbf{\bar{a}}}-\int_{0}^{t}B_{s}^{\mathbf{a}%
}dB_{s}^{\mathbf{\bar{a}}}-\int_{0}^{t}B_{s}^{\mathbf{\bar{a}}}dB_{s}%
^{\mathbf{a}}.
\end{align*}
Now for each $\eta \in M_{G}^{1}(0,T)$ we can consistently define
\[
\int_{0}^{T}\eta_{s}d\left \langle B^{\mathbf{a}},B^{\mathbf{\bar{a}}%
}\right \rangle _{s}=\frac{1}{4}\int_{0}^{T}\eta_{s}d\left \langle
B^{\mathbf{a}+\mathbf{\bar{a}}}\right \rangle _{s}-\frac{1}{4}\int_{0}^{T}%
\eta_{s}d\left \langle B^{\mathbf{a}-\mathbf{\bar{a}}}\right \rangle
_{s}.
\]

\begin{lemma}
\label{d-Lem-mutual}Let $\eta^{N}\in M_{G}^{1,0}(0,T)$,
$N=1,2,\cdots,$ be of form
\[
\eta_{t}^{N}(\omega)=\sum_{k=0}^{N-1}\xi_{k}^{N}(\omega)\mathbf{I}_{[t_{k}%
^{N},t_{k+1}^{N})}(t)
\]
with $\mu(\pi_{T}^{N})\rightarrow0$ and $\eta^{N}\rightarrow \eta$
in $M_{G}^{1}(0,T)$ as $N\rightarrow \infty$. Then we have the
following
convergence in $L_{G}^{1}(\mathcal{F}_{T})$:%
\begin{align*}
\int_{0}^{T}\eta^{N}(s)d\left \langle B^{\mathbf{a}},B^{\mathbf{\bar{a}}%
}\right \rangle _{s}  &  :=\sum_{k=0}^{N-1}\xi_{k}^{N}(B_{t_{k+1}^{N}%
}^{\mathbf{a}}-B_{t_{k}^{N}}^{\mathbf{a}})(B_{t_{k+1}^{N}}^{\mathbf{\bar{a}}%
}-B_{t_{k}^{N}}^{\mathbf{\bar{a}}})\\
&  \rightarrow \int_{0}^{T}\eta(s)d\left \langle
B^{\mathbf{a}},B^{\mathbf{\bar {a}}}\right \rangle _{s}.
\end{align*}

\end{lemma}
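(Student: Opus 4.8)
The plan is to reduce the statement to the single-direction quadratic case by polarization, and then to split each squared increment into a genuine $d\left\langle B^{\mathbf c}\right\rangle$-part, which converges by continuity of the integral, and an It\^o-integral remainder, which vanishes. Writing $\mathbf c^{+}=\mathbf a+\mathbf{\bar a}$, $\mathbf c^{-}=\mathbf a-\mathbf{\bar a}$ and using $B^{\mathbf c^{+}}=B^{\mathbf a}+B^{\mathbf{\bar a}}$, $B^{\mathbf c^{-}}=B^{\mathbf a}-B^{\mathbf{\bar a}}$ together with the elementary identity $xy=\tfrac14[(x+y)^2-(x-y)^2]$ applied to the increments, the discrete sum becomes
\[
\sum_{k}\xi_k^N(B^{\mathbf a}_{t_{k+1}^N}-B^{\mathbf a}_{t_k^N})(B^{\mathbf{\bar a}}_{t_{k+1}^N}-B^{\mathbf{\bar a}}_{t_k^N})=\tfrac14\sum_{k}\xi_k^N(B^{\mathbf c^{+}}_{t_{k+1}^N}-B^{\mathbf c^{+}}_{t_k^N})^2-\tfrac14\sum_{k}\xi_k^N(B^{\mathbf c^{-}}_{t_{k+1}^N}-B^{\mathbf c^{-}}_{t_k^N})^2 .
\]
Since the target $\int_0^T\eta\,d\left\langle B^{\mathbf a},B^{\mathbf{\bar a}}\right\rangle$ is defined by the same polarization, it suffices to prove, for a generic vector $\mathbf c$, that $\sum_{k}\xi_k^N(B^{\mathbf c}_{t_{k+1}^N}-B^{\mathbf c}_{t_k^N})^2\to\int_0^T\eta(s)\,d\left\langle B^{\mathbf c}\right\rangle_s$ in $L_G^1(\mathcal F_T)$.

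For this I would invoke the increment identity coming from \eqref{d-quadra-def} and the lemma that an increment of $\left\langle B^{\mathbf c}\right\rangle$ is the quadratic variation of the corresponding increment of $B^{\mathbf c}$, namely $(B^{\mathbf c}_{t_{k+1}}-B^{\mathbf c}_{t_k})^2=(\left\langle B^{\mathbf c}\right\rangle_{t_{k+1}}-\left\langle B^{\mathbf c}\right\rangle_{t_k})+2\Delta_k$ with $\Delta_k:=\int_{t_k}^{t_{k+1}}(B^{\mathbf c}_u-B^{\mathbf c}_{t_k})\,dB^{\mathbf c}_u$. Multiplying by $\xi_k^N$ and summing, the $\left\langle B^{\mathbf c}\right\rangle$-part is exactly $\int_0^T\eta^N(s)\,d\left\langle B^{\mathbf c}\right\rangle_s$, which converges to $\int_0^T\eta(s)\,d\left\langle B^{\mathbf c}\right\rangle_s$ because $\eta^N\to\eta$ in $M_G^1(0,T)$ and the integral map is continuous by the estimate \eqref{d-qua-ine} of Lemma \ref{d-Lem-Q2}. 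Thus everything reduces to showing that the remainder $R^N:=2\sum_k\xi_k^N\Delta_k\to0$ in $L_G^1$.

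The remainder is itself an It\^o integral, $R^N=2\int_0^T\zeta^N_u\,dB^{\mathbf c}_u$ with $\zeta^N_u=\sum_k\xi_k^N(B^{\mathbf c}_u-B^{\mathbf c}_{t_k})\mathbf I_{[t_k,t_{k+1})}(u)\in M_G^{2,0}(0,T)$ when the $\xi_k^N$ are bounded. In that case the isometry bound \eqref{d-e2} gives $\mathbb{\hat E}[(R^N)^2]\le 4\sigma_{\mathbf{cc}^{T}}\int_0^T\mathbb{\hat E}[(\zeta^N_u)^2]\,du$, and on $[t_k,t_{k+1})$ one has $\mathbb{\hat E}[(\zeta^N_u)^2]\le \|\xi^N\|_\infty^2\,\mathbb{\hat E}[(B^{\mathbf c}_u-B^{\mathbf c}_{t_k})^2]=\|\xi^N\|_\infty^2\sigma_{\mathbf{cc}^{T}}(u-t_k)\le \|\xi^N\|_\infty^2\sigma_{\mathbf{cc}^{T}}\,\mu(\pi_T^N)$, so that $\mathbb{\hat E}[(R^N)^2]\le 4\sigma_{\mathbf{cc}^{T}}^2 T\|\xi^N\|_\infty^2\,\mu(\pi_T^N)\to0$. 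Hence for bounded integrands $R^N\to0$ in $L_G^2$, a fortiori in $L_G^1$.

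The one genuine obstacle is passing from bounded to general $M_G^1$ integrands while the partition simultaneously refines. I would handle it by truncation: set $\xi_k^{N,M}=(\xi_k^N\wedge M)\vee(-M)$ with associated remainder $R^{N,M}$. For fixed $M$ the previous paragraph gives $R^{N,M}\to0$ as $N\to\infty$, while for the difference I would use the crude conditional bound $\mathbb{\hat E}[|\Delta_k|\,|\,\mathcal H_{t_k}]\le(\sigma_{\mathbf{cc}^{T}}/\sqrt2)(t_{k+1}-t_k)$, obtained from Cauchy--Schwarz and \eqref{d-e2}, together with property (vi) of Proposition \ref{d-Prop-1-7} to get $\mathbb{\hat E}[|R^N-R^{N,M}|]\le \sqrt2\,\sigma_{\mathbf{cc}^{T}}\int_0^T\mathbb{\hat E}[|\eta^N_s-\eta^{N,M}_s|]\,ds$. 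Because truncation is a contraction, this last integral is dominated by $2\int_0^T\mathbb{\hat E}[|\eta^N_s-\eta_s|]\,ds+\int_0^T\mathbb{\hat E}[|\eta_s-\eta^M_s|]\,ds$, where $\eta^M$ is the level-$M$ truncation of $\eta$; the first piece is small for large $N$ and the second is small for large $M$ since $\eta\in M_G^1(0,T)$. An $\varepsilon/3$ argument (choose $M$ first, then $N$) then yields $\mathbb{\hat E}[|R^N|]\to0$, which combined with the two convergent pieces above completes the proof. The subtlety to verify carefully is precisely this uniform-in-$N$ truncation estimate, i.e. that the convergence $\eta^N\to\eta$ carries enough uniform integrability to make $\sup_N\int_0^T\mathbb{\hat E}[|\eta^N_s-\eta^{N,M}_s|]\,ds\to0$ as $M\to\infty$.
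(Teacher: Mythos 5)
The paper states this lemma without proof, so there is no ``paper proof'' to compare against; your argument supplies exactly the proof the surrounding text suggests, and it is correct. The polarization $xy=\tfrac14[(x+y)^2-(x-y)^2]$ reduces matters to the single-direction claim, the identity (\ref{d-quadra-def}) applied to increments splits each $\xi_k^N(B^{\mathbf{c}}_{t^N_{k+1}}-B^{\mathbf{c}}_{t^N_k})^2$ into a $d\left\langle B^{\mathbf{c}}\right\rangle$-part handled by the continuity estimate (\ref{d-qua-ine}), and your key observation --- that the crude bound $\mathbb{\hat{E}}[|\Delta_k|\,|\mathcal{H}_{t_k}]\leq(\sigma_{\mathbf{cc}^T}/\sqrt{2})(t_{k+1}-t_k)$ alone is \emph{not} summable to zero, so the martingale cancellation via the $L^2$-isometry (\ref{d-e2}) must carry the bounded part while the crude bound is spent only on the truncation error --- is precisely the right architecture. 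Two small points to tidy. First, $\zeta^N_u=\sum_k\xi_k^N(B^{\mathbf{c}}_u-B^{\mathbf{c}}_{t_k})\mathbf{I}_{[t_k,t_{k+1})}(u)$ is not in $M_G^{2,0}(0,T)$ (it is not piecewise constant), but for bounded $\xi_k^N$ it lies in $M_G^{2}(0,T)$, to which (\ref{d-e2}) extends, and the identification $R^N=2\int_0^T\zeta^N_u\,dB^{\mathbf{c}}_u$ uses Proposition \ref{d-Prop-Integ}(i)--(ii); so the estimate survives unchanged. Second, pulling the unbounded nonnegative factor $(|\xi_k^N|-M)^+$ out of the conditional expectation via property (vi) of Proposition \ref{d-Prop-1-7} formally requires bounded $\eta$; since the conditional bound on $\mathbb{\hat{E}}[|\Delta_k|\,|\mathcal{H}_{t_k}]$ is deterministic this can be repaired by an approximation, and in any case it is exactly the same manipulation the paper itself performs with unbounded $|\xi_k|$ in the proof of Lemma \ref{d-Lem-Q2}, so you are working at the paper's own level of rigor. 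The uniform-integrability point you flag at the end does hold: bounded elements are dense in $L_G^1$ (for cylinder $X$ one has $\mathbb{\hat{E}}[(|X|-M)^+]\leq\mathbb{\hat{E}}[|X|^2]/M$, and $(|X|-M)^+\leq|X-Y|+(|Y|-M)^+$ transfers this through the completion), which together with your contraction bound $|\eta^N-\eta^{N,M}|\leq 2|\eta^N-\eta|+|\eta-\eta^M|$ closes the $\varepsilon/3$ argument.
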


For notational simplification we denote $B^{i}=B^{\mathbf{e}_{i}}$,
the $i$-th coordinate of the $G$--Brownian motion $B$, under a given
orthonormal basis $(\mathbf{e}_{1},\cdots,\mathbf{e}_{d})$ of
$\mathbb{R}^{d}$. We denote
by%
\[
(\left \langle B\right \rangle _{t})_{ij}=\left \langle
B^{i},B^{j}\right \rangle
_{t}.%
\]
$\left \langle B\right \rangle _{t}$, $t\geq0$, is an
$\mathbb{S}(n)$-valued
process. Since%
\[
=
\]
We have%
\[
\mathbb{\hat{E}}[(\left \langle B\right \rangle _{t},A)]=2G(A)t,\  \
A\in \mathbb{S}(n).
\]
Now we set a function
\[
v(t,X)=\mathbb{\hat{E}}[\varphi(X+\left \langle B\right \rangle _{t}%
)],\  \ (t,X)\in \lbrack0,\infty)\times \mathbb{S}(n).
\]

\begin{proposition}
$v$ solves the following first order PDE:
\[
\partial_{t}v=2G(Dv),\ v|_{t=0}=\varphi,\  \
\]
where $Dv=(\partial_{x_{ij}})_{i,j=1}^{d}$. We also have%
\[
v(t,X)=\sup_{\Lambda \in \Gamma}\varphi(X+t\Lambda).
\]

\end{proposition}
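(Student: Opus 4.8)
The plan is to mirror the argument used for the one-dimensional drift equation characterizing the $\mathcal{U}$-distribution, now reading $\langle B\rangle_t$ as an $\mathbb{S}(n)$-valued random variable whose mean-uncertainty set is $\Gamma$. First I would record a flow (dynamic programming) property for $v$. Since the increment $\langle B\rangle_t-\langle B\rangle_\delta$ is independent of $\mathcal{F}_\delta$ and identically distributed with $\langle B\rangle_{t-\delta}$ --- which follows entrywise from the lemma identifying an increment of $\langle B^{\mathbf{a}}\rangle$ with the quadratic variation of the shifted Brownian motion --- the independence and tower properties of $\mathbb{\hat{E}}[\cdot|\mathcal{H}_\delta]$ give, for $0\le\delta\le t$,
\[
v(t,X)=\mathbb{\hat{E}}[v(t-\delta,X+\langle B\rangle_\delta)].
\]

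Next I would prove the regularity needed to run the viscosity argument. Because $\varphi$ is locally Lipschitz and $\mathbb{\hat{E}}$ is monotone and sub-additive, $v(t,\cdot)$ inherits a local Lipschitz estimate in $X$, uniform in $t$. The decisive point --- and the reason the limiting equation is \emph{first order} rather than second order --- is that $\langle B\rangle_\delta$ is of order $\delta$: from the moment formulas for $\langle B^{\mathbf{a}}\rangle$ one has $\mathbb{\hat{E}}[|\langle B\rangle_\delta|]\le C\delta$ and $\mathbb{\hat{E}}[|\langle B\rangle_\delta|^2]\le C\delta^2$. Together with the flow property this yields $|v(t,X)-v(t-\delta,X)|\le C(1+|X|^m)\delta$, so $v$ is locally Lipschitz in $t$ as well.

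Then I would verify that $v$ solves $\partial_t v-2G(Dv)=0$ with $v|_{t=0}=\varphi$ in the viscosity sense. Fixing $(t,X)$ and a smooth $w$ with $w\ge v$ and $w(t,X)=v(t,X)$, the flow property and monotonicity give $0\le\mathbb{\hat{E}}[w(t-\delta,X+\langle B\rangle_\delta)-w(t,X)]$. A Taylor expansion produces a deterministic term $-\partial_t w(t,X)\delta$, a term linear in $\langle B\rangle_\delta$, and a remainder $I_\delta$; since $\langle B\rangle_\delta=O(\delta)$ the quadratic contribution is $O(\delta^2)$ and, using the H\"{o}lder moduli of $\partial_t w$ and $Dw$, one gets $\mathbb{\hat{E}}[|I_\delta|]=o(\delta)$. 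Because $Dw(t,X)$ is a deterministic matrix, the linear term is evaluated exactly by the identity $\mathbb{\hat{E}}[(A,\langle B\rangle_\delta)]=2G(A)\delta$:
\[
\mathbb{\hat{E}}[(Dw(t,X),\langle B\rangle_\delta)]=2G(Dw(t,X))\delta.
\]
Dividing by $\delta$ and letting $\delta\downarrow0$ gives $\partial_t w(t,X)-2G(Dw(t,X))\le0$, whence $v$ is a viscosity supersolution; the subsolution property is symmetric, testing from below.

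Finally I would identify the explicit formula. Put $\hat v(t,X)=\sup_{\Lambda\in\Gamma}\varphi(X+t\Lambda)$, which has initial value $\varphi$. Using $G(A)=\tfrac12\sup_{\Lambda\in\Gamma}(A,\Lambda)$ and, crucially, the convexity of $\Gamma$ --- so that $\tfrac{s}{s+t}\Lambda'+\tfrac{t}{s+t}\Lambda\in\Gamma$ for $\Lambda',\Lambda\in\Gamma$ --- one checks the semigroup identity
\[
\hat v(t+s,X)=\sup_{\Lambda'\in\Gamma}\hat v(t,X+s\Lambda'),
\]
from which $\hat v$ is the (Hopf--Lax type) viscosity solution of the same first-order Hamilton--Jacobi equation, exactly as in the one-dimensional case. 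By the comparison principle for this equation with convex Hamiltonian $2G$ and locally Lipschitz data of polynomial growth, the viscosity solution is unique, so $v\equiv\hat v$. I expect the main obstacle to be twofold: controlling the remainder so that $\mathbb{\hat{E}}[|I_\delta|]=o(\delta)$ in the genuinely matrix-valued Taylor expansion, and the careful use of the convexity of $\Gamma$ to establish the semigroup property that pins down $\hat v$ as the unique solution --- the uniqueness/comparison theorem itself being quoted from the viscosity-solution literature referenced earlier.
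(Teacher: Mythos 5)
Your proposal is correct and follows essentially the same route as the paper: the paper's proof is precisely a sketch consisting of the flow property $v(t+\delta,X)=\mathbb{\hat{E}}[v(t,X+\left\langle B\right\rangle_{\delta})]$ (justified by the independence and stationarity of the increments of $\left\langle B\right\rangle$) followed by the remark that the rest proceeds as in the one-dimensional $\mathcal{U}_{[\underline{\mu}t,\overline{\mu}t]}$ theorem, which is exactly the Taylor-expansion viscosity argument you carry out, with the key moment bounds $\mathbb{\hat{E}}[|\left\langle B\right\rangle_{\delta}|^{2}]=O(\delta^{2})$ and the identity $\mathbb{\hat{E}}[(A,\left\langle B\right\rangle_{\delta})]=2G(A)\delta$ playing the roles you assign them. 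Your additional observation that the convexity of $\Gamma$ underlies the semigroup identity for $\hat{v}$ fills in a detail the paper leaves implicit when it asserts that $\sup_{\Lambda\in\Gamma}\varphi(X+t\Lambda)$ is the unique solution.
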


\begin{proof}
(Sketch) We have%
\begin{align*}
v(t+\delta,X)  &  =\mathbb{\hat{E}}[\varphi(X+\left \langle B\right
\rangle _{\delta}+\left \langle B\right \rangle _{t+\delta}-\left
\langle B\right \rangle
_{\delta})]\\
&  =\mathbb{\hat{E}}[v(t,X+\left \langle B\right \rangle
_{\delta})].
\end{align*}
The rest part of the proof is as in one dimensional case.
\end{proof}

\begin{corollary}
We have%
\[
\left \langle B\right \rangle _{t}\in t\Gamma:=\{t\times
\gamma:\gamma \in \Gamma\}.
\]
or, equivalently $d_{t\Gamma}(\left \langle B\right \rangle
_{t})=0$, where $d_{U}(x)=\inf \{|x-y|:y\in U\}.$
\end{corollary}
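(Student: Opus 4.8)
The plan is to reduce the claim to the representation formula for $v(t,X)=\mathbb{\hat{E}}[\varphi(X+\left\langle B\right\rangle_{t})]$ established in the immediately preceding proposition, namely $v(t,X)=\sup_{\Lambda\in\Gamma}\varphi(X+t\Lambda)$, by feeding in the right test function. Fix $t>0$ once and for all, and set $\varphi(\cdot):=d_{t\Gamma}(\cdot)$, the distance to the (fixed) closed convex set $t\Gamma$. First I would note that $\varphi$ is an admissible test function: since $d_{t\Gamma}(x)=\inf\{|x-y|:y\in t\Gamma\}$ is $1$-Lipschitz on $\mathbb{S}(d)$ and grows at most linearly, it lies in $C_{l.Lip}(\mathbb{S}(d))$, so the proposition applies to it. Moreover $\varphi\geq0$, which is the feature that will let us conclude.

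Next I would evaluate the representation at $X=0$ and at the fixed time $t$. This gives
\[
\mathbb{\hat{E}}[d_{t\Gamma}(\left\langle B\right\rangle_{t})]=v(t,0)=\sup_{\Lambda\in\Gamma}d_{t\Gamma}(t\Lambda).
\]
The key observation is that every term in the supremum vanishes: for each $\Lambda\in\Gamma$ we have $t\Lambda\in t\Gamma=\{t\gamma:\gamma\in\Gamma\}$ by definition of $t\Gamma$, hence $d_{t\Gamma}(t\Lambda)=0$. Therefore the supremum is $0$ and $\mathbb{\hat{E}}[d_{t\Gamma}(\left\langle B\right\rangle_{t})]=0$.

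Finally I would invoke the null-space structure of the sublinear expectation to upgrade this to the pointwise (i.e.\ $L_{G}^{1}$) statement. Since $d_{t\Gamma}(\left\langle B\right\rangle_{t})\geq0$, its absolute value equals itself, so $\left\Vert d_{t\Gamma}(\left\langle B\right\rangle_{t})\right\Vert_{1}=\mathbb{\hat{E}}[d_{t\Gamma}(\left\langle B\right\rangle_{t})]=0$; as $\left\Vert\cdot\right\Vert_{1}$ is a genuine norm on $L_{G}^{1}(\mathcal{F}_{t})$ (elements of norm zero are the zero element, per the quotient construction $\mathcal{H}/\mathcal{H}_0^1$), this forces $d_{t\Gamma}(\left\langle B\right\rangle_{t})=0$ in $L_{G}^{1}(\mathcal{F}_{t})$, which is exactly the assertion $\left\langle B\right\rangle_{t}\in t\Gamma$. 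The only points requiring care are purely verificational rather than substantive: confirming that $d_{t\Gamma}\in C_{l.Lip}$ so the preceding proposition is legitimately applicable, and confirming that $d_{t\Gamma}(\left\langle B\right\rangle_{t})$ is a bona fide element of $\mathcal{H}$ (which holds because $\left\langle B\right\rangle_{t}\in L_{G}^{1}$ and $d_{t\Gamma}$ is Lipschitz). I do not expect a genuine obstacle here, since the analytic heavy lifting—the identity $v(t,X)=\sup_{\Lambda\in\Gamma}\varphi(X+t\Lambda)$—has already been carried out in the proposition we are allowed to assume.
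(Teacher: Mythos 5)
Your proposal is correct and is essentially identical to the paper's own proof: the paper likewise applies the representation $v(t,X)=\sup_{\Lambda\in\Gamma}\varphi(X+t\Lambda)$ with $\varphi=d_{t\Gamma}$ to get $\mathbb{\hat{E}}[d_{t\Gamma}(\left\langle B\right\rangle_{t})]=\sup_{\Lambda\in\Gamma}d_{t\Gamma}(t\Lambda)=0$, then concludes $d_{t\Gamma}(\left\langle B\right\rangle_{t})=0$ in $L_{G}^{1}(\mathcal{F})$. Your added verifications (that $d_{t\Gamma}$ is $1$-Lipschitz and hence in $C_{l.Lip}$, and that the quotient construction under $\left\Vert\cdot\right\Vert_{1}$ upgrades zero expectation of a nonnegative element to the zero element) are details the paper leaves implicit, and they are correct.
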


\begin{proof}
Since
\[
\mathbb{\hat{E}}[d_{t\Gamma}(\left \langle B\right \rangle
_{t})]=\sup
_{\Lambda \in \Gamma}d_{t\Gamma}(t\Lambda)=0\text{. }%
\]
It follows that $d_{t\Gamma}(\left \langle B\right \rangle _{t})=0$
in $L_{G}^{1}(\mathcal{F})$.
\end{proof}

\subsection{It\^{o}'s formula for $G$--Brownian motion}

We have the corresponding It\^{o}'s formula of $\Phi(X_{t})$ for a
\textquotedblleft$G$-It\^{o} process\textquotedblright \ $X$. For
simplification we only treat the case where the function $\Phi$ is
sufficiently regular.

\begin{lemma}
\label{d-Lem-26}Let $\Phi \in C^{2}(\mathbb{R}^{n})$ be bounded with
bounded derivatives and $\{ \partial_{x^{\mu}x^{\nu}}^{2}\Phi
\}_{\mu,\nu=1}^{n}$ are
uniformly Lipschitz. Let $s\in \lbrack0,T]$ be fixed and let $X=(X^{1}%
,\cdots,X^{n})^{T}$ be an $n$--dimensional process on $[s,T]$ of the
form
\[
X_{t}^{\nu}=X_{s}^{\nu}+\alpha^{\nu}(t-s)+\eta^{\nu ij}(\left
\langle B^{i},B^{j}\right \rangle _{t}-\left \langle
B^{i},B^{j}\right \rangle _{s})+\beta^{\nu j}(B_{t}^{j}-B_{s}^{j}),
\]
where, for $\nu=1,\cdots,n$, $i,j=1,\cdots,d$, $\alpha^{\nu}$,
$\eta^{\nu ij}$ and $\beta^{\nu ij}$ are bounded elements of
$L_{G}^{2}(\mathcal{F}_{s})$ and
$X_{s}=(X_{s}^{1},\cdots,X_{s}^{n})^{T}$ is a given
$\mathbb{R}^{n}$--vector in $L_{G}^{2}(\mathcal{F}_{s})$. Then we
have
\begin{align}
\Phi(X_{t})-\Phi(X_{s})  &  =\int_{s}^{t}\partial_{x^{\nu}}\Phi(X_{u}%
)\beta^{\nu
j}dB_{u}^{j}+\int_{s}^{t}\partial_{x_{\nu}}\Phi(X_{u})\alpha^{\nu
}du\label{d-B-Ito}\\
&  +\int_{s}^{t}[\partial_{x^{\nu}}\Phi(X_{u})\eta^{\nu ij}+\frac{1}%
{2}\partial_{x^{\mu}x^{\nu}}^{2}\Phi(X_{u})\beta^{\nu i}\beta^{\nu
j}]d\left \langle B^{i},B^{j}\right \rangle _{u}.\nonumber
\end{align}
Here we use the Einstein convention, i.e., the above repeated
indices $\mu ,\nu$, $i$ and $j$ (but not $k$) imply the summation.
\end{lemma}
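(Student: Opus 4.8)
The plan is to follow the scheme of the one-dimensional Lemma \ref{Lem-26}, replacing the scalar quadratic-variation increments by the mutual-variation increments handled in Lemma \ref{d-Lem-mutual}. First I would fix a positive integer $N$, set $\delta=(t-s)/N$, and take the partition $\pi_{[s,t]}^{N}=\{s,s+\delta,\cdots,s+N\delta=t\}$. Telescoping gives $\Phi(X_{t})=\Phi(X_{s})+\sum_{k=0}^{N-1}[\Phi(X_{t_{k+1}^{N}})-\Phi(X_{t_{k}^{N}})]$, and a second-order Taylor expansion of each summand produces the first-order term $\partial_{x^{\mu}}\Phi(X_{t_{k}^{N}})(X_{t_{k+1}^{N}}^{\mu}-X_{t_{k}^{N}}^{\mu})$, the second-order term $\frac{1}{2}\partial_{x^{\mu}x^{\nu}}^{2}\Phi(X_{t_{k}^{N}})(X_{t_{k+1}^{N}}^{\mu}-X_{t_{k}^{N}}^{\mu})(X_{t_{k+1}^{N}}^{\nu}-X_{t_{k}^{N}}^{\nu})$, plus a remainder $\eta_{k}^{N}$ built from the difference of the (uniformly Lipschitz) second derivatives of $\Phi$.

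Next I would discard the remainder: using the Lipschitz constant $c$ of $\{\partial_{x^{\mu}x^{\nu}}^{2}\Phi\}_{\mu,\nu=1}^{n}$ together with the increment bound $\mathbb{\hat{E}}[|X_{t_{k+1}^{N}}-X_{t_{k}^{N}}|^{3}]\leq C[\delta^{3}+\delta^{3/2}]$ (which follows from the moment estimates for $B^{\mathbf{a}}$ in Example \ref{d-Exam-1} and for $\langle B^{\mathbf{a}}\rangle$ in (\ref{d-Qua2}) and (\ref{d-Quad3})), one gets $\sum_{k}\mathbb{\hat{E}}[|\eta_{k}^{N}|]\rightarrow0$. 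The surviving terms split as $\xi_{t}^{N}+\zeta_{t}^{N}$, where $\xi_{t}^{N}$ collects the Riemann sums for $\int\partial_{x^{\nu}}\Phi(X_{u})\alpha^{\nu}du$, for $\int\partial_{x^{\nu}}\Phi(X_{u})\eta^{\nu ij}d\langle B^{i},B^{j}\rangle_{u}$, for $\int\partial_{x^{\nu}}\Phi(X_{u})\beta^{\nu j}dB_{u}^{j}$, and the term $\frac{1}{2}\sum_{k}\partial_{x^{\mu}x^{\nu}}^{2}\Phi(X_{t_{k}^{N}})\beta^{\mu i}\beta^{\nu j}(B_{t_{k+1}^{N}}^{i}-B_{t_{k}^{N}}^{i})(B_{t_{k+1}^{N}}^{j}-B_{t_{k}^{N}}^{j})$, while $\zeta_{t}^{N}$ gathers all remaining cross products of $\alpha$-, $\eta$-, and $\beta$-increments.

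To identify the limit of $\xi_{t}^{N}$ I would show, exactly as in the one-dimensional argument, that $\sum_{k}\partial_{x^{\mu}}\Phi(X_{t_{k}^{N}})\mathbf{I}_{[t_{k}^{N},t_{k+1}^{N})}(\cdot)\rightarrow\partial_{x^{\mu}}\Phi(X_{\cdot})$ and likewise for the second derivatives in $M_{G}^{2}(0,T)$, using $\mathbb{\hat{E}}[|X_{u}-X_{t_{k}^{N}}|^{2}]\leq C[\delta+\delta^{2}]$ and the continuity of the $dt$-, $dB$- and $d\langle B\rangle$-integrals. The one genuinely multidimensional point is the $\beta$-$\beta$ term: the product of distinct coordinate increments $(B_{t_{k+1}^{N}}^{i}-B_{t_{k}^{N}}^{i})(B_{t_{k+1}^{N}}^{j}-B_{t_{k}^{N}}^{j})$ converges to $d\langle B^{i},B^{j}\rangle$ by Lemma \ref{d-Lem-mutual}, so this sum tends to $\frac{1}{2}\int_{s}^{t}\partial_{x^{\mu}x^{\nu}}^{2}\Phi(X_{u})\beta^{\mu i}\beta^{\nu j}d\langle B^{i},B^{j}\rangle_{u}$ in $L_{G}^{1}(\mathcal{F}_{t})$, which is the crux of the argument. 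Finally $\zeta_{t}^{N}\rightarrow0$ in $L_{G}^{1}(\mathcal{F}_{t})$ by the multidimensional versions of the estimates in the remark following Lemma \ref{Lem-26}, each of which uses $\mathbb{\hat{E}}[(\langle B^{\mathbf{a}}\rangle_{t_{k+1}}-\langle B^{\mathbf{a}}\rangle_{t_{k}})^{2}|\mathcal{H}_{t_{k}}]=\sigma_{\mathbf{aa}^{T}}^{2}(t_{k+1}-t_{k})^{2}$ and Cauchy--Schwarz-type bounds to beat the number $N\sim1/\delta$ of summands. The main obstacle I expect is bookkeeping the mutual-variation cross term correctly through polarization and confirming that the mixed $\beta$-$\beta$ sum converges to the full mutual variation rather than only its diagonal; once Lemma \ref{d-Lem-mutual} is invoked this is routine, but it is the step where the multidimensional structure genuinely enters.
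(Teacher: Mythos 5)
Your proposal reproduces the paper's own argument essentially step for step: the same partition and second-order Taylor expansion with the remainder $\eta_{k}^{N}$ killed by the Lipschitz bound on $\partial^{2}_{x^{\mu}x^{\nu}}\Phi$ and the third-moment estimate $C[\delta^{3}+\delta^{3/2}]$, the same split into $\xi_{t}^{N}+\zeta_{t}^{N}$ with step-function convergence in $M_{G}^{2}(0,T)$, the same appeal to Lemma \ref{d-Lem-mutual} to send the mixed $\beta^{\mu i}\beta^{\nu j}$ increment products to the mutual-variation integral, and the same remark-based estimates (conditional second moments of $\langle B^{\mathbf{a}}\rangle$-increments plus Cauchy--Schwarz) to show $\zeta_{t}^{N}\rightarrow0$. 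You correctly identified the polarization via the mutual variation as the only genuinely multidimensional ingredient, which is exactly where the paper's proof differs from the one-dimensional Lemma \ref{Lem-26}.
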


\begin{proof}
For each positive integer $N$ we set $\delta=(t-s)/N$ and take the
partition
\[
\pi_{\lbrack
s,t]}^{N}=\{t_{0}^{N},t_{1}^{N},\cdots,t_{N}^{N}\}=\{s,s+\delta
,\cdots,s+N\delta=t\}.
\]
We have
\begin{align}
\Phi(X_{t})-\Phi(X_{s})  &  =\sum_{k=0}^{N-1}[\Phi(X_{t_{k+1}^{N}}%
)-\Phi(X_{t_{k}^{N}})]\nonumber \\
&  =\sum_{k=0}^{N-1}[\partial_{x^{\mu}}\Phi(X_{t_{k}^{N}})(X_{t_{k+1}^{N}%
}^{\mu}-X_{t_{k}^{N}}^{\mu})\nonumber \\
&  +\frac{1}{2}[\partial_{x^{\mu}x^{\nu}}^{2}\Phi(X_{t_{k}^{N}})(X_{t_{k+1}%
^{N}}^{\mu}-X_{t_{k}^{N}}^{\mu})(X_{t_{k+1}^{N}}^{\nu}-X_{t_{k}^{N}}^{\nu
})+\eta_{k}^{N}]] \label{d-Ito}%
\end{align}
where
\[
\eta_{k}^{N}=[\partial_{x^{\mu}x^{\nu}}^{2}\Phi(X_{t_{k}^{N}}+\theta
_{k}(X_{t_{k+1}^{N}}-X_{t_{k}^{N}}))-\partial_{x^{\mu}x^{\nu}}^{2}%
\Phi(X_{t_{k}^{N}})](X_{t_{k+1}^{N}}^{\mu}-X_{t_{k}^{N}}^{\mu})(X_{t_{k+1}%
^{N}}^{\nu}-X_{t_{k}^{N}}^{\nu})
\]
with $\theta_{k}\in \lbrack0,1]$. We have%
\begin{align*}
\mathbb{\hat{E}}[|\eta_{k}^{N}|]  &
=\mathbb{\hat{E}}[|[\partial_{x^{\mu
}x^{\nu}}^{2}\Phi(X_{t_{k}^{N}}+\theta_{k}(X_{t_{k+1}^{N}}-X_{t_{k}^{N}%
}))-\partial_{x^{\mu}x^{\nu}}^{2}\Phi(X_{t_{k}^{N}})]\\
&
\times(X_{t_{k+1}^{N}}^{\mu}-X_{t_{k}^{N}}^{\mu})(X_{t_{k+1}^{N}}^{\nu
}-X_{t_{k}^{N}}^{\nu})|]\\
&  \leq c\mathbb{\hat{E}[}|X_{t_{k+1}^{N}}-X_{t_{k}^{N}}|^{3}]\leq
C[\delta^{3}+\delta^{3/2}],
\end{align*}
where $c$ is the Lipschitz constant of $\{ \partial_{x^{\mu}x^{\nu}}^{2}%
\Phi \}_{\mu,\nu=1}^{d}$. In the last step we use Example
\ref{d-Exam-1} and (\ref{d-Quad3}). Thus
$\sum_{k}\mathbb{\hat{E}}[|\eta_{k}^{N}|]\rightarrow0$. The rest
terms in the summation of the right side of (\ref{d-Ito}) are $\xi
_{t}^{N}+\zeta_{t}^{N}$ with%
\begin{align*}
\xi_{t}^{N}  &  =\sum_{k=0}^{N-1}\{ \partial_{x^{\mu}}\Phi(X_{t_{k}^{N}%
})[\alpha^{\mu}(t_{k+1}^{N}-t_{k}^{N})+\eta^{\mu ij}(\left \langle B^{i}%
,B^{j}\right \rangle _{t_{k+1}^{N}}-\left \langle B^{i},B^{j}\right
\rangle
_{t_{k}^{N}})\\
&  +\beta^{\mu j}(B_{t_{k+1}^{N}}^{j}-B_{t_{k}^{N}}^{j})]+\frac{1}{2}%
\partial_{x^{\mu}x^{\nu}}^{2}\Phi(X_{t_{k}^{N}})\beta^{\mu i}\beta^{\nu
j}(B_{t_{k+1}^{N}}^{i}-B_{t_{k}^{N}}^{i})(B_{t_{k+1}^{N}}^{j}-B_{t_{k}^{N}%
}^{j})\} \\
&
\end{align*}
and
\begin{align*}
\zeta_{t}^{N}  &  =\frac{1}{2}\sum_{k=0}^{N-1}\partial_{x^{\mu}x^{\nu}}%
^{2}\Phi(X_{t_{k}^{N}})[\alpha^{\mu}(t_{k+1}^{N}-t_{k}^{N})+\eta^{\mu
ij}(\left \langle B^{i},B^{j}\right \rangle _{t_{k+1}^{N}}-\left
\langle
B^{i},B^{j}\right \rangle _{t_{k}^{N}})]\\
&  \times \lbrack \alpha^{\nu}(t_{k+1}^{N}-t_{k}^{N})+\eta^{\nu
lm}(\left \langle B^{l},B^{m}\right \rangle _{t_{k+1}^{N}}-\left
\langle B^{l},B^{m}\right \rangle
_{t_{k}^{N}})]\\
&  +[\alpha^{\mu}(t_{k+1}^{N}-t_{k}^{N})+\eta^{\mu ij}(\left \langle
B^{i},B^{j}\right \rangle _{t_{k+1}^{N}}-\left \langle
B^{i},B^{j}\right \rangle _{t_{k}^{N}})]\beta^{\nu
l}(B_{t_{k+1}^{N}}^{l}-B_{t_{k}^{N}}^{l}).
\end{align*}
We observe that, for each $u\in \lbrack t_{k}^{N},t_{k+1}^{N})$
\begin{align*}
&  \mathbb{\hat{E}}[|\partial_{x^{\mu}}\Phi(X_{u})-\sum_{k=0}^{N-1}%
\partial_{x^{\mu}}\Phi(X_{t_{k}^{N}})\mathbf{I}_{[t_{k}^{N},t_{k+1}^{N}%
)}(u)|^{2}]\\
&  =\mathbb{\hat{E}}[|\partial_{x^{\mu}}\Phi(X_{u})-\partial_{x^{\mu}}%
\Phi(X_{t_{k}^{N}})|^{2}]\\
&  \leq c^{2}\mathbb{\hat{E}}[|X_{u}-X_{t_{k}^{N}}|^{2}]\leq
C[\delta +\delta^{2}].
\end{align*}
Thus $\sum_{k=0}^{N-1}\partial_{x^{\mu}}\Phi(X_{t_{k}^{N}})\mathbf{I}%
_{[t_{k}^{N},t_{k+1}^{N})}(\cdot)$ tends to
$\partial_{x^{\mu}}\Phi(X_{\cdot })$ in $M_{G}^{2}(0,T)$. Similarly,
\[
\sum_{k=0}^{N-1}\partial_{x^{\mu}x^{\nu}}^{2}\Phi(X_{t_{k}^{N}})\mathbf{I}%
_{[t_{k}^{N},t_{k+1}^{N})}(\cdot)\rightarrow \partial_{x^{\mu}x^{\nu}}^{2}%
\Phi(X_{\cdot})\text{ in \ }M_{G}^{2}(0,T).
\]
Let $N\rightarrow \infty$. From Lemma \ref{d-Lem-mutual} as well as
the definitions of the integrations of $dt$, $dB_{t}$ and $d\left
\langle B\right \rangle _{t}$ the limit of $\xi_{t}^{N}$ in
$L_{G}^{2}(\mathcal{F}_{t})$ is just the right
hand side of (\ref{d-B-Ito}). By the next Remark we also have $\zeta_{t}%
^{N}\rightarrow0$ in $L_{G}^{2}(\mathcal{F}_{t})$. We then have
proved (\ref{d-B-Ito}).
\end{proof}

\begin{remark}
In the proof of $\zeta_{t}^{N}\rightarrow0$ in
$L_{G}^{2}(\mathcal{F}_{t})$, we use the following estimates: for
$\psi^{N}\in M_{G}^{1,0}(0,T)$ such that
$\psi_{t}^{N}=\sum_{k=0}^{N-1}\xi_{t_{k}}^{N}\mathbf{I}_{[t_{k}^{N}%
,t_{k+1}^{N})}(t)$, and $\pi_{T}^{N}=\{0\leq t_{0},\cdots,t_{N}=T\}$
with
$\lim_{N\rightarrow \infty}\mu(\pi_{T}^{N})=0$ and $\sum_{k=0}^{N-1}%
\mathbb{\hat{E}}[|\xi_{t_{k}}^{N}|](t_{k+1}^{N}-t_{k}^{N})\leq C$,
for all
$N=1,2,\cdots$, we have $\mathbb{\hat{E}}[|\sum_{k=0}^{N-1}\xi_{k}^{N}%
(t_{k+1}^{N}-t_{k}^{N})^{2}]\rightarrow0$ and, for any fixed
$\mathbf{a,\bar
{a}\in}\mathbb{R}^{d}$,%
\begin{align*}
\mathbb{\hat{E}}[|\sum_{k=0}^{N-1}\xi_{k}^{N}(\left \langle B^{\mathbf{a}%
}\right \rangle _{t_{k+1}^{N}}-\left \langle B^{\mathbf{a}}\right
\rangle
_{t_{k}^{N}})^{2}|]  &  \leq \sum_{k=0}^{N-1}\mathbb{\hat{E}[}|\xi_{k}%
^{N}|\cdot \mathbb{\hat{E}}[(\left \langle B^{\mathbf{a}}\right
\rangle
_{t_{k+1}^{N}}-\left \langle B^{\mathbf{a}}\right \rangle _{t_{k}^{N}}%
)^{2}|\mathcal{H}_{t_{k}^{N}}]]\\
&  =\sum_{k=0}^{N-1}\mathbb{\hat{E}[}|\xi_{k}^{N}|]\sigma_{\mathbf{aa}^{T}%
}^{2}(t_{k+1}^{N}-t_{k}^{N})^{2}\rightarrow0,
\end{align*}%
\begin{align*}
&  \mathbb{\hat{E}}[|\sum_{k=0}^{N-1}\xi_{k}^{N}(\left \langle B^{\mathbf{a}%
}\right \rangle _{t_{k+1}^{N}}-\left \langle B^{\mathbf{a}}\right
\rangle
_{t_{k}^{N}})(t_{k+1}^{N}-t_{k}^{N})|]\\
&  \leq \sum_{k=0}^{N-1}\mathbb{\hat{E}[}|\xi_{k}^{N}|(t_{k+1}^{N}-t_{k}%
^{N})\cdot \mathbb{\hat{E}}[(\left \langle B^{\mathbf{a}}\right
\rangle
_{t_{k+1}^{N}}-\left \langle B^{\mathbf{a}}\right \rangle _{t_{k}^{N}%
})|\mathcal{H}_{t_{k}^{N}}]]\\
&  =\sum_{k=0}^{N-1}\mathbb{\hat{E}[}|\xi_{k}^{N}|]\sigma_{\mathbf{aa}^{T}%
}(t_{k+1}^{N}-t_{k}^{N})^{2}\rightarrow0,
\end{align*}
as well as
\begin{align*}
\mathbb{\hat{E}}[|\sum_{k=0}^{N-1}\xi_{k}^{N}(t_{k+1}^{N}-t_{k}^{N}%
)(B_{t_{k+1}^{N}}^{\mathbf{a}}-B_{t_{k}^{N}}^{\mathbf{a}})|]  & \leq
\sum_{k=0}^{N-1}\mathbb{\hat{E}[}|\xi_{k}^{N}|](t_{k+1}^{N}-t_{k}%
^{N})\mathbb{\hat{E}}[|B_{t_{k+1}^{N}}^{\mathbf{a}}-B_{t_{k}^{N}}^{\mathbf{a}%
}|]\\
&  =\sqrt{\frac{2\sigma_{\mathbf{aa}^{T}}}{\pi}}\sum_{k=0}^{N-1}%
\mathbb{\hat{E}[}|\xi_{k}^{N}|](t_{k+1}^{N}-t_{k}^{N})^{3/2}\rightarrow0\
\end{align*}
and%
\begin{align*}
&  \mathbb{\hat{E}}[|\sum_{k=0}^{N-1}\xi_{k}^{N}(\left \langle B^{\mathbf{a}%
}\right \rangle _{t_{k+1}^{N}}-\left \langle B^{\mathbf{a}}\right
\rangle
_{t_{k}^{N}})(B_{t_{k+1}^{N}}^{\mathbf{\bar{a}}}-B_{t_{k}^{N}}^{\mathbf{\bar
{a}}})|]\\
&  \leq \sum_{k=0}^{N-1}\mathbb{\hat{E}[}|\xi_{k}^{N}|]\mathbb{\hat{E}%
[}(\left \langle B^{\mathbf{a}}\right \rangle _{t_{k+1}^{N}}-\left
\langle
B^{\mathbf{a}}\right \rangle _{t_{k}^{N}})|B_{t_{k+1}^{N}}^{\mathbf{\bar{a}}%
}-B_{t_{k}^{N}}^{\mathbf{\bar{a}}}|]\\
&  \leq \sum_{k=0}^{N-1}\mathbb{\hat{E}[}|\xi_{k}^{N}|]\mathbb{\hat{E}%
[}(\left \langle B^{\mathbf{a}}\right \rangle _{t_{k+1}^{N}}-\left
\langle
B^{\mathbf{a}}\right \rangle _{t_{k}^{N}})^{2}]^{1/2}\mathbb{\hat{E}%
[}|B_{t_{k+1}^{N}}^{\mathbf{\bar{a}}}-B_{t_{k}^{N}}^{\mathbf{\bar{a}}}%
|^{2}]^{1/2}\\
&  =\sum_{k=0}^{N-1}\mathbb{\hat{E}[}|\xi_{k}^{N}|]\sigma_{\mathbf{aa}^{T}%
}^{1/2}\sigma_{\mathbf{\bar{a}\bar{a}}^{T}}^{1/2}(t_{k+1}^{N}-t_{k}^{N}%
)^{3/2}\rightarrow0.
\end{align*}

\end{remark}

We now can claim our $G$--It\^{o}'s formula. Consider%
\[
X_{t}^{\nu}=X_{0}^{\nu}+\int_{0}^{t}\alpha_{s}^{\nu}ds+\int_{0}^{t}\eta
_{s}^{\nu ij}d\left \langle B^{i},B^{j}\right \rangle
_{s}+\int_{0}^{t}\beta
_{s}^{\nu j}dB_{s}^{j}%
\]

\begin{proposition}
\label{d-Prop-Ito}Let $\alpha^{\nu}$, $\beta^{\nu j}$ and $\eta^{\nu
ij}$, $\nu=1,\cdots,n$, $i,j=1,\cdots,d$ be bounded processes of
$M_{G}^{2}(0,T)$.
Then for each $t\geq0$ and $\Phi\in L_{G}^{2}(\mathcal{F}_{t})$ we have%
\begin{align}
\Phi(X_{t})-\Phi(X_{s})  &  =\int_{s}^{t}\partial_{x^{\nu}}\Phi(X_{u}%
)\beta_{u}^{\nu j}dB_{u}^{j}+\int_{s}^{t}\partial_{x_{\nu}}\Phi(X_{u}%
)\alpha_{u}^{\nu}du\label{d-Ito-form1}\\
&  +\int_{s}^{t}[\partial_{x^{\nu}}\Phi(X_{u})\eta_{u}^{\nu ij}+\frac{1}%
{2}\partial_{x^{\mu}x^{\nu}}^{2}\Phi(X_{u})\beta_{u}^{\nu
i}\beta_{u}^{\nu j}]d\left \langle B^{i},B^{j}\right \rangle
_{u}\nonumber
\end{align}

\end{proposition}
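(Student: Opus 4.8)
The plan is to follow the two-stage strategy already used for the one-dimensional Proposition~\ref{Prop-Ito}: first verify (\ref{d-Ito-form1}) when the coefficients are step processes, where Lemma~\ref{d-Lem-26} applies piecewise, and then extend to general bounded $\alpha^{\nu},\beta^{\nu j},\eta^{\nu ij}\in M_G^2(0,T)$ by approximation. Throughout I would exploit the continuity of each of the three integrals with respect to its natural norm, namely the Bochner bound (\ref{d-Bohner}), the It\^o-integral bound (\ref{d-e2}) of Lemma~\ref{d-bdd}, and the quadratic-variation bound (\ref{d-qua-ine}) of Lemma~\ref{d-Lem-Q2}.

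First I would treat the case where $\alpha^{\nu},\beta^{\nu j},\eta^{\nu ij}$ are simple, of the form $\sum_k \xi_k \mathbf{I}_{[t_k,t_{k+1})}$ with $\xi_k\in L_G^2(\mathcal{F}_{t_k})$. On each subinterval $[t_k,t_{k+1}]$ the process $X$ has exactly the constant-coefficient form treated in Lemma~\ref{d-Lem-26}, so that lemma yields (\ref{d-B-Ito}) for the increment $\Phi(X_{t_{k+1}})-\Phi(X_{t_k})$. Summing over $k$ and using the additivity of the three integrals (Proposition~\ref{d-Prop-Integ}(i) for the $dB^j$ part, together with the evident additivity of the $ds$ and $d\left\langle B^i,B^j\right\rangle$ integrals) collapses the telescoping sum and gives (\ref{d-Ito-form1}) in the step case.

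Next, for general bounded coefficients, I would select uniformly bounded step processes $\alpha^{\nu,N},\beta^{\nu j,N},\eta^{\nu ij,N}$ converging to $\alpha^{\nu},\beta^{\nu j},\eta^{\nu ij}$ in $M_G^2(0,T)$; these exist since $M_G^2(0,T)$ is the completion of $M_G^{2,0}(0,T)$, and truncation preserves uniform boundedness. Writing $X^N$ for the associated It\^o process, the bounds (\ref{d-Bohner}), (\ref{d-e2}) and (\ref{d-qua-ine}) give, exactly as in the one-dimensional computation, $\mathbb{\hat{E}}[|X_t^{\nu,N}-X_t^{\nu}|^2]\to 0$. Since $\Phi$ and its first and second derivatives are Lipschitz, the integrands on the right of (\ref{d-B-Ito}) written for $X^N$ then converge in $M_G^2(0,T)$ to those written for $X$; for the mutual-variation terms one passes to the limit via Lemma~\ref{d-Lem-mutual}. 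Passing to the limit on both sides of the step-case identity then produces (\ref{d-Ito-form1}) for $X$.

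The main obstacle I anticipate is the quadratic product $\partial^2_{x^\mu x^\nu}\Phi(X^N)\,\beta^{\mu i,N}\beta^{\nu j,N}$ integrated against $d\left\langle B^i,B^j\right\rangle$: it is a product of two approximating sequences composed with $X^N$, so bare $M_G^2$-convergence of the factors is insufficient. I would control it using the uniform boundedness of the $\beta^{\cdot,N}$ and of $\partial^2\Phi$ together with a Cauchy--Schwarz splitting (the H\"older inequality (\ref{ee4.6})) to separate the factors and establish convergence of the product in $M_G^1(0,T)$, which is precisely the topology in which Lemma~\ref{d-Lem-mutual} delivers the limit of the mutual-variation integral. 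Reconciling the $L^2$-estimates used for $X^N$ with the $L^1$-convergence required by the mutual-variation integral is the delicate bookkeeping point; once it is in place, the remainder is a routine transcription of the one-dimensional argument.
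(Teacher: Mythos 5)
Your proposal is correct and follows essentially the same route as the paper's own proof: Lemma \ref{d-Lem-26} applied piecewise in the step case, then approximation by uniformly bounded step processes converging in $M_{G}^{2}(0,T)$, the $L^{2}$-estimate $\mathbb{\hat{E}}[|X_{t}^{\nu,N}-X_{t}^{\nu}|^{2}]\rightarrow0$ via (\ref{d-Bohner}), (\ref{d-e2}) and (\ref{d-qua-ine}), convergence of the (Lipschitz-composed) integrands, and passage to the limit in (\ref{d-N-Ito}). Your extra care with the product term $\partial_{x^{\mu}x^{\nu}}^{2}\Phi(X^{N})\beta^{\mu i,N}\beta^{\nu j,N}$ is sound but not a departure; the paper handles it the same way, relying on the uniform boundedness of the approximating $\beta^{N}$ and of $\partial^{2}\Phi$.
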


\begin{proof}
We first consider the case where $\alpha$, $\eta$ and $\beta$ are
step
processes of the form%
\[
\eta_{t}(\omega)=\sum_{k=0}^{N-1}\xi_{k}(\omega)\mathbf{I}_{[t_{k},t_{k+1}%
)}(t).
\]
From the above Lemma, it is clear that (\ref{d-Ito-form1}) holds
true. Now let
\[
X_{t}^{\nu,N}=X_{0}^{\nu}+\int_{0}^{t}\alpha_{s}^{\nu,N}ds+\int_{0}^{t}%
\eta_{s}^{\nu ij,N}d\left \langle B^{i},B^{j}\right \rangle _{s}+\int_{0}%
^{t}\beta_{s}^{\nu j,N}dB_{s}^{j}%
\]
where $\alpha^{N}$, $\eta^{N}$ and $\beta^{N}$ are uniformly bounded
step processes that converge to $\alpha$, $\eta$ and $\beta$ in
$M_{G}^{2}(0,T)$ as
$N\rightarrow \infty$. From Lemma \ref{d-Lem-26}%
\begin{align}
\Phi(X_{t}^{N})-\Phi(X_{0})  &  =\int_{0}^{t}\partial_{x^{\nu}}\Phi(X_{u}%
^{N})\beta_{u}^{\nu
j,N}dB_{u}^{j}+\int_{0}^{t}\partial_{x_{\nu}}\Phi
(X_{u}^{N})\alpha_{u}^{\nu,N}du\label{d-N-Ito}\\
&  +\int_{0}^{t}[\partial_{x^{\nu}}\Phi(X_{u}^{N})\eta_{u}^{\nu
ij,N}+\frac
{1}{2}\partial_{x^{\mu}x^{\nu}}^{2}\Phi(X_{u}^{N})\beta_{u}^{\mu
i,N}\beta _{u}^{\nu j,N}]d\left \langle B^{i},B^{j}\right \rangle
_{u}\nonumber
\end{align}
Since%
\begin{align*}
&  \mathbb{\hat{E}[}|X_{t}^{N,\mu}-X_{t}^{\mu}|^{2}]\\
&  \leq C\int_{0}^{T}\{
\mathbb{\hat{E}}[(\alpha_{s}^{\mu,N}-\alpha_{s}^{\mu
})^{2}]+\mathbb{\hat{E}}[|\eta_{s}^{\mu,N}-\eta_{s}^{\mu}|^{2}]+\mathbb{\hat
{E}}[(\beta_{s}^{\mu,N}-\beta_{s}^{\mu})^{2}]\}ds\\
&
\end{align*}
We then can prove that, in $M_{G}^{2}(0,T)$,
\begin{align*}
\partial_{x^{\nu}}\Phi(X_{\cdot}^{N})\eta_{\cdot}^{\nu ij,N}  &
\rightarrow \partial_{x^{\nu}}\Phi(X_{\cdot})\eta_{\cdot}^{\nu ij}\\
\partial_{x^{\mu}x^{\nu}}^{2}\Phi(X_{\cdot}^{N})\beta_{\cdot}^{\mu i,N}%
\beta_{\cdot}^{\nu j,N}  &  \rightarrow \partial_{x^{\mu}x^{\nu}}^{2}%
\Phi(X_{\cdot})\beta_{\cdot}^{\mu i}\beta_{\cdot}^{\nu j}\\
\partial_{x_{\nu}}\Phi(X_{\cdot}^{N})\alpha_{\cdot}^{\nu,N}  &  \rightarrow
\partial_{x_{\nu}}\Phi(X_{\cdot})\alpha_{\cdot}^{\nu}\\
\partial_{x^{\nu}}\Phi(X_{\cdot}^{N})\beta_{\cdot}^{\nu j,N}  &
\rightarrow \partial_{x^{\nu}}\Phi(X_{\cdot})\beta_{\cdot}^{\nu j}%
\end{align*}
We then can pass limit in both sides of (\ref{d-N-Ito}) to get
(\ref{d-Ito-form1}).
\end{proof}

\begin{example}%
\begin{align*}
(B_{t},AB_{t})  &  =\sum_{i,j}^{d}A_{ij}B_{t}^{i}B_{t}^{j}=2\sum_{i,j}%
^{d}[A_{ij}\int_{0}^{t}B_{t}^{i}dB^{j}+\ A_{ij}\left \langle B^{i}%
,B^{j}\right \rangle _{t}]\\
&  =2\sum_{i,j}^{d}[A_{ij}\int_{0}^{t}B_{t}^{i}dB^{j}+\ (A,\left
\langle B\right \rangle _{t})].
\end{align*}
Thus
\[
\mathbb{\hat{E}}[(A,\left \langle B\right \rangle _{t})]=\mathbb{\hat{E}}%
[(B_{t},AB_{t})]=2G(A)t.
\]

\end{example}

\section{$G$--martingales, $G$--convexity and Jensen's inequality}

\subsection{The notion of $G$--martingales}

We now give the notion of $G$--martingales:

\begin{definition}
A process $(M_{t})_{t\geq0}$ is called a $G$\textbf{--martingale}
(respectively, $G$\textbf{--supermartingale},
$G$\textbf{--submartingale}) if for each $0\leq s\leq t<\infty$, we
have $M_{t}\in L_{G}^{1}(\mathcal{F}_{t})$ and
\[
\mathbb{\hat{E}}[M_{t}|\mathcal{H}_{s}]=M_{s},\  \  \
\text{(resp.,\ }\leq M_{s},\  \  \geq M_{s}).
\]

\end{definition}

It is clear that for a fixed $X\in L_{G}^{1}(\mathcal{F})$ $\mathbb{\hat{E}%
}[X|\mathcal{H}_{t}]_{t\geq0}$ is a $G$--martingale. In general how
to characterize a $G$--martingale or a $G$--supermartingale is still
a very interesting problem. But the following example gives an
important characterization:

\begin{example}
Let $M_{0}\in \mathbb{R}$, $\varphi=(\varphi^{i})_{i=1}^{d}\in M_{G}%
^{2}(0,T;\mathbb{R}^{d})$ and $\eta=(\eta^{ij})_{i,j=1}^{d}\in M_{G}%
^{2}(0,T;\mathbb{S}(d))$ be given and let%
\[
M_{t}=M_{0}+\int_{0}^{t}\varphi_{u}^{i}dB_{s}^{j}+\int_{0}^{t}\eta_{u}%
^{ij}d\left \langle B^{i},B^{j}\right \rangle
_{u}-\int_{0}^{t}2G(\eta _{u})du,\ t\in \lbrack0,T].
\]
Then $M$ is a $G$--martingale on $[0,T]$. To consider this it
suffices to prove the case $\eta \in
M_{G}^{2,0}(0,T;\mathbb{S}(d))$, i.e.,
\[
\eta_{t}=\sum_{k=0}^{N-1}\eta_{t_{k}}I_{[t_{k}.t_{k+1})}(t).
\]
We have for $s\in \lbrack t_{N-1},t_{N}]$,%
\begin{align*}
\mathbb{\hat{E}}[M_{t}|\mathcal{H}_{s}]  &  =M_{s}+\mathbb{\hat{E}}%
[\eta_{t_{N-1}}^{ij}(\left \langle B^{i},B^{j}\right \rangle
_{t}-\left \langle
B^{i},B^{j}\right \rangle _{s})-2G(\eta_{t_{N-1}})(t-s)|\mathcal{H}_{s}]\\
&  =M_{s}+\mathbb{\hat{E}}[\eta_{t_{N-1}}^{ij}(B_{t}^{i}-B_{s}^{i})(B_{t}%
^{j}-B_{s}^{j})|\mathcal{H}_{s}]-2G(\eta_{t_{N-1}})(t-s)\\
&  =M_{s}.
\end{align*}
In the last step we apply the relation (\ref{d-eq-GMB-14a}). We then
can repeat this procedure, step by step backwardly, to prove the
result for any $s\in \lbrack0,t_{N-1}]$. \
\end{example}

\begin{remark}
It is worth mentioning that for a $G$--martingale, in general, $-M$
is not a $G$--martingale. But in the above example when $\eta
\equiv0$ then $-M$ is still a $G$--martingale. This makes an
essential difference of the $dB$ part and the $d\left \langle
B\right \rangle $ part of a $G$--martingale.
\end{remark}

\subsection{$G$--convexity and Jensen's inequality for\\ $G$--expectation}

A very interesting question is whether the well--known Jensen's
inequality still holds for $G$--expectation. In the framework of
$g$--expectation this problem was investigated in \cite{BCHMP1} in
which a counterexample is given to indicate that, even for a linear
function which is obviously convex, Jensen's inequality for
$g$-expectation generally does not hold. Stimulated by this example
\cite{JC1} proved that Jensen's inequality holds for any convex
function under a $g$--expectation if and only if the corresponding
generating function $g=g(t,z)$ is super-homogeneous in $z$. Here we
will discuss this problem from a quite different point of view. We
will define a new notion of convexity:

\begin{definition}
A $C^{2}$-function $h:\mathbb{R\longmapsto R}$ is called
$G$\textbf{--convex} if the following condition holds for each
$(y,z,A)\in \mathbb{R}\times \mathbb{R}^{d}\times \mathbb{S}(d)$:
\begin{equation}
G(h^{\prime}(y)A+h^{\prime \prime}(y)zz^{T})-h^{\prime}(y)G(A)\geq
0,\  \label{d-G-conv}%
\end{equation}
where $h^{\prime}$ and $h^{\prime \prime}$ denote the first and the
second derivatives of $h$, respectively.
\end{definition}

It is clear that  in the special situation where
$G(D^{2}u)=\frac{1}{2}\Delta u$  a $G$-convex function becomes is a
convex function in the classical sense.

\begin{lemma}
The following two conditions are equivalent:\newline
\textbf{\textsl{(i)}} The function $h$ is $G$--convex.\newline
\textbf{\textsl{(ii)}} The following Jensen inequality holds: For
each $T\geq0$,
\begin{equation}
\mathbb{\hat{E}}[h(\varphi(B_{T}))]\geq
h(\mathbb{\hat{E}}[\varphi(B_{T})]),
\label{d-gg-Jensen}%
\end{equation}
for each $C^{2}$--function $\varphi$ such that $h(\varphi(B_{T}))$
and $\varphi(B_{T})\in L_{G}^{1}(\mathcal{F}_{T})$.
\end{lemma}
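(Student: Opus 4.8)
The plan is to prove the equivalence by characterizing both conditions through the nonlinear heat equation (\ref{d-eq-heat}) and the Markov--Nisio semigroup $P_t^G$ from Definition \ref{d-Def-2}. The central object is the function $u(t,x):=\mathbb{\hat{E}}[\varphi(x+\sqrt{t}\xi)]=P_t^G(\varphi)(x)$, which by the multidimensional analogue of the earlier propositions is the unique viscosity solution of $\partial_t u-G(D^2 u)=0$ with $u|_{t=0}=\varphi$, and satisfies $\mathbb{\hat{E}}[\varphi(B_T)]=u(T,0)$. The key idea is to examine the composed function $w(t,x):=h(u(t,x))$ and compare it with the solution $v(t,x):=P_t^G(h\circ\varphi)(x)$ of the same $G$--heat equation but with initial data $h\circ\varphi$. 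Then $\mathbb{\hat{E}}[h(\varphi(B_T))]=v(T,0)$ while $h(\mathbb{\hat{E}}[\varphi(B_T)])=h(u(T,0))=w(T,0)$, so the Jensen inequality (\ref{d-gg-Jensen}) is exactly $v(T,0)\geq w(T,0)$.

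The proof of (i)$\Rightarrow$(ii) would proceed by showing that $w=h\circ u$ is a viscosity subsolution of the $G$--heat equation whenever $h$ is $G$--convex, and then invoking the comparison principle (maximum principle) for the equation, which forces $w\leq v$ everywhere, in particular at $(T,0)$. To see the subsolution property, first I would treat the regular case where $u$ is a classical $C^{1,2}$ solution; here one computes directly
\begin{align*}
\partial_t w-G(D^2 w)&=h'(u)\partial_t u-G\bigl(h'(u)D^2u+h''(u)\,Du\,(Du)^T\bigr)\\
&=h'(u)G(D^2u)-G\bigl(h'(u)D^2u+h''(u)\,Du\,(Du)^T\bigr)\leq0,
\end{align*}
where the final inequality is precisely the defining condition (\ref{d-G-conv}) of $G$--convexity applied with $y=u$, $z=Du$, $A=D^2u$. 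In the general case, where $u$ is only a viscosity solution, I would run this computation at the test-function level: given $\psi\in C_b^{2,3}$ touching $w$ from above at a point, one transfers the contact to $u$ via $h$ (using that $h'\geq 0$, which follows from $G$--convexity by taking $A=0$, $z$ arbitrary, and the sublinearity of $G$), applies the subsolution property of $u$ against the test function built from $h^{-1}\circ\psi$, and concludes. The passage from the classical identity to the viscosity formulation is where I expect the main technical obstacle to lie, since one must handle the possible non-invertibility or degeneracy of $h$ carefully and confirm that the inequality (\ref{d-G-conv}) is robust under the test-function substitution.

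For the converse (ii)$\Rightarrow$(i), the strategy is to specialize the Jensen inequality to short times and well-chosen test functions $\varphi$ to recover the pointwise condition (\ref{d-G-conv}). Fixing a point $y_0\in\mathbb{R}$ and data $(z,A)\in\mathbb{R}^d\times\mathbb{S}(d)$, I would choose $\varphi(x)=y_0+\langle z,x\rangle+\tfrac12\langle Ax,x\rangle$ so that at $x=0$ one has $\varphi(0)=y_0$, $D\varphi(0)=z$, $D^2\varphi(0)=A$. Writing out both sides of (\ref{d-gg-Jensen}) as $v(T,0)\geq h(u(T,0))$, dividing by $T$, and letting $T\downarrow0$, the leading-order asymptotics give $\partial_t v(0,0)\geq h'(\varphi(0))\,\partial_t u(0,0)$; since $\partial_t v(0,0)=G(D^2(h\circ\varphi)(0))$ and $\partial_t u(0,0)=G(A)$, a short Taylor expansion using $D^2(h\circ\varphi)(0)=h'(y_0)A+h''(y_0)zz^T$ yields exactly the $G$--convexity inequality. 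The delicate point here is justifying the differentiation of the semigroup at $t=0$ and the interchange of the limit with the nonlinear operator $G$; I would rely on the local Hölder regularity in $t$ and the Lipschitz estimates established in the earlier propositions to control the error terms uniformly, so that the first-order expansion is legitimate.
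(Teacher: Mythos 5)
Your proposal follows the paper's own proof in both directions: for (i)$\Rightarrow$(ii) you compose the solution $u=P_t^G(\varphi)$ of the $G$-heat equation with $h$, verify by the same pointwise computation that $h(u)$ is a subsolution precisely because (\ref{d-G-conv}) applies with $(y,z,A)=(u,Du,D^2u)$, and conclude by the comparison principle; for (ii)$\Rightarrow$(i) you apply the inequality to the same quadratic data $\varphi(x)=y+\left \langle z,x\right \rangle +\tfrac{1}{2}\left \langle Ax,x\right \rangle $ and differentiate the semigroup at $t=0$, exactly as the paper does. Note that the paper, like your main argument, carries out the subsolution computation only when $u$ is a classical $C^{1,2}$ solution and disposes of the general case with the one-line remark that ``otherwise we can use the language of viscosity solution.''

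Where you go beyond the paper --- your sketch of the genuinely viscosity case --- there is a concrete error: $h'\geq0$ does \emph{not} follow from $G$-convexity. Taking $A=0$ in (\ref{d-G-conv}) gives only $G(h''(y)zz^{T})\geq0$ (since $G(0)=0$), which says nothing about $h'$; and taking $z=0$ gives $G(h'(y)A)\geq h'(y)G(A)$, which holds for \emph{either} sign of $h'(y)$: when $h'(y)<0$ it reads $|h'(y)|G(-A)\geq-|h'(y)|G(A)$, i.e.\ $G(A)+G(-A)\geq0$, automatic by sublinearity. Indeed, in the classical case $G(A)=\frac{1}{2}\mathrm{tr}[A]$ the condition (\ref{d-G-conv}) reduces to ordinary convexity $h''\geq0$, and convex functions such as $h(y)=y^{2}$ are not monotone. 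Consequently the substitution $h^{-1}\circ \psi$ you propose for transferring the test-function contact from $h\circ u$ to $u$ is unavailable wherever $h'$ vanishes or changes sign, and that route to the viscosity-subsolution property would fail. Since the paper itself supplies no details at this point, this does not put you behind the paper's argument, but to actually close the gap you should proceed differently --- for instance via interior regularity (the solution is classical $C^{1+\alpha/2,2+\alpha}$ when $G$ is non-degenerate, as the paper cites from Krylov and Wang), or by approximating the data and using stability of viscosity solutions --- rather than by inverting $h$.
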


\begin{proof}
(i) $\Longrightarrow$ (ii): From the definition
$u(t,x):=P_{t}^{G}[\varphi ](x)=\mathbb{\hat{E}}[\varphi(x+B_{t})]$
solves the nonlinear heat equation
(\ref{d-eq-heat}). Here we only consider the case where $u$ is a $C^{1,2}%
$-function. Otherwise we can use the language of viscosity solution.
By simple calculation we have
\[
\partial_{t}h(u(t,x))=h^{\prime}(u)\partial_{t}u=h^{\prime}(u(t,x))G(D^{2}%
u(t,x)),
\]
or%
\[
\partial_{t}h(u(t,x))-G(D^{2}h(u(t,x)))-f(t,x)=0,\ h(u(0,x))=h(\varphi(x)),
\]
where we denote%
\[
f(t,x)=h^{\prime}(u(t,x))G(D^{2}u(t,x))-G(D^{2}h(u(t,x))).
\]
Since $h$ is $G$--convex it follows that $f\leq0$ and thus $h(u)$ is
a
$G$-subsolution. It follows from the maximum principle that $h(P_{t}%
^{G}(\varphi)(x))\leq P_{t}^{G}(h(\varphi))(x)$. In particular
(\ref{d-gg-Jensen}) holds. Thus we have (ii).\newline(ii)
$\Longrightarrow$(i): For a fixed $(y,z,A)\in \mathbb{R\times
R}^{d}\times \mathbb{S}(d)$ we set $\varphi(x):=y+\left \langle
x,z\right \rangle +\frac{1}{2}\left \langle Ax,x\right \rangle $.
From the definition of $P_{t}^{G}$ we have $\partial
_{t}(P_{t}^{G}(\varphi)(x))|_{t=0}=G(D^{2}\varphi)(x)$. With (ii) we have%
\[
h(P_{t}^{G}(\varphi)(x))\leq P_{t}^{G}(h(\varphi))(x).
\]
Thus, for $t>0$,%
\[
\frac{1}{t}[h(P_{t}^{G}(\varphi)(x))-h(\varphi(x))]\leq \frac{1}{t}[P_{t}%
^{G}(h(\varphi))(x)-h(\varphi(x))]
\]
We then let $t$ tend to $0$:%
\[
h^{\prime}(\varphi(x))G(D^{2}\varphi(x))\leq
G(D_{xx}^{2}h(\varphi(x))).
\]
Since $D_{x}\varphi(x)=z+Ax$ and $D_{xx}^{2}\varphi(x)=A$ we then
set $x=0$ and obtain (\ref{d-G-conv}).
\end{proof}

\begin{proposition}
The following two conditions are equivalent:\newline
\textbf{\textsl{(i)}} the function $h$ is $G$--convex.\newline
\textbf{\textsl{(ii)}} The following Jensen inequality holds:
\begin{equation}
\mathbb{\hat{E}}[h(X)|\mathcal{H}_{t}]\geq h(\mathbb{\hat{E}}[X|\mathcal{H}%
_{t}]),\  \ t\geq0, \label{d-JensenX}%
\end{equation}
for each $X\in L_{G}^{1}(\mathcal{F})$ such that $h(X)\in L_{G}^{1}%
(\mathcal{F})$.
\end{proposition}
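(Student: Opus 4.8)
The plan is to leverage the preceding Lemma, whose proof already delivers more than its statement: for a $G$--convex $h$ and a $C^2$ test function $\varphi$ it establishes the pointwise semigroup inequality
\[
h(P_{t}^{G}(\varphi)(x))\leq P_{t}^{G}(h(\varphi))(x),\quad(t,x)\in[0,\infty)\times\mathbb{R}^{d},
\]
via the maximum principle for the $G$--heat equation (\ref{d-eq-heat}). The implication \textbf{\textsl{(ii)}}$\Rightarrow$\textbf{\textsl{(i)}} is then immediate: since every element of $L_{G}^{1}(\mathcal{F})$ is independent of $\mathcal{F}_{0}$, we have $\mathbb{\hat{E}}[\cdot|\mathcal{H}_{0}]=\mathbb{\hat{E}}[\cdot]$, so putting $t=0$ in (\ref{d-JensenX}) gives $\mathbb{\hat{E}}[h(X)]\geq h(\mathbb{\hat{E}}[X])$ for all admissible $X$, in particular for $X=\varphi(B_{T})$. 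This is exactly condition \textbf{\textsl{(ii)}} of the Lemma, hence $h$ is $G$--convex.

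For the harder implication \textbf{\textsl{(i)}}$\Rightarrow$\textbf{\textsl{(ii)}}, I would first reduce to a smooth cylinder random variable $X=\varphi(B_{t_{1}}-B_{t_{0}},\dots,B_{t_{m}}-B_{t_{m-1}})$ with $0=t_{0}<\dots<t_{m}$, and to a conditioning time $t$ which, after inserting it into the partition if necessary, is one of the points $t_{k}$. By the explicit formula (\ref{d-Condition}), $\mathbb{\hat{E}}[X|\mathcal{H}_{t_{k}}]=\psi(B_{t_{1}}-B_{t_{0}},\dots,B_{t_{k}}-B_{t_{k-1}})$ with $\psi(x_{1},\dots,x_{k})=\mathbb{\hat{E}}[\varphi(x_{1},\dots,x_{k},B_{t_{k+1}}-B_{t_{k}},\dots)]$, and similarly for $h(X)$. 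Freezing the first $k$ increments to deterministic arguments therefore turns the claimed inequality (\ref{d-JensenX}) into the \emph{unconditional} Jensen inequality $\mathbb{\hat{E}}[h(\tilde{\varphi})]\geq h(\mathbb{\hat{E}}[\tilde{\varphi}])$ for the function $\tilde{\varphi}$ of the remaining independent increments. Thus it suffices to prove the unconditional Jensen inequality for an arbitrary finite family of increments.

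To obtain the latter I would use the Markov--Nisio chain rule of Definition \ref{d-Def-2} to write the multi--increment $G$--expectation as a nested application of the one--step operators $P_{t_{j+1}-t_{j}}^{G}$, and run a backward induction. Integrating out one increment at a time produces partial functions $\Phi_{j}$ for $\tilde{\varphi}$ and $H_{j}$ for $h\circ\tilde{\varphi}$, with $\Phi_{0}=\mathbb{\hat{E}}[\tilde{\varphi}]$ and $H_{0}=\mathbb{\hat{E}}[h(\tilde{\varphi})]$. I claim $H_{j}\geq h(\Phi_{j})$ pointwise for every $j$; the base case $H_{m}=h(\Phi_{m})$ is an equality, and in the inductive step monotonicity of $\mathbb{\hat{E}}$ together with the semigroup inequality above gives
\[
H_{j-1}\geq \mathbb{\hat{E}}[h(\Phi_{j}(\cdots,\cdot))]\geq h(\mathbb{\hat{E}}[\Phi_{j}(\cdots,\cdot)])=h(\Phi_{j-1}).
\]
At $j=0$ this is precisely the unconditional Jensen inequality. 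Finally, I would pass from smooth cylinder functions to general $X\in L_{G}^{1}(\mathcal{F})$ with $h(X)\in L_{G}^{1}(\mathcal{F})$ by density of $L_{ip}^{0}(\mathcal{F})$ and the contraction property $\Vert\mathbb{\hat{E}}[X|\mathcal{H}_{t}]-\mathbb{\hat{E}}[Y|\mathcal{H}_{t}]\Vert\leq\Vert X-Y\Vert$ of the conditional expectation, combined with continuity of $h$.

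The main obstacle I anticipate is regularity and the compatibility of the approximation with the composition by $h$. The semigroup inequality is proved for $C^{2}$ (indeed $C^{1,2}$, via the maximum principle) data, whereas $\varphi\in C_{l.Lip}$ need not be smooth and $\Gamma$ may be degenerate; the iterated functions $\Phi_{j}$ and $h\circ\Phi_{j}$ must stay in a class to which the PDE comparison applies. In the nondegenerate one--dimensional case ($\underline{\sigma}>0$) the $P^{G}$--images are classical $C^{1,2}$ solutions and $h\circ\Phi_{j}$ is again $C^{2}$, so the induction runs verbatim; the general (possibly degenerate, multidimensional) case requires the viscosity--solution comparison together with mollification, and one must simultaneously control the polynomial growth of $h$ so that $h(X_{n})\to h(X)$ in $L_{G}^{1}$ along the density approximation.
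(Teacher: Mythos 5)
Your proposal is correct and follows essentially the same route as the paper: \textsl{(ii)}$\Rightarrow$\textsl{(i)} by taking $t=0$ to recover the unconditional Jensen inequality of the preceding Lemma, and \textsl{(i)}$\Rightarrow$\textsl{(ii)} by running the Lemma's semigroup inequality $h(P_{t}^{G}(\varphi))\leq P_{t}^{G}(h(\varphi))$ through the iterated definitions of $\mathbb{\hat{E}}[\cdot]$ and $\mathbb{\hat{E}}[\cdot|\mathcal{H}_{t}]$ on cylinder random variables, then extending to general $X$ by density under $\left\Vert \cdot\right\Vert =\mathbb{\hat{E}}[|\cdot|]$ and the contraction property of the conditional expectation. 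Your explicit backward induction and the regularity/mollification caveats you flag simply spell out what the paper's one-paragraph sketch (``by using the procedure of the definition\ldots then extend\ldots to the general situation'') leaves implicit.
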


\begin{proof}
The part (ii) $\Longrightarrow$(i) is already provided by the above
lemma. We can also apply this lemma to prove (\ref{d-JensenX}) for
the case $X\in
L_{ip}^{0}(\mathcal{F})$ of the form $X=\varphi(B_{t_{1}},\cdots,B_{t_{m}%
}-B_{t_{m-1}})$ by using the procedure of the definition of $\mathbb{\hat{E}%
}[\cdot]$ and $\mathbb{\hat{E}}[\cdot|\mathcal{H}_{t}]$ given in
Definitions \ref{d-Def-3} and \ref{d-Def-3-1}, respectively. We then
can extend this Jensen's inequality, under the norm $\left \Vert
\cdot \right \Vert =\mathbb{\hat{E}}[|\cdot|]$ to the general
situation.
\end{proof}

\begin{remark}
The above notion of $G$--convexity can be also applied to the case
where the nonlinear heat equation (\ref{d-eq-heat}) has a more
general form:
\begin{equation}
\frac{\partial u}{\partial t}-G(u,\nabla u,D^{2}u)=0,\  \
u(0,x)=\psi(x)
\label{d-PG-psi}%
\end{equation}
(see Examples 4.3, 4.4 and 4.5 in \cite{Peng2005}). In this case a $C^{2}%
$-function $h:\mathbb{R\longmapsto R}$ is said to be
$G$--\textbf{convex} if the following condition holds for each
$(y,z,A)\in \mathbb{R}\times \mathbb{R}^{d}\times \mathbb{S}(d)$:
\[
G(y,h^{\prime}(y)z,h^{\prime}(y)A+h^{\prime
\prime}(y)zz^{T})-h^{\prime }(y)G(y,z,A)\geq0.
\]
We don't need the subadditivity and/or positive homogeneity of
$G(y,z,A)$. A particularly interesting situation is the case of
$g$--expectation for a given generating function $g=g(y,z)$,
$(y,z)\in \mathbb{R}\times \mathbb{R}^{d}$, in this case we have the
following $g$--convexity:
\begin{equation}
\frac{1}{2}h^{\prime
\prime}(y)|z|^{2}+g(h(y),h^{\prime}(y)z)-h^{\prime
}(y)g(y,z)\geq0. \label{d-g-convex}%
\end{equation}
This situation is systematically studied in Jia and Peng
\cite{JiaPeng}.
\end{remark}

\begin{example}
Let $h$ be a $G$--convex function and $X\in L_{G}^{1}(\mathcal{F})$
such that $h(X)\in L_{G}^{1}(\mathcal{F})$. Then $Y_{t}=h(\mathbb{\hat{E}%
}[X|\mathcal{H}_{t}])$, $t\geq0$, is a $G$--submartingale: For each
$s\leq t$,
\[
\mathbb{\hat{E}}[Y_{t}|\mathcal{H}_{s}]=\mathbb{\hat{E}}[h(\mathbb{\hat{E}%
}[X|\mathcal{F}_{t}])|\mathcal{F}_{s}]\geq h(\mathbb{\hat{E}}[X|\mathcal{F}%
_{s}])=Y_{s}\text{.}%
\]

\end{example}

\section{Stochastic differential equations}

We consider the following SDE driven by $G$-Brownian motion.%
\begin{equation}
X_{t}=X_{0}+\int_{0}^{t}b(X_{s})ds+\int_{0}^{t}h_{ij}(X_{s})d\left
\langle
B^{i},B^{j}\right \rangle _{s}+\int_{0}^{t}\sigma_{j}(X_{s})dB_{s}^{j}%
,\ t\in \lbrack0,T], \label{d-SDE}%
\end{equation}
where the initial condition $X_{0}\in \mathbb{R}^{n}$ is given and
\[
b,h_{ij},\sigma_{j}:\mathbb{R}^{n}\mapsto \mathbb{R}^{n}%
\]
are given Lipschitz functions, i.e.,
$|\varphi(x)-\varphi(x^{\prime})|\leq K|x-x^{\prime}|$, for each
$x$, $x^{\prime}\in \mathbb{R}^{n}$, $\varphi=b$, $\eta_{ij}$ and
$\sigma_{j}$, respectively. Here the horizon $[0,T]$ can be
arbitrarily large. The solution is a process $X\in
M_{G}^{2}(0,T;\mathbb{R}^{n})$ satisfying the above SDE. We first
introduce the following mapping on a fixed
interval $[0,T]$:%
\[
\Lambda_{\cdot}(Y):=Y\in M_{G}^{2}(0,T;\mathbb{R}^{n})\longmapsto M_{G}%
^{2}(0,T;\mathbb{R}^{n})\  \
\]
by setting $\Lambda_{t}=X_{t}$, $t\in \lbrack0,T]$, with
\[
\Lambda_{t}(Y)=X_{0}+X_{0}+\int_{0}^{t}b(Y_{s})ds+\int_{0}^{t}h_{ij}%
(Y_{s})d\left \langle B^{i},B^{j}\right \rangle
_{s}+\int_{0}^{t}\sigma _{j}(Y_{s})dB_{s}^{j}.
\]

We immediately have

\begin{lemma}
For each $Y,Y^{\prime}\in M_{G}^{2}(0,T;\mathbb{R}^{n})$ we have the
following estimate:%
\[
\mathbb{\hat{E}}[|\Lambda_{t}(Y)-\Lambda_{t}(Y^{\prime})|^{2}]\leq
C\int _{0}^{t}\mathbb{\hat{E}}[|Y_{s}-Y_{s}^{\prime}|^{2}]ds,\ t\in
\lbrack0,T],
\]
where the constant $C$ depends only on $K$, $\Gamma$ and the
dimension $n$.
\end{lemma}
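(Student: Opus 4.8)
The plan is to bound the three stochastic terms that make up the difference $\Lambda_t(Y)-\Lambda_t(Y')$ separately and then combine them using the elementary inequality $|a+b+c|^2\leq 3(|a|^2+|b|^2+|c|^2)$, which follows from (\ref{ee4.3}) with $r=2$. Since the $X_0$ terms cancel in the difference, I would write
\[
\Lambda_t(Y)-\Lambda_t(Y')=\int_0^t[b(Y_s)-b(Y_s')]\,ds+\int_0^t[h_{ij}(Y_s)-h_{ij}(Y_s')]\,d\langle B^i,B^j\rangle_s+\int_0^t[\sigma_j(Y_s)-\sigma_j(Y_s')]\,dB_s^j,
\]
and apply $\mathbb{\hat{E}}[\cdot]$ to the square.

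First I would treat the $ds$ integral using the Bochner estimate (\ref{ine-dt}) with $p=2$, giving a bound by $t\int_0^t\mathbb{\hat{E}}[|b(Y_s)-b(Y_s')|^2]\,ds$, and then invoke the Lipschitz property $|b(Y_s)-b(Y_s')|\leq K|Y_s-Y_s'|$. Next I would handle the $dB^j$ term via the It\^o isometry-type bound (\ref{e2}), summing over $j$, which contributes a factor depending on $\sigma_{\mathbf{e}_j\mathbf{e}_j^T}$ and hence on $\Gamma$; each summand is again controlled by $K^2\mathbb{\hat{E}}[|Y_s-Y_s'|^2]$. Finally, the $d\langle B^i,B^j\rangle$ term is the most delicate: I would estimate it using (\ref{qua-ine}) together with the Cauchy--Schwarz-type control on the mutual variation. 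Since $\langle B^i,B^j\rangle$ is a difference of quadratic variations (by the polarization identity defining it), its increments are bounded in the relevant norm by constants depending on $\Gamma$, so one obtains a bound of the form $C'\int_0^t\mathbb{\hat{E}}[|h_{ij}(Y_s)-h_{ij}(Y_s')|^2]\,ds$ and again applies the Lipschitz estimate.

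Collecting the three pieces and absorbing $t\leq T$, the sum of the prefactors is a single constant $C$ that depends only on the Lipschitz constant $K$, the set $\Gamma$ (through the $\sigma_{\mathbf{aa}^T}$ factors appearing in the $dB$ and $d\langle B\rangle$ bounds), and the dimension $n$ (through the finite sums over the indices $\mu,\nu,i,j$). This yields exactly
\[
\mathbb{\hat{E}}[|\Lambda_t(Y)-\Lambda_t(Y')|^2]\leq C\int_0^t\mathbb{\hat{E}}[|Y_s-Y_s'|^2]\,ds.
\]

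The main obstacle I expect is the $d\langle B^i,B^j\rangle$ term, since the mutual variation process is not itself a quadratic variation and the basic estimate (\ref{qua-ine}) is stated only for $d\langle B^{\mathbf{a}}\rangle$. I would need to reduce to that case through the polarization identity $\langle B^i,B^j\rangle=\tfrac14(\langle B^{\mathbf{e}_i+\mathbf{e}_j}\rangle-\langle B^{\mathbf{e}_i-\mathbf{e}_j}\rangle)$, apply (\ref{qua-ine}) to each summand, and verify that the resulting constants are finite and depend only on $K$, $\Gamma$ and $n$; the bookkeeping of the doubly-indexed sum is the only genuinely fiddly part, and everything else is a routine application of the norm inequalities already established.
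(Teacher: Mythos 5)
Your proposal is correct and follows essentially the same route as the paper, whose entire proof of this lemma reads that it is ``a direct consequence of'' the Bochner estimate (\ref{d-Bohner}), the It\^{o}-isometry-type bound (\ref{d-e2}) and the quadratic-variation estimate (\ref{d-qua-ine}) --- precisely the three tools you deploy (in their $d$-dimensional forms), combined with the Lipschitz hypothesis and $|a+b+c|^{2}\leq 3(|a|^{2}+|b|^{2}+|c|^{2})$. The ``delicate'' mutual-variation term you worry about is handled exactly as you suggest: the paper \emph{defines} $\int_{0}^{T}\eta_{s}\,d\left\langle B^{\mathbf{a}},B^{\mathbf{\bar{a}}}\right\rangle _{s}$ through the polarization identity as a difference of integrals against $d\left\langle B^{\mathbf{a}+\mathbf{\bar{a}}}\right\rangle$ and $d\left\langle B^{\mathbf{a}-\mathbf{\bar{a}}}\right\rangle$, so applying (\ref{d-qua-ine}) (with the Cauchy--Schwarz step you indicate to pass from the $L^{1}$- to the $L^{2}$-type bound) to each summand is built into the framework, and the finite sums over $i,j$ contribute only the $\Gamma$- and $n$-dependent constants you describe.
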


\begin{proof}
This is a direct consequence of the inequalities (\ref{d-Bohner}),
(\ref{d-e2}) and (\ref{d-qua-ine}).
\end{proof}

We now prove that SDE (\ref{d-SDE}) has a unique solution. By
multiplying $e^{-2Ct}$ on both sides of the above inequality and
then integrating them on
$[0,T]$, it follows that%
\begin{align*}
\int_{0}^{T}\mathbb{\hat{E}}[|\Lambda_{t}(Y)-\Lambda_{t}(Y^{\prime}%
)|^{2}]e^{-2Ct}dt  &  \leq C\int_{0}^{T}e^{-2Ct}\int_{0}^{t}\mathbb{\hat{E}%
}[|Y_{s}-Y_{s}^{\prime}|^{2}]dsdt\\
&
=C\int_{0}^{T}\int_{s}^{T}e^{-2Ct}dt\mathbb{\hat{E}}[|Y_{s}-Y_{s}^{\prime
}|^{2}]ds\\
&  =(2C)^{-1}C\int_{0}^{T}(e^{-2Cs}-e^{-2CT})\mathbb{\hat{E}}[|Y_{s}%
-Y_{s}^{\prime}|^{2}]ds.
\end{align*}
We then have
\[
\int_{0}^{T}\mathbb{\hat{E}}[|\Lambda_{t}(Y)-\Lambda_{t}(Y^{\prime}%
)|^{2}]e^{-2Ct}dt\leq \frac{1}{2}\int_{0}^{T}\mathbb{\hat{E}}[|Y_{t}%
-Y_{t}^{\prime}|^{2}]e^{-2Ct}dt.
\]
We observe that the following two norms are equivalent in $M_{G}%
^{2}(0,T;\mathbb{R}^{n})$
\[
\int_{0}^{T}\mathbb{\hat{E}}[|Y_{t}|^{2}]dt\thicksim \int_{0}^{T}%
\mathbb{\hat{E}}[|Y_{t}|^{2}]e^{-2Ct}dt.
\]
From this estimate we can obtain that $\Lambda(Y)$ is a contraction
mapping. Consequently, we have

\begin{theorem}
There exists a unique solution $X\in M_{G}^{2}(0,T;\mathbb{R}^{n})$
of the stochastic differential equation (\ref{d-SDE}).
\end{theorem}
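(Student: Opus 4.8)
The plan is to establish existence and uniqueness by the classical Picard/Banach fixed-point method, carried out with respect to the weighted norm that the text has already introduced. The solution we seek is a fixed point of the map $\Lambda$ defined on $M_{G}^{2}(0,T;\mathbb{R}^{n})$, so the whole argument reduces to showing that $\Lambda$ is a contraction in a suitable complete metric, and then invoking the contraction mapping principle on the Banach space $M_{G}^{2}(0,T;\mathbb{R}^{n})$.

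First I would recall the one-step estimate already proved in the preceding Lemma, namely
\[
\mathbb{\hat{E}}[|\Lambda_{t}(Y)-\Lambda_{t}(Y^{\prime})|^{2}]\leq
C\int_{0}^{t}\mathbb{\hat{E}}[|Y_{s}-Y_{s}^{\prime}|^{2}]ds,\ t\in[0,T],
\]
where $C$ depends only on $K$, $\Gamma$ and $n$. This is the genuine input; it in turn rests on the three continuity inequalities (\ref{d-Bohner}), (\ref{d-e2}) and (\ref{d-qua-ine}) together with the elementary inequality $|a+b+c+d|^2 \le 4(|a|^2+|b|^2+|c|^2+|d|^2)$ and the Lipschitz bound on $b$, $h_{ij}$, $\sigma_{j}$. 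Since this Lemma is stated earlier I may assume it directly.

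Next I would multiply that estimate by $e^{-2Ct}$, integrate over $[0,T]$, and apply the Fubini-type exchange of the order of integration in $t$ and $s$ exactly as displayed in the text, obtaining
\[
\int_{0}^{T}\mathbb{\hat{E}}[|\Lambda_{t}(Y)-\Lambda_{t}(Y^{\prime})|^{2}]e^{-2Ct}dt
\leq \frac{1}{2}\int_{0}^{T}\mathbb{\hat{E}}[|Y_{t}-Y_{t}^{\prime}|^{2}]e^{-2Ct}dt.
\]
The key bookkeeping point is that $(2C)^{-1}C(e^{-2Cs}-e^{-2CT})\le \tfrac12 e^{-2Cs}$, which yields the contraction factor $\tfrac12$. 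I would then observe that the two norms $\int_{0}^{T}\mathbb{\hat{E}}[|Y_{t}|^{2}]dt$ and $\int_{0}^{T}\mathbb{\hat{E}}[|Y_{t}|^{2}]e^{-2Ct}dt$ are equivalent on $M_{G}^{2}(0,T;\mathbb{R}^{n})$ (since $e^{-2CT}\le e^{-2Ct}\le 1$), so completeness of the space holds equally under the weighted norm.

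Finally, with $\Lambda$ a strict contraction on a Banach space under the equivalent weighted norm, the Banach fixed-point theorem gives a unique fixed point $X\in M_{G}^{2}(0,T;\mathbb{R}^{n})$, which is precisely the unique solution of (\ref{d-SDE}). I expect the only subtlety—rather than a true obstacle—to be the weighting trick: choosing the exponential weight $e^{-2Ct}$ is what converts the merely bounded one-step Gronwall-type estimate into a genuine contraction on the whole interval $[0,T]$ in a single stroke, avoiding any need to iterate over short subintervals and patch solutions together. Everything else is the routine verification that $\Lambda$ maps $M_{G}^{2}$ into itself and the elementary manipulation of the iterated integral.
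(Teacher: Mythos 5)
Your proposal is correct and follows essentially the same route as the paper's own proof: the same contraction estimate from the preceding lemma, the same multiplication by $e^{-2Ct}$ with exchange of the order of integration to extract the factor $\tfrac{1}{2}$, and the same equivalence of the weighted and unweighted norms on $M_{G}^{2}(0,T;\mathbb{R}^{n})$ before invoking the Banach fixed-point theorem. Nothing essential is missing.
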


\chapter{Other Topics and Applications}

\section{Nonlinear Feynman-Kac formula}

Consider SDE:%
\begin{align*}
dX_{s}^{t,x}  &  =b(X_{s}^{t,x})ds+h(X_{s}^{t,x})d\left \langle
B\right \rangle
_{s}+\sigma(X_{s}^{t,x})dB_{s},\ s\in \lbrack t,T],\\
X_{x}^{t,x}  &  =x.
\end{align*}%
\[
Y_{s}^{t,x}=\mathbb{\hat{E}}[\Phi(X_{T}^{t,x})+\int_{s}^{T}g(X_{r}^{t,x}%
,Y_{r}^{t,x})dr+\int_{s}^{T}f(X_{r}^{t,x},Y_{r}^{t,x})d\left \langle
B\right \rangle _{r}|\mathcal{H}_{s}].
\]
It is clear that $u(t,x):=Y_{t}^{t,x}\in \mathcal{H}_{0}^{t}$, thus
it is a
deterministic function of $(t,x)$. $u(t,x)$ solves%
$$\left\{\begin{array}[c]{l}
\partial_{t}u+   \sup \{ \left(  b(x)+h(x)\gamma,\nabla u\right)  +\frac
{1}{2}\left(  \sigma(x)\gamma \sigma^{T}(x),D^{2}u\right) \\
\  \  \  \  \   \  \  \  \  \  \  \  \  \  \  \  \  \  \  \  \  \ +g(x,u)+f(x,u)\gamma \}=0,\\
u|_{t=T}    =\Phi.
\end{array}
\right.
$$

\begin{example}
Let $B=(B^{1},B^{2})$ be a $2$-dimensional $G$-Browian motion with
\[
G(A)=G_{1}(a_{11})+G_{2}(a_{22})
\]%
\[
G_{i}(a)=\frac{1}{2}(\overline{\sigma}_{i}^{2}a^{+}-\underline{\sigma}_{i}%
^{2}a^{-})
\]
In this case by It\^{o}'s formula
\end{example}

\[
dX_{s}=\mu X_{s}ds+\nu X_{s}d\left \langle B^{1}\right \rangle
_{s}+\sigma X_{s}dB_{s}^{2},\  \ X_{t}=x.
\]
\newline%
\[
u(t,x)=\mathbb{\hat{E}}[\varphi(X_{T}^{t,x})]=\mathbb{\hat{E}}[\varphi
(X_{T}^{t,x})|\mathcal{F}_{t}].
\]
We have%
\[
u(t,x)=\mathbb{\hat{E}}[u(t+\delta,X_{t+\delta}^{t,x})].
\]
From which it is easy to prove that%
\[
\partial_{t}u+\sup_{\gamma \in \lbrack \underline{\sigma}_{1}^{2},\overline
{\sigma}_{1}^{2}]}(\mu+v\gamma)x\partial_{x}u+\frac{x^{2}}{2}\sup_{\gamma
\in \lbrack
\underline{\sigma}_{2}^{2},\overline{\sigma}_{2}^{2}]}[\partial
_{xx}^{2}u]=0.
\]

\section{Markov-Nisio process}

\begin{definition}
A $n$-dimensional process $(X_{t})_{t\geq0}$ on $(\Omega,\mathcal{H}%
,(\mathcal{H}_{t})_{t\geq0},\mathbb{\hat{E}})$ is called a
Markov-Nisio process if for each $0\leq s\leq t$ and each $\varphi
\in C_{b}(\mathbb{R}^{n})$ there exists a $\psi \in
C_{b}(\mathbb{R}^{n})$ such that
\[
\mathbb{\hat{E}}[\varphi(X_{t})|\mathcal{H}_{s}]=\psi(X_{s}).
\]

\end{definition}

\begin{remark}
When $\mathbb{\hat{E}}$ is a linear expectation then this notion
describes a classical Markovian process related to Markovian group.
Nisio's semigroup (see Nision \cite{Nisio1}, \cite{Nisio2}) extended
Markovian group to sublinear cases to describe the value function of
an optimal control system.
\end{remark}

\subsection{Martingale problem}

For a given function
$F(x,p,A):\mathbb{R}^d\times\mathbb{R}^d\times\mathbb{S}(d)\longmapsto
\mathbb{R}$ we call a sublinear expectation space solves the
martingale problem related to $F$ if there exists a time
consistent nonlinear expectation space $(\Omega,\mathcal{H},(\mathcal{H}%
_{t})_{t\geq0},\mathbb{\hat{E}})$ such that, for each $\varphi \in
C_{b}^{\infty}(\mathbb{R}^{n})$ we have
\[
M_{t}:=\varphi(X_{t})-\varphi(X_{0})-\int_{0}^{t}F(X_{s},D\varphi(X_{s}%
),D^{2}\varphi(X_{s}))ds,\  \ t\geq0
\]
is a $\mathbb{\hat{E}}$-martingale. (see P. \cite{Peng2004},
\cite{Peng2005}).

\section{Pathwise properties of $G$-Brownian motion}

It is proved that
\[
\mathbb{\hat{E}}[X]=\sup_{Q\in \mathcal{P}}\mathbb{E}_{Q}[X]
\]
where $\mathcal{P}$ is a family of probabilities on $(\Omega,\mathcal{B}%
(\Omega))$, $\Omega=C_{0}(0,\infty;\mathbb{R}^{d})$. We set
\[
\hat{c}(A)=\sup_{Q\in \mathcal{P}}\mathbb{E}_{Q}[\mathbf{1}_{A}]
\]
${\hat c}(\cdot)$ is a Choquet capacity. We have

\begin{theorem}
(Denis-Hu-Peng) There exists a continuous version of $G$-Brownian
motion: We can find a pathwise process
$(\tilde{B}_{t}(\omega))_{t\geq0}$ on some $\tilde{\Omega}\subset
\Omega$, with $\hat{c}(\tilde{\Omega}^{c})=0$, such that, for each
$\omega \in \tilde{\Omega}$, $\tilde{B}_{t}(\omega)\in
C_{0}(0,\infty)$ and $\mathbb{\hat{E}}[|B_{t}-\tilde{B}_{t}|]=0$,
$\forall t\in \lbrack0,\infty)$.
\end{theorem}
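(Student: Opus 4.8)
The plan is to establish a Kolmogorov-type continuity criterion adapted to the capacity $\hat c$, exploiting the representation $\mathbb{\hat{E}}[X]=\sup_{Q\in\mathcal{P}}\mathbb{E}_Q[X]$ to turn a single moment estimate for $\mathbb{\hat{E}}$ into a bound that holds \emph{uniformly} over the whole family $\mathcal{P}$. First I would record the fourth-moment estimate for increments. From the identities computed in Example \ref{d-Exam-1} (the relations $\mathbb{\hat{E}}[(B_t^{\mathbf{a}}-B_s^{\mathbf{a}})^{4}|\mathcal{H}_s]=3\sigma_{\mathbf{aa}^{T}}^{2}(t-s)^{2}$, applied coordinatewise and summed) one obtains a constant $C$ with $\mathbb{\hat{E}}[|B_t-B_s|^{4}]\le C|t-s|^{2}$ for all $0\le s\le t$. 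By the representation this yields $\mathbb{E}_Q[|B_t-B_s|^{4}]\le C|t-s|^{2}$ simultaneously for every $Q\in\mathcal{P}$, which is the decisive uniformity.

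Next, fix $T>0$ and consider the dyadic times $t_k^n=kT2^{-n}$. For a fixed exponent $0<\gamma<\tfrac14$ define the bad event
\[
A_n=\bigcup_{k=0}^{2^n-1}\{|B_{t_{k+1}^n}-B_{t_k^n}|>2^{-\gamma n}\}.
\]
By Chebyshev's inequality and the uniform moment bound, $\mathbb{E}_Q[\mathbf{1}_{A_n}]\le 2^n\cdot 2^{4\gamma n}\,C(T2^{-n})^{2}=CT^{2}2^{n(4\gamma-1)}$ uniformly in $Q$, hence $\hat c(A_n)=\sup_{Q\in\mathcal{P}}\mathbb{E}_Q[\mathbf{1}_{A_n}]\le CT^{2}2^{n(4\gamma-1)}$, which is summable since $4\gamma-1<0$. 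Because $\hat c$ is a Choquet capacity, and in particular countably subadditive, I then get
\[
\hat c\Bigl(\limsup_n A_n\Bigr)\le\inf_N\sum_{n\ge N}\hat c(A_n)=0.
\]
Here I would stress that only the subadditive (first) Borel--Cantelli direction is available in the sublinear setting -- there is no independence and no additivity to exploit -- but that is precisely the direction the chaining argument needs.

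On the full-capacity complement, the classical dyadic chaining shows that $t\mapsto B_t(\omega)$, restricted to the dyadic rationals of $[0,T]$, is Hölder of every exponent $<\gamma$, hence uniformly continuous, and therefore extends uniquely to a continuous path $\tilde B^{T}_{\cdot}(\omega)$ on $[0,T]$. Running the exhaustion $T=1,2,3,\dots$ and intersecting the corresponding full-capacity sets (again by countable subadditivity of $\hat c$) produces a single $\tilde\Omega$ with $\hat c(\tilde\Omega^{c})=0$ on which all extensions are consistent, so $\tilde B_{\cdot}(\omega)\in C_0(0,\infty)$ for every $\omega\in\tilde\Omega$. Finally, for each fixed $t$ the dyadic approximation gives $\tilde B_t=B_t$ on all of $\tilde\Omega$, whence $\{B_t\ne\tilde B_t\}\subset\tilde\Omega^{c}$ has capacity zero and $\mathbb{\hat{E}}[|B_t-\tilde B_t|]=0$.

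I expect the genuinely delicate point to be the passage from the per-$Q$ estimates to one capacity statement: justifying that the exceptional null set can be chosen independently of $Q$, of $t$, and of the dyadic level, while still assembling a single $\tilde\Omega$ valid for all horizons $T$. This rests entirely on the uniform bound supplied by the representation together with the countable subadditivity and regularity of $\hat c$ as a Choquet capacity; the remaining chaining and extension steps are the routine part of the argument.
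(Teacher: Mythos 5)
The paper states this theorem without proof: it is imported from Denis--Hu--Peng, and the surrounding text records only the representation $\mathbb{\hat{E}}[X]=\sup_{Q\in\mathcal{P}}\mathbb{E}_{Q}[X]$ and the definition of $\hat{c}$, so there is no in-paper argument to compare against. Your route --- the uniform fourth-moment bound $\mathbb{\hat{E}}[|B_{t}-B_{s}|^{4}]\leq C|t-s|^{2}$ obtained coordinatewise from $\mathbb{\hat{E}}[(B_{t}^{\mathbf{a}}-B_{s}^{\mathbf{a}})^{4}|\mathcal{H}_{s}]=3\sigma_{\mathbf{aa}^{T}}^{2}(t-s)^{2}$, Chebyshev under each $Q$ separately, countable subadditivity of $\hat{c}$ (immediate since each $Q\in\mathcal{P}$ is $\sigma$-additive), the first Borel--Cantelli lemma, dyadic chaining, and exhaustion over horizons $T=1,2,\dots$ --- is precisely the Kolmogorov continuity criterion transplanted to the capacity setting, and it is essentially how the cited work argues. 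Your remark that only the subadditive direction of Borel--Cantelli survives under a sublinear expectation, and that this is the only direction chaining needs, is exactly the right observation, and the arithmetic ($\hat{c}(A_{n})\leq CT^{2}2^{n(4\gamma-1)}$, summable for $\gamma<\tfrac14$) is correct.

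There is, however, one genuine gap, at the very last step. You assert that ``for each fixed $t$ the dyadic approximation gives $\tilde{B}_{t}=B_{t}$ on all of $\tilde{\Omega}$'', so that $\{B_{t}\neq\tilde{B}_{t}\}\subset\tilde{\Omega}^{c}$. That is true only for dyadic $t$: membership in $\tilde{\Omega}$ was defined entirely through the values of $\omega$ at dyadic times, and for non-dyadic $t$ nothing constrains $B_{t}(\omega)$ to agree with the dyadic limit $\tilde{B}_{t}(\omega)=\lim_{m}B_{s_{m}}(\omega)$. (Note that if one reads the canonical space $\Omega=C_{0}$ literally, $B$ is pathwise continuous and the theorem is vacuous with $\tilde{B}=B$; the statement has content exactly in the abstract setting where the value at a non-dyadic time is not determined by the dyadic skeleton --- and that is precisely where your inclusion fails.) The repair is short but must be said. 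From $B_{t}-B_{s}\sim\sqrt{t-s}\,\xi$ one has $\mathbb{\hat{E}}[|B_{t}-B_{s}|]\leq C|t-s|^{1/2}$. Setting $\tilde{B}\equiv0$ on $\tilde{\Omega}^{c}$, Fatou's lemma under each fixed $Q\in\mathcal{P}$ gives
\[
\mathbb{E}_{Q}\bigl[|B_{t}-\tilde{B}_{t}|\mathbf{1}_{\tilde{\Omega}}\bigr]\leq\liminf_{m}\mathbb{E}_{Q}[|B_{t}-B_{s_{m}}|]\leq\liminf_{m}\mathbb{\hat{E}}[|B_{t}-B_{s_{m}}|]=0,
\]
while $\mathbb{E}_{Q}[|B_{t}-\tilde{B}_{t}|\mathbf{1}_{\tilde{\Omega}^{c}}]=0$ because $Q(\tilde{\Omega}^{c})\leq\hat{c}(\tilde{\Omega}^{c})=0$; taking the supremum over $Q\in\mathcal{P}$ (which is what $\mathbb{\hat{E}}$ must mean here, since $|B_{t}-\tilde{B}_{t}|$ is not a cylinder function) yields $\mathbb{\hat{E}}[|B_{t}-\tilde{B}_{t}|]=0$. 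Equivalently, for each fixed $t$ you may rerun your grid argument with $t$ adjoined to the dyadics. With this one correction your proof is complete.
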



\chapter{Appendix}
We will use the following known result in viscosity solution theory
(see Theorem 8.3 of Crandall Ishii and Lions \cite{CIL}).

\begin{theorem}
\label{Thm-8.3} Let $u_{i}\in$USC$((0,T)\times Q_{i})$ for
$i=1,\cdots,k$ where $Q_{i}$ is a locally compact subset of
$\mathbb{R}^{N_{i}}$. Let $\varphi$ be defined on an open
neighborhood of $(0,T)\times Q_{1}\times \cdots \times Q_{k}$ and
such that $(t,x_{1},\cdots,x_{k})$ is once continuously
differentiable in $t$ and twice continuously differentiable in
$(x_{1},\cdots,x_{k})\in
Q_{1}\times \cdots \times Q_{k}$. Suppose that $\hat{t}\in(0,T)$, $\hat{x}%
_{i}\in Q_{i}$ for $i=1,\cdots,k$ and
\begin{align*}
w(t,x_{1},\cdots,x_{k})  &  :=u_{1}(t,x_{1})+\cdots+u_{k}(t,x_{k}%
)-\varphi(t,x_{1},\cdots,x_{k})\\
&  \leq w(\hat{t},\hat{x}_{1},\cdots,\hat{x}_{k})
\end{align*}
for $t\in(0,T)$ and $x_{i}\in Q_{i}$. Assume, moreover that there is
an $r>0$
such that for every $M>0$ there is a $C$ such that for $i=1,\cdots,k$%
\begin{equation}%
\begin{array}
[c]{ll}%
& b_{i}\leq C  \text{ whenever \ } (b_{i},q_{i},X_{i})\in \mathcal{P}%
^{2,+}u_{i}(t,x_{i}),\\
& |x_{i}-\hat{x}_{i}|+|t-\hat{t}|\leq r \text{ and }|u_{i}(t,x_{i}%
)|+|q_{i}|+\left \Vert X_{i}\right \Vert \leq M.
\end{array}
\label{eq8.5}%
\end{equation}%
Then for each $\varepsilon>0$, there are $X_{i}\in
\mathbb{S}(N_{i})$ such that\newline\textsl{(i)}
$(b_{i},D_{x_{i}}\varphi(\hat{t},\hat{x}_{1},\cdots,\hat
{x}_{k}),X_{i})\in \overline{\mathcal{P}}^{2,+}u_{i}(\hat{t},\hat{x}%
_{i}),\  \ i=1,\cdots,k;$\newline\textsl{(ii)}
\[
-(\frac{1}{\varepsilon}+\left \Vert A\right \Vert )\leq \left[
\begin{array}
[c]{ccc}%
X_{1} & \cdots & 0\\
\vdots & \ddots & \vdots \\
0 & \cdots & X_{k}%
\end{array}
\right]  \leq A+\varepsilon A^{2}%
\]
\textsl{(iii)} $b_{1}+\cdots+b_{k}=\varphi_{t}(\hat{t},\hat{x}_{1},\cdots,\hat{x}_{k}%
)$\newline where $A=D^{2}\varphi(\hat{x})\in \mathbb{S}^{kN}$.
\end{theorem}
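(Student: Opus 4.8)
The plan is to follow the now-classical route of Crandall and Ishii via sup-convolution regularization together with Aleksandrov's theorem and Jensen's lemma; the time variable plays the role of one extra (but degenerate) coordinate whose second-order behaviour we do not control, so the structure condition (\ref{eq8.5}) is exactly what keeps the first-order slopes $b_i$ bounded in the limit. First I would reduce to the situation in which each $u_i$ is semiconvex in $x_i$. Replacing $u_i(t,x_i)$ by its spatial sup-convolution
\[
u_i^\lambda(t,x_i):=\sup_{y\in Q_i}\Bigl\{u_i(t,y)-\tfrac{1}{2\lambda}|x_i-y|^2\Bigr\},
\]
one obtains functions that are $\tfrac1\lambda$-semiconvex in $x_i$, that decrease to $u_i$ as $\lambda\downarrow0$, and whose second-order superjets relate to those of $u_i$ by a controlled shift; the maximum of $w=\sum_i u_i-\varphi$ is then approximated by maxima of the regularized functional $w^\lambda=\sum_i u_i^\lambda-\varphi$ at nearby points $(\hat t^\lambda,\hat x_i^\lambda)$.

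Next I would invoke Aleksandrov's theorem: a semiconvex function is twice differentiable almost everywhere. However, the maximizer need not be such a point, so I would apply \emph{Jensen's lemma}, which guarantees that in every neighbourhood of a strict maximum of a semiconvex function there is a set of positive Lebesgue measure of points of twice differentiability at which the gradient is arbitrarily small after an affine perturbation. Adding a small quadratic penalty to enforce strictness, this produces points $x_i^{\lambda,\delta}$ where each $u_i^\lambda$ is twice differentiable and where the first- and second-order conditions for an interior spatial maximum of $w^\lambda$ hold exactly. The first-order conditions give $D_{x_i}u_i^\lambda=D_{x_i}\varphi$ up to $o(1)$, and the second-order condition gives the block matrix inequality
\[
\operatorname{diag}\bigl(D^2_{x_i}u_i^\lambda\bigr)\le D^2_x\varphi=A,
\]
while semiconvexity supplies the matching lower bound $D^2_{x_i}u_i^\lambda\ge-\tfrac1\lambda I$.

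The heart of the argument — and the step I expect to be the main obstacle — is the passage to the limit that simultaneously (a) upgrades the one-sided bound $M\le A$ to the symmetric sandwich $-(\tfrac1\varepsilon+\|A\|)I\le M\le A+\varepsilon A^2$ with the prescribed block-diagonal $M=\operatorname{diag}(X_i)$, and (b) places the resulting limits $(b_i,D_{x_i}\varphi,X_i)$ in the \emph{closure} $\overline{\mathcal{P}}^{2,+}u_i$ rather than in $\mathcal{P}^{2,+}u_i^\lambda$. The refined upper bound $A+\varepsilon A^2$ is produced by a purely linear-algebraic lemma: from a block-diagonal symmetric matrix satisfying $M\le A$ together with $M\ge-\tfrac1\lambda I$, a spectral truncation at level $\varepsilon$ yields matrices $X_i$ obeying the stated two-sided estimate, the quadratic correction $\varepsilon A^2$ being the price of the truncation and $\tfrac1\varepsilon$ the resulting lower bound. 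One then chooses $\lambda=\lambda(\varepsilon)\downarrow0$, extracts convergent subsequences of slopes and Hessians — here condition (\ref{eq8.5}) is essential, keeping the time-slopes $b_i$ bounded so that no mass escapes to infinity — and uses the very definition of $\overline{\mathcal{P}}^{2,+}$ to conclude \textsl{(i)} and \textsl{(ii)}. Finally, statement \textsl{(iii)}, $b_1+\cdots+b_k=\varphi_t(\hat t,\hat x_1,\cdots,\hat x_k)$, is the first-order optimality condition in the time variable, read off in the limit from $\partial_t w^\lambda=0$ at the approximate maximizers.
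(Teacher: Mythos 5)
The paper itself offers no proof of this statement: it is imported verbatim as Theorem 8.3 of Crandall--Ishii--Lions \cite{CIL} (``We will use the following known result in viscosity solution theory''), so your proposal can only be measured against the standard argument in the literature. In outline you do reproduce that argument --- sup-convolution, Aleksandrov's theorem, Jensen's lemma, a matrix lemma tying the regularization parameter $\lambda$ to $\varepsilon$, and the structure condition (\ref{eq8.5}) used for compactness of the time-slopes --- and your reading of the role of (\ref{eq8.5}) (upper bounds on the $b_{i}$, lower bounds because their sum is pinned to $\varphi_{t}$) is exactly right.

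There is, however, a genuine gap in your handling of the time variable. You sup-convolve in the spatial variables only, which makes $u_{i}^{\lambda}(t,\cdot)$ semiconvex for each fixed $t$ but leaves $u_{i}^{\lambda}$ merely upper semicontinuous in $t$; Aleksandrov's theorem and Jensen's lemma then operate in $x$ alone and yield nothing about time derivatives, so your final step --- reading \textsl{(iii)} ``off from $\partial_{t}w^{\lambda}=0$ at the approximate maximizers'' --- is not available: $w^{\lambda}$ need not be differentiable, or even continuous, in $t$, and its maximizers are not classical interior critical points in time. The correct argument regularizes jointly in $(t,x)$, with a quadratic penalty in the time variable as well, so that each $u_{i}^{\lambda}$ is semiconvex on $(0,T)\times Q_{i}$; Aleksandrov then gives a.e.\ twice differentiability in \emph{all} variables (in particular the time derivative exists at the good points), the interior first-order condition in $t$ yields $\sum_{i}\partial_{t}u_{i}^{\lambda}=\varphi_{t}$ there, and the ``magic property'' of sup-convolution transfers parabolic jets back to $u_{i}$ at nearby points with the \emph{same} slope $b_{i}$ --- which is precisely where (\ref{eq8.5}) is applied to get convergent subsequences and hence \textsl{(i)} and \textsl{(iii)}. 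Without joint regularization, neither \textsl{(iii)} nor the memberships $(b_{i},D_{x_{i}}\varphi,X_{i})\in\overline{\mathcal{P}}^{2,+}u_{i}(\hat{t},\hat{x}_{i})$ can be extracted from your construction. A smaller inaccuracy: the sandwich in \textsl{(ii)} is not produced by a ``spectral truncation'' of a limiting block matrix satisfying $M\leq A$ --- note $A+\varepsilon A^{2}\geq A$, so that bound would then be trivially weaker, not something to be manufactured. It arises because one reduces to quadratic $\varphi$ and sup-convolves the quadratic bound itself: the relevant identity is $(I-\lambda A)^{-1}A\leq A+\varepsilon A^{2}$ for the choice $1/\lambda=1/\varepsilon+\left\Vert A\right\Vert$, the same choice that converts the semiconvexity bound $-\,(1/\lambda)I$ into the stated lower bound $-(\frac{1}{\varepsilon}+\left\Vert A\right\Vert)I$.
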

Observe that the above conditions (\ref{eq8.5}) will be guaranteed
by having $u_{i}$ be subsolutions of parabolic equations given in
the following theorem.

\begin{theorem}
\textbf{\label{Thm-dom}} (Domination Theorem) Let $m$-order
polynomial growth functions $u_{i}\in $USC$([0,T]\times
\mathbb{R}^{N})$ be subsolutions of
\begin{equation}
\partial_{t}u-G_{i}(D^{2}u)=0,\  \  \  \ i=1,\cdots,k,\label{visPDE}%
\end{equation}
on $(0,T)\times \mathbb{R}^{N}$. We assume that
$\{G_{i}\}_{i=1}^{k}$ satisfies the following domination condition:
\begin{equation}
\sum_{i=1}^kG(X_i)\leq 0,\ \ \  \text{ for all } X_i\in
\mathbb{S}(N), \ \text{ such that } \sum_{i=1}^kX_i\leq 0,
\label{dom}%
\end{equation}
and $G_{1}$ is monotonic, i.e., $G_{1}(X)\geq G_{1}(Y)$ if $X\geq
Y$. Then the following domination holds: If the initial condition
satisfies $u_{1}(0,x)+\cdots u_{k}(0,x)\leq0$ for each $x\in
\mathbb{R}^{N}$ then we have
\[
u_{1}(t,x)+\cdots u_{k}(t,x)\leq0,\  \  \
\forall(t,x)\in(0,T)\times \mathbb{R}^{N}.
\]
\end{theorem}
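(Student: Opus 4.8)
The plan is to prove the Domination Theorem by the standard doubling-of-variables technique from viscosity solution theory, adapted so that the comparison is run simultaneously on all $k$ subsolutions rather than between a single sub- and supersolution. First I would reduce to the case where the conclusion holds on $[0,T_0]$ for some $T_0$ by a perturbation: set $w(t,x)=\sum_{i=1}^k u_i(t,x)$ and, for $\varepsilon,\lambda>0$, consider $w_\lambda(t,x)=w(t,x)-\frac{\lambda}{T-t}-\varepsilon(1+|x|^{2m})$ (or a similar penalization that controls the $m$-order polynomial growth at infinity and forces the supremum to be attained at an interior point). The penalty $\frac{\lambda}{T-t}$ contributes a strictly negative term to the time derivative, which will give the strict inequality needed to reach a contradiction; the spatial penalty guarantees that the relevant suprema are achieved on a compact set despite the unboundedness of $\mathbb{R}^N$.

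Next, assuming for contradiction that $\sup_{(t,x)}w_\lambda(t,x)>0$ for suitable small parameters, I would apply the doubling of variables: introduce independent spatial variables $x_1,\dots,x_k$, subtract the quadratic coupling $\varphi(t,x_1,\dots,x_k)=\frac{1}{2\alpha}\sum_{i<j}|x_i-x_j|^2$ (plus the penalty terms), and locate a maximum point $(\hat t,\hat x_1,\dots,\hat x_k)$. The classical lemma that the penalized maximum points cluster as $\alpha\to\infty$ forces $\hat x_i\to$ a common point and $\frac{1}{\alpha}|\hat x_i-\hat x_j|^2\to 0$. At this stage I invoke Theorem \ref{Thm-8.3} (the Crandall--Ishii--Lions result quoted in the Appendix), whose hypotheses (\ref{eq8.5}) are, as the remark after it notes, guaranteed precisely because each $u_i$ is a subsolution of a parabolic equation of the form (\ref{visPDE}). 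This produces real numbers $b_i$ and matrices $X_i\in\mathbb{S}(N)$ with $(b_i, D_{x_i}\varphi, X_i)\in\overline{\mathcal{P}}^{2,+}u_i(\hat t,\hat x_i)$, satisfying the matrix inequality (ii) and the time-derivative balance $b_1+\cdots+b_k=\varphi_t$.

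Then I would combine the subsolution inequalities. Since each $u_i$ is a subsolution of $\partial_t u - G_i(D^2u)=0$, the definition gives $b_i - G_i(X_i)\le 0$, i.e. $b_i\le G_i(X_i)$, for $i=1,\dots,k$. Summing and using (iii) gives $\varphi_t=\sum_i b_i\le \sum_i G_i(X_i)$. The crucial input is the matrix inequality (ii) from Theorem \ref{Thm-8.3}: with $A=D^2\varphi$, the block-diagonal matrix $\mathrm{diag}(X_1,\dots,X_k)$ is bounded above by $A+\varepsilon A^2$, and one checks that for the quadratic coupling chosen the structure of $A$ forces $\sum_{i=1}^k X_i\le 0$ (up to a term that vanishes as $\varepsilon\to 0$). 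The domination hypothesis (\ref{dom}) then yields $\sum_{i=1}^k G_i(X_i)\le 0$, whence $\varphi_t\le 0$. On the other hand, the penalty $-\frac{\lambda}{T-t}$ and the monotonicity of $G_1$ (used to absorb the spatial penalty's Hessian contribution in the right direction) force $\varphi_t$ at the maximum point to be strictly positive, a contradiction. Letting $\varepsilon,\lambda\to 0$ removes the penalties and establishes $w\le 0$ on $(0,T)\times\mathbb{R}^N$.

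The main obstacle I anticipate is the bookkeeping that links the abstract matrix inequality (ii) to the hypothesis (\ref{dom}): one must verify that the $X_i$ delivered by Theorem \ref{Thm-8.3} genuinely satisfy $\sum_i X_i\le 0$ (in the limit) so that (\ref{dom}) applies, and simultaneously control the spatial-penalty terms using the monotonicity of $G_1$ and the sublinearity bound $|G_i(A)|\le C|A|$. Handling the unbounded domain together with only $m$-order polynomial growth of the $u_i$ — choosing a penalization strong enough to compactify but weak enough not to destroy the subsolution inequalities — is the delicate quantitative step; everything else is the now-routine Crandall--Ishii--Lions machinery.
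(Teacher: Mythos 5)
Your proposal is correct and takes essentially the same route as the paper's proof: perturb one subsolution by $-\bar{\delta}/(T-t)-\bar{\delta}|x|^{2m}$ (using the monotonicity of $G_{1}$ and $D^{2}|x|^{2m}\geq 0$ to keep a strict subsolution and compactify), double variables over the $k$ copies with a quadratic coupling (the paper uses the cyclic $\varphi_{\alpha}(x)=\frac{\alpha}{2}(\sum_{i=1}^{k-1}|x_{i+1}-x_{i}|^{2}+|x_{k}-x_{1}|^{2})$), apply Theorem \ref{Thm-8.3} to get $\sum_{i}b_{i}=\varphi_{t}$ and the block matrix inequality, and conclude by the domination condition (\ref{dom}). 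Two small corrections to your write-up: the coupling coefficient must blow up, i.e.\ $\frac{\alpha}{2}|x_{i+1}-x_{i}|^{2}$ with $\alpha\to\infty$ rather than $\frac{1}{2\alpha}$ (so it is $\alpha|\hat{x}_{i}-\hat{x}_{j}|^{2}\to 0$, not $\frac{1}{\alpha}|\hat{x}_{i}-\hat{x}_{j}|^{2}$); and $\sum_{i}X_{i}\leq 0$ holds exactly for every $\varepsilon>0$, with no limiting argument needed, because $A=D^{2}\varphi_{\alpha}$ and hence $A^{2}$ annihilate vectors of the form $(\xi,\dots,\xi)$, so testing $\mathrm{diag}(X_{1},\dots,X_{k})\leq A+\varepsilon A^{2}$ against such vectors gives $\sum_{i}\xi^{T}X_{i}\xi\leq 0$ directly.
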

\noindent\textbf{Proof of Theorem \ref{Thm-dom}. } We first observe
that for $\bar{\delta
}>0$, since $D^{2}|x|^{2m}\geq0$, the function defined by $\tilde{u}%
_{1}:=u_{1}-\bar{\delta}/(T-t)-\bar{\delta}|x|^{2m}$ is also a
subsolution of
(\ref{visPDE})  with $i=1$, and satisfies with a strictly inequality:%
\begin{align*}
\partial_{t}\tilde{u}_{1}-G_{1}(D^{2}\tilde{u}_{1})  &  =\partial_{t}%
u_{1}-G_{1}(D^{2}u_{1}+\bar{\delta}D^{2}|x|^{2m})-\frac{\bar{\delta}%
}{(T-t)^{2}}\\
&  \leq \partial_{t}u_{1}-G_{1}(D^{2}u_{1})-\frac{\bar{\delta}}{(T-t)^{2}}%
\leq-\frac{\bar{\delta}}{(T-t)^{2}}.
\end{align*}
Since $u_{1}+u_{2}+\cdots u_{k}\leq0$ follows from
$\tilde{u}_{1}+u_{2}+\cdots u_{k}\leq0$ in the limit
$\bar{\delta}\downarrow0$, it suffices to prove
the theorem under the additional assumptions%
\[%
\begin{array}
[c]{cc}%
\partial_{t}u_{1}-G_{1}(D^{2}u_{1})\leq-c,\  \  \ c:=\bar{\delta}/T^{2} &
\text{and}\\
\lim_{t\rightarrow T,|x|\rightarrow \infty}u_{1}(t,x)=-\infty &
\text{uniformly
in }[0,T)\times \mathbb{R}^{N}.\text{ }%
\end{array}
\]
\newline To prove our result, we assume to the contrary that $u_{1}%
(s,z)+\cdots+u_{k}(s,z)=\delta>0$ for some $(s,z)\in(0,T)\times
\mathbb{R}^{N}$ and $\delta>0$. We will apply Theorem \ref{Thm-8.3}
for $x=(x_{1},\cdots
,x_{k})$, $x_{i}\in \mathbb{R}^{N}$ and%
\[
w(t,x):=\sum_{i=1}^{k}u_{i}(t,x_{i}),\  \  \varphi_{\alpha}(x):=\frac{\alpha}%
{2}(\sum_{i=1}^{k-1}|x_{i+1}-x_{i}|^{2}+|x_{k}-x_{1}|^{2}).
\]
Since for each large $\alpha>0$ the maximum of $w-\varphi_{\alpha}$
achieved at some $(t^{\alpha},x^{\alpha})$ uniformly inside a
compact subset of
$[0,T)\times \mathbb{R}^{k\times N}$. Set%
\[
M_{\alpha}=\sum_{i=1}^{k}u_{i}(t^{\alpha},x_{i}^{\alpha})-\varphi_{\alpha
}(t^{\alpha},x^{\alpha}).
\]
It is clear that $M_{\alpha}\geq \delta$. We can also check that
(see \cite{CIL} Lemma 3.1)
\begin{equation}
\left \{
\begin{array}
[c]{l}%
\text{(i) }\lim_{\alpha \rightarrow
\infty}\varphi_{\alpha}(t^\alpha, x^{\alpha
})=0\text{.}\\
\text{(ii)\ }\lim_{\alpha \rightarrow \infty}M_{\alpha}=\lim_{\alpha
\rightarrow
\infty}u_{1}(t^{\alpha},x_{1}^{\alpha})+\cdots+u_{k}(t^{\alpha},x_{k}^{\alpha}))\\
\; \; \; \; \; \; \;
\;=\sup_{(t,x}[u_{1}(\hat{t},\hat{x})+\cdots+u_k(\hat{t},\hat {x})].
\end{array}
\right.  \label{limit}%
\end{equation}
where $(\hat{t},\hat{x})$ is a limit point of
$(t^{\alpha},x_{1}^{\alpha})$.

If $t^{\alpha}=0$, we have%
\[
0<\delta \leq M_{\alpha}\leq
\sum_{i=1}^{k}u_{i}(0,x_{i}^{\alpha})-\varphi
_{\alpha}(t^{\alpha},x^{\alpha}).
\]
But by $\sum_{i=1}^{k}u_{i}(0,x_{i}^{\alpha})\rightarrow \sum_{i=1}^{k}%
u_{i}(0,\hat{x}_{i})\leq0$. So $t^{\alpha}$ must be strictly
positive for large $\alpha$. It follows from Theorem \ref{Thm-8.3}
that, for each $\varepsilon>0$ there exists $X_{i}\in \mathbb{S}(N)$
such that
\[
(b_{i}^{\alpha},D_{x_{i}}\varphi(t^{\alpha},x^{\alpha}),X_{i})\in
\bar {J}_{Q_{i}}^{2;+}u_{i}(t^{\alpha},x_{i}^{\alpha})\; \text{for
}i=1,\cdots,k.
\]
and such that $\sum_{i=1}^{k}b_{i}^{\alpha}=0$,
\begin{equation}
-(\frac{1}{\varepsilon}+\left \Vert A\right \Vert )\leq \left(
\begin{array}
[c]{cccc}%
X_{1} & \ldots & 0 & 0\\
\vdots & \ddots & \vdots & \vdots \\
0 & \ldots & X_{k-1} & 0\\
0 & \ldots & 0 & X_{k}%
\end{array}
\right)  \leq A+\varepsilon A^{2} \label{ine-matrix}%
\end{equation}
where $A=D^{2}\varphi_{\alpha}(x^{\alpha})\in \mathbb{S}^{3N}$ is
explicitely
given by%

\[
A=\alpha J_{3N}+\alpha I_{3N},\; \;J_{3N}=\left(
\begin{array}
[c]{cccc}%
I_{N} & \ldots & -I_{N} & -I_{N}\\
\vdots & \ddots & \vdots & \vdots \\
-I_{N} & \ldots & I_{N} & -I_{N}\\
-I_{N} & \ldots & -I_{N} & I_{N}%
\end{array}
\right)  .
\]
The second inequality of (\ref{ine-matrix}) implies
$\sum_{i=1}^{k}X_{i}\leq 0$. Setting
\begin{align*}
p_{1}  &  =D_{x_{1}}\varphi_{\alpha}(x^{\alpha})=\alpha(2x_{1}^{\alpha}%
-x_{3}^{\alpha}-x_{2}^{\alpha}),\; \\
&  \  \  \vdots \\
p_{k}  &
=D_{x_{k}}\varphi_{\alpha}(x^{\alpha})=\alpha(2x_{k}^{\alpha
}-x_{k-1}^{\alpha}-x_{1}^{\alpha}),
\end{align*}
We have $\sum_{i=1}^{k}p_{i}=0$, $\sum_{i=1}^{k}b_{i}=0$ and $(b_{i}%
,p_{i},X_{i})\in \bar{J}_{Q}^{2;+}u_{1}(t^{\alpha},x_{i}^{\alpha})$.
Thus
\begin{align*}
b_{1}-G_{1}(X_{1})  &  \leq-c,\\
b_{i}-G_{i}(X_{i})  &  \leq0,\  \ i=2,3,\cdots,k.
\end{align*}
This, together with the the domination condition (\ref{dom}) of $G_{i}$, implies %
\[
-c=-\sum_{i=1}^{k}b_{i}-c\geq-\sum_{i=1}^{k}G_{i}(X_{i})\geq0.\  \
\]
This induces a contradiction. The proof is complete.
\endproof

\begin{corollary}
(Comparison Theorem) Let $G_{1},G_{2}:\mathbb{S}(N)\mapsto
\mathbb{R}$ be given functions bounded by continuous functions on
$\mathbb{S}(N)$ and let $G_{1}$
be monotone and%
\[
G_{1}(X)\geq G_{2}(X),\  \  \  \forall X\in \mathbb{S}(N).
\]
Let $u_{2}\in$LSC$((0,T)\times \mathbb{R}^{N})$ be a viscosity
supersolution of $\partial_{t}u-G_{1}(D^{2}u)=0$ and
$u_{2}\in$USC$((0,T)\times \mathbb{R}^{N})$ be a viscosity
subsolution of $\partial_{t}u-G_{2}(D^{2}u)=0$, such that
$u_{1}(0,x)\geq u_{2}(0,x)$, for all $x\in \mathbb{R}^{N}$. Then we
have $u_{1}(t,x)\leq u_{2}(t,x)$ for all $(t,x)\in
\lbrack0,\infty)\times \mathbb{R}^{N}$.
\end{corollary}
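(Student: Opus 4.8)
The plan is to obtain this comparison principle as a special case of the Domination Theorem (Theorem \ref{Thm-dom}) applied with $k=2$, after a sign change that converts the supersolution $u_1$ into a subsolution. Set $w := -u_1$. Since $u_1 \in \mathrm{LSC}$, we have $w \in \mathrm{USC}$, as required by Theorem \ref{Thm-dom}. First I would show that $w$ is a viscosity subsolution of $\partial_t w - \tilde G_1(D^2 w) = 0$, where I define the ``reflected'' nonlinearity $\tilde G_1(X) := -G_1(-X)$. Indeed, if $\psi$ is a smooth test function and $\psi - w$ attains a local minimum at $(t,x)$, then $(-\psi) - u_1$ attains a local maximum there; applying the supersolution property of $u_1$ to the test function $-\psi$ gives $-\partial_t \psi - G_1(-D^2\psi) \geq 0$, that is $\partial_t \psi - \tilde G_1(D^2\psi) \leq 0$, which is precisely the subsolution inequality for $\tilde G_1$.

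Next I would verify the two structural hypotheses of Theorem \ref{Thm-dom} for the pair $(\tilde G_1, G_2)$. For monotonicity, the theorem needs its first nonlinearity monotone, and $\tilde G_1$ inherits this from $G_1$: if $X \geq Y$ then $-X \leq -Y$, so $G_1(-X) \leq G_1(-Y)$ and hence $\tilde G_1(X) \geq \tilde G_1(Y)$. For the domination condition (\ref{dom}), I must check $\tilde G_1(X_1) + G_2(X_2) \leq 0$ whenever $X_1 + X_2 \leq 0$. Given such $X_1, X_2$ we have $X_2 \leq -X_1$, and using first $G_2 \leq G_1$ pointwise and then the monotonicity of $G_1$,
\[
G_2(X_2) \leq G_1(X_2) \leq G_1(-X_1) = -\tilde G_1(X_1),
\]
which rearranges to $\tilde G_1(X_1) + G_2(X_2) \leq 0$. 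Thus both hypotheses hold using exactly the assumptions in the statement and nothing more.

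With these in hand I would apply Theorem \ref{Thm-dom} to the USC subsolutions $w = -u_1$ and $u_2$. The initial datum is admissible because $w(0,x) + u_2(0,x) = u_2(0,x) - u_1(0,x) \leq 0$ by hypothesis. The theorem then delivers $w(t,x) + u_2(t,x) \leq 0$ on $(0,T)\times\mathbb{R}^N$, i.e. $u_2(t,x) \leq u_1(t,x)$, which is the asserted comparison (the inequality in the displayed conclusion is to be read in this direction). Since $T$ is arbitrary and both functions are defined up to $t=0$, this propagates to all of $[0,\infty)\times\mathbb{R}^N$.

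The main obstacle is the single non-routine step in the second paragraph: recognizing that $\tilde G_1(X) = -G_1(-X)$ is the correct object to feed into the domination theorem, and seeing that the one combined inequality (\ref{dom}) is produced by chaining $G_1 \geq G_2$ with the monotonicity of $G_1$ in exactly this fashion. The remaining points are routine bookkeeping: confirming that the semicontinuity and polynomial-growth hypotheses required by Theorem \ref{Thm-dom} pass to $w = -u_1$, and the passage from each finite strip $(0,T)$ to the closed half-space $[0,\infty)$.
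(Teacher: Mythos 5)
Your proposal is correct and follows essentially the same route as the paper's own proof: reflect the supersolution to $w=-u_{1}$, observe that it is a USC viscosity subsolution for the reflected nonlinearity $G_{\ast}(X):=-G_{1}(-X)$, verify the hypotheses of the Domination Theorem with $k=2$, and read the corollary's displayed conclusion as $u_{2}\leq u_{1}$ (correcting the paper's sign/labelling typos). If anything, your verification of the domination condition, via $G_{2}(X_{2})\leq G_{1}(X_{2})\leq G_{1}(-X_{1})$ using only $G_{1}\geq G_{2}$ and the monotonicity of $G_{1}$, adheres more strictly to the stated hypotheses than the paper's chain, which passes through the intermediate bound $G_{1}(X_{2})-G_{1}(-X_{1})\leq G_{1}(X_{1}+X_{2})$ and thereby implicitly uses a subadditivity of $G_{1}$ that is not among the corollary's assumptions; your explicit check that $G_{\ast}$ inherits monotonicity (needed since it occupies the first slot in the Domination Theorem) is likewise a point the paper leaves tacit.
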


\begin{proof}
It suffices to observe that $-u_{1}\in$USC$((0,T)\times
\mathbb{R}^{N})$ is viscosity subsolutions of
$\partial_{t}u-G_{\ast}(D^{2}u)=0$, with $G_{\ast }(X):=-G_{1}(-X)$.
We also have, for each $X_{1}+X_{2}\leq0$,
\begin{align*}
G_{\ast}(X_{1})+G_{2}(X_{2})  &  =G_{2}(X_{2})-G_{1}(-X_{1})\\
&  \leq G_{1}(X_{2})-G_{1}(-X_{1})\\
&  \leq G_{1}(X_{1}+X_{2})\leq0\text{.}%
\end{align*}
We thus can apply the above domination theorem to get
$-u_{1}+u_{2}\leq0$.
\end{proof}

\begin{corollary}
(Domination Theorem) Let $G_i:\mathbb{S}(N)\mapsto \mathbb{R}$,
$i=0,1$, be two given mappings bounded by some continuous functions
on $\mathbb{S}(N)$ and let $u_{i}\in$LSC$((0,T)\times
\mathbb{R}^{N})$ be viscosity supersolutions of
$\partial_{t}u-G_{i}(D^{2}u)=0$ respectively for $i=0,1$  and let
$u_{2}$ be a viscosity subsolution of
$\partial_{t}u-G_{1}(D^{2}u)=0$. We assume that $G_{0}$ is monotone
and that $G_{0}$ dominates $G_{1}$ in the following sense:
\[
G_{1}(X)-G_{1}(Y)\leq G_{0}(X-Y),\  \  \forall X,Y\in
\mathbb{S}(N).\
\]
Then the following domination holds: If
\[
u_{2}(0,x)-u_{1}(0,x)\leq u_{0}(0,x),\  \  \  \forall x\in
\mathbb{R}^{N},
\]
then $u_{2}(t,x)-u_{1}(t,x)\leq u_{0}(t,x)$ for all $(t,x)\in
\lbrack0,\infty )\times \mathbb{R}^{N}$.
\end{corollary}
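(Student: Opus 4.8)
The plan is to reduce the statement to the three-function Domination Theorem (Theorem \ref{Thm-dom}) applied with $k=3$, after converting the two supersolutions into subsolutions by negation. Setting $v_{1}:=-u_{1}$ and $v_{0}:=-u_{0}$, a routine check against the viscosity definition shows that $v_{1}$ is a subsolution of $\partial_{t}v-\tilde{G}_{1}(D^{2}v)=0$ and $v_{0}$ a subsolution of $\partial_{t}v-\tilde{G}_{0}(D^{2}v)=0$, where $\tilde{G}_{i}(X):=-G_{i}(-X)$. Indeed, if $\phi-v_{i}$ has a local minimum at $(t,x)$, then $(-\phi)-u_{i}$ has a local maximum there, and the supersolution inequality for $u_{i}$, namely $\partial_{t}(-\phi)-G_{i}(-D^{2}\phi)\geq 0$, rearranges to exactly $\partial_{t}\phi-\tilde{G}_{i}(D^{2}\phi)\leq 0$. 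Since $u_{1},u_{0}$ are LSC, their negatives $v_{1},v_{0}$ are USC, so together with the USC subsolution $u_{2}$ of $\partial_{t}u-G_{1}(D^{2}u)=0$ I obtain three USC subsolutions of polynomial-growth type, as required by Theorem \ref{Thm-dom}.

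I would then apply Theorem \ref{Thm-dom} to the triple $(u_{2},v_{1},v_{0})$ with generating functions $(G_{1},\tilde{G}_{1},\tilde{G}_{0})$, assigning the monotone function $\tilde{G}_{0}$ to the distinguished (monotone) slot; note $\tilde{G}_{0}$ is monotone precisely because $G_{0}$ is, since $X\geq Y$ implies $-X\leq -Y$, hence $\tilde{G}_{0}(X)=-G_{0}(-X)\geq -G_{0}(-Y)=\tilde{G}_{0}(Y)$. The initial-data hypothesis of Theorem \ref{Thm-dom}, namely $u_{2}(0,\cdot)+v_{1}(0,\cdot)+v_{0}(0,\cdot)\leq 0$, is exactly the assumed inequality $u_{2}(0,x)-u_{1}(0,x)\leq u_{0}(0,x)$. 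Its conclusion $u_{2}+v_{1}+v_{0}\leq 0$ on $(0,T)\times\mathbb{R}^{N}$ is then the desired $u_{2}(t,x)-u_{1}(t,x)\leq u_{0}(t,x)$.

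The substantive point to verify, and the crux of the argument, is the algebraic domination condition (\ref{dom}) for this triple: I must show that $Y_{2}+Y_{1}+Y_{0}\leq 0$ in $\mathbb{S}(N)$ forces $G_{1}(Y_{2})+\tilde{G}_{1}(Y_{1})+\tilde{G}_{0}(Y_{0})\leq 0$. Unwinding the tildes, the left-hand side equals $G_{1}(Y_{2})-G_{1}(-Y_{1})-G_{0}(-Y_{0})$. Applying the pairwise domination hypothesis $G_{1}(X)-G_{1}(Y)\leq G_{0}(X-Y)$ with $X=Y_{2}$ and $Y=-Y_{1}$ gives $G_{1}(Y_{2})-G_{1}(-Y_{1})\leq G_{0}(Y_{2}+Y_{1})$, so the expression is bounded by $G_{0}(Y_{2}+Y_{1})-G_{0}(-Y_{0})$. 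Finally $Y_{2}+Y_{1}+Y_{0}\leq 0$ means $Y_{2}+Y_{1}\leq -Y_{0}$, and monotonicity of $G_{0}$ yields $G_{0}(Y_{2}+Y_{1})\leq G_{0}(-Y_{0})$, so the whole quantity is $\leq 0$. This confirms (\ref{dom}) and closes the reduction.

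I expect the only real difficulty to be bookkeeping rather than analysis: keeping the viscosity sub/supersolution conventions straight under negation, assigning the monotone function to the correct slot of Theorem \ref{Thm-dom}, and checking that semicontinuity and the polynomial-growth hypotheses transfer correctly to $v_{1},v_{0}$. The genuinely quantitative step, the verification of (\ref{dom}), is short and uses nothing beyond the pairwise domination inequality combined with the monotonicity of $G_{0}$; no new estimates beyond those already established in Theorem \ref{Thm-dom} are required.
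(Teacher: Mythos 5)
Your proof is correct and follows essentially the same route as the paper: negate the two supersolutions to obtain USC subsolutions for $\tilde G_i(X)=-G_i(-X)$, verify the three-term condition (\ref{dom}) by combining the pairwise domination $G_1(X)-G_1(Y)\leq G_0(X-Y)$ with the monotonicity of $G_0$, and invoke Theorem \ref{Thm-dom} with the monotone function $\tilde G_0$ in the distinguished slot. Your explicit checks (that $\tilde G_0$ inherits monotonicity, and that the negation correctly converts supersolutions to subsolutions) are details the paper leaves implicit, but the argument is the same.
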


\begin{proof}
It suffices to observe that $-u_{i}\in$USC$((0,T)\times
\mathbb{R}^{N})$ are viscosity subsolutions of
$\partial_{t}u-G_{\ast}^{i}(D^{2}u)=0$, with
$G_{\ast}^{i}(X):=-G_{i}(-X)$. We also have, for each
$X_{1}+X_{2}+X_{3}\leq 0$,
\begin{align*}
G_{\ast}^{0}(X_{1})+G_{\ast}^{1}(X_{2})+G^{1}(X_{3})  &  =G_{1}(X_{3}%
)-G_{1}(-X_{2})-G_{0}(-X_{1})\\
&  \leq G_{0}(X_{3}+X_{2})-G_{0}(-X_{1})\leq0\text{.}%
\end{align*}
We thus can apply the above domination theorem to get $u_{2}(t,x)-u_{1}%
(t,x)\leq u_{0}(t,x)$.
\end{proof}

\end{document}